\numberwithin{equation}{section}
\numberwithin{figure}{section}
\theoremstyle{plain}
\newtheorem{thm}{\protect\theoremname}[section]
  \theoremstyle{definition}
  \newtheorem{defn}[thm]{\protect\definitionname}
  \theoremstyle{remark}
  \newtheorem{rem}[thm]{\protect\remarkname}
  \theoremstyle{plain}
  \newtheorem{prop}[thm]{\protect\propositionname}
  \theoremstyle{plain}
  \newtheorem{lem}[thm]{\protect\lemmaname}
  \theoremstyle{plain}
  \newtheorem{cor}[thm]{\protect\corollaryname}
\newtheorem{theorem}{Theorem}[section]
\newtheorem{examples}[theorem]{Examples}
\newcommand{\be}{\begin{equation}}
\newcommand{\ee}{\end{equation}}
\providecommand{\definitionname}{Definition}
  \providecommand{\lemmaname}{Lemma}
  \providecommand{\remarkname}{Remark}
\providecommand{\theoremname}{Theorem}
\providecommand{\definitionname}{Definition}
  \providecommand{\lemmaname}{Lemma}
  \providecommand{\propositionname}{Proposition}
  \providecommand{\remarkname}{Remark}
\providecommand{\theoremname}{Theorem}
  \providecommand{\corollaryname}{Corollary}
  \providecommand{\definitionname}{Definition}
  \providecommand{\lemmaname}{Lemma}
  \providecommand{\propositionname}{Proposition}
  \providecommand{\remarkname}{Remark}
\providecommand{\theoremname}{Theorem}
\begin{document}

\title{Matricial Function Theory and Weighted Shifts}

\author{Paul S. Muhly}

\address{Department of Mathematics \\
University of Iowa\\
 Iowa City, IA 52242}

\email{e-mail: paul-muhly@uiowa.edu }

\author{Baruch Solel}

\address{Department of Mathematics\\
 Technion\\
32000 Haifa, Israel}

\email{e-mail: mabaruch@techunix.technion.ac.il}


\thanks{The research of both authors was supported in part by a US-Israel
Binational Science Foundation grant.}

\begin{abstract}
Let $\mathcal{T}_{+}(E)$ be the tensor algebra of a $W^{*}$-correspond\-ence
$E$ over a $W^{*}$-algebra $M$. In earlier work, we showed that the 
completely contractive representations of $\mathcal{T}_{+}(E)$, whose restrictions to $M$ are normal, are parametrized
by certain discs or balls $\overline{D(E,\sigma)}$ indexed by  the
normal $*$-repres\-ent\-ations $\sigma$ of $M$. Each disc 
has analytic structure, and each element $F\in \mathcal{T}_{+}(E) $ gives
rise to an operator-valued function $\widehat{F}_{\sigma}$
on $\overline{D(E,\sigma)}$ that is continuous and analytic on the interior.  In this paper, we explore the effect of adding operator-valued weights to the theory.  While the statements of many of the results in the weighted theory are anticipated by those in the unweighted setting, substantially different proofs are required.  Interesting new connections with the theory of completely positive are developed.  Our perspective has been inspired by work of Vladimir M\"{u}ller \cite{Muller1988} in which he studied operators that can be modeled by parts of  weighted shifts.   Our results may be interpreted as providing a description of operator algebras that can be modeled by weighted tensor algebras.  Our results also extend work of Gelu Popescu \cite{Popescu2010}, who investigated similar questions. 
\end{abstract}

\subjclass[2010]{Primary 46L07, 46L08, 47L30, 47L55, 47L75, 47L80;  Secondary 30H50, 46J15, 46K50, 46L89}

\keywords{Hardy algebras, Berezin transforms,  noncommutative analytic functions, free holomorphic functions, matricial sets, matricial functions, weighted shifts}

\maketitle

\section{Introduction\label{sec:Introduction}}

This paper is a sequel to \cite{Muhly2013}, where we pursue a function
theory that derives from thinking about elements of tensor algebras
and Hardy algebras, based on $C^{*}$- and $W^{*}$- correspondences,
as functions on the spaces of representations of the algebras. It
turns out that the spaces of representations have complex manifold
structures and the functions that arise are genuinely holomorphic.
The spaces and functions also have a matricial structure that is very
much like that first studied by Joseph Taylor in \cite{Tay72c}, and
the functions we studied are very similar to the functions he connected
to elements in free algebras. Taylor's work has been the focus of
a lot of interest in recent years and an account of a large portion
of the current state-of-the-art has been written by Dmitry Kaliuzhnyi-Verbovetskyi
and Victor Vinnikov \cite{K-VV2014}. We refer to their work for many
references and background. 

The current paper arose from our efforts to probe broader contexts
for the ideas that we employed in \cite{Muhly2013}. We have been
especially inspired by the work of Vladimir M\"{u}ller \cite{Muller1988}
and Gelu Popescu \cite{Popescu2010}. To understand our motivation,
recall that in his seminal paper \cite{Rota1960} Rota essentially
proved that the study of contraction operators on Hilbert space is
coextensive with the study of the parts of unilateral shifts (of all
multiplicities). If one interprets contraction operators as the representations
of the disk algebra, on the one hand, and if one interprets unilateral
shifts as defining induced representations on certain Fock spaces,
on the other - as we did in \cite{Muhly1999} - then one has a perfect
analogy between Rota's work and ours. M\"{u}ller asked the question:
``Which operators can be modeled by parts of \emph{weighted shifts}?''
He did not give a complete answer. Indeed, his analysis leads naturally
to the view that no simple definitive answer can be given. However,
he was able to formulate some very general conditions on a weight
sequence under which one can characterize those operators which are
unitarily equivalent to a part of the unilateral shift with those
weights. (See in particular \cite[Theorem 2.2]{Muller1988}.) In \cite{Popescu2010},
Popescu began with the conclusion of M\"{u}ller's Theorem 2.2 and
expanded it to the setting of noncommutative tuples of operators.
On the basis of this expansion, he formulated a very broad generalization
of the noncommutative function theory he has been developing over
the years. 

Our point of departure is to take a fresh look at M\"{u}ller's Theorem
2.2, and from it formulate a notion of a weighted tensor algebra whose
representation theory is parametrized by a ``disc'' that generalizes
the kind of domains that Poposecu studied. In fact, even when specialized
to Popescu's setting in \cite{Popescu2010}, our discs are more general
than his. Many of our arguments follow his in broad outline, but numerous
new approaches and details are necessitated by the increase in generality
we consider.

\section{Preliminaries\label{sec:Preliminaries}}

For the purposes we have in mind, it seems best to formulate our results
in the context of $W^{*}$-algebras and $W^{*}$-correspondences.
However, many of our results are seen easily to have reformulations
that are valid in the context of unital $C^{*}$-algebras and $C^{*}$-correspondences.
We will call attention to relevant modifications for the $C^{*}$
setting when it seems advisable to do so. It should be emphasized,
too, that at the current level of development of the theory, working
in the setting with \emph{finite dimensional} $W^{*}$-algebras and
correspondences leads to interesting new results. In this case, of
course, there is no distinction between $C^{*}$-algebras and $W^{*}$-algebras.
However, what happens with the infinite dimensional constructs we
build from them does depend on whether one takes norm limits or (ultra)
weak limits. 

We shall follow the theory developed initially in \cite{Muhly2004a},
but due to advances during the last 10 years, or so, it has become
necessary to adopt somewhat different notation. Throughout this paper,
unless specified otherwise, $M$ will denote a $W^{*}$-algebra. That
is, $M$ will denote a $C^{*}$-algebra that is also a dual space.
We want to emphasize a representation-free approach, and so we make
no \emph{a priori} assumptions about $M$ being represented on some
particular Hilbert space. The weak-$*$ topology on $M$ will be referred
to as the ultraweak topology on $M$. Homomorphisms between $W^{*}$-algebras
will always be assumed to be continuous with respect to their ultraweak
topologies. For emphasis, we shall call such homomorphisms ``normal''
or ``$W^{*}$-homomorphisms''. We will also assume that our homomorphisms
are unital unless explicit assertions to the contrary are made. 

Likewise, $E$ will denote a $W^{*}$-correspondence over $M$. This
means, first of all, that $E$ is a self-dual, right, Hilbert $C^{*}$-module
over $M$ in the sense of Paschke \cite{Paschke1973}. (We shall follow
\cite{MT2005} for details about $W^{*}$-correspondences.) By \cite[Proposition 3.3.3]{MT2005},
$E$ is a dual space and we shall call the weak-$*$ topology on $E$
the ultraweak topology, as well. The set of all endomorphisms of $E$
will be denoted by $\mathcal{L}(E)$. By \cite[Corollary 3.3.2 and Proposition 3.3.4 ]{MT2005},
$\mathcal{L}(E)$ is naturally a $W^{*}$-algebra. To make $E$ a
$W^{*}$-correspondence we must assume that $E$ is a left module
over $M$ and that the left action is given by a normal $*$-homomorphism,
usually denoted $\varphi$, of $M$ into $\mathcal{L}(E)$. 

To avoid digressions which contribute little to the main points we
want to make, we shall assume that the left action of $M$ on $E$
is faithful and that the right action is \emph{full} in the sense
that the ultraweakly closed linear span of the inner products $\langle\xi,\eta\rangle$,
$\xi,\eta\in E$, is all of $M$. These two assumptions guarantee
that all the tensor powers $E^{\otimes n}$, $n\geq0$ (which are
always assumed to be balanced over $M$) are nonzero. In fact, each
$E^{\otimes n}$ is full, and the left action is faithful. We shall
denote the left action of $M$ on $E^{\otimes n}$ by $\varphi_{n}$.
Recall that it is defined on elementary tensors by the formula $\varphi_{n}(a)(\xi_{1}\otimes\xi_{2}\otimes\cdots\otimes\xi_{n})=(\varphi(a)\xi_{1})\otimes\xi_{2}\otimes\cdots\otimes\xi_{n}$,
$a\in M$. Note that $E^{\otimes0}$ is defined to be $M$, with $\langle\xi,\eta\rangle:=\xi^{*}\eta$,
and $\varphi_{0}$ given by left multiplication in $M$. The ultraweak
direct sum of the $E^{\otimes n}$, $n\geq0$, is called the \emph{Fock
space} of $E$ and is denoted $\mathcal{F}(E)$. It is, in an obvious
way, a right $W^{*}$-Hilbert module over $M$ and there is a left
action, $\varphi_{\infty}$, of $M$ on $\mathcal{F}(E)$, given by
the formula $\varphi_{\infty}(a)=\sum_{n\geq0}\varphi_{n}(a)$, $a\in M$.

The commutant \emph{in} $\mathcal{L}(E^{\otimes k})$ of the image
of $M$ under $\varphi_{k}$ will be written $\varphi_{k}(M)^{c}$.
Note that $\varphi_{0}(M)^{c}$ is simply the center of $M$, $\mathfrak{Z}(M)$.
Also, the commutant in $\mathcal{L}(\mathcal{F}(E))$ of $\varphi_{\infty}(M)$
will be denoted by $\varphi_{\infty}(M)^{c}$.

Suppose, now, $F$ is another $W^{*}$-Hilbert module over a $W^{*}$-algebra,
say, $N$, and suppose $\sigma:M\to\mathcal{L}(F)$ is a normal $*$-homomorphism.
Then we we can form the $W^{*}$-Hilbert module over $N$, $E\otimes_{\sigma}F$,
where the subscript $\sigma$ is to indicate that the tensor product
is balanced over $M$ through $\sigma$. That is $\xi a\otimes\eta=\xi\otimes\sigma(a)\eta$,
$\xi\in E$, $\eta\in F$, and $a\in M$. We can also form Rieffel's
induced representation or homomorphism $\sigma^{E}$ which maps $\mathcal{L}(E)$
to $\mathcal{L}(E\otimes_{\sigma}F)$ via the formula $\sigma^{E}(A)=A\otimes I_{F}$.
(Rieffel developed the theory of induced representations of $C^{*}$-algebras
in \cite{R1974b}; additional facts about induced representations,
especially for $W^{*}$-algebras may be found in \cite{R1974a}.) 
\begin{defn}
\label{Def:_Intertwiner_space} The \emph{intertwiner space}, $\mathfrak{I}(\sigma^{E}\circ\varphi,\sigma)$,
is defined to be the collection of bounded operators $X$ from $E\otimes_{\sigma}F$
to $F$ that satisfy the equation 
\[
X\sigma^{E}\circ\varphi(a)=\sigma(a)X,\qquad a\in M.
\]

\end{defn}
At this level of generality, $\mathfrak{I}(\sigma^{E}\circ\varphi,\sigma)$
may reduce to the zero space, as we indicated in \cite[Remark 4.6]{Muhly1999}.
(The setting there was a $C^{*}$-setting, but with minor adjustments,
one can produce $W^{*}$-Hilbert modules in which the intertwiner
space is zero.) Nevertheless, $\mathfrak{I}(\sigma^{E}\circ\varphi,\sigma)$
usually carries a lot of information about $E$ and $\sigma$ as we
shall see. In particular, we note that $\mathfrak{I}(\sigma^{E}\circ\varphi,\sigma)^{*}:=\{X^{*}\mid X\in\mathfrak{I}(\sigma^{E}\circ\varphi,\sigma)\}=\mathfrak{I}(\sigma,\sigma^{E}\circ\varphi)$
is a $W^{*}$-correspondence over the commutant of $\sigma(M)$ in
$\mathcal{L}(F)$, i.e., $\mathfrak{I}(\sigma^{E}\circ\varphi,\sigma)^{*}$
is a $W^{*}$-correspondence over the relative commutant $\sigma(M)^{c}$
of $\sigma(M)$ in $\mathcal{L}(F)$. When $F$ is a Hilbert space,
we shall use the standard notation $\sigma(M)'$ instead of $\sigma(M)^{c}$.
The inner product of $X$ and $Y$ in $\mathfrak{I}(\sigma,\sigma^{E}\circ\varphi)$
is simply $\langle X,Y\rangle:=X^{*}Y$, and the left and right actions
of $\sigma(M)^{c}$ are given by the formula:
\[
a\cdot X\cdot b:=(I_{E}\otimes a)Xb,\qquad a,b\in\sigma(M)^{c},X\in\mathfrak{I}(\sigma,\sigma^{E}\circ\varphi).
\]

Note, too, that if $X$ and $Y$ are in $\mathfrak{I}(\sigma^{E}\circ\varphi,\sigma)$,
then while the composition of $X$ and $Y$ makes no sense, in general,
we do have the following important containment:
\begin{equation}
X(I_{E}\otimes Y)\in\mathfrak{I}(\sigma^{E^{\otimes2}}\circ\varphi_{2},\sigma).\label{eq:partial_products}
\end{equation}
If we apply this formula when $X=Y$, we arrive at a notion we call
tensorial powers.
\begin{defn}
\label{Def:powers} If $\sigma$ is a normal representation $M$ on
a Hilbert $W^{*}$-module $F$ and if $Z\in\mathfrak{I}(\sigma^{E}\circ\varphi,\sigma)$,
then the $k^{th}$ \emph{tensorial power} of $Z$, $Z^{(k)}$, is
the element of $\mathfrak{I}(\sigma^{E^{\otimes k}}\circ \varphi_k,\sigma)$ that
is defined inductively by setting $Z^{(1)}=Z$, and setting $Z^{(k+1)}:=Z(I_{E}\otimes Z^{(k)})$. 
\end{defn}
The formulas
\begin{equation}
Z^{(k)}=Z(I_{E}\otimes Z)(I_{E^{\otimes2}}\otimes Z)\cdots(I_{E^{\otimes(k-1)}}\otimes Z)\label{eq:powers_a}
\end{equation}
and

\begin{equation}
Z^{(k+l)}=Z^{(k)}(I_{E^{\otimes k}}\otimes Z^{(l)})\label{eq:powers_b}
\end{equation}
are immediate. They justify in part the use of the term ``tensorial
power''. Later, in Section \ref{Sec:Duality and the commutant}, we
will give further justification for the term by showing that in certain
settings, $\mathfrak{I}(\sigma^{E^{\otimes k}}\circ \varphi_k,\sigma)^{*}$ can be
identified with $(\mathfrak{I}(\sigma^{E}\circ \varphi,\sigma)^{*})^{\otimes k}$,
where the tensoring is balanced over $\sigma(M)^{c}$ in $\mathcal{L}(F)$,
and then $Z^{(k)*}$ may be viewed as $Z^{*}\otimes Z^{*}\otimes\cdots\otimes Z^{*}$. 

A companion to the notion of an intertwining operator is the notion
of an \emph{insertion operator:}
\begin{defn}
\label{Def:Insertion_operator}Let $\sigma:M\to\mathcal{L}(F)$ be
a representation of $M$ on a Hilbert $W^{*}$-module $F$. For $\xi\in E$,
we define the \emph{insertion operator} $L_{\xi}^{F}:F\to E\otimes_{\sigma}F$
by
\[
L_{\xi}^{F}\phi:=\xi\otimes\phi.
\]

\end{defn}
We omit the superscript $F$, if $F$ is clear from context. 
\begin{rem}
\label{Rmk:_Pimsner_calculation} As Pimsner notes \cite[p. 192]{Pi},
$L_{\xi}^{*}$ is given by the formula $L_{\xi}^{*}(\eta\otimes\zeta)=\sigma(\langle\xi,\eta\rangle)\zeta$,
$\eta\in E,\zeta\in F$. Consequently, $L_{\xi}^{*}L_{\xi}=\sigma(\langle\xi,\xi\rangle)$,
which shows that the norm of $L_{\xi}$ is dominated by $\Vert\xi\Vert$
in general, and equality holds if $\sigma$ is faithful. We note,
too, that $L_{\xi}L_{\eta}^{*}=(\xi\otimes\eta^{*})\otimes I_{F}$,
where $\xi\otimes\eta^{*}$ denotes the rank one element of $\mathcal{L}(E)$
defined by the formula $\xi\otimes\eta^{*}(\zeta):=\xi\langle\eta,\zeta\rangle$.
\end{rem}
Two special instances of $\mathfrak{I}(\sigma^{E}\circ\varphi,\sigma)$
play important rolls in this paper. For the first, we take $F$ to
be $\mathcal{F}(E)$ and $\sigma$ to be $\varphi_{\infty}$. Since
$E^{\otimes k+1}$ is \emph{defined} to be $E\otimes_{\varphi_{k}}E^{\otimes k}$,
$E\otimes_{\varphi_{\infty}}\mathcal{F}(E)$ may be identified with
$\sum_{k\geq1}E^{\otimes k}$, essentially by definition. Let $T$
be the map that embeds $E\otimes_{\varphi_{\infty}}\mathcal{F}(E)$
into $\mathcal{F}(E)$ as the direct sum $\sum_{k\geq1}E^{\otimes k}$.
It is manifest that $T$ belongs to $\mathfrak{I}(\varphi_{\infty}^{E}\circ\varphi,\varphi_{\infty})$.
Further, $T^{*}$, mapping $\mathcal{F}(E)$ onto $E\otimes_{\varphi_{\infty}}\mathcal{F}(E)$,
has kernel $M$, viewed as the summand of index zero in $\mathcal{F}(E)$,
and acts like the identity on $\sum_{k\geq1}E^{\otimes k}$, viewed
as $E\otimes_{\varphi_{\infty}}\mathcal{F}(E)$. Thus, in particular
$T$ is an isometry with $T^{*}T$ equal to the identity on $E\otimes_{\varphi_{\infty}}\mathcal{F}(E)$
and $TT^{*}=I_{\mathcal{F}(E)}-P_{0}$, where $P_{0}$ is the projection
of $\mathcal{F}(E)$ onto $E^{\otimes0}$.
\begin{defn}
\label{Def:_Tautological_isometry}The map $T\in\mathfrak{I}(\varphi_{\infty}^{E}\circ\varphi,\varphi_{\infty})$
just described is called the \emph{tautological isometry} in $\mathfrak{I}(\varphi_{\infty}^{E}\circ\varphi,\varphi_{\infty})$. 
\end{defn}
The second instance which is important for our discussion is the case
when $\sigma$ is an ordinary normal representation of $M$ on a Hilbert
space $H_{\sigma}$, which we view as a $W^{*}$-correspondence over
$\mathbb{C}.$ In this case, we typically write elements of $\mathfrak{I}(\sigma^{E}\circ\varphi,\sigma)$
with fraktur letters from the end of the alphabet. Also, in conformance
with earlier work of ours, we will at times abbreviate $\mathfrak{I}(\sigma^{E}\circ\varphi,\sigma)$
by $E^{\sigma*}$(see \cite[Definition 3.1]{Muhly2004a}). When $M=\mathbb{C}=E$
and when $\sigma$ is the one-dimensional representation of $M$ on
$\mathbb{C}$ viewed as a one-dimensional Hilbert space, $\mathfrak{I}(\sigma^{E}\circ\varphi,\sigma)$
is naturally identified with $\mathbb{C}$ in such a way that the
function theory we want to stress coincides with classical function
theory on the complex plane. Thus, we take the view that in general,
the intertwiner spaces should be viewed as noncommutative affine spaces.
\begin{defn}
\label{Def:Creation_operator}The \emph{creation operator} on $\mathcal{F}(E)$
determined by $\xi\in E$ and denoted by $T_{\xi}$, is defined to
be $TL_{\xi}^{\mathcal{F}(E)}$.
\end{defn}
Of course, this really is the same definition that we have given in
\cite[p. 394]{Muhly1998a}. The reason for the formulation here
is that we want to accentuate a feature of the products of creation
operators. For this purpose, we will write $L_{\xi}$ for $L_{\xi}^{\mathcal{F}(E)}$,
when $\xi\in E^{\otimes k}$. Then, if $\xi_{1},\xi_{2}\in E$, 
\begin{multline*}
T_{\xi_{1}}T_{\xi_{2}}\eta=TL_{\xi_{1}}TL_{\xi_{2}}\eta=T(\xi_{1}\otimes T(\xi_{2}\otimes\eta))\\
=T(I_{E}\otimes T)(\xi_{1}\otimes\xi_{2}\otimes\eta)\\
=T^{(2)}L_{\xi_{1}\otimes\xi_{2}}\eta.
\end{multline*}
 It follows easily that $T_{\xi}=T^{(k)}L_{\xi}$, for all $\xi\in E^{\otimes k}$,
and that for $\xi_{1}\in E^{\otimes k}$ and $\xi_{2}\in E^{\otimes l}$,
\[
T_{\xi_{1}}T_{\xi_{2}}=T^{(k)}(I_{E^{\otimes k}}\otimes T^{(l)})L_{\xi_{1}\otimes\xi_{2}}=T^{(k+l)}L_{\xi_{1}\otimes\xi_{2}}=T_{\xi_{1}\otimes\xi_{2}}.
\]
 
\begin{defn}
\label{Def:_Unweighted_algebras}The \emph{algebraic tensor algebra}
of $E$ is simply the subalgebra of $\mathcal{L}(\mathcal{F}(E))$
generated by $\varphi_{\infty}(M)$ and $\{T_{\xi}\mid\xi\in E\}$.
It is denoted $\mathcal{T}_{0+}(E)$. Its norm closure in $\mathcal{L}(\mathcal{F}(E))$
is called \emph{the tensor algebra} of $E$ and is denoted $\mathcal{T}_{+}(E)$.
The ultraweak closure of $\mathcal{T}_{0+}(E)$ is called the \emph{Hardy
algebra} of $E$ and is denoted $H^{\infty}(E)$. Finally, the \emph{Toeplitz
algebra} of $E$ is the $C^{*}$-subalgebra of $\mathcal{L}(\mathcal{F}(E))$
generated by $\mathcal{T}_{0+}(E)$.
\end{defn}
There is another algebra of importance for the theory, the Cuntz-Pimsner
algebra of $E$, but we shall discuss this later. Also, it is not
difficult to see that under our hypotheses on $E$ - that it is full
and $\varphi$ is faithful - the $W^{*}$-subalgebra of $\mathcal{L}(\mathcal{F}(E))$
generated by $H^{\infty}(E)$ is all of $\mathcal{L}(\mathcal{F}(E))$.

For the purposes of this paper, the principal result of \cite{Muhly1998a},
Theorem 3.10, as amended by \cite[Theorem 3.4]{Muhly2004a}, can be
formulated as follows.
\begin{thm}
\label{thm:Main_thm_1998a}Let $\rho:\mathcal{T}_{+}(E)\to B(H)$
be a completely contractive representation with the property that
$\sigma:=\rho\circ\varphi_{\infty}$ is an ultraweakly continuous,
unital representation of $M$. Then $\sigma$ is a normal $*$-representation
of $M$, and the operator $\mathfrak{z}:E\otimes_{\sigma}H\to H$,
defined by the formula 
\begin{equation}
\mathfrak{z}(\xi\otimes h):=\rho(T_{\xi})h,\qquad\xi\otimes h\in E\otimes_{\sigma}H,\label{eq:Def_of_mathfrak(z)}
\end{equation}
lies in $\mathfrak{I}(\sigma^{E}\circ\varphi,\sigma)$ and has norm
at most one. Conversely, given a pair $(\sigma,\mathfrak{z})$, where
$\sigma$ is a normal $*$-representation of $M$ on a Hilbert space
$H_{\sigma}$ and $\mathfrak{z}$ is an element of $\mathfrak{I}(\sigma^{E}\circ\varphi,\sigma)$
of norm at most one, then there is a unique completely contractive
representation $\rho$ of $\mathcal{T}_{+}(E)$ on $H_{\sigma}$ such
that $\rho\circ\varphi_{\infty}=\sigma$ and such that $\rho$ satisfies
\ref{eq:Def_of_mathfrak(z)}.
\end{thm}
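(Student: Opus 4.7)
The plan is to handle the two implications separately, with the main leverage coming from the algebraic identities for creation operators established above, namely $T_{\xi}T_{\eta}=T_{\xi\otimes\eta}$, $T_{\xi}^{*}T_{\eta}=\varphi_{\infty}(\langle\xi,\eta\rangle)$ (the latter following from $T^{*}T=I_{E\otimes_{\varphi_{\infty}}\mathcal{F}(E)}$ and Remark \ref{Rmk:_Pimsner_calculation}), $\varphi_{\infty}(a)T_{\xi}=T_{\varphi(a)\xi}$, and $T_{\xi}\varphi_{\infty}(a)=T_{\xi a}$.

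\textbf{Forward direction.} First I observe that $\varphi_{\infty}$ maps $M$ $*$-iso\-morphic\-ally onto a $C^{*}$-subalgebra of $\mathcal{T}_{+}(E)$, so $\sigma=\rho\circ\varphi_{\infty}$ is a unital completely contractive algebra homomorphism from a $C^{*}$-algebra into $B(H)$, hence automatically a $*$-homomorphism; combined with the hypothesized ultraweak continuity, $\sigma$ is a normal $*$-representation. Next I invoke Arveson's dilation theorem to obtain a $*$-representation $\pi:\mathcal{T}(E)\to B(K)$ with $K\supseteq H$, $H$ co-invariant for $\pi(\mathcal{T}_{+}(E))$, and $\rho(X)=P_{H}\pi(X)|_{H}$ for every $X\in\mathcal{T}_{+}(E)$. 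Because $\varphi_{\infty}(M)\subseteq\mathcal{T}_{+}(E)$ is self-adjoint, $H$ in fact reduces $\pi\circ\varphi_{\infty}$, so $\pi(\varphi_{\infty}(a))|_{H}=\sigma(a)$. Using $\pi(T_{\xi})^{*}\pi(T_{\eta})=\pi(\varphi_{\infty}(\langle\xi,\eta\rangle))$, I compute
\[
\Bigl\Vert\sum_{i}\pi(T_{\xi_{i}})h_{i}\Bigr\Vert^{2}=\sum_{i,j}\langle h_{i},\sigma(\langle\xi_{i},\xi_{j}\rangle)h_{j}\rangle=\Bigl\Vert\sum_{i}\xi_{i}\otimes h_{i}\Bigr\Vert_{E\otimes_{\sigma}H}^{2},
\]
which shows that $V:\xi\otimes h\mapsto\pi(T_{\xi})h$ extends to an isometry from $E\otimes_{\sigma}H$ into $K$. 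Setting $\mathfrak{z}:=P_{H}V$ then yields formula \ref{eq:Def_of_mathfrak(z)}, well-definedness on the balanced tensor product (using $T_{\xi a}=T_{\xi}\varphi_{\infty}(a)$), and $\Vert\mathfrak{z}\Vert\leq1$; the intertwining identity $\mathfrak{z}\sigma^{E}(\varphi(a))=\sigma(a)\mathfrak{z}$ follows by applying $\rho$ to $\varphi_{\infty}(a)T_{\xi}=T_{\varphi(a)\xi}$.

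\textbf{Converse direction.} Given $(\sigma,\mathfrak{z})$, I define $\rho$ on the algebraic tensor algebra $\mathcal{T}_{0+}(E)$ by $\rho(\varphi_{\infty}(a))=\sigma(a)$ and $\rho(T_{\eta})=\mathfrak{z}^{(k)}L_{\eta}^{H}$ for $\eta\in E^{\otimes k}$ (with $\mathfrak{z}^{(0)}=I_{H}$), extended linearly. Using formula \ref{eq:powers_b} together with the intertwining property of $\mathfrak{z}$, a direct computation shows that the defining relations $T_{\xi}T_{\eta}=T_{\xi\otimes\eta}$, $\varphi_{\infty}(a)T_{\xi}=T_{\varphi(a)\xi}$, and $T_{\xi}\varphi_{\infty}(a)=T_{\xi a}$ are respected, so $\rho$ is a well-defined algebra homomorphism; uniqueness is immediate since these generators are norm-dense in $\mathcal{T}_{+}(E)$. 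To obtain complete contractivity, I would dilate $\mathfrak{z}$ via a Sch\"{a}ffer/Sz.-Nagy-type construction adapted to the correspondence setting to an isometric intertwiner $V\in\mathfrak{I}(\tilde{\sigma}^{E}\circ\varphi,\tilde{\sigma})$ on a Hilbert space $K\supseteq H$, where $\tilde{\sigma}$ is a normal $*$-representation of $M$ extending $\sigma$. The pair $(\tilde{\sigma},V)$ determines a $*$-representation of the Toeplitz algebra $\mathcal{T}(E)$ on $K$ (by declaring $T_{\xi}\mapsto V L_{\xi}^{K}$), whose restriction to $\mathcal{T}_{+}(E)$ is completely contractive and whose compression to $H$ agrees with $\rho$ on generators. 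Hence $\rho$ extends by norm-continuity to a completely contractive representation of $\mathcal{T}_{+}(E)$.

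\textbf{Main obstacle.} The principal difficulty is executing the isometric dilation of $\mathfrak{z}$ when $\Vert\mathfrak{z}\Vert=1$. The defect spaces produced by the Sch\"{a}ffer construction must carry compatible $M$-actions so that the resulting $V$ intertwines genuine normal $*$-representations of $M$, and the dilation space must support a normal extension $\tilde{\sigma}$ of $\sigma$. Verifying these module-theoretic compatibilities, and ensuring enough co-invariance for the compression to $H$ to be an algebra homomorphism, constitutes the nontrivial step; once it is in hand the rest of the argument is routine bookkeeping with the identities recorded in Section \ref{sec:Preliminaries}.
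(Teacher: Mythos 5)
The paper itself offers no proof of Theorem \ref{thm:Main_thm_1998a}: it is quoted from \cite{Muhly1998a} (Theorem 3.10), as amended by \cite[Theorem 3.4]{Muhly2004a}, so your attempt must be measured against that source. Your forward direction is essentially complete and correct: a unital, ultraweakly continuous, contractive homomorphism of the $C^{*}$-algebra $\varphi_{\infty}(M)$ is automatically a normal $*$-representation, and the Arveson dilation together with the identity $T_{\xi}^{*}T_{\eta}=\varphi_{\infty}(\langle\xi,\eta\rangle)$ yields the isometry $V$ and hence $\mathfrak{z}=P_{H}V$ with $\Vert\mathfrak{z}\Vert\leq1$ and the required intertwining. (The source reaches the same conclusion without dilating, via the correspondence between completely bounded bimodule maps $\xi\mapsto\rho(T_{\xi})$ and intertwiners recorded in \cite[Lemma 3.5]{Muhly1998a}.)

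The converse direction, however, has a genuine gap, and it is exactly the step you defer to the ``main obstacle'': the existence of an isometric dilation $V\in\mathfrak{I}(\tilde{\sigma}^{E}\circ\varphi,\tilde{\sigma})$ of a contractive intertwiner $\mathfrak{z}$ of norm one, in which the Sch\"{a}ffer-type defect spaces carry compatible normal $M$-module structures and $H$ is co-invariant. This is not routine bookkeeping; it is the entire content of the dilation theorem \cite[Theorem 3.3]{Muhly1998a} on which the cited proof of the converse rests, and without it complete contractivity of $\sigma\times\mathfrak{z}$ on the boundary $\Vert\mathfrak{z}\Vert=1$ is simply not established. A second unjustified step is the claim that an isometric covariant pair $(\tilde{\sigma},V)$ ``determines'' a $*$-representation of the concrete Toeplitz algebra $\mathcal{T}(E)\subseteq\mathcal{L}(\mathcal{F}(E))$ sending $T_{\xi}$ to $VL_{\xi}^{K}$; that requires the universal property of the Toeplitz algebra of a correspondence, which is a separate nontrivial input. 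If you want a route that avoids the dilation machinery and is closer in spirit to the present paper, prove complete contractivity for $\Vert\mathfrak{z}\Vert<1$ using the Poisson kernel of Definition \ref{Def:Cauchy_and_Poisson_kernels}, which exhibits $\sigma\times\mathfrak{z}$ as a compression of the induced representation $\sigma^{\mathcal{F}(E)}$ to a semi-invariant subspace, and then obtain the case $\Vert\mathfrak{z}\Vert=1$ as a point-ultraweak limit of the maps attached to $r\mathfrak{z}$, $r\uparrow1$, exactly as is done in the weighted setting in Proposition \ref{Psi}.
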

Observe that equation \ref{eq:Def_of_mathfrak(z)} can be written
\[
\rho(T_{\xi})=\mathfrak{z}L_{\xi}
\]
where $L_{\xi}$ is the insetion operator defined in Definition \ref{Def:Insertion_operator}.
This ability to factor $\rho(T_{\xi})$ is key to the function theory
that we developed in \cite{Muhly2013a}. Observe, also, that when
$M=\mathbb{C}=E$ and $\sigma$ is the one-dimensional representation
of $M$, then Theorem \ref{thm:Main_thm_1998a} essentially proves
that $\mathcal{T}_{+}(E)$ is the space of Toeplitz operators with
symbols that are continuous on the closed disc and analytic on the
interior and that the maximal ideal space of this algebra is the closed
unit disc in the complex plane. Thus in general, we denote the open
unit ball in $\mathfrak{I}(\sigma^{E}\circ\varphi,\sigma)$ by $D(E,\sigma)$
and refer to it as the (open) \emph{unit disc} in $\mathfrak{I}(\sigma^{E}\circ\varphi,\sigma)$.
Then $\overline{D(E,\sigma)}$ parametrizes the completely contractive
representations $\rho$ of $\mathcal{T}_{+}(E)$ such that $\rho\circ\varphi_{\infty}=\sigma$.
Also, because of Theorem \ref{thm:Main_thm_1998a}, we adopt the notation
$\sigma\times\mathfrak{z}$ for the representation of $\mathcal{T}_{+}(E)$
determined by $\mathfrak{z}\in\overline{D(E,\sigma)}$. Then for $F\in\mathcal{T}_{+}(E)$,
we may define $\widehat{F}_{\sigma}$ on $\overline{D(E,\sigma)}$
by the formula
\begin{equation}
\widehat{F}_{\sigma}(\mathfrak{z}):=(\sigma\times\mathfrak{z})(F).\label{eq:F-hat}
\end{equation}
In \cite{Muhly2013} we called $\widehat{F}_{\sigma}$ the ($\sigma-)$
\emph{Berezin transform} and showed that it is continuous on $\overline{D(E,\sigma)}$
and Frechet analytic on $D(E,\sigma)$. In fact, we showed that each
$F\in H^{\infty}(E)$ can be represented as a ``power series'' that
converges throughout $D(E,\sigma)$. 

\section{The Setting\label{sec:The-Setting}}

We began with the question: ``How might the theory described above
be generalized?'' We observed that on the algebraic level, at least,
much of what we have done makes no special use of the special features
of $T$. We could just as well have chosen \emph{any} operator $W$,
say, in $\mathfrak{I}(\varphi_{\infty}^{\mathcal{F}(E)}\circ\varphi,\varphi_{\infty})$
and we could have formed new creation operators $W_{\xi}$, $\xi\in E$,
by the formula 
\[
W_{\xi}:=WL_{\xi}^{\mathcal{F}(E)}.
\]
We could then study the norm-closed subalgebra of $\mathcal{L}(\mathcal{F}(E))$
that is generated by $\varphi_{\infty}(M)$ and $\{W_{\xi}\}_{\xi\in E}$.
At this level of generality nothing specific can really be said. Indeed,
if $M=\mathbb{C}=E$, then $\mathcal{F}(E)$ is simply $\ell^{2}(\mathbb{Z}_{+})$.
Of course, then $\varphi_{\infty}$ just represents $\mathbb{C}$
as scalar multiples of the identity on $\ell^{2}(\mathbb{Z}_{+})$,
so after we identify $E\otimes_{\varphi_{\infty}}\mathcal{F}(E)=\mathbb{C}\otimes_{\mathbb{C}}\ell^{2}(\mathbb{Z}_{+})$
with $\ell^{2}(\mathbb{Z}_{+})$, the sets $\mathfrak{I}(\varphi_{\infty}^{\mathcal{F}(E)}\circ\varphi,\varphi_{\infty})$
and $B(\ell^{2}(\mathbb{Z}_{+}))$ are essentially the same. Further,
after making the identifications we have here, $L_{\xi}$ is identified
with $\xi U_{+}$, since $E=\mathbb{C}$, where $U_{+}$ is the unilateral
shift on $\ell^{2}(\mathbb{Z}_{+})$. It follows that $W_{\xi}=\xi WU_{+}$
for all $\xi\in E$. Since $WU_{+}$can be any operator on the Hilbert
space $\ell^{2}(\mathbb{Z}_{+})$ we are effectively asking for the
(completely contractive) representation theory of \emph{any} singly
generated norm-closed algebra of operators on Hilbert space - something
that is well beyond reach at this time. However, if we make special
choices for $W$, then interesting, tractable, extensions of the theory
outlined above emerge.

One appealing natural and ``familiar'' choice for $W\in\mathfrak{I}(\varphi_{\infty}^{E}\circ\varphi,\varphi_{\infty})$
is to begin with a diagonal matrix $D$ on $\mathcal{F}(E)$ that
lies in $\varphi_{\infty}(M)^{c}$, and take $W=DT$.
\begin{defn}
\label{Def:_Weight_seq_Weighted_intertwiner}A  sequence $Z=\{Z_{k}\}_{k\geq0}$ of elements $Z_{k}\in\varphi_{k}(M)^{c}$ will be called a \emph{weight sequence}
on $(M,E)$ in case:
\begin{enumerate}
\item $Z_{0}=I_{M}$, 
\item $\sup\Vert Z_{k}\Vert<\infty$, and
\item each $Z_{k}$ is invertible. 
\end{enumerate}
We write $D={\rm {diag}}[Z_{0},Z_{1},Z_{2},\cdots]$ for the diagonal
matrix on $\mathcal{F}(E)$ determined by $Z$ and we write $W=W^{Z}:=DT$
for the intertwiner in $\mathfrak{I}(\varphi_{\infty}^{E}\circ\varphi,\varphi_{\infty})$
determined by $Z$. We call $W$ the \emph{weighted intertwiner} associated
to $Z$.
\end{defn}
Because $\sup\Vert Z_{k}\Vert<\infty$, the diagonal operator $D$
and the weighted intertwiner are clearly bounded, with $\Vert D\Vert=\sup_{k\geq0}\Vert Z_{k}\Vert$
and $\Vert W\Vert=\sup_{k\geq1}\Vert Z_{k}\Vert$. Further, because
$D$ lies in $\varphi_{\infty}(M)^{c}$, it is clear that $W$ lies
in $\mathfrak{I}(\varphi_{\infty}^{E}\circ\varphi,\varphi_{\infty})$.
The assumption that each $Z_{k}$ is invertible is one of convenience.
It is not absolutely essential, but to relax it leads to digressions
which are tangential to our primary objective. We do not assume, however,
that $D$ is invertible - except in special circumstances, where we
introduce this assumption as an explicit hypothesis.
\begin{defn}
\label{Def:_Weighted_shifts}Let $Z$ be a weight sequence on $(M,E)$
and let $W=W^{Z}$ be the associated weighted intertwiner in $\mathfrak{I}(\varphi_{\infty}^{E}\circ\varphi,\varphi_{\infty})$.
Then for $\xi\in E$, the \emph{weighted shift} determined by $\xi$
is the operator in $\mathcal{L}(\mathcal{F}(E))$ defined by the formula
\[
W_{\xi}^{Z}=W_{\xi}:=WL_{\xi}^{\mathcal{F}(E)}.
\]

\end{defn}
Evidently, $W_{\xi}=DT_{\xi}$ where $D$ is the diagonal operator
associated to $Z$. Consequently, we have $W_{\xi}^{*}=T_{\xi}^{*}D^{*}$.
\begin{defn}
\label{Zalgebras} Let $Z=\{Z_{k}\}_{k\geq0}$ be a weight sequence
on $(M,E)$. The subalgebra of $\mathcal{L}(\mathcal{F}(E))$ generated
by $\varphi_{\infty}(M)\cup\{W_{\xi}^{Z}:\xi\in E\}$ is called the
\emph{weighted algebraic tensor }algebra of $E$ determined by $Z$
and will be denoted $\mathcal{T}_{0+}(E,Z)$. The norm-closure of
$\mathcal{T}_{0+}(E,Z)$ in $\mathcal{L}(\mathcal{F}(E))$ is called,
simply, the \emph{weighted tensor algebra of $E$ determined by} $Z$
and will be denoted $\mathcal{T}_{+}(E,Z)$. The $C^{*}$-algebra
generated by $\mathcal{T}_{+}(E,Z)$ will be denoted $\mathcal{T}(E,Z)$
and called the \emph{weighted Toeplitz algebra} of $E$ determined
by $Z$. The ultraweak closure of $\mathcal{T}_{+}(E,Z)$ in $\mathcal{L}(\mathcal{F}(E))$
is the \emph{weighted Hardy algebra} of $E$ determined by $Z$, and
will be written $H^{\infty}(E,Z)$.\end{defn}
\begin{rem}
\label{Rm:Products} It will be helpful to have formulas at hand for
products of weighted shifts. We have defined $W_{\xi}$ as $WL_{\xi}^{\mathcal{F}(E)}=DT_{\xi}.$
So for $\xi\in E$ and $\theta\in E^{\otimes k}$, we have find that
$W_{\xi}\theta=Z_{k+1}(\xi\otimes\theta)$. Therefore, if $\xi_{1},\ldots,\xi_{m}$
are in $E$, a straightforward calculation shows that 
\begin{multline}
W_{\xi_{1}}\cdots W_{\xi_{m}}\theta=Z_{m+k}(I_{E}\otimes Z_{m+k-1})\\\times\cdots(I_{E^{\otimes(m-1)}}\otimes Z_{k+1})(\xi_{1}\otimes\cdots\xi_{m}\otimes\theta).\label{eq:m-products_of_weighted_creators}
\end{multline}
In general and in a fashion similar to that which we have developed
for intertwiners, we shall write 
\[
Z^{(n)}:=Z_{n}(I_{E}\otimes Z_{n-1})(I_{E^{\otimes2}}\otimes Z_{n-2})\cdots(I_{E^{\otimes(n-1)}}\otimes Z_{1}).
\]
Observe that $Z^{(n)}\in\varphi_{n}(M)^{c}$ since $Z_{n}\in\varphi_{n}(M)^{c}$
and $(I_{E}\otimes Z_{n-1})(I_{E^{\otimes2}}\otimes Z_{n-2})\cdots(I_{E^{\otimes(n-1)}}\otimes Z_{1})\in\varphi_{n}(M)^{c}$
for any choice of elements $Z_{k}\in\mathcal{L}(E^{\otimes k})$,
$k=1,2,\cdots,n-1$. It follows that \eqref{eq:m-products_of_weighted_creators}
may be rewritten as 
\[
W_{\xi_{1}}\cdots W_{\xi_{m}}\theta=Z^{(m+k)}(I_{E^{\otimes m}}\otimes Z^{(k)})^{-1}(\xi_{1}\otimes\cdots\xi_{m}\otimes\theta).
\]
Consequently, we find that for $\xi\in E^{\otimes m}$, we may \emph{define}
$W_{\xi}$ via the formula 
\begin{equation}
W_{\xi}\theta=Z^{(m+k)}(I_{E^{\otimes m}}\otimes Z^{(k)})^{-1}(\xi\otimes\theta),\qquad\theta\in E^{\otimes k},\label{Wxi}
\end{equation}
and once this is done, a straightforward calculation shows that 
\begin{equation}
W_{\xi_{1}}W_{\xi_{2}}=W_{\xi_{1}\otimes\xi_{2}},\qquad\xi_{1}\in E^{\otimes n},\xi_{2}\in E^{\otimes m}.\label{eq:Product_formula}
\end{equation}
In particular, then, $W_{\xi}\in\mathcal{T}_{0+}(E,Z)$ for every
$\xi\in E^{\otimes k}$ and $k\geq1$. The formulas just established
show that the linear span of the $W_{\xi}$, as $\xi$ runs over $E^{\otimes k}$,
$k\geq1,$ together with all the $\varphi_{\infty}(a)$, $a\in M,$
is $\mathcal{T}_{0+}(E,Z)$.
\end{rem}
In the theory of single weighted shifts it is known that if the weighted
shift is injective, i.e. if the weights are all different from zero,
one can view the operator as an ordinary (unweighted) shift on a ``weighted
$\ell_{2}$ space\textquotedbl{}. An analogous statement holds here,
since we are assuming all the weights $\{Z_{k}\}$ are invertible.

Simply define $F(k)$, $k\geq1$, to be the $W^{*}$-correspondence
that algebraically is $E^{\otimes k}$ but has the $M$-valued inner
product defined by 
\begin{equation}
\langle\xi,\eta\rangle_{Z}=\langle(Z^{(k)})\xi,(Z^{(k)})\eta\rangle,\qquad\xi,\eta\in E^{\otimes k}.
\end{equation}
Since $Z^{(k)}\in\varphi_{k}(M)^{c}$ for each $k$, it is easy to
check that this equation defines a bonafide inner product on $F(k)$
with respect to which $F(k)$ is a $W^{*}$-correspondence over $M$.
For $k=0$, we set $F(0)=M$ and we form 
\[
\mathcal{F}_{Z}(E):=\sum_{k=0}^{\infty}\oplus F(k).
\]
On $\mathcal{F}_{Z}(E)$ we can define shifts by 
\[
T_{\xi}^{F}\eta=\xi\otimes\eta,\qquad\xi\in E^{\otimes n},\eta\in F(k).
\]

\begin{prop}
\label{prop: Weighted_Fock_space}The unique additive map $V:\mathcal{F}(E)\rightarrow\mathcal{F}_{Z}(E)$
that sends $\eta\in E^{\otimes k}$ to $(Z^{(k)})^{-1}\eta\in F(k)$
is a correspondence isomorphism and $V^{*}T_{\xi}^{F}V=W_{\xi}$ for
all $\xi\in E^{\otimes n}$. Further, $V^{*}\varphi_{\infty}^{F}(\cdot)V=\varphi_{\infty}(\cdot)$,
where $\varphi_{\infty}^{F}(a)=\sum_{k\geq0}\varphi_{k}(a)$, acting
on $\mathcal{F}_{Z}(E)$.
\end{prop}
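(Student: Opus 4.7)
The plan is to work summand by summand on $\mathcal{F}(E)=\sum_{k\geq 0}\oplus E^{\otimes k}$ and on $\mathcal{F}_Z(E)=\sum_{k\geq 0}\oplus F(k)$, exploiting the fact that $V$ restricts on the $k$th summand to the map $\eta\mapsto (Z^{(k)})^{-1}\eta$, with the convention $Z^{(0)}=I_M$ so that $V$ is the identity on the zeroth summand. The invertibility of each $Z_k$ and the condition $Z^{(k)}\in\varphi_k(M)^c$ will do all the real work.

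A preliminary step is to confirm that the recipe defining $F(k)$ really produces a $W^*$-correspondence over $M$. Positivity and non-degeneracy of $\langle\cdot,\cdot\rangle_Z$ are immediate from invertibility of $Z^{(k)}$, and the key point is that the left action $\varphi_k$ remains a $*$-homomorphism with respect to the new inner product; this is exactly where $Z^{(k)}\in\varphi_k(M)^c$ is used, via
\[
\langle\varphi_k(a)\xi,\eta\rangle_Z=\langle\varphi_k(a)Z^{(k)}\xi,Z^{(k)}\eta\rangle=\langle\xi,\varphi_k(a^*)\eta\rangle_Z.
\]
Once this is in hand, checking that $V$ is a $W^*$-correspondence isomorphism reduces to three one-line computations on each summand: isometry from $\langle V\eta,V\eta\rangle_Z=\langle Z^{(k)}(Z^{(k)})^{-1}\eta,Z^{(k)}(Z^{(k)})^{-1}\eta\rangle=\langle\eta,\eta\rangle$; surjectivity from invertibility of $Z^{(k)}$; and bimodule linearity from the commutation of $Z^{(k)}$ with $\varphi_k(M)$ on the left and from the automatic right $M$-linearity of $Z^{(k)}$ on the right. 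The pointwise isometries glue to a unitary on the whole Fock space.

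For the intertwining of shifts, I would take $\xi\in E^{\otimes n}$ and $\eta\in E^{\otimes k}$, note that
\[
T_\xi^F V\eta=\xi\otimes(Z^{(k)})^{-1}\eta=(I_{E^{\otimes n}}\otimes(Z^{(k)})^{-1})(\xi\otimes\eta)\in F(n+k),
\]
and then apply $V^*$, which acts on $F(n+k)$ by multiplication by $Z^{(n+k)}$, to obtain
\[
V^*T_\xi^F V\eta=Z^{(n+k)}(I_{E^{\otimes n}}\otimes Z^{(k)})^{-1}(\xi\otimes\eta).
\]
This is precisely $W_\xi\eta$ by formula (\ref{Wxi}), which is the whole point of the construction. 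The second intertwining $V^*\varphi_\infty^F(a)V=\varphi_\infty(a)$ is even simpler: for $\eta\in E^{\otimes k}$ one has $\varphi_\infty^F(a)V\eta=\varphi_k(a)(Z^{(k)})^{-1}\eta=(Z^{(k)})^{-1}\varphi_k(a)\eta$ by commutation in $\varphi_k(M)^c$, and applying $V^*$ returns $\varphi_k(a)\eta=\varphi_\infty(a)\eta$.

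I do not anticipate any substantive obstacle; the only point that deserves a sentence of care is verifying that the summand-wise isometries assemble to a bounded operator on the entire Fock space, which is automatic from the orthogonal direct-sum structure. The conceptual content of the proof is simply that the invertible weight $Z^{(k)}$ on the $k$th level is being absorbed as a change of $M$-valued inner product on $E^{\otimes k}$, and this change of inner product converts the weighted shifts $W_\xi$ into the unweighted shifts $T_\xi^F$ on $\mathcal{F}_Z(E)$.
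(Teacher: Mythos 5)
Your proposal is correct and follows exactly the route the paper intends: the paper omits the details, remarking only that the result is a straightforward application of the multiplication formulas of Remark \ref{Rm:Products}, and your computation $V^{*}T_{\xi}^{F}V\eta=Z^{(n+k)}(I_{E^{\otimes n}}\otimes Z^{(k)})^{-1}(\xi\otimes\eta)=W_{\xi}\eta$ is precisely the invocation of formula \eqref{Wxi} that the authors have in mind. The supporting checks (that $\langle\cdot,\cdot\rangle_{Z}$ makes $F(k)$ a correspondence via $Z^{(k)}\in\varphi_{k}(M)^{c}$, that $V$ is a summand-wise unitary bimodule map with $V^{*}$ acting by $Z^{(k)}$ on $F(k)$, and the commutation argument for $\varphi_{\infty}^{F}$) are all accurate.
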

The proof is a straightforward application of the formulas for multiplication
in $\mathcal{T}_{0+}(E,Z)$ established in Remark \ref{Rm:Products}.

{}
\begin{rem}
\label{Rm:_Algebraic_Extension}The formulas in Remark \ref{Rm:Products}
also reveal the sense in which we may assert that the Hilbert space
representations of $\mathcal{T}_{0+}(E,Z)$ are parametrized by the
pairs $(\sigma,\mathfrak{z})$ where $\sigma$ ranges over the normal
representations of $M$ on Hilbert space and for each $\sigma$, $\mathfrak{z}$
ranges over $\mathfrak{I}(\sigma^{E}\circ\varphi,\sigma)$. Indeed,
if $\sigma:M\to B(H)$ is a normal representation of $M$ on $H$
and if $\mathfrak{z}\in\mathfrak{I}(\sigma^{E}\circ\varphi,\sigma)$,
then the map $W_{\xi}\to\mathfrak{z}L_{\xi}^{H}:=\mathfrak{z}L_{\xi}$,
$\xi\in E$, is a bimodule map, which together with $\sigma$, extends
to a representation $\rho$ of $\mathcal{T}_{0+}(E,Z)$ on $H$. That
is, $W_{\varphi(a)\xi b}\to\mathfrak{z}L_{\varphi(a)\xi b}=\sigma(a)\mathfrak{z}L_{\xi}\sigma(b)$
and $\rho(W_{\xi_{1}\otimes\xi_{2}})=\rho(W_{\xi_{1}}W_{\xi_{2}})=\rho(W_{\xi_{1}})\rho(W_{\xi_{2}})=\mathfrak{z}L_{\xi_{1}}\mathfrak{z}L_{\xi_{2}}=\mathfrak{z}^{(2)}L_{\xi_{1}\otimes\xi_{2}}$
for all $\xi_{1},\xi_{2}\in E$. As a result, we see that for all
$\xi\in\mathfrak{I}(\sigma^{E^{\otimes k}}\circ\varphi_{k},\sigma)$,
$\rho(W_{\xi})=\mathfrak{z}^{(k)}L_{\xi}$. Conversely, given a representation
$\rho$ of $\mathcal{T}_{0+}(E,Z)$ with the property that $\rho\circ\varphi_{\infty}:=\sigma$
is a normal representation of $M$ and $\xi\to\rho(T_{\xi}^{F})$
is a completely bounded linear map on $E$, then $\xi\to\rho(T_{\xi}^{F})$
is a completely bounded bimodule map of $E$ on $H$, and so by \cite[Lemma 3.5]{Muhly1998a},
there is a $\mathfrak{z}\in\mathfrak{I}(\sigma^{E}\circ\varphi,\sigma)$
such that $\rho(T_{\xi}^{F})=\mathfrak{z}L_{\xi}$. Further, the cb-norm
of $\xi\to\rho(T_{\xi}^{F})$ is $\Vert\mathfrak{z}\Vert$. Our problem,
then, is to determine when a $\mathfrak{z}\in\mathfrak{I}(\sigma^{E}\circ\varphi,\sigma)$
is such that the map $W_{\xi}\to\mathfrak{z}L_{\xi}$ extends to a
completely contractive representation of $\mathcal{T}_{+}(E,Z)$.
\end{rem}
We are unable to determine when such an extension is possible without
additional assumptions on $Z$. However, we were led to our assumptions
asking somewhat more about possible extensions: We would like to identify
those $\mathfrak{z}\in\mathfrak{I}(\sigma^{E}\circ\varphi,\sigma)$
that give rise to representations that may be realized as parts of
induced representations. 
\begin{defn}
\label{Def:Induced_Rep}Let $\sigma$ be a normal representation of
$M$ on the Hilbert space $H_{\sigma}$ and let $\sigma^{\mathcal{F}(E)}$
be the representation of $\mathcal{L}(\mathcal{F}(E))$ on $\mathcal{F}(E)\otimes_{\sigma}H_{\sigma}$
that is induced by $\sigma$ in the sense of Rieffel \cite{R1974b}.
We call the restrictions of $\sigma^{\mathcal{F}(E)}$ to $\mathcal{T}_{+}(E,Z)$,
$\mathcal{T}(E,Z)$ and $H^{\infty}(E,Z)$, the representations of
$\mathcal{T}_{+}(E,Z)$, $\mathcal{T}(E,Z)$ and $H^{\infty}(E,Z)$,
respectively, that are \emph{induced by $\sigma$}, and we denote
all of them by $\sigma^{\mathcal{F}(E)}$.
\end{defn}
The multiple usage of the terminology and the notation $\sigma^{\mathcal{F}(E)}$
should cause no difficulty in context. 

Note that in the case and when $M=\mathbb{C}=E$, $\sigma$ is $1$-dimensional
and there are no weights, $\mathcal{F}(E)\otimes_{\sigma}H_{\sigma}$
is just $\ell^{2}(\mathbb{Z}_{+})$ and $\sigma^{\mathcal{F}(E)}$
in essence is the identity representation of $B(\ell^{2}(\mathbb{Z}_{+}))$.
Consequently, when $\sigma$ is the (essentially) unique representation
of $\mathbb{C}$ on $n$-dimensional Hilbert space $H$, $1\leq n\leq\infty$,
and when $\xi\in E=\mathbb{C}$ is the number $1$, then $\sigma^{\mathcal{F}(E)}(T_{\xi})$
is the unilateral shift of multiplicity $n$ acting on $\ell^{2}(\mathbb{Z}_{+})\otimes H$.
If, in this setting, $Z$ is a weight sequence of numbers, and if
$D$ is the diagonal matrix they determine, then $\sigma^{\mathcal{F}(E)}(W_{\xi})=\sigma^{\mathcal{F}(E)}(DT_{\xi})$
is simply the $n$-fold multiple of the weighted shift $W_{\xi}$
acting on $\ell^{2}(\mathbb{Z}_{+})\otimes H$. It was in this context
that M\"{u}ller investigated operators $T\in B(H)$ that can be modeled
as a part of $W_{\xi}$ acting on $\ell^{2}(\mathbb{Z}_{+})\otimes H$.
What he discovered was that when $T\in B(H)$ can be realized as part
of $W_{\xi}$ acting on $\ell^{2}(\mathbb{Z}_{+})\otimes H$, the
isometry embedding $H$ into $\ell^{2}(\mathbb{Z}_{+})\otimes H$
that effects the realization is, on close inspection, related to the
classical Poisson kernel. In a sense, this is not too much of a surprise.
Indeed, since Foia\c{s}'s modification of Rota's theorem \cite{Foias1963},
the work of de Branges and Rovnyak \cite{deBrangeRovnyak1964} and
the growth of model theory in the 1960's, it has been understood that
the isometry that realizes the canonical model of a so-called $C_{\cdot0}$
contraction is connected to the Poisson kernel. And of course, in
\cite{Popescu2010}, Popescu based his extension of M\"{u}ller's
work on a generalized Poisson kernel.

Now in \cite{Muhly2009}, we developed a Poisson kernel for representations
of Hardy algebras in the unweighted case. Although we did not highlight
this fact in \cite{Muhly2009}, what appears to be novel about our
discovery, and what is especially relevant for our current study,
is the explicit role played by certain completely positive maps that
lurk in the background and are not readily visible in situations studied
heretofore. To motivate choices we will make shortly, it will be helpful
to call attention to some salient features of our construction. 
\begin{defn}
\label{Def:Cauchy_and_Poisson_kernels} (See \cite[Definitions 6 and 8]{Muhly2009}.)
Let $\sigma:M\to B(H)$ be a normal representation of $M$, and let
$\mathfrak{z}\in\mathfrak{I}(\sigma^{E}\circ\varphi,\sigma)$ have
norm less than $1$. Then the \emph{Cauchy kernel }evaluated at $\mathfrak{z}$
is the map from $H$ to $\mathcal{F}(E)\otimes_{\sigma}H$ defined
by the matrix
\begin{equation}
C(\mathfrak{z}):=[\mathfrak{z}^{(0)*},\mathfrak{z}^{(1)*},\mathfrak{z}^{(2)*},\mathfrak{z}^{(3)*},\cdots]^{\intercal}\label{eq:unweighted Cauchy kernel}
\end{equation}
The \emph{Poisson kernel} evaluated at $\mathfrak{z}$ is the operator
\begin{equation}
K(\mathfrak{z}):=(I_{\mathcal{F}(E)}\otimes\Delta_{*}(\mathfrak{z}))C(\mathfrak{z}),\label{eq:unweighted_Poisson_kernel}
\end{equation}
mapping $H$ to $\mathcal{F}(E)\otimes_{\sigma}H$, where $\Delta_{*}(\mathfrak{z}):=(I_{H}-\mathfrak{z}\mathfrak{z}^{*})^{\frac{1}{2}}$. 
\end{defn}
Simple calculations show that $C(\mathfrak{z})$ intertwines $\sigma$
and the induced representation $\sigma^{\mathcal{F}(E)}\circ\varphi_{\infty}$,
i.e., $C(\mathfrak{z})\in\mbox{\ensuremath{\mathfrak{I}}(\ensuremath{\sigma},\ensuremath{\sigma^{\mathcal{F}(E)}\circ\varphi_{\infty}})}$.
But in fact, the proof of Lemma 12 of \cite{Muhly2009} shows that
\begin{equation}
C(\mathfrak{z})(\sigma\times\mathfrak{z}(F))=\sigma^{\mathcal{F}(E)}(F)C(\mathfrak{z})=(F\otimes I_{H})C(\mathfrak{z}),\qquad F\in H^{\infty}(E).\label{eq:Cauchy_transform}
\end{equation}
On the other hand, as we pointed out in \cite[Proposition 10]{Muhly2009},
because $I_{\mathcal{F}(E)}\otimes\Delta_{*}(\mathfrak{z})$ commutes
with $\sigma^{\mathcal{F}(E)}(\mathcal{L}(\mathcal{F}(E))$, $K(\mathfrak{z})$
is an \emph{isometry} from $H$ into $\mathcal{F}(E)\otimes_{\sigma}H$
such that
\begin{equation}
K(\mathfrak{z})(\sigma\times\mathfrak{z}(F))=(F\otimes I)K(\mathfrak{z}),\label{eq:Poisson_transform_1}
\end{equation}
for all $F\in H^{\infty}(E)$. If one takes adjoints in Equation \eqref{eq:Poisson_transform_1},
one sees that $\sigma\times\mathfrak{z}$ is obtained from the induced
representation $\sigma^{\mathcal{F}(E)}$ by restricting the latter
to a co-invariant subspace. Thus, that equation is somewhat stronger
than the equation

\begin{equation}
\sigma\times\mathfrak{z}(F)=K(\mathfrak{z})^{*}(F\otimes I)K(\mathfrak{z}),\label{eq:Poisson_transform_2}
\end{equation}
which simply says that $\sigma\times\mathfrak{z}$ is the compression
of $\sigma^{\mathcal{F}(E)}$ to a semi-invariant subspace. 

The point that we want to emphasize here is that \emph{the intertwiner
$\mathfrak{z}$ induces a completely positive map $\Theta_{\mathfrak{z}}$
on the commutant of $\sigma(M)$, $\sigma(M)'$; vis. $\Theta_{\mathfrak{z}}(a):=\mathfrak{z}(I_{E}\otimes a)\mathfrak{z}^{*}$,
$a\in\sigma(M)'$. }We have used this formula a number of places in
our work (see, e.g., \cite{Muhly2002a,Muhly2003,Muhly2007,Muhly2011a}).
Its importance lies in the fact that the powers of $\Theta_{\mathfrak{z}}$
can be expressed in terms of $\mathfrak{z}$ via the formula
\[
\Theta_{\mathfrak{z}}^{k}(a)=\mathfrak{z}^{(k)}(I_{E^{\otimes k}}\otimes a)\mathfrak{z}^{(k)*}. 
\]
Further, since $\Vert\Theta_{\mathfrak{z}}\Vert=\Vert\mathfrak{z}\Vert^{2}$,
the series 
\begin{equation} \label{eq:_Potential_Thetaz}
\Gamma:=\mbox{\ensuremath{\iota}+\ensuremath{\Theta}}_{\mathfrak{z}}+\Theta_{\mathfrak{z}}^{2}+\cdots,
\end{equation}where $\iota$ denotes the identity map on $\sigma(M)'$, converges
in the norm on the space of bounded linear maps on $\sigma(M)'$,
when $\Vert\mathfrak{z}\Vert<1$. Further, still, in this case, the
fact that $C(\mathfrak{z})\in\mathfrak{I}(\sigma,\sigma^{\mathcal{F}(E)}\circ\varphi_{\infty})$
enables one to factor $\Gamma$ as
\[
\Gamma(a)=C(\mathfrak{z})^{*}(I_{\mathcal{F}(E)}\otimes a)C(\mathfrak{z}),\qquad a\in\sigma(M)'.
\]
Of course, $\Gamma$ is the resolvent $(\iota-\Theta_{\mathfrak{z}})^{-1}$.
Since $\Theta_{\mathfrak{z}}$ and $\Gamma$ are completely positive
maps, it is perhaps more appropriate to view $\Theta_{\mathfrak{z}}$
as a noncommutative Markov kernel (or operator) and then $\Gamma$
is a noncommutative analogue of its potential kernel (or operator)
from probability theory \cite[Definition 2.1.3]{Revuz1984}. The analogy becomes especially apt when $\Vert \mathfrak{z} \Vert = 1$ because, like potentials in probability theory, the convergence of the series \eqref{eq:_Potential_Thetaz} becomes problematic when the kernel involved has norm $1$. The more immediate 
point for us is that in equation \eqref{eq:Poisson_transform_2} the
normalization factor, $I_{\mathcal{F}(E)}\otimes\Delta_{*}(\mathfrak{z})$,
may be expressed in terms of $\Theta_{\mathfrak{z}}$ - $\Delta_{*}(\mathfrak{z})^{2}=(\iota-\Theta_{\mathfrak{z}})(I)$
- and this underwrites the validity of equation \eqref{eq:Poisson_transform_2}.
In fact, a careful reading of M\"{u}ller's \cite[Theorem 2.2]{Muller1988}
leads to the following theorem, which is the starting point of our
analysis.
\begin{thm}
\label{thm:Muller's_Thm2.2}Assume the weight sequence $Z=\{Z_{k}\}_{k\geq0}$
has the property that $Z^{(n)}=Z_{n}(I_{E}\otimes Z_{n-1})\cdots(I_{E^{\otimes n-1}}\otimes Z_{1})$
is positive for each $n$, and let $R_{n}:=(Z^{(n)})^{-1}$. Let $\sigma$
be a normal representation of $M$ on a Hilbert space $H$ and assume
that $\mathfrak{z}\in\mathfrak{I}(\sigma^{E}\circ\varphi,\sigma)$
has the property that the series 
\begin{equation}
\sum_{k=0}^{\infty}\mathfrak{z}^{(k)}(R_{k}^{2}\otimes I_{H})\mathfrak{z}^{(k)*}\label{eq:theta^R}
\end{equation}
converges ultraweakly in $B(H)$. Then: 
\begin{enumerate}
\item For every $a\in\sigma(M)'$, the series $\sum_{k=0}^{\infty}\mathfrak{z}^{(k)}(R_{k}^{2}\otimes a)\mathfrak{z}^{(k)*}$converges
ultrastrongly in $B(H)$ and defines a completely positive map $\Theta_{\mathfrak{z}}^{R}$
on $\sigma(M)'$; and
\item If $\Theta_{\mathfrak{z}}^{R}$ is the potential of a completely positive
map $\phi$ on $\sigma(M)'$, with norm $\Vert\phi\Vert\leq1$, i.e., if $\Theta_{\mathfrak{z}}^{R} = \iota + \phi + \phi^{2} + \cdots$, as in \eqref{eq:_Potential_Thetaz}, then
the operator $V:H\to\mathcal{F}(E)\otimes_{\sigma}H$ defined by the
formula, 
\[
V=(I_{\mathcal{F}(E)}\otimes(I-\phi)(I)^{\frac{1}{2}}){\rm {diag}}[1,R_{1},R_{2},\cdots]C(\mathfrak{z}),
\]
is an isometry with the property that 
\begin{equation}
V\sigma(a)=(\varphi_{\infty}(a)\otimes I_{H})V,\qquad a\in M,\label{eq:sigma_intertwine}
\end{equation}
and
\begin{equation}
V(\mathfrak{z}L_{\xi}^{H})=(W_{\xi}\otimes I_{H})V,\qquad\xi\in E.\label{eq: W_xi_intertwine}
\end{equation}

\end{enumerate}
In particular, $\sigma\times\mathfrak{z}$, which is defined initially
on $\mathcal{T}_{0+}(E,Z)$ extends to an ultraweakly continuous representation
of $H^{\infty}(E,Z)$ that is unitarily equivalent, via $V$, to the
compression of $\sigma^{\mathcal{F}(E)}(H^{\infty}(E,Z))$ to a co-invariant
subspace.
\end{thm}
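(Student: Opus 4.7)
The plan is to establish four ingredients in order: (i) convergence and complete positivity of $\Theta_{\mathfrak{z}}^R$ on $\sigma(M)'$, (ii) isometry of $V$, (iii) intertwining of $V$ with $\sigma(M)$, and (iv) intertwining of $V$ with the weighted creation operators. The concluding unitary-equivalence statement will then drop out by applying (iii)--(iv) to the generators of $\mathcal{T}_{0+}(E,Z)$ and extending by norm/ultraweak continuity.

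For (i), the hypothesis gives convergence at $a = I_H$. For positive $a \in \sigma(M)'$, the domination $R_k^{2}\otimes a \leq \Vert a \Vert (R_k^{2}\otimes I_H)$ makes the partial sums an increasing net of positive operators bounded above, forcing ultrastrong convergence; general $a$ follow by writing $a$ as a linear combination of four positives. Each summand $a \mapsto \mathfrak{z}^{(k)}(R_k^{2}\otimes a)\mathfrak{z}^{(k)*}$ is completely positive, being tensoring on the left by the positive $R_k^{2}$ followed by conjugation by $\mathfrak{z}^{(k)}$, so the ultraweak limit is CP. That $\Theta_{\mathfrak{z}}^R$ takes values in $\sigma(M)'$ is a routine consequence of $R_k \in \varphi_k(M)^c$ together with the intertwining relation $\mathfrak{z}^{(k)}(\varphi_k(m)\otimes I) = \sigma(m)\mathfrak{z}^{(k)}$.

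For (ii), because $R_k \in \varphi_k(M)^c$ and $(I-\phi)(I)^{1/2} \in \sigma(M)'$, direct computation collecting the normalizing factors gives
\[
V^*V \;=\; \sum_{k \geq 0} \mathfrak{z}^{(k)}\bigl(R_k^{2} \otimes (I-\phi)(I)\bigr)\mathfrak{z}^{(k)*} \;=\; \Theta_{\mathfrak{z}}^{R}\bigl((I-\phi)(I)\bigr).
\]
The potential identity $\Theta_{\mathfrak{z}}^R = \iota + \phi + \phi^2 + \cdots$ then telescopes this to $\sum_{j\geq 0}(\phi^j(I) - \phi^{j+1}(I)) = I_H - \lim_j \phi^j(I) = I_H$, the last equality because a convergent series of positive operators has terms tending ultrastrongly to $0$. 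For (iii), starting from $C(\mathfrak{z})\sigma(a) = (\varphi_\infty(a)\otimes I_H)C(\mathfrak{z})$, one observes that $\operatorname{diag}[1,R_1,R_2,\ldots]$ commutes with $\varphi_\infty(a)\otimes I_H$ (each $R_k$ commutes with $\varphi_k(a)$) and that $I_{\mathcal{F}(E)}\otimes(I-\phi)(I)^{1/2}$ commutes with it too (disjoint tensor factors); sandwiching the intertwining relation between these yields the desired identity.

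The crux — and the step I expect to be the main obstacle — is (iv). The degree-shifting action of $W_\xi \otimes I_H$ on the Fock decomposition of $\mathcal{F}(E)\otimes_\sigma H$ must be matched against the recursion $\mathfrak{z}^{(k+1)} = \mathfrak{z}(I_E \otimes \mathfrak{z}^{(k)})$ encoded in $C(\mathfrak{z})$. The key bookkeeping identity is $Z_{k+1}(I_E \otimes R_k) = Z^{(k+1)}(I_E \otimes R_k^{2})$, equivalently $R_{k+1}Z_{k+1} = I_E \otimes R_k$, which supplies exactly the rescaling of weights needed to pass between the $k$-th and $(k+1)$-th components; the normalization $(I-\phi)(I)^{1/2}$ sits in $\sigma(M)'$ and so passes through $W_\xi \otimes I_H$ unchanged. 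Once the intertwinings hold on the generators $\varphi_\infty(M)$ and $\{W_\xi : \xi \in E\}$, the product formula $W_{\xi_1}W_{\xi_2} = W_{\xi_1\otimes\xi_2}$ of Remark~\ref{Rm:Products} extends them to all $W_\xi$, $\xi \in E^{\otimes k}$, and ultraweak continuity delivers the announced extension of $\sigma\times\mathfrak{z}$ to $H^\infty(E,Z)$ together with its unitary equivalence with the compression of $\sigma^{\mathcal{F}(E)}(H^\infty(E,Z))$ to the range of $V$.
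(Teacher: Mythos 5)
Your proposal is correct and follows essentially the same route as the paper: the isometry comes from $V^{*}V=\Theta_{\mathfrak{z}}^{R}\bigl((I-\phi)(I)\bigr)=I$, the $\sigma$-intertwining is immediate from commutation, and the creation-operator intertwining rests on exactly the identity $R_{k+1}Z_{k+1}=I_{E}\otimes R_{k}$ that the paper's computation uses. The only cosmetic differences are that you telescope the potential series where the paper invokes $(\iota-\phi)^{-1}$ directly, and you get ultrastrong convergence in part (1) from monotonicity and a four-positives decomposition where the paper uses the $aa^{*}$ bound; both are equivalent.
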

The assumption that the $Z^{(n)}$ are all positive is not necessary.
However, it simplifies the argument and for the purposes we have in
mind for the theorem, the extra generality will not be used.
\begin{proof}
The series \eqref{eq:theta^R} converges ultraweakly if and only if
its partial sums are uniformly bounded. If the partial sums are bounded
by $N$, say, then for all $a\in\sigma(M)'$, the partial sums of
$\sum_{k=0}^{\infty}\mathfrak{z}^{(k)}(R_{k}^{2}\otimes aa^{*})\mathfrak{z}^{(k)*}$
are bounded by $N\Vert a\Vert^{2}$, and so this series converges
ultraweakly. But then $\sum_{k=0}^{\infty}\mathfrak{z}^{(k)}(R_{k}^{2}\otimes a)\mathfrak{z}^{(k)*}$
converges ultrastrongly. Observe that if $\Vert \mathfrak{z} \Vert < 1$, then the series converges \emph{in norm} and
then $\Theta_{\mathfrak{z}}^{R}$ may be written 
\[
\Theta_{\mathfrak{z}}^{R}(a)=C(\mathfrak{z})^{*}\mathcal{R}(I_{\mathcal{F}(E)}\otimes a)\mathcal{R}C(\mathfrak{z}),\qquad a\in\sigma(M)',
\]
where $\mathcal{R}$ is the diagonal matrix with respect to the direct
sum decomposition of $\mathcal{F}(E)$, ${\rm {diag}}[I,R_{1},R_{2},R_{3},\cdots]$. If $\Vert \mathfrak{z} \Vert = 1$, then the series may still converge in some sense even if the series representing $C(\mathfrak{z})$ fails to converge.  We encountered this kind of problem in \cite{Muhly2009}.  The Poisson kernel may converge when the Cauchy kernel doesn't.  In any case, when the series \eqref{eq:theta^R} converges, we will write $\Theta_{\mathfrak{z}}^{R}(a)$ as $C(\mathfrak{z})^{*}\mathcal{R}(I_{\mathcal{F}(E)}\otimes a)\mathcal{R}C(\mathfrak{z})$ without regard to the convergence of the Cauchy kernel.   If $\Theta_{\mathfrak{z}}^{R}$ is the potential of $\phi$, i.e.,
if $\Theta_{\mathfrak{z}}^{R}(a)=(I-\phi)^{-1}(a)$ for all $a\in\sigma(M)'$,
where $\phi$ is a completely positive map on $\sigma(M)'$ of norm
at most one, then $(I-\phi)(I)$ is a positive element of $\sigma(M)'$
and
\begin{eqnarray*}
V^{*}V & = & C(\mathfrak{z})^{*}\mathcal{R}(I_{\mathcal{F}(E)}\otimes(I-\phi)(I))\mathcal{R}C(\mathfrak{z})\\
 & = & \Theta_{\mathfrak{z}}^{R}\circ(I-\phi)(I)\\
 & = & I.
\end{eqnarray*}
Thus $V$ is an isometry.

The equation \eqref{eq:sigma_intertwine} is immediate because $I_{\mathcal{F}(E)}\otimes(I-\phi(I))^{\frac{1}{2}}$
commutes with $\sigma^{\mathcal{F}(E)}(\mathcal{L}(\mathcal{F}(E))$
and, because $R_{n}$ lies in $\varphi_{n}(M)^{c}$, $\mathfrak{z}^{(n)}R_{n}\in\mathfrak{I}(\sigma^{E^{\otimes n}}\circ\varphi_{n},\sigma)$.

The equation \eqref{eq: W_xi_intertwine} requires a bit more calculation.
Observe first that $Z_{k}=R_{k}^{-1}(I_{E}\otimes R_{k-1})$ for all
$k$ . To prove \eqref{eq: W_xi_intertwine} we need to show that
for every $k$ we have 
\begin{multline*}
(I_{E^{\otimes k}}\otimes(I-\phi)(I)^{\frac{1}{2}})(R_{k}\otimes I_{H})(\mathfrak{z}^{(k)})^{*}L_{\xi}^{*}\mathfrak{z}^{*}\\
=(W_{\xi}^{*}\otimes I_{H})(I_{E^{\otimes(k+1)}}\otimes(I-\phi)(I)^{\frac{1}{2}})(R_{k+1}\otimes I_{H})(\mathfrak{z}^{(k+1)})^{*}.
\end{multline*}
To do this we shall show that, for every $k$, 
\begin{multline*}
L_{\xi}\mathfrak{z}^{(k)}(R_{k}\otimes I_{H})(I_{E^{\otimes k}}\otimes(I-\phi)(I)^{\frac{1}{2}})\\
=(I_{E}\otimes\mathfrak{z}^{(k)})(R_{k+1}\otimes I_{H})(I_{E^{\otimes(k+1)}}\otimes(I-\phi)(I)^{\frac{1}{2}})(W_{\xi}\otimes I_{H})
\end{multline*}
as operators on $E^{\otimes k}\otimes H$. So, fix $\theta\in E^{\otimes k}$
and $h\in H$ and apply the right-hand-side of this equation to $\theta\otimes h$
to get 
\begin{multline}
(I_{E}\otimes\mathfrak{z}^{(k)})(R_{k+1}\otimes I_{H})(I_{E^{\otimes(k+1)}}\otimes(I-\phi)(I)^{\frac{1}{2}})(W_{\xi}\otimes I_{H})(\theta\otimes h)\\
=(I_{E}\otimes\mathfrak{z}^{(k)})(R_{k+1}\otimes I_{H})(I_{E^{\otimes(k+1)}}\otimes(I-\phi)(I)^{\frac{1}{2}})(Z_{k+1}(\xi\otimes\theta)\otimes h)\\
=(I_{E}\otimes\mathfrak{z}^{(k)})(R_{k+1}Z_{k+1}(\xi\otimes\theta)\otimes(I-\phi)(I)^{\frac{1}{2}}h)\\
=(I_{E}\otimes\mathfrak{z}^{(k)})(\xi\otimes R_{k}\theta\otimes(I-\phi)(I)^{\frac{1}{2}}h)
\end{multline}
where, in the last equality, we use the fact that $Z_{k+1}=R_{k+1}^{-1}(I_{E}\otimes R_{k})$.
Since $(I_{E}\otimes\mathfrak{z}^{(k)})(\xi\otimes R_{k}\theta\otimes(I-\phi)(I)^{\frac{1}{2}}h)$
equals $L_{\xi}\mathfrak{z}^{(k)}(R_{k}\otimes I_{H})(I_{E^{\otimes k}}\otimes(I-\phi)(I)^{\frac{1}{2}})(\theta\otimes h)$,
the proof of \eqref{eq: W_xi_intertwine} is complete.

For the final assertion of the theorem, simply note that equations
\eqref{eq:sigma_intertwine} and \eqref{eq: W_xi_intertwine} express
the fact that $V$ intertwines the generators of $\sigma\times\mathfrak{z(\mathcal{T}}_{0+}(E,Z))$
and the generators of $\sigma^{\mathcal{F}(E)}(\mathcal{T}_{0+}(E,Z))$.
From this, the assertion is an immediate consequence.
\end{proof}

\section{Special Potentials\label{sec:Special Potentials} }

Under natural minimal hypotheses, a weight sequence $Z$ always gives
rise to a weighted shift and a completely positive operator $\Theta_{\mathfrak{z}}^{R}$
for suitable $\mathfrak{z}$'s. It appears to be a difficult matter,
however, to determine if $\Theta_{\mathfrak{z}}^{R}$ is the potential
operator for a completely positive map on $\sigma(M)'$. Consequently,
it seems best to reorient our focus and seek completely positive maps
on $\sigma(M)'$ whose potentials may be expressed in terms of weighted
shifts. For the rest of the paper, we focus on one attractive source
of completely positive maps with this property. Our choice was inspired
by the paper of M\"{u}ller, particularly \cite[Theorem 2.2]{Muller1988},
the memoir of Popescu \cite{Popescu2009a}, and especially the following
simple calculation: If $\{x_{k}\}_{k\geq1}$ is a sequence of nonnegative
numbers, with $x_{1}>0$, such that the power series $\sum x_{k}z^{k}$
has a positive radius of convergence, then for $|z|$ sufficiently
small, we may write 
\[
\frac{1}{1-\sum_{k\geq1}x_{k}z^{k}}=\sum_{k\geq0}r_{k}^{2}z^{k}
\]
where the $r_{k}$ are strictly positive numbers. In fact, the $r_{k}$'s
may be computed easily in terms of the $x_{k}$'s. This calculation
will be contained in the proof of Theorem \ref{thm:The-potential-of-Phiz}.
\begin{defn}
\label{X}A sequence $X=\{X_{k}\}_{k=1}^{\infty}$ of operators will
be called \emph{admissible} in case it satisfies the following three
conditions:
\begin{enumerate}
\item $X_{k}\in\varphi_{k}(M)^{c}$.
\item $X_{k}\geq0$ for all $k\geq1$ and $X_{1}$ is invertible. 
\item $\limsup\Vert X_{k}\Vert^{\frac{1}{k}}<\infty$ .
\end{enumerate}
\end{defn}
{}
\begin{lem}
\label{lem:Estimate}If $X$ is an admissible sequence, with $\rho:=\limsup\Vert X_{k}\Vert^{\frac{1}{k}}$,
and if $\sigma:M\to B(H_{\sigma})$ is a normal representation, then
for all $\mathfrak{z}\in\mathfrak{I}(\sigma^{E}\circ\varphi,\sigma)$
with $\Vert\mathfrak{z}\Vert<\rho^{-\frac{1}{2}}$, 
\begin{equation}
\sum_{k=1}^{\infty}\Vert\mathfrak{z}^{(k)}(X_{k}\otimes I_{H_{\sigma}})\mathfrak{z}^{(k)*}\Vert<\infty.\label{eq:dominating_series}
\end{equation}
\end{lem}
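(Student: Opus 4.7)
The plan is to reduce the claim to a one-line root-test estimate by bounding each term of the series by a geometric quantity whose root is controlled by the hypothesis $\|\mathfrak{z}\| < \rho^{-1/2}$.

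First I would record two elementary estimates. The operator $X_k \otimes I_{H_\sigma}$ that appears in each summand is, by definition, the action of the induced representation $\sigma^{E^{\otimes k}}$ applied to $X_k \in \mathcal{L}(E^{\otimes k})$. Since induced representations are $*$-homomorphisms of the relevant $W^{*}$-algebras of adjointable operators (so are contractive), we get
\[
\|X_k \otimes I_{H_\sigma}\| \;\leq\; \|X_k\|.
\]
Next, from the inductive definition $\mathfrak{z}^{(k+1)} = \mathfrak{z}(I_E \otimes \mathfrak{z}^{(k)})$ in Definition 2.4 (equivalently, formula \eqref{eq:powers_a}), one obtains by induction
\[
\|\mathfrak{z}^{(k)}\| \;\leq\; \|\mathfrak{z}\|^{k},
\]
using at each step that $I_E \otimes (\cdot)$ is contractive on adjointables and that composition does not increase the operator norm.

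Combining these bounds with positivity of $X_k$, the $k$-th term of the series satisfies
\[
\bigl\| \mathfrak{z}^{(k)} (X_k \otimes I_{H_\sigma}) \mathfrak{z}^{(k)*} \bigr\|
\;\leq\; \|\mathfrak{z}^{(k)}\|^{2}\,\|X_k\|
\;\leq\; \|\mathfrak{z}\|^{2k}\,\|X_k\|.
\]
Now I would apply the root test. By the definition of $\rho = \limsup \|X_k\|^{1/k}$,
\[
\limsup_{k\to\infty} \bigl( \|\mathfrak{z}\|^{2k}\,\|X_k\| \bigr)^{1/k}
\;=\; \|\mathfrak{z}\|^{2}\cdot\rho.
\]
The hypothesis $\|\mathfrak{z}\| < \rho^{-1/2}$ (interpreted as always holding when $\rho = 0$) yields $\|\mathfrak{z}\|^{2}\rho < 1$, so the majorizing series $\sum_k \|\mathfrak{z}\|^{2k}\|X_k\|$ converges, forcing \eqref{eq:dominating_series}.

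There is no real obstacle here: the only point requiring care is the justification that $\|X_k \otimes I_{H_\sigma}\| \leq \|X_k\|$ (one must recognize this symbol as induced representation, not some inflated ampliation) and the inductive norm bound on $\mathfrak{z}^{(k)}$. Once those are in place, the statement is immediate from the root test.
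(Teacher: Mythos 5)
Your proposal is correct and follows exactly the route of the paper's own proof: dominate the $k$-th term by $\Vert\mathfrak{z}\Vert^{2k}\Vert X_{k}\Vert$ and conclude by the root test from $\Vert\mathfrak{z}\Vert^{2}\rho<1$. The only difference is that you spell out the justification of the domination (contractivity of the induced $*$-homomorphism and the inductive bound $\Vert\mathfrak{z}^{(k)}\Vert\leq\Vert\mathfrak{z}\Vert^{k}$), which the paper leaves implicit.
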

\begin{proof}
The term $\Vert\mathfrak{z}^{(k)}(X_{k}\otimes I_{H_{\sigma}})\mathfrak{z}^{(k)*}\Vert$
is dominated by $\Vert\mathfrak{z}\Vert^{2k}\Vert X_{k}\Vert$. If
$\Vert\mathfrak{z}\Vert<\rho^{-\frac{1}{2}}$, then $\limsup\Vert\mathfrak{z}^{(k)}(X_{k}\otimes I_{H_{\sigma}})\mathfrak{z}^{(k)*}\Vert^{\frac{1}{k}}<1$
and so \eqref{eq:dominating_series} converges.\end{proof}
\begin{defn}
\label{Domain}If $X=\{X_{k}\}_{k=1}^{\infty}$ is an admissible sequence
and if $\sigma:M\to B(H_{\sigma})$ is a normal representation of
$M$, then $D(X,\sigma)$ is defined to be 
\[
\{\mathfrak{z}\in\mathfrak{I}(\sigma^{E}\circ\varphi,\sigma)\mid\Vert\sum_{k=1}^{\infty}\mathfrak{z}^{(k)}(X_{k}\otimes I_{H_{\sigma}})\mathfrak{z}^{(k)*}\Vert<1\},
\]
and is called the (open) \emph{disc associated to $X$ and $\sigma$}.
\end{defn}
Observe that by Lemma \ref{lem:Estimate}, $D(X,\sigma)$ contains
a non-empty, norm-open neighborhood of the origin in $\mathfrak{I}(\sigma^{E}\circ\varphi,\sigma)$.
\begin{lem}
\label{lem:Basic_CP_maps}Let $X=\{X_{k}\}_{k=1}^{\infty}$ be an
admissible sequence and let $\sigma:M\to B(H_{\sigma})$ be a normal
representation. Then every $\mathfrak{z}\in\overline{D(X,\sigma)}$
determines a completely positive map $\Phi_{\mathfrak{z}}$ on $\sigma(M)'$,
of norm at most one, via the formula
\begin{equation}
\Phi_{\mathfrak{z}}(a):=\sum_{k=1}^{\infty}\mathfrak{z}^{(k)}(X_{k}\otimes a)\mathfrak{z}^{(k)*},\qquad a\in\sigma(M)',\label{eq:Phi-sub-z}
\end{equation}

\end{lem}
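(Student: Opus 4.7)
The lemma asserts three things that I would tackle in sequence: convergence of the series for every $a\in\sigma(M)'$, that the resulting map $\Phi_{\mathfrak z}$ is completely positive with values in $\sigma(M)'$, and that it has norm at most one. The plan is to get all three from a single monotonicity argument.

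For convergence, I would first note that since each $X_k\ge 0$, the summand $\mathfrak z^{(k)}(X_k\otimes a)\mathfrak z^{(k)*}$ is positive whenever $a\ge 0$. For a positive $a$ with $\|a\|\le 1$, the estimate $X_k\otimes a\le X_k\otimes I_{H_\sigma}$ yields
\[
0\le \sum_{k=1}^{N}\mathfrak z^{(k)}(X_k\otimes a)\mathfrak z^{(k)*}\le \sum_{k=1}^{N}\mathfrak z^{(k)}(X_k\otimes I_{H_\sigma})\mathfrak z^{(k)*},
\]
and for $\mathfrak z\in\overline{D(X,\sigma)}$ the right-hand partial sums are increasing and uniformly bounded by an operator of norm at most one. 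An increasing bounded sequence of positive operators converges strongly, so $\Phi_{\mathfrak z}(a)$ exists as a strong limit with $\|\Phi_{\mathfrak z}(a)\|\le\|a\|$; linear decomposition of arbitrary $a$ into four positive pieces extends this to all of $\sigma(M)'$. As a side benefit, $\Phi_{\mathfrak z}(I)$ is exactly the operator whose norm controls membership in $\overline{D(X,\sigma)}$, so the unital bound $\|\Phi_{\mathfrak z}(I)\|\le 1$ comes for free.

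For complete positivity, I would rewrite each summand in Stinespring form: setting $V_k:=(X_k^{1/2}\otimes I_{H_\sigma})\mathfrak z^{(k)*}$, one has $\mathfrak z^{(k)}(X_k\otimes a)\mathfrak z^{(k)*}=V_k^{*}(I_{E^{\otimes k}}\otimes a)V_k$, which is manifestly completely positive in $a$. Finite sums of completely positive maps are completely positive, and pointwise strong-operator limits preserve complete positivity (since positivity of matrices is preserved under weak-operator limits), hence $\Phi_{\mathfrak z}$ is completely positive. Being completely positive on a unital C*-algebra, its norm equals $\|\Phi_{\mathfrak z}(I)\|\le 1$. To verify that the range actually lies in $\sigma(M)'$, fix $m\in M$ and use the intertwining identities $\mathfrak z^{(k)}(\varphi_k(m)\otimes I_{H_\sigma})=\sigma(m)\mathfrak z^{(k)}$ and its adjoint, together with $X_k\varphi_k(m)=\varphi_k(m)X_k$, to push $\sigma(m)$ through each summand; strong-operator closedness of $\sigma(M)'$ transfers the property to the limit.

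The one place that might require a little extra care is the interpretation of $\overline{D(X,\sigma)}$. If it denotes the set where the series $\sum_{k\ge 1}\mathfrak z^{(k)}(X_k\otimes I_{H_\sigma})\mathfrak z^{(k)*}$, understood as a strong limit of increasing partial sums, has norm at most one, the argument above applies verbatim. If instead it is the norm-closure of the open disc, one has to observe that for each fixed $N$ the partial sum $\sum_{k=1}^{N}\mathfrak z^{(k)}(X_k\otimes I_{H_\sigma})\mathfrak z^{(k)*}$ is norm-continuous in $\mathfrak z$, so its norm is inherited from nearby elements of $D(X,\sigma)$ and is bounded by one on the closure; monotone convergence in $N$ then yields the same estimate for the full series. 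In either reading I do not expect any genuine obstacle beyond this bit of bookkeeping.
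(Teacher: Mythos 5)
Your proof is correct and follows essentially the same route as the paper's: each summand is exhibited as completely positive by factoring through the intertwiner $\mathfrak{z}^{(k)}(X_{k}^{1/2}\otimes I)$, the partial sums are completely positive with norm attained at $I_{H_{\sigma}}$ and hence bounded by $1$ via the defining condition of $\overline{D(X,\sigma)}$, and the limit inherits both properties. You simply spell out details the paper leaves implicit (monotone strong convergence for positive $a$ plus linear decomposition, the verification that the values lie in $\sigma(M)'$, and the reading of $\overline{D(X,\sigma)}$ as the set where the increasing partial sums are dominated by $I$), all of which are sound.
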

called the \emph{prepotential} determined by ($X$ and) $\mathfrak{z}$. 
\begin{proof}
The assumption that $X_{k}$ is a positive element in $\varphi_{k}(M)^{c}$
guarantees that $\mathfrak{z}^{(k)}X_{k}^{\frac{1}{2}}\in\mathfrak{I}(\sigma^{E^{\otimes k}}\circ\varphi_{k},\sigma)$.
Consequently $a\to\mathfrak{z}^{(k)}X_{k}^{\frac{1}{2}}(I_{E^{\otimes k}}\otimes a)(\mathfrak{z}^{(k)}X_{k}^{\frac{1}{2}})^{*}=\mathfrak{z}^{(k)}(X_{k}\otimes a)\mathfrak{z}^{(k)*}$
is a completely positive map on $\sigma(M)'$. Then so are the partial
sums, $a\to\sum_{k=1}^{N}\mathfrak{z}^{(k)}(X_{k}\otimes a)\mathfrak{z}^{(k)*}$,
of the series defining $\Phi_{\mathfrak{z}}$. Their norms are achieved
at $a=I_{H_{\sigma}}$. The hypothesis guarantees that these are all
bounded by $1$. Therefore the sum is a completely positive map of
norm at most 1.\end{proof}
\begin{thm}
\label{thm:The-potential-of-Phiz}Let $X=\{X_{k}\}_{k\geq1}$be an
admissible sequence. Then there is a unique weight sequence $Z=\{Z_{k}\}_{k\geq0}$
with these four properties:
\begin{enumerate}
\item $Z_{k}\in\varphi_{k}(M)^{c}$ , $k\geq0$,
\item $Z^{(m)}\geq0$ for all $m\geq1$, 
\item If $R_{m}$ is defined to be $(Z^{(m)})^{-1}$, then for each normal
representation $\sigma$ of $M$ on a Hilbert space $H$ and for each
$\mathfrak{z}\in D(X,\sigma)$, the series $\sum_{k=0}^{\infty}\mathfrak{z}^{(k)}(R_{k}^{2}\otimes I_{H})\mathfrak{z}^{(k)*}$
converges in norm, and
\item If $\Theta_{\mathfrak{z}}^{R}$ is the completely positive map on
$\sigma(M)'$ defined by $R$ and $\mathfrak{z}$ as in Theorem \ref{thm:Muller's_Thm2.2},
then $\Theta_{\mathfrak{z}}^{R}$ is the potential of $\Phi_{\mathfrak{z}}$. 
\end{enumerate}
\end{thm}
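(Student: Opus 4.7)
The plan is to reverse-engineer the sequence $Z$ from the desired identity. If $\Theta_{\mathfrak{z}}^R$ is to equal the potential $\iota+\Phi_{\mathfrak{z}}+\Phi_{\mathfrak{z}}^2+\cdots$, then expanding both sides as ``power series'' in $\mathfrak{z}$ forces a convolution-type recursion on the coefficients $R_n^2$, paralleling the scalar computation sketched just before Definition~\ref{X}. Accordingly, I define $R_0:=I_M$ and, inductively for $n\geq 1$,
\[
R_n^2 := \sum_{j=1}^n R_{n-j}^2 \otimes X_j \in \mathcal{L}(E^{\otimes n}),
\]
with $R_{n-j}^2$ acting on the first $n-j$ tensor factors and $X_j$ on the last $j$.

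A routine induction shows $R_n^2 \geq 0$, $R_n^2 \in \varphi_n(M)^c$ (since $\varphi_n(m)=\varphi(m)\otimes I_{E^{\otimes(n-1)}}$ acts only on the first slot, so tensor products of centralizers of the $\varphi_j$'s for the corresponding segments are again in $\varphi_n(M)^c$), and $R_n^2$ is invertible (as $R_n^2 \geq R_{n-1}^2 \otimes X_1$ with both factors positive and invertible). Take $R_n$ to be the positive square root and set $Z_n := R_n^{-1}(I_E \otimes R_{n-1})$, $Z_0:=I_M$. Each $Z_n$ lies in $\varphi_n(M)^c$, and the telescoping product
\[
Z^{(n)} = Z_n(I_E \otimes Z_{n-1})(I_{E^{\otimes 2}} \otimes Z_{n-2})\cdots(I_{E^{\otimes(n-1)}}\otimes Z_1) = R_n^{-1}
\]
is immediate from the definitions, giving positivity of $Z^{(n)}$ and the desired relation $R_n=(Z^{(n)})^{-1}$. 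The bound $\sup_n \|Z_n\| < \infty$ needed for $Z$ to be a weight sequence must be extracted from the admissibility hypothesis $\limsup \|X_k\|^{1/k} < \infty$ via quantitative estimates on the recursion; the scalar model with $X_k = c r^k$, in which the $Z_n$ are eventually constant, shows this is the expected behavior.

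The main identity to verify is $\Theta^R_{\mathfrak{z}} \circ (\iota - \Phi_{\mathfrak{z}}) = \iota$. The driving tensor-algebraic fact is $\mathfrak{z}^{(k+j)} = \mathfrak{z}^{(k)}(I_{E^{\otimes k}} \otimes \mathfrak{z}^{(j)})$ from \eqref{eq:powers_b}, combined with the commutation of $R_k^2 \otimes I_H$ with $I_{E^{\otimes k}} \otimes \mathfrak{z}^{(j)}$ on disjoint tensor factors. A direct calculation then gives
\[
\mathfrak{z}^{(k)}\bigl(R_k^2 \otimes \mathfrak{z}^{(j)}(X_j \otimes a)\mathfrak{z}^{(j)*}\bigr)\mathfrak{z}^{(k)*} = \mathfrak{z}^{(k+j)}\bigl((R_k^2 \otimes X_j) \otimes a\bigr)\mathfrak{z}^{(k+j)*},
\]
so that summing over $k\geq 0$, $j\geq 1$ and regrouping by $n=k+j$, the recursion defining $R_n^2$ collapses the double sum to $\Theta^R_{\mathfrak{z}}(a) - a$. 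This gives (4). For (3), note that $\mathfrak{z}\in D(X,\sigma)$ means $\|\Phi_{\mathfrak{z}}(I_H)\|<1$, and since $\Phi_{\mathfrak{z}}$ is completely positive its operator norm equals $\|\Phi_{\mathfrak{z}}(I_H)\|<1$; hence $\sum_n \Phi_{\mathfrak{z}}^n$ converges in norm, and iterating the tensor identity above expands $\Phi_{\mathfrak{z}}^n(I_H)$ as a composition sum in the $X_{j_i}$'s that, by the recursion, coincides with $\mathfrak{z}^{(n)}(R_n^2\otimes I_H)\mathfrak{z}^{(n)*}$, yielding the required norm convergence.

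For uniqueness, any other weight sequence $Z'$ satisfying (1)--(4) produces $(R'_n)^2 := ((Z')^{(n)})^{-2}$, which, by the same coefficient computation imposed on $\Theta^{R'} \circ (\iota - \Phi_{\mathfrak{z}}) = \iota$, must satisfy the same convolution recursion; this can be made rigorous by specializing to $\sigma = \varphi_\infty$ and $\mathfrak{z}$ near the origin and extracting coefficients using the natural grading on $\mathcal{F}(E)$. Uniqueness of positive square roots then forces $R'_n = R_n$, and the identity $Z^{(n)} = Z_n(I_E \otimes Z^{(n-1)})$ with $Z^{(n-1)}$ invertible forces $Z'_n = Z_n$. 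The principal obstacle is operator-algebraic bookkeeping around the recursion, most notably securing the uniform bound $\sup_n \|Z_n\| < \infty$ from the growth hypothesis on $\{X_k\}$; the core tensor identity and the complete-positivity arguments themselves are clean.
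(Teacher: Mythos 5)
Your construction coincides with the paper's: the recursion $R_n^2=\sum_{j=1}^{n}R_{n-j}^2\otimes X_j$ produces exactly the operators $R_k^2=\sum_{l=1}^{k}\sum_{\alpha\in F(k,l)}\bigotimes_{i=1}^{l}X_{\alpha(i)}$ of \eqref{eq:Def_Rk} (group the explicit sum by the last index $\alpha(l)$), the definition $Z_n=R_n^{-1}(I_E\otimes R_{n-1})$ is \eqref{eq:Def of Zk}, and your verification of (4) by checking $\Theta^{R}_{\mathfrak{z}}\circ(\iota-\Phi_{\mathfrak{z}})=\iota$ is just a repackaging of the paper's expansion of $(\iota-\Phi_{\mathfrak{z}})^{-1}=\sum_{l}\Phi_{\mathfrak{z}}^{l}$. (One small misstatement: $\Phi_{\mathfrak{z}}^{n}(I_H)$ is \emph{not} equal to $\mathfrak{z}^{(n)}(R_n^2\otimes I_H)\mathfrak{z}^{(n)*}$; it is the full double series $\sum_{n}\Phi_{\mathfrak{z}}^{n}(I_H)$ that regroups, term by term in the grading, to $\sum_{k}\mathfrak{z}^{(k)}(R_k^2\otimes I_H)\mathfrak{z}^{(k)*}$.)

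The genuine gap is the uniform bound $\sup_k\Vert Z_k\Vert<\infty$, which you explicitly leave open. This is not optional bookkeeping: without it $Z$ is not a weight sequence in the sense of Definition \ref{Def:_Weight_seq_Weighted_intertwiner}, so the theorem is not proved. Moreover the hypothesis you point to, $\limsup\Vert X_k\Vert^{1/k}<\infty$, is not where the bound comes from: upper bounds on $\Vert X_k\Vert$ only control $R_k^2$ from above, whereas $\Vert Z_k\Vert^2=\Vert R_k^{-1}(I_E\otimes R_{k-1}^2)R_k^{-1}\Vert$ requires a \emph{lower} bound on $R_k^2$ relative to $I_E\otimes R_{k-1}^2$. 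The paper extracts this from the invertibility of $X_1$ together with positivity of all the summands: since both groupings of the explicit sum are valid, one also has $R_k^2=\sum_{l=1}^{k}X_l\otimes R_{k-l}^2\geq X_1\otimes R_{k-1}^2\geq a\,(I_E\otimes R_{k-1}^2)$ when $X_1\geq aI_E$, whence $Z_kZ_k^*=R_k^{-1}(I_E\otimes R_{k-1}^2)R_k^{-1}\leq a^{-1}I$ and $\Vert Z_k\Vert\leq a^{-1/2}$; this is the content of equation \eqref{eq:X-Z_relation}, from which the paper reads off $Z_k(X_1\otimes I_{E^{\otimes k-1}})Z_k^*\leq I$. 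Note that the one-sided inequality you do record, $R_n^2\geq R_{n-1}^2\otimes X_1$, bounds $R_{n-1}^2\otimes I_E$ from above by $a^{-1}R_n^2$, which sits on the wrong side of the tensor product for estimating $Z_n$; you need the companion recursion with $X_l$ in the first slot, so the equality of the two groupings must be made explicit.
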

\begin{proof}
We produce the $R_{m}$'s first, and define the $Z_{k}$'s in terms
of them. Their uniqueness will then be established. 

Let $\mathfrak{z}$ be an element of the disc, $D(X,\sigma)$, and
form the completely positive map on $\sigma(M)'$ defined by equation
\ref{eq:Phi-sub-z}. We want to compute $(\iota-\Phi_{\mathfrak{z}})^{-1}$.
Note that the norm of $\Phi_{\mathfrak{z}}$ is $\Vert\Phi_{\mathfrak{z}}(I)\Vert<1$,
by hypothesis. Therefore, $(\iota-\Phi_{\mathfrak{z}})^{-1}=\sum_{k\geq0}\Phi_{\mathfrak{z}}^{k}$,
which converges in norm. To compute $\Phi_{\mathfrak{z}}^{2}$, let
$a\in\sigma(M)'$. Then 
\begin{multline*}
\Phi_{\mathfrak{z}}^{2}(a)=\sum_{k}\Phi_{\mathfrak{z}}(\mathfrak{z}^{(k)}(X_{k}\otimes a)\mathfrak{z}^{(k)*})\\
=\sum_{k,p}\mathfrak{z}^{(p)}(X_{p}\otimes\mathfrak{z}^{(k)}(X_{k}\otimes a)\mathfrak{z}^{(k)*})\mathfrak{z}^{(p)*}\\
=\sum_{k,p\geq1}\mathfrak{z}^{(k+p)}(X_{p}\otimes X_{k}\otimes a)\mathfrak{z}^{(k+p)*}.
\end{multline*}
Relabel the last sum to get
\[
\Phi_{\mathfrak{z}}^{2}(a)=\sum_{j=1}^{\infty}\mathfrak{z}^{(j)}(\sum_{p+k=j}X_{p}\otimes X_{k}\otimes a)\mathfrak{z}^{(j)*},
\]
and then iterate the process to find that all $l\geq1$, 
\[
\Phi_{\mathfrak{z}}^{l}(a)=\sum_{k\geq1}\mathfrak{z}^{(k)}\{(\sum_{i_{1}+\cdots+i_{l}=k}X_{i_{1}}\otimes\cdots\otimes X_{i_{l}})\otimes a\}\mathfrak{z}^{(k)*}.
\]
Consequently, 
\begin{multline*}
(\iota-\Phi_{\mathfrak{z}})^{-1}(a)=a+\sum_{l\geq0}\sum_{k\geq1}\mathfrak{z}^{(k)}\{(\sum_{i_{1}+\cdots+i_{l}=k}X_{i_{1}}\otimes\cdots\otimes X_{i_{l}})\otimes a\}\mathfrak{z}^{(k)*}\\
=a+\sum_{k\geq1}\mathfrak{z}^{(k)}\{(\sum_{l\geq0}\sum_{i_{1}+\cdots+i_{l}=k}X_{i_{1}}\otimes\cdots\otimes X_{i_{l}})\otimes a\}\mathfrak{z}^{(k)*},
\end{multline*}
where empty sums are treated as zero. Introducing notation suggested
to us by Jennifer Good, which clarifies the last sum, we write
\begin{equation}
F(k,l):=\{\alpha:\{1,2,\cdots,l\}\to\mathbb{N}\mid\sum_{i=1}^{l}\alpha(i)=k\},\label{eq:Phi_k_l}
\end{equation}
for positive integers $k$ and $l$, with $l\leq k$. Then one finds
that 
\[
(\iota-\Phi_{\mathfrak{z}})^{-1}(a)=a+\sum_{k=1}^{\infty}\mathfrak{z}^{(k)}\{(\sum_{l=1}^{k}\sum_{\alpha\in F(k,l)}\bigotimes_{i=1}^{l}X_{\alpha(i)})\otimes a\}\mathfrak{z}^{(k)*}.
\]
Set 
\begin{equation}
R_{k}:=\left(\sum_{l=1}^{k}\sum_{\alpha\in F(k,l)}\bigotimes_{i=1}^{l}X_{\alpha(i)}\right)^{\frac{1}{2}},\qquad k\geq1,\label{eq:Def_Rk}
\end{equation}
and note that each $R_{k}$ is invertible by virtue of the assumptions
that $X_{1}$ is invertible and that all the $X_{i}$ are nonnegative.
Also, set 
\begin{equation}
Z_{k}:=R_{k}^{-1}(I_{E}\otimes R_{k-1}),\qquad k\geq1,\label{eq:Def of Zk}
\end{equation}
and set $Z_{0}=I$. It is clear that the route from $\{R_{k}\}_{k\geq1}$
to $Z=\{Z_{k}\}_{k\geq0}$ may be traversed in reverse and therefore
$Z$ is uniquely determined and satisfies $Z^{(m)}\geq0$ for all
$m\geq1$. The only thing that really remains to prove is that $Z$
is a bounded sequence.

For this purpose, first note the following two equations, whose verifications
are immediate from the definitions:
\begin{equation}
Z^{(k)}R_{k}^{2}(Z^{(k)})^{*}=I_{E^{\otimes k}},\qquad k\geq1,\label{eq:Z-R_relation}
\end{equation}
and 
\begin{equation}
\sum_{l=1}^{k}X_{l}\otimes R_{k-l}^{2}=R_{k}^{2},\qquad k\geq1.\label{eq:X-R_relation}
\end{equation}
These equations combine to yield
\begin{multline}
\sum_{l=1}^{k}Z_{k}(I_{E}\otimes Z_{k-1})\cdots(I_{E^{\otimes l-1}}\otimes Z_{k+1-l})(X_{l}\otimes I_{E^{\otimes k-l}})\\
(I_{E^{\otimes l-1}}\otimes Z_{k+1-l})^{*}\cdots(I_{E}\otimes Z_{k-1})^{*}Z_{k}^{*}=I_{E^{\otimes k}}.\label{eq:X-Z_relation}
\end{multline}
Indeed, 
\begin{multline*}
\sum_{l=1}^{k}Z_{k}(I_{E}\otimes Z_{k-1})\cdots(I_{E^{\otimes l-1}}\otimes Z_{k+1-l})(X_{l}\otimes I_{E^{\otimes k-l}})\\
\times(I_{E^{\otimes l-1}}\otimes Z_{k+1-l})^{*}\cdots(I_{E}\otimes Z_{k-1})^{*}Z_{k}^{*}\\
\shoveleft=\sum_{l=1}^{k}Z_{k}(I_{E}\otimes Z_{k-1})\cdots(I_{E^{\otimes l-1}}\otimes Z_{k+1-l})\\
\times(X_{l}\otimes Z^{(k-l)}R_{k-l}^{2}(Z^{(k-l)})^{*})(I_{E^{\otimes l-1}}\otimes Z_{k+1-l})^{*}\cdots(I_{E}\otimes Z_{k-1})^{*}Z_{k}^{*}\\
=\sum Z^{(k)}(X_{l}\otimes R_{k-l}^{2})(Z^{(k)})^{*}=Z^{(k)}R_{k}^{2}(Z^{(k)})^{*}=I_{E^{\otimes k}}.
\end{multline*}
But equation \eqref{eq:X-Z_relation} contains the term $Z_{k}(X_{1}\otimes I_{E^{\otimes k-1}})Z_{k}^{*}$
and all the summands are nonnegative. Therefore, $Z_{k}(X_{1}\otimes I_{E^{\otimes k-1}})Z_{k}^{*}\leq I_{E^{\otimes k}}$,
for all $k\geq1$. Since $X_{1}$ is invertible, there is an $a>0$
such that $X_{1}\geq aI_{E}$ and it then follows that $aZ_{k}Z_{k}^{*}\leq I_{E^{\otimes k}}$
and, thus, $||Z_{k}||\leq1/\sqrt{a}$ for every $k\geq1$, i.e., $Z$
is a bounded sequence.
\end{proof}
Theorem \eqref{thm:The-potential-of-Phiz} raises two questions which
need further investigation that we shall not pursue here. First, ``Are
all weighted shifts covered by this theorem?'' That is, can every
weighted shift with positive weights be realized through a potential
of a suitable admissible sequence? The answer is ``No''. In her thesis
\cite{Good2015}, among many things, Jennifer Good has shown that
the Bergman shift does not arise from an admissible sequence in the
fashion described in Theorem \ref{thm:The-potential-of-Phiz}. The
second question is: ``Are the prepotentials $\Phi_{\mathfrak{z}}$
built from admissible sequences the only prepotentials that give rise
to weighted shifts?'' The answer, again, is ``No''. In his thesis,
Itzik Martziano studies the following generalization of our setting.
Let $X=\{X_{k}\}_{k\geq1}$ be an admissible sequence, let $m$ be
a positive integer and let $\overline{D(X,\sigma,m)}:=\{\mathfrak{z}\mid(id-\Phi_{\mathfrak{z}})^{k}(I)\geq0,\;1\leq k\leq m\}$,
where $\Phi_{\mathfrak{z}}$ is the series of completely positive
maps on $\sigma(M)'$ determined by $\mathfrak{z}$ and $X$ through
\ref{eq:Phi-sub-z} and where $id$ denotes the identity map on $\sigma(M)'$.
He shows that in such a situation, it is possible to build a weight
sequence $Z$ such that every point in $\overline{D(X,\sigma,m)}$
gives a completely contractive representation of $\mathcal{T}_{+}(E,Z)$.
Further, with additional technical hypotheses, every completely contractive
representation of $\mathcal{T}_{+}(E,Z)$ comes from a point in $\overline{D(X,\sigma,m)}$.
\begin{defn}
\label{Def:Associated} Let $X$ be an admissible sequence. A sequence
$G=\{G_{k}\}_{k\geq0}$ of invertible operators, with $G_{k}\in\varphi_{k}(M)^{c}$,
for all $k$, is said to be \emph{associated} to $X$ in case $G^{(m)*}G^{(m)}=R_{m}^{-2}$,
where $R_{m}$ is defined in terms of $X$ in \eqref{eq:Def_Rk}.
The sequence $Z=\{Z_{k}\}_{k\geq0}$ constructed in Theorem \ref{thm:The-potential-of-Phiz}
from $X$ is called the \emph{canonical} weight sequence associated
with $X$.
\end{defn}
The canonical weight sequence need not consist of positive operators,
as is evident from the definition. Nevertheless, as we shall see,
$Z$ is equivalent, in the sense of the following definition to a
sequence with positive terms. 
\begin{defn}
\label{equivalent_weights} Suppose $A=\{A_{k}\}$ and $B=\{B_{k}\}$
are both sequences of weights associated with admissible sequences.
Then $A$ and $B$ are called \emph{equivalent} if, for every $k\geq0$,
there is a unitary operator $U_{k}\in\varphi_{k}(M)^{c}$ such that
\[
B_{k+1}=U_{k+1}A_{k+1}(I_{E}\otimes U_{k}^{*}),\qquad k\geq1.
\]
 \end{defn}
\begin{lem}
\label{equivalent} If $A=\{A_{k}\}$ and $B=\{B_{k}\}$ are both
sequences of invertible weights, with $A_{k},B_{k}\in\varphi_{k}(M)^{c}$,
then $A$ and $B$ are equivalent if and only if they are associated
with the same admissible sequence. \end{lem}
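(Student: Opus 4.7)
The plan is to exploit the recursion $Z^{(m)} = Z_m(I_E \otimes Z^{(m-1)})$ (visible from the definition of $Z^{(m)}$ in Remark \ref{Rm:Products}) to convert the equivalence asserted in the lemma into the identity $A^{(m)*}A^{(m)} = B^{(m)*}B^{(m)}$ for every $m \geq 1$. This is precisely what it means for $A$ and $B$ to be associated to a common admissible sequence: by Definition \ref{Def:Associated} each weight sequence is associated to $X$ via $G^{(m)*}G^{(m)} = R_m^{-2}$, and the admissible sequence $X$ is itself recoverable from the $R_m$'s by the recursion \eqref{eq:X-R_relation}.

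For the implication that equivalence implies shared association, I would substitute the defining relation $B_{k+1} = U_{k+1}A_{k+1}(I_E \otimes U_k^*)$ into the product defining $B^{(m)}$. Adjacent substituted factors produce pairs $(I_{E^{\otimes j}} \otimes U_j^*)(I_{E^{\otimes j}} \otimes U_j)$ that collapse to the identity, and the whole expression telescopes to
\[
B^{(m)} = U_m\, A^{(m)}\, (I_{E^{\otimes m}} \otimes U_0^*).
\]
Since $U_0$ lies in $\varphi_0(M)^c = \mathfrak{Z}(M)$, the factor $I_{E^{\otimes m}} \otimes U_0^*$ is right multiplication by $U_0^*$ on $E^{\otimes m}$, which is an element of $\mathcal{L}(E^{\otimes m})$; by the right $M$-linearity of every element of $\mathcal{L}(E^{\otimes m})$, it commutes with $A^{(m)*}A^{(m)}$. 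Combining this commutation with the unitarity of $U_m$ and of $I_{E^{\otimes m}} \otimes U_0$ gives $B^{(m)*}B^{(m)} = A^{(m)*}A^{(m)}$, as required.

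For the converse, I would define $U_k := B^{(k)}(A^{(k)})^{-1}$ for $k \geq 1$ and put $U_0 := I_M$. Each $A^{(k)}$ and $B^{(k)}$ is an invertible element of the von Neumann algebra $\varphi_k(M)^c$ (being a product of invertibles in that commutant), so $U_k$ is itself an invertible element of $\varphi_k(M)^c$. The hypothesis $A^{(k)*}A^{(k)} = B^{(k)*}B^{(k)}$ immediately gives $U_k^* U_k = I$, and combined with invertibility this promotes $U_k$ to a unitary. To recover the equivalence relation, I would apply the recursion to write $B_{k+1} = B^{(k+1)}(I_E \otimes B^{(k)})^{-1}$, substitute $B^{(k+1)} = U_{k+1}A^{(k+1)}$ and $B^{(k)} = U_k A^{(k)}$, and regroup to expose the factor $A^{(k+1)}(I_E \otimes A^{(k)})^{-1} = A_{k+1}$, yielding $B_{k+1} = U_{k+1}A_{k+1}(I_E \otimes U_k^*)$.

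The bulk of the work is bookkeeping of the nested tensor-identity factors and verifying that the telescoping cancellations in the forward direction go through; the one genuine conceptual point is that $I_{E^{\otimes m}} \otimes U_0^*$ commutes with $A^{(m)*}A^{(m)}$, which absorbs the spurious factor appearing at the right end of the telescoped formula. In the converse, there is no obstacle beyond using invertibility to upgrade $U_k^*U_k = I$ to full unitarity.
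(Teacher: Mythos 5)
Your proposal is correct and follows essentially the same route as the paper: telescope the relation $B_{j}=U_{j}A_{j}(I_{E}\otimes U_{j-1}^{*})$ to get $B^{(m)}=U_{m}A^{(m)}$ (up to the harmless $U_{0}$ factor) and hence $A^{(m)*}A^{(m)}=B^{(m)*}B^{(m)}$, which pins down the $R_{m}$'s and thus $X$; and conversely set $U_{k}=B^{(k)}(A^{(k)})^{-1}$, use $A^{(k)*}A^{(k)}=B^{(k)*}B^{(k)}$ plus invertibility to see $U_{k}$ is unitary, and unwind the recursion to recover the equivalence. The only (cosmetic) differences are that you define the $U_{k}$ directly rather than by induction and you justify discarding the $U_{0}$ factor by a commutation argument where the paper simply normalizes $U_{0}=I$.
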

\begin{proof}
Suppose the two sequences are equivalent and $\{U_{k}\}$ are as in
Definition~\ref{equivalent_weights} then $B^{(k)}=U_{k}A^{(k)}$
for all $k\geq1$ and, thus, $A^{(k)*}A^{(k)}=B^{(k)*}B^{(k)}$. Since
the $X_{k}$'s can be calculated in terms of the $R_{k}$'s and since
the $R_{k}'s$ can be calculated in terms of the weight sequence by
Definition~\ref{Def:Associated}, we see that the equation $A^{(k)*}A^{(k)}=B^{(k)*}B^{(k)}$
implies that the two sequences $A$ and $B$ come from the same admissible
sequence. For the converse, assume that the two sequences are associated
with the same admissible sequence. Then $A^{(k)*}A^{(k)}=B^{(k)*}B^{(k)}$
for all $k$. We construct the sequence $\{U_{k}\}$ in Definition~\ref{equivalent_weights}
inductively. We set $U_{0}=I$ and $U_{1}=B_{1}A_{1}^{-1}$. Then
$U_{1}$ is a unitary operator since it is invertible and $A_{1}^{*}A_{1}=B_{1}^{*}B_{1}$.
Suppose $U_{1},U_{2},\ldots,U_{k}$ have been chosen such that $B_{l}=U_{l}A_{l}(I_{E}\otimes U_{l-1}^{*})$
for $l\leq k$. It follows that, for every such $l$, $B^{(l)}=U_{l}A^{(l)}$.
Set $U_{k+1}=B^{(k+1)}(A^{(k+1)})^{-1}$. Then $U_{k+1}$ is invertible
and 
\begin{multline*}
U_{k}^{*}U_{k}=((A^{(k+1)})^{-1})^{*}(B^{(k+1)})^{*}B^{(k+1)}(A^{(k+1)})^{-1}\\
=((A^{(k+1)})^{-1})^{*}(A^{(k+1)})^{*}A^{(k+1)}(A^{(k+1)})^{-1}=I.
\end{multline*}
Thus $U_{k+1}$ is unitary and $B^{(k+1)}=U_{k+1}A^{(k+1)}$. Therefore
\begin{multline*}
B_{k+1}=B^{(k+1)}(I_{E}\otimes B^{(k)})^{-1}=U_{k+1}A^{(k+1)}(I_{E}\otimes A^{(k)})^{-1}(I_{E}\otimes U_{k})^{*}\\
=U_{k+1}A_{k+1}(I_{E}\otimes U_{k})^{*}.
\end{multline*}
 \end{proof}
\begin{lem}
\label{weights} If $X=\{X_{k}\}_{k\geq1}$ is an admissible sequence,
then there is a unique sequence $Z=\{Z_{k}\}_{k\geq0}$ of weights
associated with $X$ such that $Z_{k}\geq0$ for every $k\geq1$.\end{lem}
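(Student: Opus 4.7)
The plan is to construct the positive weight sequence $Z$ inductively, at each step taking $Z_{k+1}$ to be the positive square root of the unique positive operator that makes the association equation $(Z^{(k+1)})^* Z^{(k+1)} = R_{k+1}^{-2}$ hold, and then to derive uniqueness by combining Lemma~\ref{equivalent} with the uniqueness of positive square roots of invertible positive operators.

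For existence, I would set $Z_0 := I$ and inductively assume that $Z_0, Z_1, \ldots, Z_k$ have been built as positive invertible operators in the respective commutants $\varphi_j(M)^c$, satisfying $(Z^{(j)})^* Z^{(j)} = R_j^{-2}$ for $j \leq k$.  Form the invertible operator $A := R_{k+1}^{-1}(I_E \otimes Z^{(k)})^{-1}$ in $\varphi_{k+1}(M)^c$ and set $Z_{k+1} := (A^* A)^{1/2}$; this is positive, invertible, and lies in $\varphi_{k+1}(M)^c$, and a short calculation will yield $(Z^{(k+1)})^* Z^{(k+1)} = (I_E \otimes Z^{(k)})^* A^* A (I_E \otimes Z^{(k)}) = R_{k+1}^{-2}$, extending the induction.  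Boundedness of the resulting sequence, which is what makes $Z$ a genuine weight sequence in the sense of Definition~\ref{Def:_Weight_seq_Weighted_intertwiner}, is not automatic but can be obtained by copying the last paragraph of the proof of Theorem~\ref{thm:The-potential-of-Phiz} verbatim: from the association equation one derives $Z^{(k)} R_k^2 (Z^{(k)})^* = I_{E^{\otimes k}}$, expanding $R_k^2 = \sum_{l=1}^k X_l \otimes R_{k-l}^2$ recovers equation~\eqref{eq:X-Z_relation}, and reading off the $l=1$ summand together with the bound $X_1 \geq a I_E$ (which exists because $X_1 \in \varphi_1(M)^c$ is invertible and positive) yields $\|Z_k\| \leq a^{-1/2}$.

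For uniqueness, I would let $Z'$ be another positive weight sequence associated with $X$, invoke Lemma~\ref{equivalent} to obtain unitaries $U_k \in \varphi_k(M)^c$ with $U_0 = I$ satisfying $Z'_{k+1} = U_{k+1} Z_{k+1}(I_E \otimes U_k^*)$, and induct to prove $U_k = I$ for every $k$.  Assuming $U_k = I$, the equation collapses to $Z'_{k+1} = U_{k+1} Z_{k+1}$; positivity of both sides and taking adjoints will force $U_{k+1} Z_{k+1} = Z_{k+1} U_{k+1}^*$, which upon squaring gives $(Z'_{k+1})^2 = U_{k+1} Z_{k+1} U_{k+1} Z_{k+1} = Z_{k+1}^2$, so uniqueness of positive square roots of invertible positive operators yields $Z'_{k+1} = Z_{k+1}$, and invertibility of $Z_{k+1}$ then forces $U_{k+1} = I$.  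The main obstacle I anticipate is the inductive existence step, where one needs the natural candidate $R_{k+1}^{-1}(I_E \otimes Z^{(k)})^{-1}$ to admit a positive replacement in $\varphi_{k+1}(M)^c$; the key observation that resolves it is that the absolute value of any invertible element of a von Neumann algebra stays inside that algebra, so $(A^*A)^{1/2}$ automatically has the required commutation properties.
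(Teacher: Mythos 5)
Your proof is correct and takes essentially the same route as the paper: the paper's proof is the same induction, defining $Z_{m+1}^{2}:=((I_{E}\otimes Z^{(m)})^{*})^{-1}R_{m+1}^{-2}(I_{E}\otimes Z^{(m)})^{-1}$ --- which is exactly your $A^{*}A$ --- and reading off uniqueness directly from the fact that this equation forces $Z_{m+1}^{2}$, together with uniqueness of positive square roots. Your detour through Lemma~\ref{equivalent} for the uniqueness step and your explicit verification of boundedness (which the paper's proof leaves implicit, and which indeed follows from the argument at the end of the proof of Theorem~\ref{thm:The-potential-of-Phiz} since equation~\eqref{eq:X-Z_relation} uses only that $Z$ is associated to $X$) are harmless elaborations of the same argument.
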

\begin{proof}
The proof is by induction. For $k=0$, $Z_{0}=I$, by convention.
For $k=1$, $Z_{1}$ is defined by the equation $Z_{1}^{2}=R_{1}^{-2}$
so that $Z_{1}$ is uniquely determined and clearly $Z_{1}$ invertible.
Suppose that $Z_{1},Z_{2},\ldots Z_{m}$ are positive and invertible.
Then $Z^{(1)},\ldots,Z^{(m)}$ are invertible, and as we want $R_{m+1}^{-2}=Z^{(m+1)*}Z^{(m+1)}=(I_{E}\otimes Z^{(m)})^{*}Z_{m}^{2}(I_{E}\otimes Z^{(m)})$,
we simply define $Z_{m+1}^{2}:=((I_{E}\otimes Z^{(m)})^{*})^{-1}R_{m+1}^{-2}(I_{E}\otimes Z^{(m)})^{-1}$.
Then $Z_{m+1}$ is positive and invertible, and its uniqueness is
manifest.
\end{proof}
The proof of the following lemma is straightforward and is omitted.
\begin{lem}
\label{uequivalent} Suppose $A=\{A_{k}\}$ and $B=\{B_{k}\}$ are
two equivalent sequences of weights associated with the same admissible
sequence $X$ and let $\{U_{k}\}_{k=0}^{\infty}$ be a sequence of
unitary operators implementing the equivalence with $U_{0}=I$. Write
$U$ for the diagonal operator on $\mathcal{F}(E)$ defined by $\{U_{k}\}_{k=0}^{\infty}$
, i.e., let $U\theta=U_{k}\theta$, for $\theta\in E^{\otimes k}$.
Then $U$ is a unitary operator in $\mathcal{L}(\mathcal{F}(E))$
that commutes with $\varphi_{\infty}(M)$ and satisfies the equation
$W_{\xi}^{B}U=UW_{\xi}^{A}$ for all $\xi\in E$. Consequently, $U\mathcal{T}_{+}(E,A)U^{*}=\mathcal{T}_{+}(E,B)$,
$U\mathcal{T}(E,A)U^{*}=\mathcal{T}(E,B)$, and $UH^{\infty}(E,A)U^{*}=H^{\infty}(E,B).$
\end{lem}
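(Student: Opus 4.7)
The plan is to verify the three assertions about $U$ (unitarity, commutation with $\varphi_{\infty}(M)$, and the intertwining of weighted shifts) directly from the definitions, and then deduce the algebra identities by conjugation. The first two points are essentially trivialities: unitarity of $U$ follows because $U$ acts as $U_{k}$ on each summand $E^{\otimes k}$ of $\mathcal{F}(E)$ and each $U_{k}$ is a unitary in $\mathcal{L}(E^{\otimes k})$; commutation with $\varphi_{\infty}(M)$ follows summand-by-summand from the hypothesis that $U_{k}\in\varphi_{k}(M)^{c}$.

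The main computation is the intertwining identity $W_{\xi}^{B}U=UW_{\xi}^{A}$, and it reduces to an elementary unraveling on an elementary tensor $\theta\in E^{\otimes k}$. From the definition in Remark~\ref{Rm:Products}, $W_{\xi}^{A}\theta=A_{k+1}(\xi\otimes\theta)$, so that
\[
UW_{\xi}^{A}\theta=U_{k+1}A_{k+1}(\xi\otimes\theta),
\]
while
\[
W_{\xi}^{B}U\theta=B_{k+1}(\xi\otimes U_{k}\theta)=B_{k+1}(I_{E}\otimes U_{k})(\xi\otimes\theta).
\]
The equivalence relation from Definition~\ref{equivalent_weights} gives $B_{k+1}=U_{k+1}A_{k+1}(I_{E}\otimes U_{k}^{*})$, so $B_{k+1}(I_{E}\otimes U_{k})=U_{k+1}A_{k+1}$, and the two expressions agree. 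This is the only real calculation in the proof; the mild subtlety is making sure the indices line up correctly (the case $k=0$ uses $U_{0}=I$) and that one is entitled to work on elementary tensors and then extend by linearity and ultraweak continuity to all of $E^{\otimes k}$, which is fine because $U$ and the weighted shifts are bounded operators.

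The three consequences are then formal. The identity $UW_{\xi}^{A}U^{*}=W_{\xi}^{B}$ for every $\xi\in E$, combined with $U\varphi_{\infty}(a)U^{*}=\varphi_{\infty}(a)$ for every $a\in M$, shows that conjugation by $U$ sends the generators of $\mathcal{T}_{0+}(E,A)$ to the generators of $\mathcal{T}_{0+}(E,B)$; since conjugation by a unitary is a $*$-isomorphism that is continuous in norm, ultraweak, and $C^{*}$ senses, it preserves norm closures, ultraweak closures, and the $C^{*}$-algebras generated. Hence $U\mathcal{T}_{+}(E,A)U^{*}=\mathcal{T}_{+}(E,B)$, $UH^{\infty}(E,A)U^{*}=H^{\infty}(E,B)$, and $U\mathcal{T}(E,A)U^{*}=\mathcal{T}(E,B)$. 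No step is a genuine obstacle; the only thing to get right is the bookkeeping in the shift intertwining relation, which is why the authors presumably omit the proof.
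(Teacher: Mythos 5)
Your proof is correct and is exactly the routine verification the paper has in mind when it omits the proof as "straightforward": the key identity $B_{k+1}(I_{E}\otimes U_{k})=U_{k+1}A_{k+1}$ coming from the equivalence relation is precisely what makes $W_{\xi}^{B}U=UW_{\xi}^{A}$ work on each summand $E^{\otimes k}$, and the algebra identities then follow formally from conjugation by a unitary. No gaps.
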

Thus, the algebras we are studying are all artifacts of the admissible
weight sequence $X$ and, up to unitary equivalence, the representation
theory of each algebra depends only upon $X$.

\section{Representations and the Poisson Kernel\label{sec:Representations-and-Poisson}}

Throughout this section $X=\{X_{k}\}_{k\geq1}$ will be an admissible
weight sequence and $Z=\{Z_{k}\}_{k\geq0}$ will be a sequence of
weights associated to $X$ . Our goal is to prove
\begin{thm}
\label{Dparametrizesrepresentations} For each normal {*}-representation
$\sigma$ of $M$, the completely contractive representations of the
tensor algebra $\mathcal{T}_{+}(E,Z)$ whose restrictions to $\varphi_{\infty}(M)$
is $\sigma$ are parameterized by the points in $\overline{D(X,\sigma)}$.
That is, a representation $\sigma\times\mathfrak{z}$ of the algebraic
tensor algebra $\mathcal{T}_{0+}(E,Z)$ extends to a completely contractive
representation of the full tensor algebra $\mathcal{T}_{+}(E,Z)$
if and only if $\mathfrak{z}$ belongs to $\overline{D(X,\sigma)}$.
Further, if $\mathfrak{z}\in D(X,\sigma)$, then $\sigma\times\mathfrak{z}$
extends to an ultraweakly continuous completely contractive representation
of $H^{\infty}(E,Z)$.
\end{thm}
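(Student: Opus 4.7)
The plan is to prove both directions of the equivalence together with the ultraweak continuity statement. For the sufficiency direction on the open disc $D(X,\sigma)$, I would invoke the two earlier theorems in series. Theorem \ref{thm:The-potential-of-Phiz}(3)--(4) guarantees that for $\mathfrak{z}\in D(X,\sigma)$ the series $\sum_k \mathfrak{z}^{(k)}(R_k^2\otimes I_H)\mathfrak{z}^{(k)*}$ converges in norm and equals the potential $\Theta_\mathfrak{z}^R=(\iota-\Phi_\mathfrak{z})^{-1}$ of the completely positive map $\Phi_\mathfrak{z}$, whose norm equals $\|\Phi_\mathfrak{z}(I)\|<1$. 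Theorem \ref{thm:Muller's_Thm2.2} then produces an isometry $V:H\to\mathcal{F}(E)\otimes_\sigma H$ intertwining $\sigma\times\mathfrak{z}$ on the generators of $\mathcal{T}_{0+}(E,Z)$ with the induced representation $\sigma^{\mathcal{F}(E)}$. Because $\sigma^{\mathcal{F}(E)}$ is a normal $*$-representation of $\mathcal{L}(\mathcal{F}(E))$, its restriction to $\mathcal{T}_+(E,Z)$ is completely contractive and its restriction to $H^\infty(E,Z)$ is ultraweakly continuous; the compression $F\mapsto V^*\sigma^{\mathcal{F}(E)}(F)V$ inherits both properties, supplying the desired ultraweakly continuous completely contractive extension of $\sigma\times\mathfrak{z}$ to $H^\infty(E,Z)$.

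To handle the boundary, I scale. For $\mathfrak{z}\in\overline{D(X,\sigma)}$ and any $0<r<1$, the inequality $\Phi_{r\mathfrak{z}}(I)=\sum_k r^{2k}\mathfrak{z}^{(k)}(X_k\otimes I)\mathfrak{z}^{(k)*}\le r^2\Phi_\mathfrak{z}(I)$ shows that $r\mathfrak{z}\in D(X,\sigma)$, so the previous paragraph produces completely contractive representations $\sigma\times(r\mathfrak{z})$ of $\mathcal{T}_+(E,Z)$. On each generator $W_\xi$, $\xi\in E^{\otimes k}$, Remark \ref{Rm:_Algebraic_Extension} gives $(\sigma\times r\mathfrak{z})(W_\xi)=r^k\mathfrak{z}^{(k)}L_\xi\to\mathfrak{z}^{(k)}L_\xi=(\sigma\times\mathfrak{z})(W_\xi)$ in norm as $r\to 1^-$, and hence the same convergence holds pointwise in norm on the algebraic tensor algebra $\mathcal{T}_{0+}(E,Z)$. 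Since the $\sigma\times(r\mathfrak{z})$ are uniformly completely contractive and $\mathcal{T}_{0+}(E,Z)$ is norm-dense in $\mathcal{T}_+(E,Z)$, a standard BLT-type argument yields a completely contractive extension of $\sigma\times\mathfrak{z}$ to $\mathcal{T}_+(E,Z)$.

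For the necessity direction, suppose $\rho:=\sigma\times\mathfrak{z}$ extends to a completely contractive representation of $\mathcal{T}_+(E,Z)$; I want $\mathfrak{z}\in\overline{D(X,\sigma)}$. Fix for each $k\ge 1$ a quasi-basis $\{\eta_j^{(k)}\}_j$ of the $W^*$-correspondence $E^{\otimes k}$ and, using $X_k^{1/2}\in\varphi_k(M)^c$, form for each $N\ge 1$ the finite row $\mathbf{R}_N:=[W_{X_k^{1/2}\eta_j^{(k)}}]_{1\le k\le N,\,j}$, whose entries lie in $\mathcal{T}_{0+}(E,Z)$. The crux is to establish $\mathbf{R}_N\mathbf{R}_N^*\le I_{\mathcal{F}(E)}$. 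I would transfer to the weighted Fock space $\mathcal{F}_Z(E)$ via the correspondence isomorphism $V$ of Proposition \ref{prop: Weighted_Fock_space}, under which $W_\xi$ becomes the unweighted creation operator $T_\xi^F$; a direct calculation shows that for $\theta\in F(n)$ one has $\sum_j T^F_{X_k^{1/2}\eta_j^{(k)}}(T^F_{X_k^{1/2}\eta_j^{(k)}})^*\theta=(X_k\otimes R_{n-k}^2)R_n^{-2}\theta$, and summing over $1\le k\le n$ collapses, by the recursion $R_n^2=\sum_{l=1}^n X_l\otimes R_{n-l}^2$ of equation \eqref{eq:X-R_relation}, to the identity on $F(n)$ for all $n\ge 1$. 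Hence the full row $\mathbf{R}_\infty$ satisfies $\mathbf{R}_\infty\mathbf{R}_\infty^*=I_{\mathcal{F}(E)}-P_0\le I_{\mathcal{F}(E)}$, where $P_0$ is the projection onto $E^{\otimes 0}$. Complete contractivity of $\rho$ then gives
\[
\sum_{k=1}^N\mathfrak{z}^{(k)}(X_k\otimes I_H)\mathfrak{z}^{(k)*}=\rho(\mathbf{R}_N)\rho(\mathbf{R}_N)^*\le I_H,
\]
after using $\rho(W_\xi)=\mathfrak{z}^{(k)}L_\xi$ from Remark \ref{Rm:_Algebraic_Extension} together with the identity $\sum_j L_{X_k^{1/2}\eta_j^{(k)}}L_{X_k^{1/2}\eta_j^{(k)}}^*=X_k\otimes I_H$. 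Letting $N\to\infty$ gives $\Phi_\mathfrak{z}(I)\le I_H$, whence $\mathfrak{z}\in\overline{D(X,\sigma)}$.

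The main obstacle is the row-norm estimate in the necessity direction. Its verification requires both the clean transfer of the weighted operators $W_\xi$ to the unweighted creation operators on $\mathcal{F}_Z(E)$ and careful bookkeeping of the $F(k)$-inner products twisted by $R_k^{-2}$, combined with the recursion \eqref{eq:X-R_relation}; it is at this step that the canonical relationship between the admissible sequence $X$ and the weight sequence $Z$ supplied by Theorem \ref{thm:The-potential-of-Phiz} is used in an essential way.
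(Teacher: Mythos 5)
Your proposal is essentially correct and reaches the same three milestones as the paper, but by somewhat different machinery at each stage. For the open disc, you chain Theorem \ref{thm:The-potential-of-Phiz} into Theorem \ref{thm:Muller's_Thm2.2}; the paper instead runs the argument through the Poisson kernel of Section \ref{sec:Representations-and-Poisson} (Lemma \ref{PoissonIsometry}, Lemma \ref{Kintertwines}, Corollary \ref{representations}), whose isometry is the same $V$ as in Theorem \ref{thm:Muller's_Thm2.2} --- so this is a repackaging rather than a new idea, and it buys nothing but also loses nothing. For the boundary, your scaling $r\mathfrak{z}\to\mathfrak{z}$ with norm convergence on the dense subalgebra $\mathcal{T}_{0+}(E,Z)$ is a more elementary route than the paper's Proposition \ref{Psi}, which takes a point-ultraweak limit of the unital completely positive maps $\Psi_{Z,r\mathfrak{z}}$ on the operator system $\overline{span}\{TS^{*}\}$; the paper's version costs Arveson's compactness machinery but delivers the stronger ucp extension that is needed later for the dilation theorems, which your version does not produce. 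For necessity, you replace the paper's Arveson-dilation step (Lemma \ref{ccrepn}) by applying complete contractivity directly to a row contraction, and you re-derive the content of Lemma \ref{W} by transporting to the weighted Fock space $\mathcal{F}_{Z}(E)$ of Proposition \ref{prop: Weighted_Fock_space}; your computation of $\sum_{j}T^{F}_{X_k^{1/2}\eta_j}(T^{F}_{X_k^{1/2}\eta_j})^{*}$ collapsing via \eqref{eq:X-R_relation} is correct and is arguably cleaner than invoking the dilation.

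Two details need repair. First, the ``finite row'' $\mathbf{R}_{N}$ is not finite in general: the decomposition $X_{k}=\sum_{j}\xi_{j}\otimes\xi_{j}^{*}$ of a positive element of $\mathcal{L}(E^{\otimes k})$ provided by Lemma \ref{lem:Positive_Expansion} has a possibly uncountable index set, and the sum converges only ultrastrongly. You must therefore work with finite sub-rows $\mathbf{R}_{F}$ (each satisfying $\mathbf{R}_{F}\mathbf{R}_{F}^{*}\leq I-P_{0}\leq I$ because all summands are positive), apply complete contractivity to each, and take the supremum --- precisely the bookkeeping the paper performs with the finite subsets $F(k)\subseteq A(k)$. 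Second, Theorem \ref{thm:Muller's_Thm2.2} assumes $Z^{(n)}\geq0$, which holds for the canonical weight sequence but not for a general sequence $Z$ associated with $X$, as the theorem under discussion permits; to cover general $Z$ you need the unitary equivalence of Lemma \ref{uequivalent} (equivalently, the unitary $U$ of Lemma \ref{Kintertwines}(2)). Both are routine fixes, not flaws in the strategy.
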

The key ingredient to be introduced and used in this effort is the
Poisson kernel built from $Z$. 

We write $W=W^{Z}$ for the weighted intertwiner in $\mathfrak{I}(\varphi_{\infty}^{\mathcal{F}(E)}\circ\varphi,\varphi)$
determined by $Z$ and we write $P_{k}$ for the projection in $\mathcal{L}(\mathcal{F}(E))$
onto the $k^{th}$ summand of $\mathcal{F}(E)$, $E^{\otimes k}$,
$k\geq0$. Note that $W$ may be viewed matricially as 
\[
W=\begin{bmatrix}0 & 0 & 0 & \cdots\\
Z_{1} & 0 & 0 & \cdots\\
0 & Z_{2} & 0 & \cdots\\
\vdots & \ddots & \ddots & \ddots
\end{bmatrix}
\]
acting from $\sum_{k\geq1}^{\oplus}E^{\otimes k}$ to $\sum_{k\geq0}^{\oplus}E^{\otimes k}$.
Also $X$ may be promoted to a sequence of operators $\widetilde{X}=\{\widetilde{X}_{k}\}_{k\geq0}$
on $\mathcal{F}(E)$, where $\widetilde{X}_{0}=0$ and for $k\geq1$,
$\widetilde{X}_{k}$ is the diagonal operator whose $l^{th}$ diagonal
entry $\widetilde{X}_{k}(l)$ is given by
\begin{equation}
\widetilde{X}_{k}(l):=\begin{cases}
\begin{array}{c}
0,\\
X_{k}\otimes I_{E^{\otimes(l-k)}},
\end{array} & \begin{array}{c}
l\leq k\\
l>k
\end{array}\end{cases}.\label{eq:Xk-tilde}
\end{equation}

The proof of the following lemma depends heavily on equation \ref{eq:X-Z_relation}.
In that equation, the sequence $Z$ is the canonical sequence associated
with $X$. However, the only fact about $Z$ that was used is that
it is associated with $X$, i.e., that $Z^{(m)*}Z^{(m)}=R_{m}^{-2}$,
where $R_{m}$ is defined in terms of $X$ in \eqref{eq:Def_Rk}.
So in particular, that equation is valid for the choice of $Z$ made
at the outset of this section. 
\begin{lem}
\label{W} With the notation established, 
\begin{equation}
\sum_{k=1}^{\infty}W^{(k)}\widetilde{X}_{k}W^{(k)*}=I_{\mathcal{F}(E)}-P_{0}.\label{eq:sumW}
\end{equation}
\end{lem}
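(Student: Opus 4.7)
The plan is to prove the identity one summand at a time, reducing it to equation \eqref{eq:X-Z_relation}. As noted in the paragraph preceding the lemma, \eqref{eq:X-Z_relation} holds for \emph{any} weight sequence $Z$ associated with $X$ (only the relation $Z^{(m)*}Z^{(m)}=R_{m}^{-2}$ is used in its derivation), so it is available here.

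First I would compute $W^{(k)}$ on each summand of the source $E^{\otimes k}\otimes_{\varphi_{\infty}}\mathcal{F}(E)=\bigoplus_{j\geq 0}E^{\otimes(k+j)}$. Directly from the definition $W=DT$, one has $W(\xi\otimes\eta)=Z_{j+1}(\xi\otimes\eta)$ for $\xi\in E$ and $\eta\in E^{\otimes j}$. A short induction on $k$ using $W^{(k+1)}=W(I_{E}\otimes W^{(k)})$, combined with $Z_{i}\in\varphi_{i}(M)^{c}$, then yields
\[
W^{(k)}|_{E^{\otimes(k+j)}}=A^{k}_{k+j}:=Z_{k+j}(I_{E}\otimes Z_{k+j-1})\cdots(I_{E^{\otimes(k-1)}}\otimes Z_{j+1}).
\]
Taking adjoints, $W^{(k)*}$ vanishes on $E^{\otimes l}$ for $l<k$, and for $l\geq k$ it maps $E^{\otimes l}$ into the $(l-k)$-th summand $E^{\otimes k}\otimes_{\varphi_{l-k}}E^{\otimes(l-k)}\cong E^{\otimes l}$ via $(A^{k}_{l})^{*}$.

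Next, the definition of $\widetilde{X}_{k}$ asserts that its restriction to $E^{\otimes l}$ is $X_{k}\otimes I_{E^{\otimes(l-k)}}$ for $l\geq k$ and zero otherwise. Combining this with the previous step, I get
\[
W^{(k)}\widetilde{X}_{k}W^{(k)*}|_{E^{\otimes l}}=\begin{cases}A^{k}_{l}(X_{k}\otimes I_{E^{\otimes(l-k)}})(A^{k}_{l})^{*}, & l\geq k,\\ 0, & l<k,\end{cases}
\]
so that on each summand the sum $\sum_{k\geq 1}W^{(k)}\widetilde{X}_{k}W^{(k)*}$ collapses to the finite sum $\sum_{k=1}^{l}A^{k}_{l}(X_{k}\otimes I_{E^{\otimes(l-k)}})(A^{k}_{l})^{*}$.

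After renaming indices ($k\leftrightarrow l$), equation \eqref{eq:X-Z_relation} says exactly that this finite sum equals $I_{E^{\otimes l}}$ for every $l\geq 1$, while the $l=0$ summand contributes zero because $W^{(k)*}P_{0}=0$ for all $k\geq 1$. Summing over $l\geq 0$ gives $I_{\mathcal{F}(E)}-P_{0}$; convergence of the series in, say, the strong operator topology is automatic since the partial sums are positive, monotone increasing on each summand, and pointwise eventually equal to the claimed limit. The only obstacle is really bookkeeping: one must identify the $(l-k)$-th summand of $E^{\otimes k}\otimes_{\varphi_{\infty}}\mathcal{F}(E)$ with $E^{\otimes l}$ so that $\widetilde{X}_{k}$ can be inserted between $W^{(k)*}$ and $W^{(k)}$, after which the lemma is a direct repackaging of \eqref{eq:X-Z_relation}.
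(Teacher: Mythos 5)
Your proof is correct and takes essentially the same route as the paper's: both reduce the identity to its diagonal blocks, compute the restriction of $W^{(k)}$ to the $l$-th summand as $Z_{l}(I_{E}\otimes Z_{l-1})\cdots(I_{E^{\otimes(k-1)}}\otimes Z_{l-k+1})$, and invoke equation \eqref{eq:X-Z_relation} (valid for any weight sequence associated with $X$) to see that each block sums to $I_{E^{\otimes l}}$ for $l\geq1$ and to $0$ for $l=0$. Your added remarks on the summand bookkeeping and on convergence of the series are consistent with, and slightly more explicit than, the paper's argument.
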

\begin{proof}
Write $L$ for the left-hand side of (\ref{eq:sumW}). Then it is
easy to see that $L$ is a diagonal matrix and so we need only attend
to its diagonal entries. For $l=0$, $P_{0}LP_{0}=0$, by definition.
For $l=1$, 
\begin{multline*}
P_{1}LP_{1}=P_{1}W\widetilde{X}_{1}W{}^{*}P_{1}\\
=P_{1}WP_{1}\widetilde{X}_{1}P_{1}W{}^{*}P_{1}=P_{1}(Z_{1}X_{1}Z_{1}^{*})P_{1}\\
=P_{1},
\end{multline*}
by equation \eqref{eq:X-Z_relation}. To compute $P_{l}LP_{l}$ for
$l>1$, note first that for $k\leq l$, 
\begin{multline*}
P_{l}W^{(k)}P_{l}=P_{l}W(I_{E}\otimes W)\cdots(I_{E^{\otimes(k-1)}}\otimes W)P_{l}\\
=P_{l}WP_{l}(I_{E}\otimes W)P_{l}\cdots P_{l}(I_{E^{\otimes(k-1)}}\otimes W)P_{l}\\
=P_{l}Z_{l}(I_{E}\otimes P_{l-1}Z_{l-1}P_{l-1})\cdots(I_{E^{\otimes(k-1)}}\otimes P_{l-k+1}Z_{l-k+1})P_{l}\\
=P_{l}(Z_{l}(I_{E}\otimes Z_{l-1})\cdots(I_{E^{\otimes(k-1)}}\otimes Z_{l-k+1})P_{l}.
\end{multline*}
It follows easily from equation \eqref{eq:X-Z_relation} that $P_{l}LP_{l}=P_{l}$
for every $l>0$.
\end{proof}
Suppose, now, $\sigma$ is a normal representation of $M$ on a Hilbert
space $H_{\sigma}$, and let $\mathfrak{z}\in\mathfrak{I}(\sigma^{E}\circ\varphi,\sigma)$.
Then, as we noted in Remark \ref{Rm:_Algebraic_Extension}, the pair
$(\sigma,\mathfrak{z})$ determines a purely algebraic homomorphism
or representation of $\mathcal{T}_{0+}(E,Z)$ in $B(H_{\sigma})$,
denoted $\sigma\times\mathfrak{z}$, by the formula 
\[
\sigma\times\mathfrak{z}(\varphi_{\infty}(a)+\sum_{k\geq1}W_{\xi_{k}})=\sigma(a)+\sum_{k\geq1}\mathfrak{z}^{(k)}L_{\xi_{k}},
\]
where $\xi_{k}\in E^{\otimes k}$ and, recall, $L_{\xi_{k}}$ is the
map from $H_{\sigma}$ to $E^{\otimes k}\otimes_{\sigma}H_{\sigma}$
defined by the formula $L_{\xi_{k}}h:=\xi_{k}\otimes h$. We want
to determine when $\sigma\times\mathfrak{z}$ extends to a completely
contractive representation of $\mathcal{T}_{+}(E,Z)$. We first show
that each point in the \emph{open} disc $D(X,\sigma)$ defines a completely
contractive representation of $\mathcal{T}_{+}(E,Z)$. In fact, we
show that such representations automatically extend to ultraweakly
continuous, completely contractive representations of $H^{\infty}(X,\sigma)$.
We then focus on points on the boundary of $D(X,\sigma)$. Finally,
we show that each completely contractive representation of $\mathcal{T}_{+}(E,Z$)
is determined by a point in $\overline{D(X,\sigma)}$. 
\begin{defn}
\label{Poisson_Kernel} Let $X$ be an admissible sequence and let
$R_{k}$ be defined as in equation \eqref{eq:Def_Rk}. Given a normal
representation $\sigma$ on $H$ and an element $\mathfrak{z}\in\overline{D(X,\sigma)}$,
we set $\Delta_{*}(\mathfrak{z})=(I-\sum_{k=1}^{\infty}\mathfrak{z}^{(k)}(X_{k}\otimes I_{H_{\sigma}})\mathfrak{z}^{(k)*})^{1/2}$.
The \emph{Poisson kernel}, denoted $K(\mathfrak{z})$, is the operator
$K(\mathfrak{z}):H\rightarrow\mathcal{F}(E)\otimes_{\sigma}H$ defined
by 
\[
K(\mathfrak{z})=(I_{\mathcal{F}(E)}\otimes\Delta_{*}(\mathfrak{z}))[I_{H}\quad(R_{1}\otimes I_{H})\mathfrak{z}^{*}\quad(R_{2}\otimes I_{H})(\mathfrak{z}^{(2)})^{*}\quad\ldots]^{T}
\]
That is, 
\[
K(\mathfrak{z})h=(I_{\mathcal{F}(E)}\otimes\Delta_{*}(\mathfrak{z}))(h\oplus(R_{1}\otimes I_{H})\mathfrak{z}^{*}h\oplus(R_{2}\otimes I_{H})(\mathfrak{z}^{(2)})^{*}h\oplus\cdots)
\]
for $h\in H$. 
\end{defn}
It should be emphasized that \emph{a priori} the sum representing $K(\mathfrak{z})h$ converges only for $\mathfrak{z}\in D(X,\sigma)$. The following lemma shows that the series representing $K(\mathfrak{z})^*K(\mathfrak{z})$ converges in the strong operator topology and identifies the sum.  From this one concludes that the sum representing $K(\mathfrak{z})h$ converges strongly for all $\mathfrak{z}\in \overline{ D(X,\sigma)}$.
\begin{lem}
\label{PoissonIsometry} Assume $\mathfrak{z}\in\overline{D(X,\sigma)}$
and let $\Phi_{\mathfrak{z}}$ be its prepotential as defined in Lemma
\ref{lem:Basic_CP_maps}. Then 
\begin{enumerate}
\item The sequence $\{\Phi_{\mathfrak{z}}^{m}(I)\}_{m=0}^{\infty}$ is a
decreasing sequence of positive operators on $H_{\sigma}$, with strong
limit denoted $Q_{\mathfrak{z}}$. 
\item $\Phi_{\mathfrak{z}}(Q_{\mathfrak{z}})=Q_{\mathfrak{z}}$. 
\item $K(\mathfrak{z})^{*}K(\mathfrak{z})=I_{H}-Q_{\mathfrak{z}}$. 
\item If $\mathfrak{z}\in D(X,\sigma)$, i.e., if $||\sum_{k=1}^{\infty}\mathfrak{z}^{(k)}(X_{k}\otimes I_{H_{\sigma}})\mathfrak{z}^{(k)*}||<1$,
then $Q_{\mathfrak{z}}=0$ and, consequently, $K(\mathfrak{z})^{*}K(\mathfrak{z})=I_{H}$
so that $K(\mathfrak{z})$ is an isometry. 
\end{enumerate}
\end{lem}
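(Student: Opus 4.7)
The plan is to prove the four parts in order, with (1) and (2) setting up the basic properties of the decreasing iterates of $\Phi_{\mathfrak{z}}$, and (3) reducing the computation of $K(\mathfrak{z})^*K(\mathfrak{z})$ to a telescoping sum.

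For (1), I would start from the hypothesis $\mathfrak{z}\in\overline{D(X,\sigma)}$, which by Lemma \ref{lem:Basic_CP_maps} says $\Phi_{\mathfrak{z}}$ is completely positive with $\Phi_{\mathfrak{z}}(I)\le I$. Hence $I-\Phi_{\mathfrak{z}}(I)\ge 0$, and applying the positive map $\Phi_{\mathfrak{z}}^{m}$ gives $\Phi_{\mathfrak{z}}^{m}(I)-\Phi_{\mathfrak{z}}^{m+1}(I)\ge 0$. The sequence is therefore decreasing in the set of positive elements bounded below by $0$, so it converges strongly to a limit $Q_{\mathfrak{z}}\in\sigma(M)'$.

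For (2), the one-sided inequality $\Phi_{\mathfrak{z}}(Q_{\mathfrak{z}})\le Q_{\mathfrak{z}}$ follows immediately from $Q_{\mathfrak{z}}\le\Phi_{\mathfrak{z}}^{m}(I)$ and positivity, by taking $m\to\infty$ in $\Phi_{\mathfrak{z}}(Q_{\mathfrak{z}})\le\Phi_{\mathfrak{z}}^{m+1}(I)$. For the opposite inequality I would establish normality of $\Phi_{\mathfrak{z}}$: each summand $T_{k}(a):=\mathfrak{z}^{(k)}(X_{k}\otimes a)\mathfrak{z}^{(k)*}$ is normal, and since the partial sums form a monotone net of positive maps bounded above by $\Phi_{\mathfrak{z}}$ (which on $I$ yields the element $\Phi_{\mathfrak{z}}(I)\le I$), a standard monotone/Fubini argument on matrix elements $\langle\cdot h,h\rangle$ shows that $\Phi_{\mathfrak{z}}$ is strongly continuous on decreasing bounded nets. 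Applying this to $\Phi_{\mathfrak{z}}^{m}(I)\searrow Q_{\mathfrak{z}}$ gives $\Phi_{\mathfrak{z}}(Q_{\mathfrak{z}})=\lim_{m}\Phi_{\mathfrak{z}}^{m+1}(I)=Q_{\mathfrak{z}}$.

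For (3), I would work with the partial sums $S_{N}:=\sum_{k=0}^{N}\mathfrak{z}^{(k)}(R_{k}^{2}\otimes\Delta_{*}(\mathfrak{z})^{2})\mathfrak{z}^{(k)*}$, which equal $K_{N}(\mathfrak{z})^{*}K_{N}(\mathfrak{z})$ for the truncated Poisson kernel and form an increasing positive sequence. Expanding $R_{k}^{2}=\sum_{l=1}^{k}\sum_{\alpha\in F(k,l)}\bigotimes_{i=1}^{l}X_{\alpha(i)}$ from \eqref{eq:Def_Rk}, and using the iterated formula $\Phi_{\mathfrak{z}}^{l}(a)=\sum_{j\ge l}\mathfrak{z}^{(j)}\bigl((\sum_{\alpha\in F(j,l)}\bigotimes X_{\alpha(i)})\otimes a\bigr)\mathfrak{z}^{(j)*}$ from the proof of Theorem \ref{thm:The-potential-of-Phiz}, a Fubini-type interchange, valid termwise for positive operators, gives
\[
\sum_{k=0}^{\infty}\mathfrak{z}^{(k)}(R_{k}^{2}\otimes\Delta_{*}(\mathfrak{z})^{2})\mathfrak{z}^{(k)*}=\sum_{l=0}^{\infty}\Phi_{\mathfrak{z}}^{l}(\Delta_{*}(\mathfrak{z})^{2}).
\]
Since $\Delta_{*}(\mathfrak{z})^{2}=(\iota-\Phi_{\mathfrak{z}})(I)$, the right-hand side telescopes to $\lim_{N}(I-\Phi_{\mathfrak{z}}^{N+1}(I))=I-Q_{\mathfrak{z}}$ by (1). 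This proves $S_{N}\nearrow I-Q_{\mathfrak{z}}$ in the strong operator topology and simultaneously shows that the defining series for $K(\mathfrak{z})h$ converges strongly for $h\in H$, identifying $K(\mathfrak{z})^{*}K(\mathfrak{z})=I_{H}-Q_{\mathfrak{z}}$. Finally, (4) is immediate: when $\mathfrak{z}\in D(X,\sigma)$, $\|\Phi_{\mathfrak{z}}\|=\|\Phi_{\mathfrak{z}}(I)\|<1$, so $\|\Phi_{\mathfrak{z}}^{m}(I)\|\le\|\Phi_{\mathfrak{z}}\|^{m}\to 0$ in norm, forcing $Q_{\mathfrak{z}}=0$ and $K(\mathfrak{z})^{*}K(\mathfrak{z})=I_{H}$.

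The main obstacle is part (3), specifically justifying both the termwise interchange of the double sum and the convergence of the resulting series at boundary points $\mathfrak{z}$ with $\|\Phi_{\mathfrak{z}}(I)\|=1$, where $\Theta_{\mathfrak{z}}^{R}$ need not converge in operator norm. The positivity of every summand is what rescues the argument: positive-operator monotone convergence permits both the Fubini swap and the passage to the limit, and the boundedness of the partial sums (from the telescoping formula) guarantees strong convergence of $K(\mathfrak{z})$ itself on the closed disc, a point one must verify carefully since this is stronger than the a priori definition of $K(\mathfrak{z})$.
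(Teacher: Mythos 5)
Your proposal is correct and follows essentially the same route as the paper: part (1) by applying the positive map $\Phi_{\mathfrak{z}}^{m}$ to $I-\Phi_{\mathfrak{z}}(I)\ge 0$, part (3) by expanding $R_{k}^{2}$ via \eqref{eq:Def_Rk}, recognizing the resulting double sum as $\sum_{l\ge 0}\Phi_{\mathfrak{z}}^{l}(\Delta_{*}(\mathfrak{z})^{2})$ (justified by positivity of all summands), and telescoping with $\Delta_{*}(\mathfrak{z})^{2}=(\iota-\Phi_{\mathfrak{z}})(I)$ to obtain $I-Q_{\mathfrak{z}}$. The extra details you supply for (2) (normality of $\Phi_{\mathfrak{z}}$ on decreasing bounded sequences) and for the boundary convergence of $K(\mathfrak{z})$ are points the paper dispatches as ``immediate'' or in the remark preceding the lemma, so they are elaborations rather than a different argument.
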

\begin{proof}
For the first assertion, note that $\Phi_{\mathfrak{z}}(I)\leq I$
and $\Phi_{\mathfrak{z}}$ is completely positive. Thus the sequence
is decreasing. The second assertion is immediate. For the third, we
compute.  But first note that all the terms in the sums involved are non-negative and so sums may be interchanged.  
\[
K(\mathfrak{z})^{*}K(\mathfrak{z})=\Delta_{*}(\mathfrak{z})^{2}+\sum_{k=1}^{\infty}\mathfrak{z}^{(k)}(R_{k}^{2}\otimes\Delta_{*}(\mathfrak{z})^{2})\mathfrak{z}^{(k)*}=
\]
\begin{multline*}
=\Delta_{*}(\mathfrak{z})^{2}+\sum_{k=1}^{\infty}\mathfrak{z}^{(k)}\left(\left(\sum_{l=1}^{k}\sum_{\alpha\in F(k,l)}\bigotimes_{i=1}^{l}X_{\alpha(i)}\right)\otimes\Delta_{*}(\mathfrak{z})^{2}\right)\mathfrak{z}^{(k)*}\\
=\Delta_{*}(\mathfrak{z})^{2}+\sum_{k=1}^{\infty}\sum_{l\leq k}\mathfrak{z}^{(k)}(\sum_{i_{1}+i_{2}+\cdots+i_{l}=k}X_{i_{1}}\otimes X_{i_{2}}\otimes\cdots\otimes X_{i_{l}}\otimes\Delta_{*}(\mathfrak{z})^{2})\mathfrak{z}^{(k)*}.
\end{multline*}
Now we compute again: 
\begin{multline*}
\Phi_{\mathfrak{z}}(\mathfrak{z}^{(k)}(X_{k}\otimes\Delta_{*}(\mathfrak{z})^{2})\mathfrak{z}^{(k)*})\\=\sum_{m=1}^{\infty}\mathfrak{z}^{(m)}(X_{j}\otimes\mathfrak{z}^{(k)}(X_{k}\otimes\Delta_{*}(\mathfrak{z})^{2})\mathfrak{z}^{(k)*})\mathfrak{z}^{(m)*}\\=\sum_{m=k+1}^{\infty}\mathfrak{z}^{(m)}(X_{m-k}\otimes X_{k}\otimes\Delta_{*}(\mathfrak{z})^{2})\mathfrak{z}^{(m)*},
\end{multline*}
and conclude 
\[
\Phi_{\mathfrak{z}}^{2}(\Delta_{*}(\mathfrak{z})^{2})=\sum_{m=2}^{\infty}\mathfrak{z}^{(m)}(\sum_{i_{1}+i_{2}=m}X_{i_{1}}\otimes X_{i_{2}}\otimes\Delta_{*}(\mathfrak{z})^{2})\mathfrak{z}^{(m)*}.
\]
Similarly, 
\[
\Phi_{\mathfrak{z}}^{l}(\Delta_{*}(\mathfrak{z})^{2})=\sum_{m=l}^{\infty}\mathfrak{z}^{(m)}(\sum_{i_{1}+i_{2}+\cdots+i_{l}=m}X_{i_{1}}\otimes X_{i_{2}}\otimes\cdots\otimes X_{i_{l}}\otimes\Delta_{*}(\mathfrak{z})^{2})\mathfrak{z}^{(m)*}.
\]
Combining these computations, we find that 
\begin{multline*}
K(\mathfrak{z})^{*}K(\mathfrak{z})=\Delta_{*}(\mathfrak{z})^{2}+\sum_{l=1}^{\infty}\Phi_{\mathfrak{z}}^{l}(\Delta_{*}(\mathfrak{z})^{2})=I-\Phi_{\mathfrak{z}}(I)+\sum_{l=1}^{\infty}\Phi_{\mathfrak{z}}^{l}(I-\Phi_{\mathfrak{z}}(I))\\
=I-\lim\Phi_{\mathfrak{z}}^{l}(I)=I-Q_{\mathfrak{z}}.
\end{multline*}
The fourth, and final, assertion is immediate.\end{proof}
\begin{lem}
\label{Kintertwines} Let $\sigma$ be a normal representation of
$M$, let $X$ be an admissible sequence and let $Z=\{Z_{k}\}$ be
an associated sequence of weights. 
\begin{enumerate}
\item If $Z$ is the canonical sequence of weights associated with $X$,
then, for every $\xi\in E$ and every $\mathfrak{z}\in\overline{D(X,\sigma)}$,
\begin{equation}
K(\mathfrak{z})L_{\xi}^{*}\mathfrak{z}^{*}=(W_{\xi}^{*}\otimes I_{H})K(\mathfrak{z})\label{eq:intertwine}
\end{equation}
 where $L_{\xi}h=\xi\otimes h$ for $\xi\in E$ and $h\in H$. 
\item For a general sequence of weights $Z$, there exists a unitary operator,
$U\in\mathcal{L}(\mathcal{F}(E))\cap\varphi_{\infty}(M)'$, that maps
each $E^{\otimes k}$ onto itself, such that 
\begin{equation}
(U\otimes I_{H})K(\mathfrak{z})L_{\xi}^{*}\mathfrak{z}^{*}=(W_{\xi}^{*}\otimes I_{H})(U\otimes I_{H})K(\mathfrak{z}).\label{eq:intertwine_a}
\end{equation}

\item For $a\in M$ we have 
\begin{equation}
\sigma^{\mathcal{F}(E)}\circ\varphi_{\infty}(a)K(\mathfrak{z})=(\varphi_{\infty}(a)\otimes I_{H})K(\mathfrak{z})=K(\mathfrak{z})\sigma(a).\label{eq:intertwine_b}
\end{equation}

\end{enumerate}
\end{lem}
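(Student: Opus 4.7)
The plan is to establish (3) by direct commutation, prove (1) from a component-wise calculation anchored in the defining relation of the canonical weight sequence, and derive (2) from (1) via the unitary equivalence supplied by Lemma \ref{uequivalent}. Decompose $K(\mathfrak{z}) = \bigoplus_{k\geq 0} C_k(\mathfrak{z})$, where
\[
C_k(\mathfrak{z}) := (I_{E^{\otimes k}}\otimes\Delta_*(\mathfrak{z}))(R_k\otimes I_H)(\mathfrak{z}^{(k)})^*,
\]
with the conventions $R_0 = I$ and $(\mathfrak{z}^{(0)})^* = I_H$. The principal care-point is that for $\mathfrak{z}$ on the boundary of $D(X,\sigma)$ the series $\sum_k C_k(\mathfrak{z})h$ need not converge in norm; that it nevertheless converges strongly, producing a bounded $K(\mathfrak{z})$ with $K(\mathfrak{z})^*K(\mathfrak{z})=I-Q_\mathfrak{z}$, is precisely Lemma \ref{PoissonIsometry}. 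Once $K(\mathfrak{z})$ is known to be a well-defined bounded operator, each identity below can be verified on the dense subspace of vectors with finite Fock support and then extended by continuity.

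For (3), the first equality is merely the definition of the induced representation. For the second, each summand $\mathfrak{z}^{(k)}(X_k\otimes I)\mathfrak{z}^{(k)*}$ of $\Phi_\mathfrak{z}(I)$ lies in $\sigma(M)'$, so $\Delta_*(\mathfrak{z})$ does as well. Combined with the facts that $R_k\in\varphi_k(M)^c$ commutes with $\varphi_k(a)$ and that $(\varphi_k(a)\otimes I_H)(\mathfrak{z}^{(k)})^* = (\mathfrak{z}^{(k)})^*\sigma(a)$ (because $\mathfrak{z}^{(k)}\in\mathfrak{I}(\sigma^{E^{\otimes k}}\circ\varphi_k,\sigma)$), one may push $\varphi_k(a)\otimes I_H$ through $C_k(\mathfrak{z})$ to reach $C_k(\mathfrak{z})\sigma(a)$; summing over $k$ yields (3).

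For (1), assuming $Z$ is the canonical sequence, it suffices to prove
\[
C_k(\mathfrak{z})L_\xi^*\mathfrak{z}^* = \bigl(W_\xi^*|_{E^{\otimes(k+1)}}\otimes I_H\bigr)C_{k+1}(\mathfrak{z}),\qquad k\geq 0.
\]
Testing the adjoint identity on a vector $\theta\otimes h\in E^{\otimes k}\otimes_\sigma H$, the left side reduces, via the recursion $\mathfrak{z}^{(k+1)}=\mathfrak{z}(I_E\otimes\mathfrak{z}^{(k)})$, to $\mathfrak{z}^{(k+1)}(\xi\otimes R_k\theta\otimes\Delta_*(\mathfrak{z})h)$, while the right side, using $W_\xi\theta=Z_{k+1}(\xi\otimes\theta)$ from Remark \ref{Rm:Products}, becomes $\mathfrak{z}^{(k+1)}(R_{k+1}Z_{k+1}(\xi\otimes\theta)\otimes\Delta_*(\mathfrak{z})h)$. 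The two coincide precisely because the canonical weight satisfies $R_{k+1}Z_{k+1}=I_E\otimes R_k$, by equation \eqref{eq:Def of Zk}.

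For (2), let $Z^0$ denote the canonical positive weight sequence associated with $X$ supplied by Lemma \ref{weights}. Since $Z$ and $Z^0$ are both associated with the same $X$, Lemma \ref{equivalent} yields a family of unitaries $U_k\in\varphi_k(M)^c$ implementing their equivalence. Assembling them into the diagonal operator $U$ as in Lemma \ref{uequivalent}, one has $U\in\varphi_\infty(M)'$, $U$ preserves each $E^{\otimes k}$, and $UW_\xi^{Z^0}=W_\xi^Z U$, i.e.\ $(W_\xi^{Z^0})^*=U^*(W_\xi^Z)^*U$. Because $K(\mathfrak{z})$ depends only on $X$ (through the $R_k$ and through $\Delta_*(\mathfrak{z})$) and not on the choice of associated weight sequence, substituting this expression into the identity of (1) and multiplying on the left by $U\otimes I_H$ yields (2). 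Beyond the boundary-convergence issue already handled in the first paragraph, the remaining work is careful bookkeeping of the tensor-factor identifications.
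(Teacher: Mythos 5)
Your proposal is correct and takes essentially the same route as the paper: part (1) is verified summand-by-summand on $E^{\otimes k}\otimes_{\sigma}H$ using $W_{\xi}\theta=Z_{k+1}(\xi\otimes\theta)$ and the recursion $R_{k+1}Z_{k+1}=I_{E}\otimes R_{k}$, part (3) is a direct commutation, and part (2) is deduced from part (1) via the diagonal unitary of Lemma \ref{uequivalent}, using that $K(\mathfrak{z})$ is built only from $X$. One small correction for part (2): the reference sequence $Z^{0}$ must be the \emph{canonical} sequence of Definition \ref{Def:Associated}, namely $Z_{k}^{0}=R_{k}^{-1}(I_{E}\otimes R_{k-1})$, for which your computation in part (1) actually applies; the positive sequence produced by Lemma \ref{weights} is a different (merely equivalent) associated sequence and need not satisfy $R_{k+1}Z_{k+1}=I_{E}\otimes R_{k}$, so invoking part (1) for it would be unjustified. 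With that substitution the argument coincides with the paper's.
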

\begin{proof}
To prove \eqref{eq:intertwine} we need to show that for every $k$
we have 
\begin{multline*}
(I_{E^{\otimes k}}\otimes\Delta_{*}(\mathfrak{z}))(R_{k}\otimes I_{H})(\mathfrak{z}^{(k)})^{*}L_{\xi}^{*}\mathfrak{z}^{*}\\=(W_{\xi}^{*}\otimes I_{H})(I_{E^{\otimes(k+1)}}\otimes\Delta_{*}(\mathfrak{z}))(R_{k+1}\otimes I_{H})(\mathfrak{z}^{(k+1)})^{*}.
\end{multline*}
To do this we shall show that, for every $k$, 
\begin{multline*}
L_{\xi}\mathfrak{z}^{(k)}(R_{k}\otimes I_{H})(I_{E^{\otimes k}}\otimes\Delta_{*}(\mathfrak{z}))\\=(I_{E}\otimes\mathfrak{z}^{(k)})(R_{k+1}\otimes I_{H})(I_{E^{\otimes(k+1)}}\otimes\Delta_{*}(\mathfrak{z}))(W_{\xi}\otimes I_{H})
\end{multline*}
as operators on $E^{\otimes k}\otimes H$. So, fix $\theta\in E$
and $h\in H$ and apply the right-hand-side of this equation to $\theta\otimes h$
to get 
\begin{multline*}
(I_{E}\otimes\mathfrak{z}^{(k)})(R_{k+1}\otimes I_{H})(I_{E^{\otimes(k+1)}}\otimes\Delta_{*}(\mathfrak{z}))(W_{\xi}\otimes I_{H})(\theta\otimes h)\\
=(I_{E}\otimes\mathfrak{z}^{(k)})(R_{k+1}\otimes I_{H})(I_{E^{\otimes(k+1)}}\otimes\Delta_{*}(\mathfrak{z}))(Z_{k+1}(\xi\otimes\theta)\otimes h)\\
=(I_{E}\otimes\mathfrak{z}^{(k)})(R_{k+1}Z_{k+1}(\xi\otimes\theta)\otimes\Delta_{*}(\mathfrak{z})h)\\
=(I_{E}\otimes\mathfrak{z}^{(k)})(\xi\otimes R_{k}\theta\otimes\Delta_{*}(\mathfrak{z})h),
\end{multline*}
where, in the last equality, we used the fact that $Z_{k+1}=R_{k+1}^{-1}(I_{E}\otimes R_{k})$. Since $(I_{E}\otimes\mathfrak{z}^{(k)})(\xi\otimes R_{k}\theta\otimes\Delta_{*}(\mathfrak{z})h)$ is equal to $L_{\xi}\mathfrak{z}^{(k)}(R_{k}\otimes I_{H})(I_{E^{\otimes k}}\otimes\Delta_{*}(\mathfrak{z}))(\theta\otimes h)$,
this proves \ref{eq:intertwine}.

For part (2), use part (1) and Lemma~\ref{uequivalent}. Part (3)
follows from the definition of $K(\mathfrak{z})$ since $R_{k}\in\varphi_{k}(M)'$
for all $k$ and $\mathfrak{z}^{(k)}$ intertwines $\sigma$ and $\varphi_{k}\otimes I_{H}$.\end{proof}
\begin{defn}
\label{ZPoisson} If $Z=\{Z_{k}\}$ is a sequence of weights associated
with an admissible sequence $X$, then the $Z$-Poisson kernel, written
$K_{Z}$ , is defined by the equation 
\[
K_{Z}(\mathfrak{z}):=(U\otimes I_{H})K(\mathfrak{z})
\]
where $U$ is the unitary operator referred to in Lemma~\ref{Kintertwines}
(2) and $\mathfrak{z}\in\overline{D(X,\sigma)}$. 
\end{defn}
Note that if $Z$ is the canonical sequence associated with $X$,
then $K_{Z}=K$.

{}
\begin{defn}
\label{Def: Evaluation Map} Suppose $\sigma$ is a normal representation
of $M$ on $H$ and that $\mathfrak{z}\in\overline{D(X,\sigma)}$.
Suppose also that $Z=\{Z_{k}\}_{k\geq0}$ is an invertible sequence
of weights associated to $X$. The formula
\[
T\mapsto K_{Z}(\mathfrak{z})^{*}(T\otimes I_{H})K_{Z}(\mathfrak{z}),\qquad T\in\mathcal{L}(\mathcal{F}(E)),
\]
defines a map from $\mathcal{L}(\mathcal{F}(E))$ to $B(H)$ called
the \emph{evaluation map} determined by $\mathfrak{z}$ and is denoted
$\Psi_{\mathfrak{z}}$, $\Psi_{Z,\mathfrak{z}}$, or $\Psi_{X,Z,\mathfrak{z}}$,
depending on context. \end{defn}
\begin{rem}
\label{Rm:Eval_Map}Assertion (2) of Lemma \ref{PoissonIsometry}
shows that $K(\mathfrak{z})$ is a contraction for each $\mathfrak{z}\in\overline{D(X,\sigma)}$
and therefore so is $K_{Z}(\mathfrak{z})=(U\otimes I)K(\mathfrak{z})$.
Consequently, $\Psi_{Z,\mathfrak{z}}$ is completely positive and
contractive; it is unital if and only if $Q_{\mathfrak{z}}=0$.\end{rem}
\begin{cor}
\label{representations} Suppose $\mathfrak{z}\in\overline{D(X,\sigma)}$
and $Q_{\mathfrak{z}}=0$ (as is the case if $||\sum_{k=1}^{\infty}\mathfrak{z}^{(k)}(X_{k}\otimes I_{H_{\sigma}})\mathfrak{z}^{(k)*}||<1$).
Then the evaluation map $\Psi_{Z,\mathfrak{z}}$ restricted to $H^{\infty}(E,Z)$
is an ultraweakly continuous, completely contractive representation
of $H^{\infty}(E,Z)$ on $H_{\sigma}$ that extends $\sigma\times\mathfrak{z}$
on $\mathcal{T}_{0+}(E,Z)$. Specifically,

\begin{equation}
\Psi_{Z,\mathfrak{z}}(\varphi_{\infty}(a))=\sigma(a),\qquad a\in M,\label{eq:Covariant-1}
\end{equation}
and
\begin{equation}
\Psi_{Z,\mathfrak{z}}(W_{\xi})=\mathfrak{z}^{(n)}L_{\xi},\qquad\xi\in E^{\otimes n}.\label{eq:Covariant-2}
\end{equation}
\end{cor}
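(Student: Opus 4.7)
The plan is to exhibit $\Psi_{Z,\mathfrak{z}}$ as the compression of the induced $*$-representation $\sigma^{\mathcal{F}(E)}$ to the range of the isometry $K_Z(\mathfrak{z})$, and to verify that this range is co-invariant under $\sigma^{\mathcal{F}(E)}(H^\infty(E,Z))$. First, since $Q_\mathfrak{z}=0$, Lemma \ref{PoissonIsometry}(4) gives that $K(\mathfrak{z})$ is an isometry, and because $U$ is unitary, $K_Z(\mathfrak{z})=(U\otimes I_H)K(\mathfrak{z})$ is also isometric. Let $P:=K_Z(\mathfrak{z})K_Z(\mathfrak{z})^*$ be the projection onto $\mathrm{Range}(K_Z(\mathfrak{z}))$.

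Second, from Lemma \ref{Kintertwines}(2),(3) one obtains $(W_\xi^*\otimes I_H)K_Z(\mathfrak{z}) = K_Z(\mathfrak{z})L_\xi^*\mathfrak{z}^*$ for $\xi\in E$, and $(\varphi_\infty(a)\otimes I_H)K_Z(\mathfrak{z})=K_Z(\mathfrak{z})\sigma(a)$ for $a\in M$ (using that $U\in\varphi_\infty(M)'$). These show that $\mathrm{Range}(K_Z(\mathfrak{z}))$ is co-invariant under each generator of $\sigma^{\mathcal{F}(E)}(\mathcal{T}_{0+}(E,Z))$; iterating via the product formula $W_{\xi_1\otimes\xi_2}=W_{\xi_1}W_{\xi_2}$ of Remark \ref{Rm:Products}, one obtains co-invariance under all of $\sigma^{\mathcal{F}(E)}(\mathcal{T}_{0+}(E,Z))$. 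The set $\{T\in\mathcal{L}(\mathcal{F}(E))\mid PT=PTP\}$ is ultraweakly closed, so it contains the ultraweak closure $H^\infty(E,Z)$ of $\mathcal{T}_{0+}(E,Z)$.

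Third, for $A,B\in H^\infty(E,Z)$, co-invariance of $\mathrm{Range}(K_Z(\mathfrak{z}))$ under $A\otimes I_H$ gives $(A\otimes I)P = P(A\otimes I)P$, and hence
\begin{align*}
\Psi_{Z,\mathfrak{z}}(AB) &= K_Z(\mathfrak{z})^*(A\otimes I_H)(B\otimes I_H)K_Z(\mathfrak{z}) \\
&= K_Z(\mathfrak{z})^*(A\otimes I_H)P(B\otimes I_H)K_Z(\mathfrak{z}) \\
&= \Psi_{Z,\mathfrak{z}}(A)\,\Psi_{Z,\mathfrak{z}}(B),
\end{align*}
so $\Psi_{Z,\mathfrak{z}}$ is multiplicative on $H^\infty(E,Z)$. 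Complete contractivity follows since the map is the compression of a $*$-representation by a contraction, and ultraweak continuity is inherited from the ultraweak continuity of Rieffel induction $T\mapsto T\otimes I_H$ and compression by the fixed operators $K_Z(\mathfrak{z})^*, K_Z(\mathfrak{z})$. The covariance relation \eqref{eq:Covariant-1} is immediate from $K_Z(\mathfrak{z})\sigma(a)=(\varphi_\infty(a)\otimes I_H)K_Z(\mathfrak{z})$ and $K_Z(\mathfrak{z})^*K_Z(\mathfrak{z})=I_H$.

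For \eqref{eq:Covariant-2}, I would first extend the intertwining of Lemma \ref{Kintertwines}(2) to all $\xi\in E^{\otimes n}$: namely, $(W_\xi^*\otimes I_H)K_Z(\mathfrak{z}) = K_Z(\mathfrak{z})L_\xi^*(\mathfrak{z}^{(n)})^*$. This is proved by induction on $n$ using $W_{\xi_1\otimes\xi_2}=W_{\xi_1}W_{\xi_2}$, the recursive definition $\mathfrak{z}^{(n+1)}=\mathfrak{z}(I_E\otimes\mathfrak{z}^{(n)})$, and the identity $L_{\xi_1\otimes\xi_2}^*=L_{\xi_2}^*(I_E\otimes\cdots)L_{\xi_1}^*$ suitably interpreted on $E^{\otimes(n+1)}\otimes H$. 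Taking adjoints and compressing by $K_Z(\mathfrak{z})$ then yields $\Psi_{Z,\mathfrak{z}}(W_\xi)=\mathfrak{z}^{(n)}L_\xi$. The main obstacle in the argument is precisely this last bookkeeping step, verifying the extended intertwining for higher tensor powers; the rest is a standard co-invariant-subspace/compression argument combined with the limiting passage from $\mathcal{T}_{0+}(E,Z)$ to its ultraweak closure.
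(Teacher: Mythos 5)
Your proposal is correct and follows essentially the same route as the paper: isometry of $K_Z(\mathfrak{z})$ from $Q_{\mathfrak{z}}=0$, co-invariance of its range via the intertwining relations of Lemma \ref{Kintertwines}, and compression to get a multiplicative, ultraweakly continuous, completely contractive map. The only (immaterial) difference is at the last step: where you extend the intertwining relation to all of $E^{\otimes n}$ by induction, the paper proves \eqref{eq:Covariant-2} only for $\xi\in E$ and then deduces the general case from the product formula $W_{\xi_1\otimes\xi_2}=W_{\xi_1}W_{\xi_2}$ together with the already-established multiplicativity of $\Psi_{Z,\mathfrak{z}}$, which spares the bookkeeping you flag as the main obstacle.
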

\begin{proof}
The assumption that $Q_{\mathfrak{z}}=0$ implies that $K(\mathfrak{z})$
and $K_{Z}(\mathfrak{z})$ are isometries. If $\sigma_{K}$ is the
induced representation $\sigma_{K}=\sigma^{\mathcal{F}(E)}$ restricted
to $H^{\infty}(E,Z)$, then the range of $K_{Z}(\mathfrak{z})$ is
coinvariant for $\sigma_{K}(H^{\infty}(E,Z))$ by equations \ref{eq:intertwine}
and \ref{eq:intertwine_b}. This shows that the restriction of $\Psi_{Z,\mathfrak{z}}$
to $H^{\infty}(E,Z)$ is an ultraweakly continuous, completely contractive
representation of $H^{\infty}(E,Z)$. Equation \ref{eq:Covariant-1}
follows from equation \ref{eq:intertwine_b} and the fact that $U\otimes I_{H_{\sigma}}$commutes
with $\sigma^{\mathcal{F}(E)}\circ\varphi_{\infty}$. As for equation
\ref{eq:Covariant-2}, observe that equation \ref{eq:intertwine_a}
shows that 
\begin{multline*}
K_{Z}(\mathfrak{z})L_{\xi}^{*}\mathfrak{z}^{*}=(U\otimes I_{H})K(\mathfrak{z})L_{\xi}^{*}\mathfrak{z}^{*}=(W_{\xi}^{*}\otimes I_{H})(U\otimes I_{H})K(\mathfrak{z})\\
=(W_{\xi}^{*}\otimes I_{H})K_{Z}(\mathfrak{z}).
\end{multline*}
Since $K_{Z}(\mathfrak{z)}$ is an isometry, we may multiply both
sides of this equation by $K_{Z}(\mathfrak{z})^{*}$ to obtain $K_{Z}(\mathfrak{z})^{*}(W_{\xi}^{*}\otimes I_{H})K_{Z}(\mathfrak{z})=L_{\xi}^{*}\mathfrak{z}^{*}$.
If we take adjoints in this equation, then we have equation \ref{eq:Covariant-2}
for the case $\xi\in E$. The general result now follows from this
case, Remark \ref{Rm:Products}, and the fact that $\Psi_{Z,\mathfrak{z}}$
restricted to $H^{\infty}(E,Z)$ is a homomorphism.
\end{proof}
We have shown that each $\mathfrak{z}\in\overline{D(X,\sigma)}$ such
that $Q_{\mathfrak{z}}=0$ determines a unital completely positive
map on all of $\mathcal{L}(\mathcal{F}(E))$ with the property that
the restriction to $\mathcal{T}_{+}(E,Z)$ is a completely contractive
representation. We now want to show that independent of whether or
not $Q_{\mathfrak{z}}=0$, $\mathfrak{z}$ determines a completely
positive contractive map $\Psi_{Z,\mathfrak{z}}$ on the \emph{operator
system} $\overline{span}\{TS^{*}\mid T,S\in\mathcal{T}_{+}(E,Z)\}$,
which contains $\mathcal{T}_{+}(E,Z)$, with the property that its
restriction to $\mathcal{T}_{+}(E,Z)$ is a completely contractive
representation such that on $M$, it gives rise to a normal $W^{*}$-representation. 
\begin{prop}
\label{Psi} Given $\mathfrak{z}\in\overline{D(X,\sigma)}$, there
is a unital, completely positive linear map 
\[
\Psi_{Z,\mathfrak{z}}:\overline{span}\{TS^{*}\mid T,S\in\mathcal{T}_{+}(E,Z)\}\rightarrow B(H_{\sigma})
\]
that restricts to a completely contractive representation of $\mathcal{T}_{+}(E,Z)$
and is such that, for $\xi\in E^{\otimes k}$ and $\eta\in E^{\otimes m}$,
\[
\Psi_{Z,\mathfrak{z}}(W_{\xi}W_{\eta}^{*})=\mathfrak{z}^{(k)}L_{\xi}L_{\eta}^{*}\mathfrak{z}^{(m)*},
\]
\[
\Psi_{Z,\mathfrak{z}}(W_{\xi})=\mathfrak{z}^{(k)}L_{\xi}
\]
and 
\[
\Psi_{Z,\mathfrak{z}}(\varphi_{\infty}(a))=\sigma(a)
\]
for $a\in M$.\end{prop}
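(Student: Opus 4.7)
The plan is to handle the case $\mathfrak{z}\in\overline{D(X,\sigma)}$ by approximating from the interior, since Corollary~\ref{representations} already gives the desired map on the open disc. First I would verify that $r\mathfrak{z}\in D(X,\sigma)$ for every $r\in(0,1)$: because all summands are positive and $r^{2k}\le r^{2}$ for $k\ge1$, one has
\[
\sum_{k\ge1}(r\mathfrak{z})^{(k)}(X_{k}\otimes I_{H_{\sigma}})(r\mathfrak{z})^{(k)*}=\sum_{k\ge1}r^{2k}\mathfrak{z}^{(k)}(X_{k}\otimes I_{H_{\sigma}})\mathfrak{z}^{(k)*}\le r^{2}I,
\]
whose norm is strictly less than $1$. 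Hence Corollary~\ref{representations} produces a unital completely positive map $\Psi_{Z,r\mathfrak{z}}:\mathcal{L}(\mathcal{F}(E))\to B(H_{\sigma})$ whose restriction to $H^{\infty}(E,Z)$ is a completely contractive representation satisfying $\Psi_{Z,r\mathfrak{z}}(\varphi_{\infty}(a))=\sigma(a)$ and $\Psi_{Z,r\mathfrak{z}}(W_{\xi})=(r\mathfrak{z})^{(k)}L_{\xi}=r^{k}\mathfrak{z}^{(k)}L_{\xi}$ for $\xi\in E^{\otimes k}$.

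Next I would compute the values of $\Psi_{Z,r\mathfrak{z}}$ on the natural spanning set of $\mathcal{S}:=\overline{\mathrm{span}}\{TS^{*}\mid T,S\in\mathcal{T}_{+}(E,Z)\}$. Because $\Psi_{Z,r\mathfrak{z}}$ is a homomorphism on $\mathcal{T}_{+}(E,Z)$, one has
\[
\Psi_{Z,r\mathfrak{z}}(W_{\xi}W_{\eta}^{*})=r^{k+m}\mathfrak{z}^{(k)}L_{\xi}L_{\eta}^{*}\mathfrak{z}^{(m)*},\qquad\xi\in E^{\otimes k},\;\eta\in E^{\otimes m},
\]
and similarly on $\varphi_{\infty}(a)W_{\eta}^{*}$ and on $W_{\xi}\varphi_{\infty}(a)$. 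As $r\uparrow1$ each of these expressions converges in norm to the putative target $\mathfrak{z}^{(k)}L_{\xi}L_{\eta}^{*}\mathfrak{z}^{(m)*}$. Thus $\{\Psi_{Z,r\mathfrak{z}}\}_{r<1}$ converges pointwise in norm on the dense subspace $\mathrm{span}\{W_{\xi}W_{\eta}^{*},\varphi_{\infty}(a)W_{\eta}^{*},W_{\xi}\varphi_{\infty}(a),\varphi_{\infty}(a)\}$ of $\mathcal{S}$.

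Since each $\Psi_{Z,r\mathfrak{z}}$ is unital and completely positive, hence completely contractive, the family is uniformly bounded by $1$. A standard three-$\varepsilon$ argument then upgrades the pointwise-norm convergence on the dense subspace to pointwise-norm convergence on all of $\mathcal{S}$, yielding a unital completely positive map $\Psi_{Z,\mathfrak{z}}:\mathcal{S}\to B(H_{\sigma})$ with the specified values on generators. Complete positivity and the unital property pass to the limit automatically because positivity at each matrix level is a closed condition. Similarly, multiplicativity of the restriction to $\mathcal{T}_{+}(E,Z)$ passes to the limit: for $T,S$ in the dense subalgebra $\mathcal{T}_{0+}(E,Z)$, one has $\Psi_{Z,r\mathfrak{z}}(TS)=\Psi_{Z,r\mathfrak{z}}(T)\Psi_{Z,r\mathfrak{z}}(S)$ for all $r<1$, and all three factors converge in norm; norm-density of $\mathcal{T}_{0+}(E,Z)$ in $\mathcal{T}_{+}(E,Z)$, together with uniform boundedness, extends multiplicativity to the norm closure.

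The main obstacle I anticipate is not the individual ingredients but the bookkeeping: one must ensure that every element of the spanning set has a value which depends polynomially on $r$ (so that $r\uparrow1$ really is a norm limit, not merely a weak$^{*}$ cluster point), and one must argue cleanly that the limit remains a homomorphism on $\mathcal{T}_{+}(E,Z)$ rather than merely a completely positive map on $\mathcal{S}$. The scaling trick resolves both issues because it replaces a boundary-value problem (where $Q_{\mathfrak{z}}$ may be nonzero and $K(\mathfrak{z})$ fails to be isometric) with a limit of well-behaved interior representations for which Corollary~\ref{representations} applies verbatim.
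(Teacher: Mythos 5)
Your proof is correct, and its first half coincides with the paper's: both scale $\mathfrak{z}$ into the open disc so that $Q_{r\mathfrak{z}}=0$, invoke Corollary \ref{representations} to obtain the unital completely positive maps $\Psi_{Z,r\mathfrak{z}}$ on all of $\mathcal{L}(\mathcal{F}(E))$, and compute $\Psi_{Z,r\mathfrak{z}}(W_{\xi}W_{\eta}^{*})=r^{k+m}\mathfrak{z}^{(k)}L_{\xi}L_{\eta}^{*}\mathfrak{z}^{(m)*}$ on the spanning set. Where you genuinely diverge is in how the limit $r\uparrow 1$ is taken. The paper never proves that the net converges: it cites Arveson (Remark 1.1.2 and Lemma 1.2.4 of \cite{Arv1969a}) for the compactness of the unital completely positive maps in $\mathcal{B}(\mathfrak{S},H_{\sigma})$ in the point-ultraweak topology, extracts an arbitrary limit point, and then checks that the prescribed values on generators are forced because they are simultaneously ultraweak limits of the $r$-scaled values. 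You instead show the net is pointwise norm-Cauchy: the values on spanning elements depend polynomially on $r$, the maps are uniformly completely contractive because they are unital and completely positive, and the standard density/three-$\varepsilon$ argument then gives norm convergence on all of $\overline{span}\{TS^{*}\}$. Your route buys something: it is more elementary (no Banach--Alaoglu-type compactness), it produces a canonical limit rather than an unspecified cluster point, and it makes the passage of multiplicativity to the limit on $\mathcal{T}_{+}(E,Z)$ completely transparent, a point the paper leaves to the reader with ``verified equally easily.'' The paper's route is shorter to write and is the device one must fall back on when only weak-$*$ control of the approximating family is available; here, as you observe, one actually has norm control, so nothing is lost. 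Both arguments are complete proofs of the proposition.
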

\begin{proof}
For every $0<r<1$, $r\mathfrak{z}$ has the property that $Q_{r\mathfrak{z}}=0$.
Therefore, the evaluation map $\Psi_{Z,r\mathfrak{z}}$ is a unital
completely positive linear map on all of $\mathcal{L}(\mathcal{F}(E))$
mapping to $B(H_{\sigma})$, by Remark \ref{Rm:Eval_Map}. In fact,
$\Psi_{Z,r\mathfrak{z}}\circ\varphi_{\infty}=\sigma$ for each $r$,
$0\leq r<1$. Further, each $\Psi_{Z,r\mathfrak{z}}$ restricts to
a completely contractive representation on $\mathcal{T}_{+}(E,Z)$
by Corollary~\ref{representations}. Moreover, by Corollary \ref{representations},
we also have, for $\xi\in E^{\otimes k}$ and $\eta\in E^{\otimes m}$,
\begin{equation}
\Psi_{Z,r\mathfrak{z}}(W_{\xi}W_{\eta}^{*})=r^{k+m}\mathfrak{z}^{(k)}L_{\xi}L_{\eta}^{*}\mathfrak{z}^{(m)*}.\label{eq:Psi_Z_rz}
\end{equation}
Hence the restriction of  $\Psi_{Z,r\mathfrak{z}}$ to $\mathfrak{S}:=\overline{span}\{TS^{*}\mid T,S\in\mathcal{T}_{+}(E,Z)\}$
has all the desired properties. As in \cite[p. 146 ff]{Arv1969a},
we shall write $\mathcal{B}(\mathfrak{S},H_{\sigma})$ for the Banach
space of all bounded linear maps from $\mathfrak{S}$ to $B(H_{\sigma})$
with the operator norm and the point-ultraweak topology. Remark 1.1.2
of \cite{Arv1969a} coupled with \cite[Lemma 1.2.4]{Arv1969a} shows
that the collection of all unital, completely positive maps in $\mathcal{B}(\mathfrak{S},H_{\sigma})$
is a compact subset of the unit ball, $\mathcal{B}_{1}(\mathfrak{S},H_{\sigma})$
in this topology. If $\Psi_{Z,\mathfrak{z}}$ denotes any limit point
of $\{\Psi_{Z,r\mathfrak{z}}\}_{0\leq r<1}$, then it is easy to see
that $\Psi_{Z,\mathfrak{z}}$ has all the desired properties. For
instance, on the one hand, $\Psi_{Z,\mathfrak{z}}(W_{\xi}W_{\eta}^{*})$
is the ultraweak limit $\{\Psi_{Z,r\mathfrak{z}}(W_{\xi}W_{\eta}^{*})\}_{0\leq r<1}$.
On the other, $\mathfrak{z}^{(k)}L_{\xi}L_{\eta}^{*}\mathfrak{z}^{(m)*}$
is the ultraweak limit of $\{r^{k+m}\mathfrak{z}^{(k)}L_{\xi}L_{\eta}^{*}\mathfrak{z}^{(m)*}\}_{0\leq r<1}$.
Therefore, $\Psi_{Z,\mathfrak{z}}(W_{\xi}W_{\eta}^{*})=\mathfrak{z}^{(k)}L_{\xi}L_{\eta}^{*}\mathfrak{z}^{(m)*}$,
by \eqref{eq:Psi_Z_rz}. The other requirements of $\Psi_{Z,\mathfrak{z}}$
are verified equally easily. 
\end{proof}
We now want to show that every completely contractive representation
$\rho$ of $\mathcal{T}_{+}(E,Z)$ with the property that $\sigma:=\rho\circ\varphi_{\infty}$
is a normal representation of $M$ is determined by an element $\mathfrak{z}\in\overline{D(X,\sigma)}$.
For this purpose, we require the following lemma, which may appear
in the literature. However, we supply a proof since we do not know
a specific reference.
\begin{lem}
\label{lem:Positive_Expansion}Let $E$ be a $W^{*}$-Hilbert module
over a $W^{*}$-algebra $M$, and let $X$ be a positive element of
$\mathcal{L}(E)$. Then there is a family of vectors in $E$, $\{\xi_{\alpha}\}_{\alpha\in A}$,
such that 
\[
X=\sum_{\alpha\in A}\xi_{\alpha}\otimes\xi_{\alpha}^{*},
\]
in the sense that if $\mathcal{F}$ is the set of all finite subsets
of $A$ directed by inclusion then the net of all finite partial sums
of the series, $\{\sum_{\alpha\in F}\xi_{\alpha}\otimes\xi_{\alpha}^{*}\}_{F\in\mathcal{F}}$,
converges to $X$ in the ultrastrong topology on $\mathcal{L}(E)$.\end{lem}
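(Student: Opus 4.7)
The plan is to use a Zorn's lemma argument applied to the partially ordered class $\mathcal{P}$ consisting of all families $\{\xi_\alpha\}_{\alpha\in A}$ of vectors in $E$ for which every finite partial sum
\[
\sum_{\alpha\in F}\xi_\alpha\otimes\xi_\alpha^*,\qquad F\subseteq A\text{ finite},
\]
satisfies $\sum_{\alpha\in F}\xi_\alpha\otimes\xi_\alpha^*\le X$ in $\mathcal{L}(E)$. Order $\mathcal{P}$ by extension (one family is $\le$ another if the indexing set of the first embeds into that of the second consistently with the vectors). Chains have upper bounds via unions, so Zorn produces a maximal family $\{\xi_\alpha\}_{\alpha\in A}$. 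Since each $\xi_\alpha\otimes\xi_\alpha^*\ge 0$, the net of finite partial sums is increasing and bounded above by $X$; because $\mathcal{L}(E)$ is a $W^*$-algebra (as recalled in Section~\ref{sec:Preliminaries}), it converges ultrastrongly to an operator $Y\in\mathcal{L}(E)$ with $0\le Y\le X$, and every finite partial sum is dominated by $Y$.

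The remaining task is to show that $Y=X$. Suppose for contradiction that $Z:=X-Y$ is nonzero. Then $Z^{1/2}\in\mathcal{L}(E)$ is a nonzero positive operator, so some $\eta\in E$ with $\|\eta\|=1$ satisfies $Z^{1/2}\eta\ne 0$. Put $\xi:=Z^{1/2}\eta$. The Cauchy--Schwarz inequality for Hilbert $C^*$-modules, namely $\langle\eta,\zeta\rangle^*\langle\eta,\zeta\rangle\le\|\eta\|^2\langle\zeta,\zeta\rangle$, yields $\eta\otimes\eta^*\le I_E$, and sandwiching by the positive operator $Z^{1/2}$ gives
\[
\xi\otimes\xi^*\;=\;Z^{1/2}(\eta\otimes\eta^*)Z^{1/2}\;\le\;Z^{1/2}\,I_E\,Z^{1/2}\;=\;Z\;=\;X-Y.
\]
Enlarge the family by adjoining $\xi$ at a fresh index. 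Any finite partial sum of the new family is either a finite partial sum of the old family (hence $\le Y\le X$) or equals such a sum plus $\xi\otimes\xi^*$, in which case it is $\le Y+(X-Y)=X$. Thus the enlarged family still lies in $\mathcal{P}$, contradicting maximality, and so $Y=X$ as required.

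The main point where one must be careful is checking that the operator-theoretic facts used all remain valid for the $W^*$-module $\mathcal{L}(E)$: the existence of the positive square root, the rank-one bound $\eta\otimes\eta^*\le\|\eta\|^2 I_E$, and the ultrastrong convergence of bounded increasing nets of positive operators. Each of these is standard for self-dual Hilbert $W^*$-modules, the middle one being exactly the module Cauchy--Schwarz inequality. An alternative route, which avoids Zorn's lemma entirely, would invoke Paschke's quasi-orthonormal basis $\{u_\beta\}$ for the self-dual module $E$, expressing $I_E=\sum_\beta u_\beta\otimes u_\beta^*$ ultrastrongly and then writing
\[
X\;=\;X^{1/2}\,I_E\,X^{1/2}\;=\;\sum_\beta (X^{1/2}u_\beta)\otimes(X^{1/2}u_\beta)^*,
\]
so that $\xi_\beta:=X^{1/2}u_\beta$ gives the desired family directly; I would prefer the Zorn argument for self-containedness.
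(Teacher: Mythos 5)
Your argument is correct, but it is not the route the paper takes. The paper's proof is exactly the ``alternative route'' you sketch and then decline at the end: following the proof of Theorem 3.12 of Paschke's paper, it produces a maximal family $\{\eta_{\alpha}\}$ with $\langle\eta_{\alpha},\eta_{\beta}\rangle=0$ for $\alpha\neq\beta$ and each $\langle\eta_{\alpha},\eta_{\alpha}\rangle$ a projection, so that the $\eta_{\alpha}\otimes\eta_{\alpha}^{*}$ are mutually orthogonal projections summing to $I_{E}$, and then sets $\xi_{\alpha}:=X^{\frac{1}{2}}\eta_{\alpha}$. Your version instead runs the exhaustion directly on decompositions of $X$: the key step is that at a putatively maximal stage with sum $Y<X$, the module Cauchy--Schwarz inequality yields a nonzero increment $\xi\otimes\xi^{*}\leq X-Y$ with $\xi=(X-Y)^{1/2}\eta$; beyond that you need only the positivity criterion $\langle\zeta,T\zeta\rangle\geq0\Rightarrow T\geq0$ for adjointable operators and monotone convergence of bounded increasing nets in the $W^{*}$-algebra $\mathcal{L}(E)$, so your proof is genuinely more self-contained. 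Two small costs. First, Zorn's lemma must be applied to an honest partially ordered \emph{set}: if zero vectors are admitted, no maximal family exists (one can always adjoin another zero vector), and the chain of all ordinal-indexed families of zeros has no set upper bound; the standard fix is to restrict to families of nonzero vectors (harmless, since your increment $\xi\otimes\xi^{*}$ is nonzero) and to bound the index sets by a fixed sufficiently large cardinal, which is possible because boundedness of the partial sums caps the number of nonzero terms. Second, you lose the extra structure of the resolution $I_{E}=\sum_{\alpha}\eta_{\alpha}\otimes\eta_{\alpha}^{*}$ by orthogonal projections, which the paper exploits immediately afterward in Remark \ref{Rm:Cstar-algebras} to obtain countable families under countable decomposability and to formulate the $C^{*}$-module analogue via Kasparov stabilization.
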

\begin{proof}
The proof of Theorem 3.12 of \cite{Paschke1973} shows that there
is a set of vectors $\{\eta_{\alpha}\}_{\alpha\in A}$ in $E$ which
is maximal in the collection of all sets of vectors with the properties
that $\langle\eta_{\alpha},\eta_{\beta}\rangle=0$, if $\alpha\neq\beta$,
while $\langle\eta_{\alpha},\eta_{\alpha}\rangle$ is a projection
for all $\alpha.$ Such a set has the property that $I_{E}=\sum_{\alpha\in A}\eta_{\alpha}\otimes\eta_{\alpha}^{*}$
in the sense that the net of finite partial sums converges to $I_{E}$
in the ultraweak topology. Indeed, each of the operators $\eta_{\alpha}\otimes\eta_{\alpha}^{*}$
is a projection and any two of them are orthogonal. Therefore the
sum converges to a projection $P\in\mathcal{L}(E)$, say, in the ultraweak
topology. If the sum were not $I_{E}$, then a nonzero vector $\zeta$
in the range of $I-P$ can be found so that $\langle\zeta,\zeta\rangle$
is a projection, by \cite[Proposition 3.11]{Paschke1973}. Adding
$\zeta$ to $\{\eta_{\alpha}\}_{\alpha\in A}$ would contradict the
maximality of $\{\eta_{\alpha}\}_{\alpha\in A}$. With such a family
$\{\eta_{\alpha}\}_{\alpha\in A}$ in hand, let $\xi_{\alpha}:=X^{\frac{1}{2}}\eta_{\alpha}$.
Then a straightforward calculation shows that 
\[
\sum_{\alpha\in A}\xi_{\alpha}\otimes\xi_{\alpha}^{*}=\sum_{\alpha\in A}(X^{\frac{1}{2}}\eta_{\alpha})\otimes(X^{\frac{1}{2}}\eta_{\alpha})^{*}=X^{\frac{1}{2}}(\sum_{\alpha\in A}\eta_{\alpha}\otimes\eta_{\alpha}^{*})X^{\frac{1}{2}}=X.
\]
The conclusion that the net converges ultrastrongly to $X$ follows
from the fact that the partial sums are nonnegative elements of $\mathcal{L}(E)$
that increase to $X$.\end{proof}
\begin{rem}
\label{Rm:Cstar-algebras} If the $W^{*}$-algebra $M$ is countably
decomposable and if $E$ is countably generated as a $W^{*}$-Hilbert
module over $M$, then $\mathcal{L}(E)$ is countably decomposable
and the set $A$ in Lemma \ref{lem:Positive_Expansion} may be chosen
to be countable.

Also, if $M$ is a unital $C^{*}$-algebra and if $E$ is a countably
generated Hilbert $C^{*}$-module over $M$, then by Kasparov's stabilization
theorem, we can find a sequence of vectors in $E$, $\{\eta_{n}\}_{n\geq1}$,
with the property that $\sum_{n\geq1}\eta_{n}\otimes\eta_{n}^{*}=I_{E}$,
where the series converges in the strict topology on $\mathcal{L}(E)$,
viewed as the multiplier algebra of $\mathcal{K}(E)$. It follows
that if $X$ is a positive element of $\mathcal{L}(E)$ and if $\xi_{n}:=X^{\frac{1}{2}}\eta_{n}$,
then $\sum_{n\geq1}\xi_{n}\otimes\xi_{n}^{*}=X$, with convergence
in the strict topology.
\end{rem}
Recall that in Remark \ref{Rm:_Algebraic_Extension}, we noted that
if $\rho$ is a completely contractive representation of $\mathcal{T}_{+}(E,Z)$
such that $\sigma:=\rho\circ\varphi_{\infty}$ is a normal $*$-representation
of $M$, then the map $\xi\to\rho(W_{\xi})$ is a completely bounded
bimodule map on $E$, and so there is an element $\mathfrak{z}\in\mathfrak{I}(\sigma^{E}\circ\varphi,\sigma)$
such that $\rho(W_{\xi})=\mathfrak{z}L_{\xi}.$ The following lemma
completes the proof of Theorem \ref{Dparametrizesrepresentations}.
\begin{lem}
\label{ccrepn} If $\mathfrak{z}\in\mathfrak{I}(\sigma^{E}\circ\varphi,\sigma)$
is such that the representation $\rho$ defined by the equation $\rho(W_{\xi})=\mathfrak{z}L_{\xi}$ is completely contractive, then $\mathfrak{z}$
lies in $\overline{D(X,\sigma)}$. \end{lem}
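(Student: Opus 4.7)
The goal is to prove $\bigl\|\sum_{k\geq 1}\mathfrak{z}^{(k)}(X_{k}\otimes I_{H_{\sigma}})\mathfrak{z}^{(k)*}\bigr\|\leq 1$. My plan is to decompose each positive weight $X_{k}$ into rank-one pieces, transfer the operator bound provided by Lemma~\ref{W} from the Fock-space side to the representation side via complete contractivity of $\rho$, and then pass to the limit.

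First, I would use Lemma~\ref{lem:Positive_Expansion} to write each $X_{k}=\sum_{\alpha\in A_{k}}\xi_{k,\alpha}\otimes\xi_{k,\alpha}^{*}$ with $\xi_{k,\alpha}\in E^{\otimes k}$ and ultrastrong convergence. The formulas in Remark~\ref{Rm:Products} give $W_{\xi}=W^{(k)}L_{\xi}^{\mathcal{F}(E)}$ for $\xi\in E^{\otimes k}$, and Remark~\ref{Rmk:_Pimsner_calculation} gives $L_{\xi}L_{\xi}^{*}=(\xi\otimes\xi^{*})\otimes I_{\mathcal{F}(E)}$. Combining these identities with Lemma~\ref{W}, I would see that for any finite $F\subset\bigsqcup_{k}\{k\}\times A_{k}$,
$$\sum_{(k,\alpha)\in F}W_{\xi_{k,\alpha}}W_{\xi_{k,\alpha}}^{*}\ \leq\ \sum_{k\geq 1}W^{(k)}\widetilde{X}_{k}W^{(k)*}=I_{\mathcal{F}(E)}-P_{0}\leq I,$$
so the row $[W_{\xi_{k,\alpha}}]_{(k,\alpha)\in F}$ is a contraction in $M_{1,|F|}(\mathcal{T}_{+}(E,Z))$. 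Complete contractivity of $\rho$ would then force $\sum_{(k,\alpha)\in F}\rho(W_{\xi_{k,\alpha}})\rho(W_{\xi_{k,\alpha}})^{*}\leq I_{H_{\sigma}}$. Substituting $\rho(W_{\xi})=\mathfrak{z}^{(k)}L_{\xi}^{H_{\sigma}}$ from Remark~\ref{Rm:_Algebraic_Extension} together with $L_{\xi}L_{\xi}^{*}=(\xi\otimes\xi^{*})\otimes I_{H_{\sigma}}$, this becomes
$$\sum_{(k,\alpha)\in F}\mathfrak{z}^{(k)}\bigl((\xi_{k,\alpha}\otimes\xi_{k,\alpha}^{*})\otimes I_{H_{\sigma}}\bigr)\mathfrak{z}^{(k)*}\leq I_{H_{\sigma}}.$$

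The final step is to let $F$ exhaust the index set. For fixed $k$, as finite $F_{k}\subset A_{k}$ grow, the partial sums $\sum_{\alpha\in F_{k}}\xi_{k,\alpha}\otimes\xi_{k,\alpha}^{*}$ converge ultrastrongly to $X_{k}$; the rank-one pieces push forward individually to $L_{\xi}L_{\xi}^{*}$ and hence aggregate to $X_{k}\otimes I_{H_{\sigma}}$, which is well-defined because $X_{k}\in\varphi_{k}(M)^{c}$ so that the Rieffel-induced tensor makes sense. Compression by the bounded operators $\mathfrak{z}^{(k)}$ and $\mathfrak{z}^{(k)*}$ preserves strong-operator convergence of bounded nets, and a monotone-convergence argument in $k$ will then yield $\sum_{k\geq 1}\mathfrak{z}^{(k)}(X_{k}\otimes I_{H_{\sigma}})\mathfrak{z}^{(k)*}\leq I_{H_{\sigma}}$, placing $\mathfrak{z}\in\overline{D(X,\sigma)}$. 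The hard part will be the careful bookkeeping of this unordered double limit together with the compatibility of ultrastrong convergence on $\mathcal{L}(E^{\otimes k})$ with its image after tensoring with $I_{H_{\sigma}}$, since the map $Y\mapsto Y\otimes I$ is a priori only a $*$-homomorphism on $\varphi_{k}(M)^{c}$ and one must route through rank-one operators to handle the intermediate partial sums.
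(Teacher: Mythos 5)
Your proposal is correct and follows the same skeleton as the paper's proof: both decompose each $X_{k}$ into rank-one pieces via Lemma~\ref{lem:Positive_Expansion}, both use Lemma~\ref{W} to bound the resulting finite sums of $W_{\xi_{k,\alpha}}W_{\xi_{k,\alpha}}^{*}$ by $I_{\mathcal{F}(E)}$, and both finish by taking suprema of an increasing net of positive operators bounded by $I_{H}$. The one genuine difference is the mechanism for transferring the Fock-space inequality through $\rho$. The paper invokes Arveson's dilation theorem to write $\rho(\cdot)=V^{*}\pi(\cdot)V$ for a $C^{*}$-representation $\pi$ of $\mathcal{T}(E,Z)$, and then uses $VV^{*}\leq I$ together with positivity of $\pi$ to get $\sum\rho(W_{\xi_{\alpha}})\rho(W_{\xi_{\alpha}})^{*}\leq V^{*}\pi(\sum W_{\xi_{\alpha}}W_{\xi_{\alpha}}^{*})V\leq I_{H}$. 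You instead observe that $\sum_{(k,\alpha)\in F}W_{\xi_{k,\alpha}}W_{\xi_{k,\alpha}}^{*}\leq I$ says exactly that the row $[W_{\xi_{k,\alpha}}]$ is a contraction in $M_{1,|F|}(\mathcal{T}_{+}(E,Z))$, so complete contractivity applied to rectangular matrices immediately yields that $[\rho(W_{\xi_{k,\alpha}})]$ is a row contraction, i.e.\ $\sum\rho(W_{\xi_{k,\alpha}})\rho(W_{\xi_{k,\alpha}})^{*}\leq I_{H}$. This is a legitimate and somewhat more economical route: it extracts the needed inequality directly from the definition of complete contractivity without passing through a dilation (and, incidentally, without using unitality of $\rho$, which the paper's appeal to Arveson's theorem does use). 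What the paper's route buys is that the dilation $\pi$ is an object it wants anyway elsewhere (e.g.\ Theorem~\ref{dilation2}); for the lemma itself your argument suffices. Your closing caution about the double limit is well placed, and the resolution is as you suspect: $A\mapsto A\otimes I_{H}$ is a normal $*$-homomorphism on $\varphi_{k}(M)^{c}$, so it carries the ultrastrongly convergent increasing net $\sum_{\alpha\in F_{k}}\xi_{k,\alpha}\otimes\xi_{k,\alpha}^{*}\nearrow X_{k}$ to an increasing net with supremum $X_{k}\otimes I_{H}$, and compression by $\mathfrak{z}^{(k)}$ preserves suprema of bounded increasing nets of positive operators; the paper glosses over this point just as quickly.
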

\begin{proof}
As we noted in Remark \ref{Rm:_Algebraic_Extension}, $\rho(W_{\xi})=\mathfrak{z}^{(n)}L_{\xi}$
for all $\xi\in E^{\otimes n}$. By lemma \ref{lem:Positive_Expansion},
we may write each $X_{k}=\sum_{\alpha\in A(k)}\xi_{\alpha}\otimes\xi_{\alpha}^{*},$
for a suitable indexing set $A(\alpha)$, where $\xi_{\alpha}\in E^{\otimes k}$.
Fix $k$ and let $F(k)$ be a finite subset of $A(k)$. Then 
\begin{multline}
\sum_{\alpha\in F(k)}\mathfrak{z}^{(k)}(\xi_{\alpha}\otimes\xi_{\alpha}^{*}\otimes I_{H})\mathfrak{z}^{(k)*}=\sum_{\alpha\in F(k)}\mathfrak{z}^{(k)}L_{\xi_{\alpha}}L_{\xi_{\alpha}}^{*}\mathfrak{z}^{(k)*}\\
=\sum_{\alpha\in F(k)}\rho(W_{\xi_{\alpha}})\rho(W_{\xi_{\alpha}})^{*}.\label{eq:z-1}
\end{multline}
Since $\rho$ is completely contractive and unital, we may apply Arveson's
dilation theorem \cite[Theorem 1.2.9]{Arv1969a} to produce a $C^{*}$-representation
$\pi$ of the weighted Toeplitz algebra $\mathcal{T}(E,Z)$ on a Hilbert
space $H_{\pi}$ and an isometry $V:H\to H_{\pi}$ such that $\rho(\cdot)=V^{*}\pi(\cdot)V$.
We may then continue equation \ref{eq:z-1} with 
\begin{multline}
\sum_{\alpha\in F(k)}\rho(W_{\xi_{\alpha}})\rho(W_{\xi_{\alpha}})^{*}=\sum_{\alpha\in F(k)}V^{*}\pi(W_{\xi_{\alpha}})VV^{*}\pi(W_{\xi_{\alpha}})^{*}V\\
\leq\sum_{\alpha\in F(k)}V^{*}\pi(W_{\xi_{\alpha}})\pi(W_{\xi_{\alpha}})^{*}V=V^{*}\sum_{\alpha\in F(k)}\pi(W_{\xi_{\alpha}})\pi(W_{\xi_{\alpha}})^{*}V\\
=V^{*}\pi\left(\sum_{\alpha\in F(k)}W_{\xi_{\alpha}}W_{\xi_{\alpha}}^{*}\right)V=V^{*}\pi\left(W^{(k)}(\sum_{\alpha\in F(k)}T_{\xi_{\alpha}}T_{\xi_{\alpha}}^{*})W^{(k)*}\right)V.\label{eq:z-2}
\end{multline}
Combining \eqref{eq:z-1} and \eqref{eq:z-2} we conclude that for
each positive integer $n$ and for each choice of finite subsets $F(k)\subseteq A(k)$,
\begin{multline}
\sum_{k=1}^{n}\left(\sum_{\alpha\in F(k)}\mathfrak{z}^{(k)}(\xi_{\alpha}\otimes\xi_{\alpha}^{*}\otimes I_{H})\mathfrak{z}^{(k)*}\right)\\
\leq V^{*}\pi\left(\sum_{k=1}^{n}W^{(k)}(\sum_{\alpha\in F(k)}T_{\xi_{\alpha}}T_{\xi_{\alpha}}^{*})W^{(k)*}\right)V\\
=V^{*}\pi\left(\sum_{k=1}^{n}W^{(k)}(\sum_{\alpha\in F(k)}(\xi_{\alpha}\otimes\xi_{\alpha}^{*})\otimes I_{\mathcal{F}(E)})W^{(k)*}\right)V.\label{eq:z-3}
\end{multline}
The expression $\sum_{k=1}^{n}W^{(k)}(\sum_{\alpha\in F(k)}(\xi_{\alpha}\otimes\xi_{\alpha}^{*})\otimes I_{\mathcal{F}(E)})W^{(k)*}$
is dominated by $\sum_{k=1}^{n}W^{(k)}\widetilde{X}_{k}W^{(k)*}$,
for each $n$. Each of these, in turn, is dominated by $I_{\mathcal{F}(E)}$,
by virtue of Lemma \ref{W}. Therefore, even though we do not know
$\pi$ is normal, we still may conclude that the supremum of the operators on the left-hand side of equation  \eqref{eq:z-3} , which is $\sum_{k=1}^{\infty}\mathfrak{z}^{(k)}(X_{k}\otimes I_{H})\mathfrak{z}^{(k)*}$,
is dominated by $I_{H}$.
\end{proof}

\section{Dilations and Coextensions\label{sec:Dilations}}

In this section we study dilations and coextensions of completely
contractive representations of $\mathcal{T}_{+}(E,Z)$. Recall that
in general, if $\rho_{i}:A\to B(H_{i})$ is a representation of an
algebra $A$ on a Hilbert space $H_{i}$, $i=1,2$, then $\rho_{2}$
is called a \emph{dilation} of $\rho_{1}$ in case there is a Hilbert
space isometry $V:H_{1}\to H_{2}$ such that $\rho_{1}(a)=V^{*}\rho_{2}(a)V$
for all $a\in A$. If $\rho_{2}$ is a dilation of $\rho_{1}$, then
the operators $\rho_{2}(a)$ admit a block matricial decomposition
which may be described as follows. Let $\mathcal{M}_{1}$ be the smallest
subspace of $H_{2}$ that contains the range of $V$ and is invariant
under $\rho_{2}(a)$ for all $a\in A$ and let $\mathcal{M}_{2}$
be the orthogonal complement of $VH_{1}$ in $\mathcal{M}_{1}$. Then,
as Sarason showed in \cite{Sarason1965}, corresponding to the family
of subspaces $\{0\}\subseteq\mathcal{\mathcal{M}}_{2}\subseteq\mathcal{M}_{1}\subseteq H_{2}$,
every $\rho_{2}(a)$ has an upper triangular block matrix decomposition:
\[
\rho_{2}(a)=\begin{bmatrix}\sigma_{1}(a) & * & *\\
0 & \sigma_{2}(a) & *\\
0 & 0 & \sigma_{3}(a)
\end{bmatrix},
\]
where each $\sigma_{i}$ is a representation of $A$ and $\sigma_{2}(\cdot)= VV^{*}\rho_{2}(\cdot)V|_{VH_{1}}$
is unitarily equivalent to $\rho_{1}$. If $\mathcal{M}_{1}= H_{2}$,
so $\mathcal{\rho}_{2}(\cdot)=\begin{bmatrix}\sigma_{1}(\cdot) & *\\
0 & \sigma_{2}(\cdot)
\end{bmatrix}$, then not only is $\rho_{2}$ a dilation of $\rho_{1}$, it is also
called a \emph{coextension} of $\rho_{1}$. Thus, a dilation $\rho_{2}$
of $\rho_{1}(\cdot)=V^{*}\rho_{2}(\cdot)V$ is a coextension of $\rho_{1}$
if and only if the range of $V$ is invariant under the operators
$\{\rho_{2}(a)^{*}\mid a\in A\}$. 

In our setting, if $\sigma\times\mathfrak{z}$ is a completely contractive
representation of $\mathcal{T}_{+}(E,Z)$ on a Hilbert space $H$,
then the role of the Poisson kernel $K(\mathfrak{z})$ (Definition
\ref{Poisson_Kernel}) is to be an isometry that realizes the induced
representation $\sigma^{\mathcal{F}(E)}$ (restricted to $\mathcal{T}_{+}(E)$)
as a coextension of $\sigma\times\mathfrak{z}$. Of course, $K(\mathfrak{z})$
is an isometry if and only if $Q_{\mathfrak{z}}=0$ in Lemma \ref{PoissonIsometry},
but when it is, Corollary \ref{representations} shows that it realizes
$\sigma^{\mathcal{F}(E)}$ as a coextension of $\sigma\times\mathfrak{z}$.
In Theorem \ref{dilation1}, we will show how to compensate for
the possibility that $K(\mathfrak{z})$ may not be an isometry. 

Recall that the $Z$-Toeplitz algebra, $\mathcal{T}(E,Z)$, is the
$C^{*}$-subalgebra of $\mathcal{L}(\mathcal{F}(E))$ generated by
$\mathcal{T}_{+}(E,Z)$. Our first objective is to identify the representations
$\sigma\times\mathfrak{v}$ of $\mathcal{T}_{+}(E,Z)$ that are the
restrictions to $\mathcal{T}_{+}(E,Z)$ of $C^{*}$-representations
of $\mathcal{T}(E,Z)$. Our results may seem somewhat provisional
because their proofs are based on hypotheses that are not universally
satisfied. Nevertheless, they do have considerable applicability and
their inclusion here is waranted because they help to illuminate the
representation theory of $\mathcal{T}_{+}(E,Z)$. 

Throughout this section, $M$, $E,$ $X$ and $Z$ will be fixed.
There are three special hypotheses that play a role in our analysis. 
\begin{quote}
\textbf{Hypothesis A. }$\mathcal{K}(E^{\otimes k})=\mathcal{L}(E^{\otimes k})$
for all $k\geq0$.
\end{quote}
This hypothesis is the assumption that $E$ is finitely generated
in a strong sense. It is satisfied in many examples. We know of no
finitely generated correspondence $E$ such that $\mathcal{K}(E^{\otimes k})\neq\mathcal{L}(E^{\otimes k})$
for some $k\geq2$.
\begin{quote}
\textbf{Hypothesis B.} The $Z_{k}$'s are uniformly bounded below,
i.e., there is an $\epsilon>0$ such that $Z_{k}\geq\epsilon I_{E^{\otimes k}}$
for all $k\geq1$.
\end{quote}
Hypotheses $B$ guarentees that $Z$ is invertible in $\mathcal{L}(\mathcal{F}(E))$,
of course. Hypotheses $A$ and $B$, together, guarentee that the
unweighted tensor algebra$,$ $\mathcal{T}_{+}(E)$, is contained
in $\mathcal{T}_{+}(E,Z)$, as we shall see.
\begin{quote}
\textbf{Hypothesis C.} There is an $a$ such that \[(\limsup\Vert X_{k}\Vert^{\frac{1}{k}})I_{E}<aI_{E}\leq X_{1}.\]
\end{quote}
Note that Hypothesis C is satisfied if there are only finitely many
nonzero weights $X_{k}$. Hypotheses A, B, and C, together guarentee
that a variant of our generalization of Wold's decomposition theorem
\cite[Theorem 2.9]{Muhly1999} is valid for represenations of $\mathcal{T}(E,Z)$
as we shall see in Theorem \ref{thm:Wold}. 

By virtue of Arveson's dilation theory \cite{Arv1969a}, every completely
contractive representation of $\mathcal{T}_{+}(E,Z)$ can be dilated
to a $C^{*}$-representation of $\mathcal{T}(E,Z)$. In general, it
does not seem that one can say a lot about the structure of the dilation.
However, under our hypotheses there are dilations that are also coextensions.
We show also that without assuming Hypotheses A, B, and C, it is possible
to build a coextension for any completely contractive representation
of $\mathcal{T}_{+}(E,Z)$ via an explicit construction. We show that
under Hypotheses A, B, and C, the induced part of the explicit coextension
agrees with the induced part of the coextension that arises in our
dilation process.

In the setting of unweighted tensor and Toeplitz algebras, our Wold
decomposition theorem asserts that every $C^{*}$-representation of
the Toe\-plitz algebra of a $C^{*}$-correspondence decomposes into
the direct sum of an induced representation of the algebra and a fully
coisometric representation in the sense of \cite[Definition 5.3]{Muhly1998a}.
The formally same result is valid in the weighted setting provided
the notion of ``fully coisometric'' is modified according to
\begin{defn}
\label{Def:Fully_Coisometric} Let $\sigma$ normal $W^{*}$-representation
of $M$ on a Hilbert space $H$. An element $\mathfrak{v}\in\overline{D(X,\sigma)}$
is called \emph{fully coisometric} in case 
\[
\sum_{k\geq1}\mathfrak{v}^{(k)}(X_{k}\otimes I_{H})\mathfrak{v}^{(k)*}=I_{H}.
\]
In this case, we shall also call $\sigma\times\mathfrak{v}$ a \emph{fully
coisometric }representation.
\end{defn}
Our generalization of Theorem 2.9 of \cite{Muhly1999} is the following
theorem, which generalizes also Theorem 3.8 of Popescu's Memoir \cite{Popescu2010}.
\begin{thm}
\label{thm:Wold}Suppose Hypotheses A, B, and C are satisfied and
suppose $\pi$ is a $C^{*}$-representation of $\mathcal{T}(E,Z)$
on a Hilbert space $H$. Then $\pi$ decomposes as $\pi=\pi_{ind}\oplus\pi_{full}$
acting on $H=H_{ind}\oplus H_{full}$, where $\pi_{ind}$ is (unitarily
equivalent to) an induced representation and $\pi_{full}$ is fully
coisometric. 
\end{thm}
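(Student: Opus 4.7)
The strategy is to mimic the proof of the Wold decomposition in Theorem~2.9 of \cite{Muhly1999}, with the weighted setting requiring the new ingredient that the vacuum projection $P_{0}$ lies in $\mathcal{T}(E,Z)$. Once $P_{0}\in \mathcal{T}(E,Z)$ is shown, $p := \pi(P_{0})$ is a projection in $B(H)$, and the decomposition is recovered via the cyclic subspace $H_{ind} := \overline{\pi(\mathcal{T}(E,Z))\, pH}$ together with its orthogonal complement $H_{full}$.

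To place $P_{0}$ in $\mathcal{T}(E,Z)$, I invoke Lemma~\ref{W}, which expresses $I-P_{0}=\sum_{k\geq 1}W^{(k)}\widetilde{X}_{k}W^{(k)*}$. Hypothesis~A, via Lemma~\ref{lem:Positive_Expansion}, gives expansions $X_{k}=\sum_{\alpha}\xi_{\alpha}^{(k)}\otimes \xi_{\alpha}^{(k)*}$ with $\xi_{\alpha}^{(k)}\in E^{\otimes k}$, identifying each summand as $W^{(k)}\widetilde{X}_{k}W^{(k)*}=\sum_{\alpha}W_{\xi_{\alpha}^{(k)}}W_{\xi_{\alpha}^{(k)}}^{*}\in \mathcal{T}(E,Z)$. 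Hypothesis~C bounds $\limsup \Vert X_{k} \Vert^{1/k}$, and together with the weight estimates coming from Hypothesis~B controls $\Vert W^{(k)} \Vert$ enough to force norm convergence of the series; its limit $I-P_{0}$ therefore lies in $\mathcal{T}(E,Z)$. As a corollary, from the identity $D^{2}=WW^{*}+P_{0}\in \mathcal{T}(E,Z)$ and Hypothesis~B one extracts $D^{-1}\in \mathcal{T}(E,Z)$, hence $T_{\xi}=D^{-1}W_{\xi}\in \mathcal{T}(E,Z)$, so $\mathcal{T}(E)\subseteq \mathcal{T}(E,Z)$.

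With $p=\pi(P_{0})$ a projection on $H$, both $H_{ind}$ and $H_{full}$ reduce $\pi$ (the former by construction, the latter by adjoint-closedness of $\pi(\mathcal{T}(E,Z))$). Setting $\sigma_{0}:=\sigma|_{pH}$, I define $U:\mathcal{F}(E)\otimes_{\sigma_{0}}pH\to H_{ind}$ by $U(\xi\otimes h):=\pi(T_{\xi})h$ for $\xi\in E^{\otimes k}$, $h\in pH$. The identities $T_{\eta}^{*}P_{0}=0$ for $\eta$ of positive degree and $\pi(T_{\xi}^{*}T_{\eta})=\sigma(\langle \xi,\eta\rangle)$ make $U$ well-defined and isometric; surjectivity uses the inclusion $\mathcal{T}(E)\subseteq \mathcal{T}(E,Z)$ above. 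An equivariance check identifies $\pi|_{H_{ind}}$ with $\sigma_{0}^{\mathcal{F}(E)}$ restricted to $\mathcal{T}(E,Z)$. For the fully coisometric half, let $\mathfrak{v}$ be the intertwiner associated to $\pi|_{\mathcal{T}_{+}(E,Z)}$ by Theorem~\ref{Dparametrizesrepresentations}, and let $\mathfrak{v}_{full}$ denote its block relative to $H_{full}$. Since $pH\subseteq H_{ind}$, we have $\pi_{full}(P_{0})=0$, so $\pi_{full}(I-P_{0})=I_{H_{full}}$. Applying $\pi_{full}$ to Lemma~\ref{W} and reusing the rank-one expansion of $X_{k}$---together with $\pi(W_{\xi_{\alpha}}W_{\xi_{\alpha}}^{*})=\mathfrak{v}^{(k)}L_{\xi_{\alpha}}L_{\xi_{\alpha}}^{*}\mathfrak{v}^{(k)*}$ from Proposition~\ref{Psi}---yields $\sum_{k\geq 1}\mathfrak{v}_{full}^{(k)}(X_{k}\otimes I_{H_{full}})\mathfrak{v}_{full}^{(k)*}=I_{H_{full}}$, which is exactly the fully coisometric condition of Definition~\ref{Def:Fully_Coisometric}.

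The main obstacle is obtaining operator-norm convergence in the proof that $P_{0}\in \mathcal{T}(E,Z)$. The partial sums $\sum_{k\leq N}W^{(k)}\widetilde{X}_{k}W^{(k)*}$ increase strongly to $I-P_{0}$ unconditionally, but getting norm convergence---essential for the limit to lie in the norm-closed algebra $\mathcal{T}(E,Z)$---requires balancing Hypothesis~C's decay of $\Vert X_{k}\Vert^{1/k}$ against the norm of $W^{(k)}$. A careful accounting using the identities from the proof of Theorem~\ref{thm:The-potential-of-Phiz} (notably $R_{k}^{2}=\sum_{l=1}^{k}X_{l}\otimes R_{k-l}^{2}$), together with Hypothesis~B to keep $\{Z_{k}\}$ bounded below, is what ultimately makes the estimate work.
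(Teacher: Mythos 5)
Your proposal is correct and follows essentially the same route as the paper: establish norm convergence of $\sum_{k}W^{(k)}\widetilde{X}_{k}W^{(k)*}$ to $I-P_{0}$ (the paper's Lemma \ref{toeplitz}), deduce $\mathcal{T}(E)\subseteq\mathcal{T}(E,Z)$ so that $P_{0}$ and the diagonal operators are available (Proposition \ref{TE}), take $H_{ind}$ to be the cyclic subspace generated by $\pi(P_{0})H$ with the unitary $\xi\otimes h\mapsto\pi(T_{\xi})h$, and verify the fully coisometric identity on the complement by applying $\pi_{full}$ to Lemma \ref{W} via rank-one expansions of the $X_{k}$. The only slip is attributing the upper bound on $\Vert Z_{k}\Vert$ (needed for norm convergence) partly to Hypothesis B, whereas it comes from $X_{1}\geq aI$ in Hypothesis C via the identity $R_{k}^{2}=\sum_{l}X_{l}\otimes R_{k-l}^{2}$, which you do cite.
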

We will break the proof into a series of lemmas that put into evidence
the rolls of the hypotheses used. The initial focus will be to show
that the hypotheses guarantee that the (unweighted) Toeplitz algebra
$\mathcal{T}(E)$ is contained in $\mathcal{T}(E,Z)$. That uses Hypotheses
$A$ and $B$ only. Hypothesis $C$ is used near the end of the proof
of Theorem \ref{thm:Wold} to secure the convergence of the series
that shows that $\pi_{full}$ is fully coisometric.
\begin{lem}
\label{comp1} For $k,m\in\mathbb{N}$ and $\xi,\eta\in E^{\otimes m}$,
the restriction of $W_{\xi}W_{\eta}^{*}$ to the $m+k$-summand of
the Fock space $\mathcal{F}(E)$ is given by the formula 
\begin{multline*}
W_{\xi}W_{\eta}^{*}|E^{\otimes(m+k)}\\=Z^{(m+k)}(I_{m}\otimes Z^{(k)})^{-1}((\xi\otimes\eta^{*})\otimes I_{k})(I_{m}\otimes Z^{(k)*})^{-1}Z^{(m+k)*},
\end{multline*}
where $\xi\otimes\eta^{*}$ is the operator in $\mathcal{K}(E^{\otimes m})$
defined by $\xi\otimes\eta^{*}(\zeta)=\xi\langle\eta,\zeta\rangle$
for $\zeta\in E^{\otimes m}$. \end{lem}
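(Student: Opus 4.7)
The plan is to derive the formula directly from equation \eqref{Wxi} of Remark \ref{Rm:Products} combined with the rank-one identity $L_\xi L_\eta^* = (\xi\otimes\eta^*)\otimes I_F$ recorded in Remark \ref{Rmk:_Pimsner_calculation}. Recall that \eqref{Wxi} can be rephrased, for $\xi\in E^{\otimes m}$, as the factorization
\[
W_\xi|_{E^{\otimes k}} \;=\; Z^{(m+k)}\bigl(I_{E^{\otimes m}}\otimes Z^{(k)}\bigr)^{-1} L_\xi \colon E^{\otimes k}\longrightarrow E^{\otimes(m+k)},
\]
where $L_\xi\theta=\xi\otimes\theta$ is the insertion operator. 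The hypothesis that each $Z_k$ is invertible guarantees that $Z^{(k)}$ and $I_{E^{\otimes m}}\otimes Z^{(k)}$ are invertible, so this factorization makes sense.

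First I would take adjoints in the displayed factorization to obtain
\[
W_\eta^*|_{E^{\otimes(m+k)}} \;=\; L_\eta^*\bigl(I_{E^{\otimes m}}\otimes Z^{(k)*}\bigr)^{-1} Z^{(m+k)*}\colon E^{\otimes(m+k)}\longrightarrow E^{\otimes k}.
\]
Next, since the image of $W_\eta^*|_{E^{\otimes(m+k)}}$ lies in $E^{\otimes k}$, composing with the formula for $W_\xi|_{E^{\otimes k}}$ yields
\[
W_\xi W_\eta^*\big|_{E^{\otimes(m+k)}}=Z^{(m+k)}(I_{E^{\otimes m}}\otimes Z^{(k)})^{-1}\bigl(L_\xi L_\eta^*\bigr)(I_{E^{\otimes m}}\otimes Z^{(k)*})^{-1}Z^{(m+k)*}.
\]
Finally, I would invoke Remark \ref{Rmk:_Pimsner_calculation} with the roles of $E$ and $F$ there played by $E^{\otimes m}$ and $E^{\otimes k}$ respectively, obtaining $L_\xi L_\eta^* = (\xi\otimes\eta^*)\otimes I_{E^{\otimes k}}$, which substituted into the previous display gives exactly the claimed identity (with $I_k$ abbreviating $I_{E^{\otimes k}}$).

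There is no real obstacle: the argument is a direct computation built from a single definition, one adjoint, and one standard rank-one identity. The only care required is bookkeeping to check that the indices on $I_{E^{\otimes m}}\otimes Z^{(k)}$ match the domains and codomains so that the composition makes literal sense at each stage.
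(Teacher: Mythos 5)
Your proof is correct and is exactly what the paper intends: the paper's entire proof is ``This is immediate from \eqref{Wxi},'' and your write-up simply makes explicit the factorization $W_\xi|_{E^{\otimes k}}=Z^{(m+k)}(I_{E^{\otimes m}}\otimes Z^{(k)})^{-1}L_\xi$, the adjoint, and the rank-one identity $L_\xi L_\eta^*=(\xi\otimes\eta^*)\otimes I$ from Remark \ref{Rmk:_Pimsner_calculation}. No gaps; the bookkeeping of domains and codomains is exactly the care required.
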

\begin{proof}
This immediate from \eqref{Wxi}.
\end{proof}
In the following lemma, we shall write $\widetilde{A}$, when $A\in\mathcal{L}(E^{\otimes k})$,
for the operator $\rm{diag}[A(i)]$ in $\mathcal{L}(\mathcal{F}(E))$
that is diagonal with respect to the direct sum decomposition of $\mathcal{F}(E)$
as $\sum_{n\geq0}E^{\otimes n}$, where $A(i)=0$, $0\leq i<k$, and
$A(i)=A\otimes I_{E^{\otimes(i-k)}}$, $i\geq k$. This notation is
simply an extension of equation \eqref{eq:Xk-tilde}.
\begin{lem}
\label{toeplitz} If Hypothesis A is satisfied, then 
\[
W^{(m)}\widetilde{X}_{m}W^{(m)*}\in\mathcal{T}(E,Z)
\]
for all $m\geq1$. If, in addition, Hypothesis C is satisfied, then
the series $\sum_{k=1}^{\infty}W^{(k)}\widetilde{X_{k}}W^{(k)*}$
converges in norm to $I-P_{0}$, where $P_{0}$ is the projection
of the Fock correspondence onto the zero$^{th}$ summand.\end{lem}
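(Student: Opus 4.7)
The proof splits into two parts. For the first, the plan is to approximate $X_m$ in norm by finite sums of positive rank-one operators $\xi\otimes\xi^*$, $\xi\in E^{\otimes m}$, and then to identify $W^{(m)}\widetilde{X}_mW^{(m)*}$ with the corresponding norm-convergent sum $\sum_\alpha W_{\xi_\alpha}W_{\xi_\alpha}^*$ of elements of $\mathcal{T}(E,Z)$. Hypothesis A places $X_m$ in $\mathcal{K}(E^{\otimes m})$, so combining Lemma \ref{lem:Positive_Expansion} with the standard fact that multiplication by a compact operator converts ultrastrong convergence of bounded nets into norm convergence (so $X_m^{1/2}(I-P_F)X_m^{1/2}\to 0$ in norm, where the $P_F=\sum_{\alpha\in F}\eta_\alpha\otimes\eta_\alpha^*$ are the projection partial sums produced by Lemma \ref{lem:Positive_Expansion}) gives a norm-convergent expansion $X_m=\sum_\alpha \xi_\alpha\otimes\xi_\alpha^*$ with $\xi_\alpha:=X_m^{1/2}\eta_\alpha$. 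A direct summand-by-summand computation using Lemma \ref{comp1} verifies that $W^{(m)}\widetilde{(\xi\otimes\xi^*)}W^{(m)*}=W_\xi W_\xi^*$ for every $\xi\in E^{\otimes m}$, since on $E^{\otimes(m+k)}$ both sides equal $Z^{(m+k)}(I_m\otimes Z^{(k)})^{-1}((\xi\otimes\xi^*)\otimes I_k)(I_m\otimes Z^{(k)*})^{-1}Z^{(m+k)*}$. Hence $W^{(m)}\widetilde{X}_mW^{(m)*}$ is the norm limit of finite sums of elements $W_{\xi_\alpha}W_{\xi_\alpha}^*$, each of which lies in $\mathcal{T}(E,Z)$ by Remark \ref{Rm:Products}.

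For the second assertion, Lemma \ref{W} already supplies strong convergence to $I-P_0$, so it will be enough to produce a summable norm bound on the individual summands: the resulting absolutely convergent series must then agree with the strong limit. By Lemma \ref{uequivalent} the conclusion is invariant under passage to a unitarily equivalent weight sequence, so I would assume $Z$ is the canonical positive sequence supplied by Lemma \ref{weights}, in which case $Z_k=R_k^{-1}(I_E\otimes R_{k-1})$. The renewal identity \eqref{eq:X-R_relation} together with $X_1\geq aI_E$ (the positivity part of Hypothesis C) gives $R_k^2\geq X_1\otimes R_{k-1}^2\geq a(I_E\otimes R_{k-1})^2$, and conjugating this by $R_k^{-1}$ produces the uniform estimate
\[
\|Z_k\|^2=\|R_k^{-1}(I_E\otimes R_{k-1}^2)R_k^{-1}\|\leq \tfrac{1}{a}.
\]
Writing $A_k(l):=Z_l(I_E\otimes Z_{l-1})\cdots(I_{E^{\otimes(k-1)}}\otimes Z_{l-k+1})$, which is precisely the restriction of $W^{(k)}$ to $E^{\otimes l}$ for $l\geq k$, one obtains $\|A_k(l)\|\leq a^{-k/2}$ and hence $\|W^{(k)}\widetilde{X}_kW^{(k)*}\|\leq a^{-k}\|X_k\|$. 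The exponential-growth half of Hypothesis C then yields $\limsup(a^{-k}\|X_k\|)^{1/k}<1$, so $\sum_{k\geq 1}a^{-k}\|X_k\|<\infty$ and the Weierstrass M-test delivers norm convergence of $\sum_{k\geq 1}W^{(k)}\widetilde{X}_kW^{(k)*}$, with limit forced to equal $I-P_0$ by Lemma \ref{W}.

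The main technical point is the uniform estimate $\|Z_k\|\leq a^{-1/2}$ in the canonical case, which relies on the positivity of the $R_k$ to convert the operator inequality $R_k^2\geq a(I_E\otimes R_{k-1})^2$ into a norm bound by conjugating by $R_k^{-1}$. Without the reduction to the canonical sequence via Lemma \ref{uequivalent}, the elementary bound $\|A_k(l)\|\leq \prod_{j=l-k+1}^{l}\|Z_j\|$ would provide no uniform geometric decay, and the M-test argument would collapse.
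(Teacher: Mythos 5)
Your overall architecture matches the paper's proof: reduce the first assertion to rank-one operators via Lemma \ref{comp1}, and get the second from the uniform bound $\Vert Z_k\Vert\leq a^{-1/2}$ together with the root test, with Lemma \ref{W} identifying the limit. The second half is essentially the paper's argument verbatim; your detour through Lemma \ref{uequivalent} and the canonical positive sequence of Lemma \ref{weights} is valid but unnecessary, since the bound $\Vert Z_k\Vert\leq 1/\sqrt{a}$ is already established for \emph{any} associated weight sequence at the end of the proof of Theorem \ref{thm:The-potential-of-Phiz}, directly from equation \eqref{eq:X-Z_relation} (the paper simply cites that paragraph).

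There is, however, one step in your first half that does not hold up as stated. The claim that $X_m^{1/2}(I-P_F)X_m^{1/2}\to 0$ in norm because ``multiplication by a compact operator converts ultrastrong convergence of bounded nets into norm convergence'' is a Hilbert-space fact that fails for Hilbert $W^*$-modules: here $\mathcal{K}(E^{\otimes m})$ is merely the norm-closed span of the rank-one operators, and such operators need not improve the topology of convergence. For instance, with $M=E=\ell^{\infty}(\mathbb{N})$ one has $\mathcal{K}(E)=\mathcal{L}(E)=M$ and $I_E=1\otimes 1^*$ is rank one, yet the maximal orthogonal family $\{e_n\}$ of Lemma \ref{lem:Positive_Expansion} gives $\Vert I-P_F\Vert=1$ for every finite $F$, so $X^{1/2}(I-P_F)X^{1/2}$ does not tend to $0$ in norm when $X=I_E$. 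The repair is immediate and makes your appeal to Lemma \ref{lem:Positive_Expansion} superfluous: Hypothesis A puts $X_m$ in $\mathcal{K}(E^{\otimes m})$, which by definition means $X_m$ is a \emph{norm} limit of finite sums $\sum_i\xi_i\otimes\eta_i^*$; since $D\mapsto W^{(m)}\widetilde{D}W^{(m)*}$ is bounded and, by Lemma \ref{comp1}, sends $\xi\otimes\eta^*$ to $W_{\xi}W_{\eta}^*\in\mathcal{T}(E,Z)$, the first assertion follows. (In fact, under Hypothesis A one may write $I_{E^{\otimes m}}$, and hence $X_m=X_m^{1/2}I_{E^{\otimes m}}X_m^{1/2}$, as a \emph{finite} sum of rank-one operators, as is done in the proof of Proposition \ref{TE}, so no limiting process is needed at all.)
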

\begin{proof}
Lemma~\ref{comp1} implies that $W^{(k)}\widetilde{D}W^{(k)*}\in\mathcal{T}(E,Z)$
for every $D\in\mathcal{K}(E^{\otimes k})$. Hypothesis A, then implies
that $X_{m}\in\mathcal{K}(E^{\otimes m})$ for every $m$, so $W^{(m)}\widetilde{X}_{m}W^{(m)*}\in\mathcal{T}(E,Z)$
for every $m\geq1$. To prove the second assertion, note first that
the series $\sum_{k=1}^{\infty}W^{(k)}\widetilde{X_{k}}W^{(k)*}$
converges to $I-P_{0}$ ultraweakly by Lemma~\ref{W}. So all we
need to show is that the series $\sum_{k=1}^{\infty}W^{(k)}\widetilde{X_{k}}W^{(k)*}$
converges in norm. If $a$ is a positive number satisfying Hypothesis
C, then the last paragraph of the proof of Theorem \ref{thm:The-potential-of-Phiz}
shows that $\Vert Z_{k}\Vert\leq1/\sqrt{a}$ for all $k$. Consequently,
$\Vert W^{(k)}\widetilde{X}_{k}W^{(k)*}\Vert\leq\frac{1}{a^{k}}\Vert X_{k}\Vert$.
Since $(\overline{\lim}||X_{k}||^{1/k})<a$, by Hypothesis C, $\overline{\lim}\Vert W^{(k)}\widetilde{X}_{k}W^{(k)*}\Vert^{\frac{1}{k}}<1$.
Consequently, the series $\sum_{k=1}^{\infty}W^{(k)}\widetilde{X_{k}}W^{(k)*}$
converges in $\mathcal{T}(E,Z)$, by the root test.
\end{proof}
We shall write $D_{k}$, $k\geq1$, for the diagonal operator on $\mathcal{F}(E)$
defined by 
\[
D_{k}:=\rm{diag}[0,0,\ldots,0,Z^{(k)},Z^{(k+1)}(I_{k}\otimes Z)^{-1},Z^{(k+2)}(I_{k}\otimes Z^{(2)})^{-1},\ldots]
\]
(with $k$ zeros). So that, for $\xi\in E^{\otimes k}$, we may write
\[
W_{\xi}=D_{k}T_{\xi}.
\]
We shall also write 
\[
D^{(-k)}=(0,0,\ldots,0,(Z^{(k+1)}(I_{k}\otimes Z)^{-1})^{-1},(Z^{(k+2)}(I_{k}\otimes Z^{(2)})^{-1})^{-1},\ldots)
\]
(again, $k$ zeros). Consequently, 
\[
D_{k}D^{(-k)}=D^{(-k)}D_{k}=\sum_{j=k}^{\infty}P_{j},
\]
where $P_{j}$ is the projection of $\mathcal{F}(E)$ onto $E^{\otimes j}$,
and for $\xi\in E^{\otimes k}$, 
\[
T_{\xi}=D^{(-k)}W_{\xi}.
\]

\begin{prop}
\label{TE} If Hypotheses A and B are satisfied, then
\begin{enumerate}
\item $\mathcal{T}(E)\subseteq\mathcal{T}(E,Z)$. In particular, for each
$j\geq0$, $P_{j}\in\mathcal{T}(E,Z)$.
\item For every $k\geq1$, $D_{k}$ and $D^{(-k)}$ lie in $\mathcal{T}(E,Z)\cap(\varphi_{\infty}(M))'$.
\end{enumerate}
\end{prop}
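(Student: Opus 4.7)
The plan is to establish (2) first, since (1) then follows immediately from the identity $T_\xi = D^{(-1)} W_\xi$ (for $\xi \in E$), together with the standard fact that under Hypothesis A, each $P_j$ already lies in the unweighted Toeplitz algebra $\mathcal{T}(E)$.

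For the base case $m=1$: the proof of Lemma \ref{toeplitz} actually shows $W^{(1)} \widetilde{A} W^{(1)*} \in \mathcal{T}(E,Z)$ for every $A \in \mathcal{K}(E) = \mathcal{L}(E)$ (Hypothesis A). Taking $A = I_E$ and computing on each summand $E^{\otimes n}$ gives $W^{(1)} \widetilde{I}_E W^{(1)*} = D_1 D_1^* \in \mathcal{T}(E,Z)$. Under Hypothesis B each $Z_k$ is self-adjoint with $Z_k \geq \epsilon I$, so $D_1 = D_1^*$ is positive and $\sigma(D_1) \subseteq \{0\} \cup [\epsilon, \|D_1\|]$ is compact with $0$ isolated. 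Continuous functional calculus then puts $D_1 = (D_1 D_1^*)^{1/2}$ in $\mathcal{T}(E,Z)$, and applying the continuous function $f$ with $f(0)=0$ and $f(t) = 1/t$ on $[\epsilon, \|D_1\|]$ yields $D^{(-1)} = f(D_1) \in \mathcal{T}(E,Z)$. Since $D^{(-1)} W_\xi = T_\xi$ for $\xi \in E$, we obtain $T_\xi \in \mathcal{T}(E,Z)$ and hence $\mathcal{T}(E) \subseteq \mathcal{T}(E,Z)$.

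For the inductive step ($m \geq 2$), I would verify the algebraic recursion
\[
D_m = D_1 \cdot T(I_E \otimes D_{m-1}) T^*
\]
by direct calculation on each $E^{\otimes n}$, where $T$ is the tautological isometry. Although $T \notin \mathcal{T}(E,Z)$, a short computation using Remark \ref{Rmk:_Pimsner_calculation} together with $D_{m-1} \in \varphi_\infty(M)'$ shows that $T_\xi D_{m-1} T_\eta^* = T[(\xi \otimes \eta^*) \otimes D_{m-1}] T^*$ for $\xi, \eta \in E$. By Hypothesis A one may choose finite sums $F_\lambda = \sum_\alpha \xi_\alpha \otimes \eta_\alpha^*$ converging to $I_E$ in norm, and then
\[
\sum_\alpha T_{\xi_\alpha} D_{m-1} T_{\eta_\alpha}^* \longrightarrow T(I_E \otimes D_{m-1}) T^*
\]
in norm, with error bounded uniformly by $\|I_E - F_\lambda\| \cdot \|D_{m-1}\|$. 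The inductive hypothesis $D_{m-1} \in \mathcal{T}(E,Z)$ (together with the base-case containment $T_\xi \in \mathcal{T}(E,Z)$ for $\xi \in E$) places each partial sum in $\mathcal{T}(E,Z)$, hence so does the norm limit, and therefore $D_m \in \mathcal{T}(E,Z)$.

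To produce $D^{(-m)}$: under Hypothesis B, $D_m^* D_m$ is a positive element of $\mathcal{T}(E,Z)$ whose spectrum is contained in $\{0\} \cup [\epsilon^{2m}, \|D_m\|^2]$, because each factor $I_{E^{\otimes j}} \otimes Z_{n-j}$ in the expression for $D_m|_{E^{\otimes n}}$ has inverse of norm at most $\epsilon^{-1}$, so $\|D_m|_{E^{\otimes n}} x\| \geq \epsilon^m \|x\|$ for $n \geq m$. Continuous functional calculus then provides the pseudo-inverse $f(D_m^* D_m) \in \mathcal{T}(E,Z)$ (with $f(0) = 0$ and $f(t) = 1/t$ on the positive part of the spectrum), and a diagonal verification yields $D^{(-m)} = f(D_m^* D_m) D_m^*$. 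Finally, $D_m$ and $D^{(-m)}$ lie in $\varphi_\infty(M)'$ because of their diagonal form: each entry $Z_n(I_E \otimes Z_{n-1}) \cdots (I_{E^{\otimes(m-1)}} \otimes Z_{n-m+1})$ is a product of elements of $\varphi_n(M)^c$, hence lies in $\varphi_n(M)^c$. The main obstacle is the inductive step: the recursion involves the tautological isometry $T \notin \mathcal{T}(E,Z)$, and Hypothesis A is precisely what permits $T(I_E \otimes D_{m-1}) T^*$ to be realized as a norm limit of elements of $\mathcal{T}(E,Z)$.
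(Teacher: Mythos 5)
Your argument is correct, and your part (1) together with the base case is essentially the paper's own proof: both obtain $D_{1}D_{1}^{*}=D_{1}^{2}\in\mathcal{T}(E,Z)$ from a finite-rank decomposition of $I_{E}$ (the paper via $D_{1}(I-P_{0})D_{1}^{*}$, you via the computation behind Lemma~\ref{toeplitz}), invert by continuous functional calculus using Hypothesis B, and recover $T_{\xi}=D^{(-1)}W_{\xi}$. For part (2), however, you take a genuinely different route. The paper disposes of $D_{k}$ in one line: Hypothesis A applied to $E^{\otimes k}$ yields a finite sum $I_{E^{\otimes k}}=\sum_{j}\xi_{j}\otimes\eta_{j}^{*}$, whence $D_{k}=\sum_{j}D_{k}T_{\xi_{j}}T_{\eta_{j}}^{*}=\sum_{j}W_{\xi_{j}}T_{\eta_{j}}^{*}$, using the higher-order weighted shifts $W_{\xi_{j}}=D_{k}T_{\xi_{j}}$ for $\xi_{j}\in E^{\otimes k}$; it then gets $D^{(-k)}$ as $(D_{k}+\sum_{j<k}P_{j})^{-1}-\sum_{j<k}P_{j}$ by spectral permanence rather than by your pseudo-inverse $f(D_{m}^{*}D_{m})D_{m}^{*}$. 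Your induction via the recursion $D_{m}=D_{1}\,T(I_{E}\otimes D_{m-1})T^{*}$, with $T(I_{E}\otimes D_{m-1})T^{*}$ realized as a norm limit of sums $\sum_{\alpha}T_{\xi_{\alpha}}D_{m-1}T_{\eta_{\alpha}}^{*}$, is longer but buys something real: it invokes Hypothesis A only at level one, i.e., only $\mathcal{K}(E)=\mathcal{L}(E)$ is needed, whereas the paper's one-liner uses $\mathcal{K}(E^{\otimes k})=\mathcal{L}(E^{\otimes k})$ for every $k$ --- precisely the part of Hypothesis A whose necessity the paper itself flags as unclear. Two small points to make explicit if you write this up: the identity $T_{\xi}D_{m-1}T_{\eta}^{*}=T\bigl((\xi\otimes\eta^{*})\otimes D_{m-1}\bigr)T^{*}$ really does require $D_{m-1}\in\varphi_{\infty}(M)^{c}$ (which you note), and since $I_{E}\in\mathcal{K}(E)$ already forces $I_{E}$ to be an exact \emph{finite} sum of rank-one operators, your approximating net $F_{\lambda}$ can be replaced by a single exact decomposition, as the paper does.
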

\begin{proof}
Recall that $W_{\xi}=D_{1}T_{\xi}$, where $D_{1}=\rm{diag}[0,Z_{1},Z_{2},\ldots]$
and $\xi\in E$. Thus, for $\xi,\eta\in E$, $D_{1}T_{\xi}T_{\eta}^{*}D_{1}^{*}=W_{\xi}W_{\eta}^{*}\in\mathcal{T}(E,Z)$.
Since we can write $I_{E}$ as a finite sum, $I_{E}=\sum_{k}\xi_{k}\otimes\eta_{k}^{*}$,
by Hypothesis A, $\sum_{k}T_{\xi_{k}}T_{\eta_{k}}^{*}=I-P_{0}$. Thus
$D_{1}^{2}=D_{1}(I-P_{0})D_{1}^{*}\in\mathcal{T}(E,Z)$. The spectrum
of $D_{1}^{2}$ is contained in $\{0\}\cup[\epsilon^{2},\infty)$
and, applying the function $f(t)$ defined to be $t^{-1/2}$ on $[\epsilon^{2},\infty)$
and $f(0)=0$ (which is continuous on the spectrum of $D_{1}^{2}$),
we see that $D^{(-1)}:=\rm{diag}[0,Z_{1}^{-1},Z_{2}^{-1},\ldots]$ lies
in $\mathcal{T}(E,Z)$. Multiplying $D^{(-1)}$ by $D_{1}T_{\xi}$,
we find that $T_{\xi}\in\mathcal{T}(E,Z)$, completing the proof of
the first assertion. For the second, observe that for $k\geq1$, we
may write $I_{E^{\otimes k}}=\sum_{j}\xi_{j}\otimes\eta_{j}^{*}$,
another finite sum. Consequently, $D_{k}=\sum_{j}D_{k}T_{\xi_{j}}T_{\eta_{j}}^{*}=\sum_{j}W_{\xi_{j}}T_{\eta_{j}}^{*}\in\mathcal{T}(E,Z)$.
Also, $\sum_{j=0}^{k-1}P_{j}=I-\sum_{j}T_{\xi_{j}}T_{\eta_{j}}^{*}\in\mathcal{T}(E)\subseteq\mathcal{T}(E,Z)$
and, so $D^{(-k)}=(D_{k}+\sum_{j=0}^{k-1}P_{j})^{-1}-\sum_{j=0}^{k-1}P_{j}\in\mathcal{T}(E,Z)$.
\end{proof}
We next want to focus on a general completely contractive representations
of $\mathcal{T}_{+}(E,Z)$, $\tau\times\mathfrak{v},$ that arises
as the restriction of a $C^{*}$-representation $\pi$ of $\mathcal{T}(E,Z)$.
We shall write the common Hilbert space of $\tau\times\mathfrak{v}$
and $\pi$ as $K$ and we shall assume that Hypotheses $A$ and $B$
are in force so that we may invoke Proposition~\ref{TE} and use
its notation.

Since $D_{k},D^{-k}$ and $P_{0}$, $k\geq1$, all lie in $\mathcal{T}(E,Z)$,
we may form $P_{\mathcal{M}}:=\pi(P_{0})$, $d_{k}:=\pi(D_{k})$ and
$d^{(-k)}:=\pi(D^{-k})$. It results that $d_{k}d^{(-k)}=d^{(-k)}d_{k}=I-\pi(\sum_{j=0}^{k-1}P_{j})$
and, in particular, $d_{1}d^{(-1)}=d^{(-1)}d_{1}=I-P_{\mathcal{M}}$.
Since $\pi(\varphi_{\infty}(M))=\tau(M)$ and $D_{k},D^{(-k)},P_{0}\in(\varphi_{\infty}(M))'$,
on $K$ $d_{k},d^{(-k)},P_{\mathcal{M}}\in\tau(M)'$, and $\mathcal{M}$
is a subspace of $K$ that reduces $\tau$.

Further, for $\xi\in E$, $\pi(W_{\xi})=(\tau\times\mathfrak{v})(W_{\xi})=\mathfrak{v}L_{\xi}$
and $\pi(T_{\xi})=\pi(D^{(-1)}W_{\xi})=d^{(-1)}\mathfrak{v}L_{\xi}$.
Note that $d^{(-1)}\mathfrak{v}\in E^{\tau*}$. Since $\mathcal{T}(E)\subseteq\mathcal{T}(E,Z)$,
we may restrict $\pi$ to $\mathcal{T}(E)$ and $\mathcal{T}_{+}(E)$,
getting a $C^{*}$-represen\-ta\-tion of $\mathcal{T}(E)$ whose
restriction to $\mathcal{T}_{+}(E)$ is $\tau\times d^{(-1)}\mathfrak{v}$.
Consequently, $d^{(-1)}\mathfrak{v}$ is an isometry.

It is immediate that if $\xi,\theta\in E$, then $\pi(T_{\xi}T_{\theta})=d^{(-1)}\mathfrak{v}L_{\xi}d^{(-1)}\mathfrak{v}L_{\theta}=(d^{(-1)}\mathfrak{v})^{(2)}L_{\xi\otimes\theta}$.
It follows easily that for every $\eta\in E^{\otimes k}$ (where $k\geq1$),
$\pi(T_{\eta})=(d^{(-1)}\mathfrak{v})^{(k)}L_{\eta}$.

Also, for every $k\geq1$, $\eta\in E^{\otimes(k-1)}$ and $\xi\in E$,
we have 
\[
T_{Z_{k}(\xi\otimes\eta)}P_{0}=W_{\xi}T_{\eta}P_{0}.
\]
So, applying $\pi$, we find that 
\[
(d^{(-1)}\mathfrak{v})^{(k)}L_{Z_{k}(\xi\otimes\eta)}P_{\mathcal{M}}=\mathfrak{v}L_{\xi}(d^{(-1)}\mathfrak{v})^{(k-1)}L_{\eta}P_{\mathcal{M}}
\]
and for every $g\in\mathcal{M}$,

\begin{equation}
(d^{(-1)}\mathfrak{v})^{(k)}(Z_{k}(\xi\otimes\eta)\otimes g)=\mathfrak{v}(\xi\otimes(d^{(-1)}\mathfrak{v})^{(k-1)}(\eta\otimes g)).\label{dv}
\end{equation}

Finally note that, for $k\geq1$, 
\[
(d^{(-1)}\mathfrak{v})^{(k)}=d^{(-k)}\mathfrak{v}^{(k)}
\]
because 
\begin{multline*}D^{(-1)}W_{\xi_{1}}D^{(-1)}W_{\xi_{2}}\cdots D^{(-1)}W_{\xi_{k}}\\=T_{\xi_{1}}T_{\xi_{2}}\cdots T_{\xi_{k}}=T_{\xi_{1}\otimes\xi_{2}\cdots\otimes\xi_{k}}=D^{(-k)}W_{\xi_{1}\otimes\xi_{2}\cdots\otimes\xi_{k}}.
\end{multline*}
\begin{lem}
\label{induced} With the notation established, we have
\begin{enumerate}
\item The spaces $(d_{1}\mathfrak{v})^{(k)}(E^{\otimes k}\otimes_{\tau}\mathcal{M}$
\; and \; $(d_{1}\mathfrak{v})^{(k)}(E^{\otimes m}\otimes_{\tau}\mathcal{M})$
are orthogonal for  $k\ne m$. 
\item If $K_{0}$ is the subspace of $K$ defined by 
\[K_{0}:=\sum_{k=0}^{\oplus}(d_{1}\mathfrak{v})^{(k)}(E^{\otimes k}\otimes_{\tau}\mathcal{M}),\]
and if $V:\mathcal{F}(E)\otimes_{\tau}\mathcal{M}\rightarrow K_{0}$
is the operator that maps $\eta\otimes g$ to $(d_{1}\mathfrak{v})^{(k)}(\eta\otimes g)$,
$\eta\in E^{\otimes k}$, $g\in\mathcal{M}$, then $V$ is Hilbert
space isomorphism and, for $\xi\in E$, 
\begin{equation}
V(W_{\xi}\otimes I_{\mathcal{M}})=\pi(W_{\xi})V.\label{Vintertwine}
\end{equation}

\end{enumerate}
\end{lem}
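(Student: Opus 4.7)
The plan is to interpret $(d_{1}\mathfrak{v})^{(k)}$ in the statement as $(d^{(-1)}\mathfrak{v})^{(k)}$, since this is the intertwiner for which the formula $\pi(T_{\eta}) = (d^{(-1)}\mathfrak{v})^{(k)}L_{\eta}$, $\eta\in E^{\otimes k}$, was established in the paragraphs just preceding the lemma. Under this reading, the subspaces in (1) are precisely $\pi(T_{E^{\otimes k}})\mathcal{M}$, and $V$ is the unique continuous linear extension of $\eta\otimes g\mapsto \pi(T_{\eta})g$. The lemma then takes a familiar form: the wandering subspace $\mathcal{M}=\pi(P_{0})K$, shifted up by the unweighted creation operators $\pi(T_{\eta})$, carries an induced representation of $\mathcal{T}_{+}(E,Z)$ on its closed span $K_{0}$.

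For (1), orthogonality reduces to proving $P_{\mathcal{M}}\pi(T_{\eta_{1}})^{*}\pi(T_{\eta_{2}})P_{\mathcal{M}}=0$ for $\eta_{1}\in E^{\otimes k}$ and $\eta_{2}\in E^{\otimes m}$, $k\neq m$. Since $\pi$ is a $*$-homomorphism on $\mathcal{T}(E)\subseteq\mathcal{T}(E,Z)$, it suffices to verify $P_{0}T_{\eta_{1}}^{*}T_{\eta_{2}}P_{0}=0$ in $\mathcal{L}(\mathcal{F}(E))$. This is immediate: $T_{\eta_{2}}P_{0}$ has range in $E^{\otimes m}$, $T_{\eta_{1}}^{*}$ shifts degree by $-k$ (annihilating summands of degree smaller than $k$), and $P_{0}$ selects the zeroth summand, which is reached only when $m=k$.

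For (2), $M$-balancedness of $V$ follows from $T_{\eta a}=T_{\eta}\varphi_{\infty}(a)$, which gives $V(\eta a\otimes g)=\pi(T_{\eta})\tau(a)g=V(\eta\otimes\tau(a)g)$. Isometricity on each level uses the standard identity $T_{\eta_{1}}^{*}T_{\eta_{2}}=\varphi_{\infty}(\langle\eta_{1},\eta_{2}\rangle_{E^{\otimes k}})$ for $\eta_{1},\eta_{2}\in E^{\otimes k}$; applying $\pi$ gives $\langle\pi(T_{\eta_{1}})g_{1},\pi(T_{\eta_{2}})g_{2}\rangle=\langle g_{1},\tau(\langle\eta_{1},\eta_{2}\rangle)g_{2}\rangle$, which matches the inner product on $E^{\otimes k}\otimes_{\tau}\mathcal{M}$; cross-level inner products vanish by (1). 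Since the image of $V$ is by construction dense in $K_{0}$, $V$ is a Hilbert space isomorphism onto $K_{0}$. For the intertwining equation \eqref{Vintertwine}, use the identity $T_{Z_{k+1}(\xi\otimes\eta)}P_{0}=W_{\xi}T_{\eta}P_{0}$ recorded just before \eqref{dv}: applying $\pi$ and evaluating at $g\in\mathcal{M}$ yields
\[
V(W_{\xi}\eta\otimes g)=V(Z_{k+1}(\xi\otimes\eta)\otimes g)=\pi(T_{Z_{k+1}(\xi\otimes\eta)})g=\pi(W_{\xi})\pi(T_{\eta})g=\pi(W_{\xi})V(\eta\otimes g).
\]

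The only real obstacle is bookkeeping: juggling the two intertwiners $\mathfrak{v}$ and $d^{(-1)}\mathfrak{v}$ simultaneously, recalling that $T_{\eta_{1}}^{*}T_{\eta_{2}}=\varphi_{\infty}(\langle\eta_{1},\eta_{2}\rangle)$ extends from $E$ to all tensor levels via the tautological isometry structure, and checking that the $P_{0}$-restricted identity above produces a genuine equality after applying $\pi$ and evaluating on $\mathcal{M}=P_{\mathcal{M}}K$, which it does because $P_{\mathcal{M}}g=g$ for $g\in\mathcal{M}$.
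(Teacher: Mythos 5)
Your proof is correct and follows essentially the same route as the paper's: part (1) via applying $\pi$ to $P_{0}T_{\eta_{k}}^{*}T_{\eta_{m}}P_{0}=0$, isometricity of $V$ from the fact that $(d^{(-1)}\mathfrak{v})^{(k)}$ is an isometry (which you re-derive through $T_{\eta_{1}}^{*}T_{\eta_{2}}=\varphi_{\infty}(\langle\eta_{1},\eta_{2}\rangle)$, exactly how the paper established it in the preceding paragraphs), and the intertwining relation from the identity $T_{Z_{k+1}(\xi\otimes\eta)}P_{0}=W_{\xi}T_{\eta}P_{0}$, i.e., equation \eqref{dv}. Your reading of $(d_{1}\mathfrak{v})$ as $(d^{(-1)}\mathfrak{v})$ is the intended one and consistent with the surrounding text.
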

\begin{proof}
For $k\ne m$ and $\eta_{k}\in E^{\otimes k},\eta_{m}\in E^{\otimes m}$,
we have $P_{0}T_{\eta_{k}}^{*}T_{\eta_{m}}P_{0}=0$. Apply $\pi$
to get $P_{\mathcal{M}}L_{\eta_{k}}^{*}(d_{1}\mathfrak{v})^{(k)*}(d_{1}\mathfrak{v})^{(m)}L_{\eta_{m}}P_{\mathcal{M}}=0$,
which proves $1$. Since, as was mentioned above, $d_{1}\mathfrak{v}$
is a isometry (and thus so is $(d_{1}\mathfrak{v})^{(k)}$ for every
$k\geq1$), $V$ is a Hilbert space isomorphism. To prove Equation
(\ref{Vintertwine}) it suffices to apply both sides to $\eta\otimes g\in E^{\otimes k}\otimes\mathcal{M}$.
But this is just Equation (\ref{dv}).
\end{proof}
{}
\begin{proof}
\emph{(of Theorem \ref{thm:Wold})} We continue to use the notation
just established and recall that $\pi(P_{0})=P_{\mathcal{M}}$ and
$\pi(T_{\xi})=d_{1}\mathfrak{v}L_{\xi}$ for $\xi\in E$. Since Hypothesis
$A$ allows us to write $I_{E}$ as a finite sum, $I_{E}=\sum\xi_{j}\otimes\xi_{j}^{*}$,
we find that $\sum T_{\xi_{j}}P_{0}T_{\xi_{j}}^{*}=P_{1}$. Apply
$\pi$ to conclude that \[\sum d_{1}\mathfrak{v}L_{\xi_{j}}P_{\mathcal{M}}L_{\xi_{j}}^{*}(d_{1}\mathfrak{v})^{*}=\pi(P_{1}).\]
Thus $\pi(P_{1})$ is the projection onto $d_{1}\mathfrak{v}(E\otimes_{\tau}\mathcal{M})$.
Similarly, $\pi(P_{k})$ is the projection onto $(d_{1}\mathfrak{v})^{(k)}(E^{\otimes k}\otimes_{\tau}\mathcal{M})$.
Write $Q_{n}=\sum_{k=n}^{\infty}P_{k}$. Then we see that $\{\pi(Q_{n})\}$
is a decreasing sequence of projections that converges in the strong
operator topology to the projection onto $K_{0}^{\perp}$ where $K_{0}=\sum_{k=0}^{\oplus}(d_{1}\mathfrak{v})^{(k)}(E^{\otimes k}\otimes_{\tau}\mathcal{M})\subseteq K$,
by Lemma~\ref{induced}.

Since, for every $\xi\in E$ and $n\geq0$, $W_{\xi}Q_{n}=Q_{n}W_{\xi}Q_{n}$,
we see that $\pi(W_{\xi})$ leaves each $\pi(Q_{n})K$ invariant and
so, $\pi(W_{\xi})$ leaves $K_{0}^{\perp}$ invariant. On the other
hand, equation \ref{Vintertwine} shows that $K_{0}$ is invariant
for each $\pi(W_{\xi})$. Thus $K_{0}$ reduces $\pi$ and by Lemma~\ref{induced}
the restriction of $\pi$ to $K_{0}$ is unitarily equivalent to an
induced representation. Therefore, we shall rename $K_{0}$ as $K_{ind}$,
write $\pi_{ind}:=\pi\vert K_{ind}$, and in anticipation of what
we shall show next, we shall write $K_{full}:=K_{0}^{\perp}$ and
$\pi_{full}:=\pi\vert K_{full}$. 

To see that $\pi_{full}$ is in fact fully coisometric in the sense
of Definition \ref{Def:Fully_Coisometric}, note that $\pi_{full}(P_{0})=0$
by construction. Note also that by Lemma \ref{toeplitz}, the series
$\sum_{k=0}^{\infty}W^{(k)}\widetilde{X}_{k}W^{(k)*}$
converges in norm to $I-P_{0}$. Hence $\pi_{full}(\sum_{k=0}^{\infty}W^{(k)}\widetilde{X}_{k}W^{(k)*})=\pi_{full}(I)=I_{K_{full}}$.
It suffices, therefore, to prove that 
\[
\pi_{full}(W^{(k)}\widetilde{X}_{k}W^{(k)*})=\mathfrak{v}_{0}^{(k)}(X_{k}\otimes I_{K_{full}})\mathfrak{v}_{0}^{(k)*},
\]
where $\mathfrak{v}_{0}$ is the restriction of $\mathfrak{v}$ to
$K_{full}$. To this end, observe first that $\pi_{full}=\tau_{0}\otimes\mathfrak{v}_{0}$,
where $\tau_{0}$ is the restriction of $\tau$ to $K_{full}$ and
observe that since $X_{k}$ is invertible, Hypothesis $A$ implies
that we may write $X_{k}$ as a finite sum $X_{k}=\sum_{i}\xi_{i}\otimes\xi_{i}^{*}$,
where $\xi_{i}\in E^{\otimes k}$. Consequently, $W^{(k)}\widetilde{X}_{k}W^{(k)*}=\sum W^{(k)}\widetilde{L}_{\xi_{i}}\widetilde{L}_{\xi_{i}}^{*}W^{(k)*}=\sum W_{\xi_{i}}^{(k)}W_{\xi_{i}}^{(k)*}$,
where $\widetilde{L}_{\xi_{i}}:\mathcal{F}(E)\to E^{\otimes k}\otimes\mathcal{F}(E)$
is the insertion operator determined by $\xi_{i}$. Therefore, 
\[
\pi_{full}(W^{(k)}\widetilde{X}_{k}W^{(k)*})=\sum\pi_{full}(W_{\xi_{i}}^{(k)})\pi_{full}(W_{\xi_{i}}^{(k)})^{*}.
\]
Since $\pi_{full}=\tau_{0}\otimes\mathfrak{v}_{0}$, $\pi_{full}(W_{\xi_{i}}^{(k)})=\mathfrak{v}_{0}^{(k)}L_{\xi_{i}}$,
where $L_{\xi_{i}}:K_{full}\to E^{\otimes k}\otimes K_{full}$ is
the insertion operator as in Remark \ref{Rm:_Algebraic_Extension}.
Finally, then, 
\begin{multline*}
\pi_{full}(W^{(k)}\widetilde{X}_{k}W^{(k)*})=\sum\pi_{full}(W_{\xi_{i}}^{(k)})\pi_{full}(W_{\xi_{i}}^{(k)})^{*}\\
=\sum\mathfrak{v}_{0}^{(k)}L_{\xi_{i}}L_{\xi_{i}}^{*}\mathfrak{v}_{0}^{(k)*}=\mathfrak{v}_{0}^{(k)}(\sum\xi_{i}\otimes\xi_{i}^{*}\otimes I_{K_{full}})\mathfrak{v}_{0}^{(k)*}\\
=\mathfrak{v}_{0}^{(k)}(X_{k}\otimes I_{K_{full}})\mathfrak{v}_{0}^{(k)*},
\end{multline*}
as was to be proved.
\end{proof}
The following theorem generalizes Theorem 3.12 of \cite{Popescu2010}.

\begin{thm}
\label{dilation2} Suppose that Hypotheses $A$, $B$, and $C$ are
satisfied. Suppose also that $\sigma$ is a normal representation
of $M$ on a Hilbert space $H$ and that $\mathfrak{z}\in\overline{D(X,\sigma)}$.
Then $\sigma\times\mathfrak{z}$ admits a $C^{*}$-dilation that is
also a coextension.\end{thm}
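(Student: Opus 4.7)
The strategy is to combine the coextension construction anticipated by Theorem \ref{dilation1} with the weighted Wold decomposition Theorem \ref{thm:Wold}, whose availability is the sole reason Hypotheses A, B, C are imposed here. The case $\mathfrak{z}\in D(X,\sigma)$ is immediate: by Lemma \ref{PoissonIsometry}(4) the Poisson kernel $K_Z(\mathfrak{z})$ is already an isometry, and Lemma \ref{Kintertwines} shows that the induced representation $\sigma^{\mathcal{F}(E)}|_{\mathcal{T}(E,Z)}$ is a $C^*$-dilation with $K_Z(\mathfrak{z})H$ co-invariant for $\sigma^{\mathcal{F}(E)}(\mathcal{T}_{+}(E,Z))$; this case needs no hypotheses. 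The real content is the boundary case, where $Q_{\mathfrak{z}}\neq 0$.

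For the general case I would proceed as follows. Invoke Theorem \ref{dilation1} to obtain a completely contractive coextension $\pi_0\colon \mathcal{T}_{+}(E,Z)\to B(K)$ of $\sigma\times\mathfrak{z}$, realized by an isometry $V\colon H\to K$. It is built from the Poisson kernel $K_Z(\mathfrak{z})$ (which intertwines adjoints by Lemma \ref{Kintertwines} and has defect $Q_{\mathfrak{z}}=I-K_Z(\mathfrak{z})^*K_Z(\mathfrak{z})$ by Lemma \ref{PoissonIsometry}(3)) together with a fully coisometric piece $\tau\times\mathfrak{v}$ in the sense of Definition \ref{Def:Fully_Coisometric} that absorbs the defect $Q_{\mathfrak{z}}$. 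The first summand already extends to a $C^*$-representation of $\mathcal{T}(E,Z)$ as the induced representation $\sigma^{\mathcal{F}(E)}$, so the task reduces to extending the fully coisometric summand to a $C^*$-representation of $\mathcal{T}(E,Z)$.

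Next I would prove that under Hypotheses A, B, C every fully coisometric completely contractive representation of $\mathcal{T}_{+}(E,Z)$ extends (uniquely) to a $C^*$-representation of $\mathcal{T}(E,Z)$. The key ingredients are Lemma \ref{toeplitz}, which under A and C gives the norm-convergent identity $\sum_{k\geq 1} W^{(k)}\widetilde{X}_k W^{(k)*}=I-P_0$ inside $\mathcal{T}(E,Z)$, and Proposition \ref{TE}, which under A and B places the unweighted Toeplitz algebra $\mathcal{T}(E)$, the diagonal weights $D_k$, $D^{(-k)}$, and each projection $P_k$ inside $\mathcal{T}(E,Z)$. These relations force the values of any $C^*$-extension on the adjoint and inverse-diagonal generators of $\mathcal{T}(E,Z)$, and the fully coisometric identity $\sum_k \mathfrak{v}^{(k)}(X_k\otimes I)\mathfrak{v}^{(k)*}=I$ guarantees consistency with the Toeplitz-type relation of Lemma \ref{toeplitz}. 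The direct sum of the two extensions then yields a $C^*$-representation $\pi\colon \mathcal{T}(E,Z)\to B(K)$, and co-invariance of $VH$ for $\pi(\mathcal{T}_{+}(E,Z))$ is inherited directly from Theorem \ref{dilation1}.

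The main obstacle is the extension step for the fully coisometric summand: one must argue that the abstract completely contractive representation $\tau\times\mathfrak{v}$ of $\mathcal{T}_{+}(E,Z)$ is automatically $C^*$-admissible on all of $\mathcal{T}(E,Z)$. This is precisely where Hypotheses A, B, C earn their keep; without Hypothesis A one has no norm convergence in Lemma \ref{toeplitz}, without Hypothesis B the diagonal operators $D^{(-k)}$ need not belong to $\mathcal{T}(E,Z)$, and without Hypothesis C the series controlling the fully coisometric sum need not converge in norm. Under all three, the structural relations inside $\mathcal{T}(E,Z)$ translate into operator identities on the fully coisometric summand that are automatically satisfied, and the extension is forced to exist.
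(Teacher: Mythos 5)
Your argument has a genuine gap at its central step: the claim that, under Hypotheses A, B, and C, the fully coisometric summand $\tau\times\mathfrak{v}$ produced by Theorem \ref{dilation1} extends to a $C^{*}$-representation of $\mathcal{T}(E,Z)$ \emph{on the same Hilbert space}. Nothing in the paper establishes this; indeed, immediately after Theorem \ref{dilation1} the authors explicitly state that they do \emph{not} claim the coextension $(\sigma^{\mathcal{F}(E)}\otimes I_{\mathcal{D}_{*}}\oplus\tau)\times(\mathfrak{w}_{\mathcal{D}_{*}}\oplus\mathfrak{v})$ extends to a $C^{*}$-representation of $\mathcal{T}(E,Z)$. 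The Wold decomposition (Theorem \ref{thm:Wold}) says that the fully coisometric part of a $C^{*}$-representation is fully coisometric; it does not give the converse, and the converse is false. Already in the unweighted scalar case $M=E=\mathbb{C}$, the backward shift $T=U_{+}^{*}$ satisfies $TT^{*}=I$, hence is fully coisometric, but $T^{*}T\neq I$, so the map $T_{1}\mapsto T$ cannot extend to a $*$-representation of the Toeplitz $C^{*}$-algebra on $\ell^{2}(\mathbb{Z}_{+})$, since any such extension would force $T^{*}T=\rho(T_{1}^{*}T_{1})=I$. Your assertion that ``the extension is forced to exist'' is therefore exactly the point that needs, and lacks, a proof; repairing it by dilating the fully coisometric summand yet again would then require re-establishing co-invariance of $VH$, which is the whole content of the theorem.

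The paper's proof avoids this reduction entirely. It takes the unital completely positive map $\Psi_{Z,\mathfrak{z}}$ of Proposition \ref{Psi}, defined on the operator system $\overline{span}\{TS^{*}\mid T,S\in\mathcal{T}_{+}(E,Z)\}$, extends it to all of $\mathcal{T}(E,Z)$ by Arveson's extension theorem, and forms the Stinespring dilation $\pi$ on $K\supseteq H$. The identity $\Psi_{Z,\mathfrak{z}}(W_{\xi}W_{\eta}^{*})=\mathfrak{z}^{(k)}L_{\xi}L_{\eta}^{*}\mathfrak{z}^{(m)*}=(\sigma\times\mathfrak{z})(W_{\xi})(\sigma\times\mathfrak{z})(W_{\eta})^{*}$ from Proposition \ref{Psi}, compared with $P_{H}\pi(W_{\xi})P_{H}\pi(W_{\eta})^{*}P_{H}+P_{H}\pi(W_{\xi})P_{H}^{\perp}\pi(W_{\eta})^{*}P_{H}$, forces $P_{H}\pi(W_{\xi})P_{H}^{\perp}=0$ upon taking $\xi=\eta$; that is precisely the statement that $\pi$ is a coextension. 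If you want a correct proof along your lines, you would need to replace your extension step with an argument of this multiplicative-domain type, which is essentially the original problem over again.
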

\begin{proof}
Consider the map $\Psi_{Z,\mathfrak{z}}$ from Proposition \ref{Psi}.
It is a unital completely positive map on the operator system $\overline{span}\{TS^{*}\mid T,S\in\mathcal{T}_{+}(E,Z)\}$
with values in $B(H_{\sigma})$ that restricts to the completely contractive
representation $\sigma\times\mathfrak{z}$ on $\mathcal{T}_{+}(E,Z)$.
We may apply Arveson's extension of the Hahn-Banach theorem \cite[Theorem 1.2.3]{Arv1969a}
to extend $\Psi_{Z,\mathfrak{z}}$ to a completely positive map from
$\mathcal{T}(E,Z)$ to $B(H_{\sigma})$ (we keep the same symbol for
the extension) and then employ Stinespring's theorem to dilate $\Psi_{Z,\mathfrak{z}}$
to a $C^{*}$-representation $\pi$ of $\mathcal{T}(E,Z)$ on a Hilbert
space $K$ that contains $H$. Thus we may write $\Psi_{Z,\mathfrak{z}}(X)=P_{H}\pi(X)P_{H}$
for all $X\in\mathcal{T}(E,Z)$. We show that $\pi$ is a coextension.
To this end, write $I_{K}=I=P_{H}+P_{H}^{\perp}$ and observe that
\begin{multline*}
\Psi_{Z,\mathfrak{z}}(W_{\xi}W_{\eta}^{*})=P_{H}\pi(W_{\xi}W_{\eta}^{*})P_{H}=P_{H}\pi(W_{\xi})(P_{H}+P_{H}^{\perp})\pi(W_{\eta})^{*}P_{H}\\
=P_{H}\pi(W_{\xi})P_{H}\pi(W_{\eta})^{*}P_{H}+P_{H}\pi(W_{\xi})P_{H}^{\perp}\pi(W_{\eta})^{*}P_{H}.
\end{multline*}
By Proposition \ref{Psi}, 
\[\Psi_{Z,\mathfrak{z}}(W_{\xi}W_{\eta}^{*})=\mathfrak{z}^{(k)}L_{\xi}L_{\eta}^{*}\mathfrak{z}^{(m)*}\\=\sigma\times\mathfrak{z}(W_{\xi})\sigma\times\mathfrak{z}(W_{\eta})^{*}.
\]
On the other hand, since $\pi$ dilates $\Psi_{Z,\mathfrak{z}}$,
$P_{H}\pi(W_{\xi})P_{H}\pi(W_{\eta})^{*}P_{H}=\sigma\times\mathfrak{z}(W_{\xi})\sigma\times\mathfrak{z}(W_{\eta})^{*}$
also. Thus we conclude that $P_{H}\pi(W_{\xi})P_{H}^{\perp}\pi(W_{\eta})^{*}P_{H}=0$
for all $\xi$ and $\eta$. Taking $\xi=\eta$, then shows that $P_{H}\pi(W_{\xi})P_{H}^{\perp}=0$
for all $\xi$. So $\pi$ is a coextension as required.\end{proof}
\begin{rem}
\label{rem:noncoextension}We want to emphasize that we are not claiming
in Theorem \ref{dilation2} that all $C^{*}$-dilations of a representation
$\sigma\times\mathfrak{z}$ are coextensions. Rather, we showed that
$C^{*}$-dilations that are coextensions exist by constructing them
from maps of the form $\Psi_{Z,\mathfrak{z}}$ defined in Proposition
\ref{Psi}. Although we know of no particular examples, there may
well be dilations that do not come from such maps. We want to emphasize,
too, that the existence of maps of the form $\Psi_{Z,\mathfrak{z}}$
reflects features of $\mathcal{T}_{+}(E,Z)$ that may not be shared
by other operator algebras.
\end{rem}
We turn now to another, somewhat more constructive approach to producing
a coextension of a representation $\sigma\times\mathfrak{z}.$ Our arguments are inspired in part by those of Viselter in \cite[Theorem 2.22]{Viselter2011}).To
present it, we require additional notation. Suppose $\sigma$ is a
normal representation of $M$ on $H$ and that $\mathfrak{z}\in\overline{D(X,\sigma)}$.
Then to build the induced representation that goes with $\mathfrak{z}$,
we write $\Delta_{*}(\mathfrak{z})$ for $(I_{H}-\sum_{k=1}^{\infty}\mathfrak{z}^{(k)}(X_{k}\otimes I_{H})\mathfrak{z}^{(k)*})^{1/2}$
as in Definition \ref{Poisson_Kernel}. We also write $\mathcal{D}_{*}$
for the subspace $\overline{\Delta_{*}(\mathfrak{z})H}$ and note
that it reduces $\sigma(M)$. Further, we write $K_{\mathcal{D}_{*}}$
for $\mathcal{F}(E)\otimes_{\sigma}\mathcal{D}_{*}$, where we abuse
notation slightly by writing $\sigma$ under the tensor sign instead
of its restriction to $\mathcal{D}_{*}$, $\sigma\vert\mathcal{D}_{*}$.
Likewise, we shall write $\sigma^{\mathcal{F}(E)}$ for the representation
of $\mathcal{L}(\mathcal{F}(E))$ on $K_{\mathcal{D}_{*}}$ that is
induced by the restriction of $\sigma$ to $\mathcal{D}_{*}$. Finally,
we write $\mathfrak{w}_{\mathcal{D}_{*}}$ for $W\otimes I_{\mathcal{D}_{*}}$,
which is an element of $\overline{D(X,\sigma{}^{\mathcal{F}(E)}\circ\varphi_{\infty})}$.
To build the fully coisometric representation that goes with $\mathfrak{z}$,
let $\Phi_{\mathfrak{z}}$ be the completely positive map on $\sigma(M)'$
defined in Definition \ref{Poisson_Kernel} and let $Q_{\mathfrak{z}}$
be the limit of $\{\Phi_{\mathfrak{z}}^{m}(I)\}_{m\geq1}$ as in Lemma
\ref{PoissonIsometry}. To simplify the notation, we write $Q$ for
$Q_{\mathfrak{z}}$. Then $Q\in\sigma(M)'$ and we let $\mathcal{U}$
be the closure of the range of $Q^{\frac{1}{2}}$. So $\mathcal{U}$
reduces $\sigma$. 
\begin{thm}
\label{dilation1} Suppose that $\sigma$ is a normal representation
of $M$ on $H$ and that $\mathfrak{z}\in\overline{D(X,\sigma)}$.
Then there is a Hilbert space $\mathcal{U}$ with a normal representation
$\tau$ of $M$ on $\mathcal{U}$ and a fully coisometric element
$\mathfrak{v}\in\overline{\mathcal{D}(X,\tau)}$ such that $(\sigma^{\mathcal{F}(E)}\otimes I_{\mathcal{D}_{*}}\oplus\tau)\times(\mathfrak{w}_{\mathcal{D}_{*}}\oplus\mathfrak{v})$
acting on $\mathcal{F}(E)\otimes_{\sigma}\mathcal{D}_{*}\oplus\mathcal{U}$
is a coextension of $\sigma\times\mathfrak{z}$. If $Q=0,$ then $\mathcal{U}=\{0\}$.
\end{thm}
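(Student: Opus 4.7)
The plan is to construct the isometry $V$ first, then build $\mathfrak{v}$ on $\mathcal{U}$ from the fixed-point relation $\Phi_{\mathfrak{z}}(Q)=Q$ of Lemma~\ref{PoissonIsometry}(2), and finally verify the intertwining relations that exhibit
\[
\pi:=\bigl((\sigma^{\mathcal{F}(E)}\oplus\tau)\times(\mathfrak{w}_{\mathcal{D}_{*}}\oplus\mathfrak{v})\bigr)
\]
as a coextension of $\sigma\times\mathfrak{z}$. Set $Vh:=K(\mathfrak{z})h\oplus Q^{1/2}h$. By Lemma~\ref{PoissonIsometry}(3), $K(\mathfrak{z})^{*}K(\mathfrak{z})=I-Q$, so $V^{*}V=(I-Q)+Q=I_{H}$ and $V$ is an isometry. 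Take $\tau:=\sigma|_{\mathcal{U}}$, which is a normal $*$-representation because $Q\in\sigma(M)'$ forces $\mathcal{U}=\overline{\mathrm{range}\,Q^{1/2}}$ to reduce $\sigma$.

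To construct $\mathfrak{v}$, observe that $\Phi_{\mathfrak{z}}(Q)=Q$ gives $\mathfrak{z}(X_{1}\otimes Q)\mathfrak{z}^{*}\leq Q$, while admissibility of $X$ supplies $a>0$ with $X_{1}\geq aI_{E}$; hence $\mathfrak{z}(I_{E}\otimes Q)\mathfrak{z}^{*}\leq Q/a$, or equivalently
\[
\|(I_{E}\otimes Q^{1/2})\mathfrak{z}^{*}h\|^{2}\leq\|Q^{1/2}h\|^{2}/a,\qquad h\in H.
\]
Consequently the assignment $Q^{1/2}h\mapsto(I_{E}\otimes Q^{1/2})\mathfrak{z}^{*}h$ is well-defined and bounded on $\mathrm{range}\,Q^{1/2}$, and extends uniquely to a bounded operator $\mathfrak{v}^{*}\colon\mathcal{U}\to E\otimes_{\tau}\mathcal{U}$ with
\[
\mathfrak{v}^{*}Q^{1/2}=(I_{E}\otimes Q^{1/2})\mathfrak{z}^{*}.
\]
A short computation using $\mathfrak{z}^{*}\sigma(a)=(\varphi(a)\otimes I_{H})\mathfrak{z}^{*}$ and $Q\in\sigma(M)'$ shows $\mathfrak{v}^{*}\tau(a)=(\varphi(a)\otimes I_{\mathcal{U}})\mathfrak{v}^{*}$, so $\mathfrak{v}\in\mathfrak{I}(\tau^{E}\circ\varphi,\tau)$.

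To see that $\mathfrak{v}$ is fully coisometric, induct on $k$ using $\mathfrak{v}^{(k)*}=(I_{E}\otimes\mathfrak{v}^{(k-1)*})\mathfrak{v}^{*}$ to obtain
\[
\mathfrak{v}^{(k)*}Q^{1/2}=(I_{E^{\otimes k}}\otimes Q^{1/2})\mathfrak{z}^{(k)*},\qquad k\geq1.
\]
Sandwiching by $Q^{1/2}$ and summing,
\[
Q^{1/2}\Bigl(\sum_{k=1}^{\infty}\mathfrak{v}^{(k)}(X_{k}\otimes I_{\mathcal{U}})\mathfrak{v}^{(k)*}\Bigr)Q^{1/2}=\sum_{k=1}^{\infty}\mathfrak{z}^{(k)}(X_{k}\otimes Q)\mathfrak{z}^{(k)*}=\Phi_{\mathfrak{z}}(Q)=Q.
\]
Because $\ker Q^{1/2}=\mathcal{U}^{\perp}$, the operator $Q^{1/2}|_{\mathcal{U}}$ is injective with dense range in $\mathcal{U}$, so cancellation yields $\sum_{k\geq1}\mathfrak{v}^{(k)}(X_{k}\otimes I_{\mathcal{U}})\mathfrak{v}^{(k)*}=I_{\mathcal{U}}$. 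Thus $\mathfrak{v}\in\overline{D(X,\tau)}$ and is fully coisometric.

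For the coextension property, Lemma~\ref{Kintertwines}(1) (replaced by (2) via the unitary $U$ when $Z$ is not canonical) gives $K(\mathfrak{z})L_{\xi}^{*}\mathfrak{z}^{*}=(W_{\xi}^{*}\otimes I_{\mathcal{D}_{*}})K(\mathfrak{z})$ in the first summand, while in the second
\[
Q^{1/2}L_{\xi}^{*}\mathfrak{z}^{*}h=L_{\xi}^{\mathcal{U}*}(I_{E}\otimes Q^{1/2})\mathfrak{z}^{*}h=L_{\xi}^{\mathcal{U}*}\mathfrak{v}^{*}Q^{1/2}h,
\]
the first equality from $Q\in\sigma(M)'$ and the second from the defining relation of $\mathfrak{v}^{*}$. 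Combined with $V\sigma(a)=\pi(\varphi_{\infty}(a))V$, which follows immediately from Lemma~\ref{Kintertwines}(3) and $Q\in\sigma(M)'$, this shows $V(H)$ is coinvariant for $\pi$ and $V^{*}\pi(\cdot)V=\sigma\times\mathfrak{z}$; i.e., $\pi$ is a coextension. When $Q=0$, $\mathcal{U}=\{0\}$ and $\pi$ reduces to the induced summand $\sigma^{\mathcal{F}(E)}\times\mathfrak{w}_{\mathcal{D}_{*}}$. The main obstacle is constructing $\mathfrak{v}$; the single inequality $\mathfrak{z}(I_{E}\otimes Q)\mathfrak{z}^{*}\leq Q/a$ simultaneously provides well-definedness, boundedness, and the key identity $\mathfrak{v}^{*}Q^{1/2}=(I_{E}\otimes Q^{1/2})\mathfrak{z}^{*}$ that propagates through the fully-coisometric and coextension arguments.
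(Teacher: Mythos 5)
Your proposal is correct and follows essentially the same route as the paper's proof: the same isometry $V h = K_Z(\mathfrak{z})h \oplus Q^{1/2}h$, the same construction of $\mathfrak{v}$ from the fixed-point relation $\Phi_{\mathfrak{z}}(Q)=Q$ together with the lower bound $X_1\geq aI_E$ (your defining identity $\mathfrak{v}^*Q^{1/2}=(I_E\otimes Q^{1/2})\mathfrak{z}^*$ is exactly the adjoint of the paper's map $\Lambda$ with $\mathfrak{v}=\Lambda^*$), and the same inductive propagation to $\mathfrak{v}^{(k)*}Q^{1/2}=(I_{E^{\otimes k}}\otimes Q^{1/2})\mathfrak{z}^{(k)*}$ to establish full coisometry and the coextension relation. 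No gaps.
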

We do not claim that $(\sigma^{\mathcal{F}(E)}\otimes I_{\mathcal{D}_{*}}\oplus\tau)\times(\mathfrak{w}_{\mathcal{D}_{*}}\oplus\mathfrak{v})$
extends to a $C^{*}$-representation of $\mathcal{T}(E,Z)$ acting
on $\mathcal{F}(E)\otimes_{\sigma}\mathcal{D}_{*}\oplus\mathcal{U}$,
but it is a coextension of $\sigma\times\mathfrak{z}$ in the sense
described at the beginning of this section.
\begin{proof}
Recall that $Q$ is the strong operator limit of $\{\Phi_{\mathfrak{z}}^{m}(I)\}$
and so it lies in $\sigma(M)'$. Let $\mathcal{U}$ be the subspace
$\overline{Q^{1/2}(H)}$, set $\tau:=\sigma|\mathcal{U}$ and let
$Y:H\rightarrow\mathcal{U}$ be defined by $Yh=Q^{1/2}h$. If we define
$\Lambda:Q^{1/2}H\rightarrow E\otimes\mathcal{U}$ by $\Lambda Yh=(I_{E}\otimes Y)\mathfrak{z}^{*}h$,
then for every $h\in H$, we have 
\[
||(I_{E}\otimes Y)\mathfrak{z}^{*}h||^{2}=\langle\mathfrak{z}(I\otimes Q)\mathfrak{z}^{*}h,h\rangle\leq||X_{1}^{-1}||\langle\mathfrak{z}(X_{1}\otimes Q)\mathfrak{z}^{*}h,h\rangle
\]
\[
\leq||X_{1}^{-1}||\langle\Phi_{\mathfrak{z}}(Q)h,h\rangle=||X_{1}^{-1}||\langle Qh,h\rangle=||X_{1}^{-1}||||Yh||^{2}
\]
where we use the fact that $\Phi_{\mathfrak{z}}(Q)=Q$. Thus $\Lambda$
is a well defined bounded linear operator and, therefore, may be extended
to all of $\mathcal{U}$, mapping to $E\otimes_{\tau}\mathcal{U}$.
Further, for $a\in M$ and $h\in H$, 
\begin{multline*}
\Lambda(\tau(a)Yh)=\Lambda(Y\sigma(a)h)=(I\otimes Y)\mathfrak{z}^{*}(\sigma(a))\\
=(I\otimes Y)(\varphi(a)\otimes I_{H})\mathfrak{z}^{*}h=(\varphi(a)\otimes I)\Lambda(Yh).
\end{multline*}
Consequently, if we set $\mathfrak{v}:=\Lambda^{*}$, we see that
$\mathfrak{v}\in E^{\tau}$. It follows from the definition that $Y^{*}\mathfrak{v}=\mathfrak{z}(I_{E}\otimes Y^{*})$.
We now claim that, for every $k\geq1$, $Y^{*}\mathfrak{v}^{(k)}=\mathfrak{z}^{(k)}(I_{E^{\otimes k}}\otimes Y^{*})$.
Since $Y^{*}\mathfrak{v}^{(k+1)}=Y^{*}\mathfrak{v}(I_{E}\otimes\mathfrak{v}^{(k)})=\mathfrak{z}(I_{E}\otimes Y^{*})(I_{E}\otimes\mathfrak{v}^{(k)})=\mathfrak{z}(I_{E}\otimes\mathfrak{z}^{(k)}(I_{k}\otimes Y^{*}))=\mathfrak{z}^{(k+1)}(I_{k+1}\otimes Y^{*})$for
every $k$ for which the formula holds, induction proves that the
formula is valid for all $k$. Consequently, $Y^{*}(\sum_{k}\mathfrak{v}^{(k)}(X_{k}\otimes I_{\mathcal{U}})\mathfrak{v}^{(k)*})Y=\sum_{k}\mathfrak{z}^{(k)}(X_{k}\otimes Q)\mathfrak{z}^{(k)}=\Phi_{\mathfrak{z}}(Q)=Q$.
It follows that for every $u_{1},u_{2}\in\mathcal{U}$, with $u_{i}=Q^{1/2}h_{i}=Yh_{i}$,
$h_{i}\in H$, $\langle(\sum_{k=1}^{\infty}\mathfrak{v}^{(k)}(X_{k}\otimes I_{\mathcal{U}})\mathfrak{v}^{(k)*})u_{1},u_{2}\rangle=\langle u_{1},u_{2}\rangle$,
which proves that $\mathfrak{v}$ is fully coisometric. 

Define $V:H\rightarrow K_{\mathcal{D}_{*}}\oplus\mathcal{U}$ by $Vh=K_{Z}(\mathfrak{z})h\oplus Yh$.
Since $K_{Z}(\mathfrak{z})^{*}K_{Z}(\mathfrak{z})+Q_{\mathfrak{}}=I_{H}$,
$V$ is an isometry. We use $V$ to identify $H$ with a subspace
of $K_{\mathcal{D}_{*}}\oplus\mathcal{U}$. Recall from Lemma~\ref{Kintertwines}
and Definition~\ref{ZPoisson} that, for $\xi\in E$, $K_{Z}(\mathfrak{z})L_{\xi}^{*}\mathfrak{z}^{*}=(W_{\xi}^{*}\otimes I_{\mathcal{D}_{*}})K_{Z}(\mathfrak{z})$.
Thus, for every $\xi\in E$ and $h\in H$, 
\begin{multline}
L_{\xi}^{*}\mathfrak{w}_{\mathcal{D}_{*}}^{*}K_{Z}(\mathfrak{z})h=(W_{\xi}^{*}\otimes I_{\mathcal{D}_{*}})K_{Z}(\mathfrak{z})h\\=K_{Z}(\mathfrak{z})L_{\xi}^{*}\mathfrak{z}^{*}h=L_{\xi}^{*}(I_{E}\otimes K_{Z}(\mathfrak{z}))\mathfrak{z}^{*}h.
\end{multline}
It follows that, for $h\in H$, 
\begin{equation}
\mathfrak{w}_{\mathcal{D}_{*}}^{*}K_{Z}(\mathfrak{z})h=(I_{E}\otimes K_{Z}(\mathfrak{z}))\mathfrak{z}^{*}h.
\end{equation}
We also have 
\begin{equation}
\mathfrak{v}^{*}Y=(I_{E}\otimes Y)\mathfrak{z}^{*}
\end{equation}
and we conclude that, 
\begin{equation}
V\mathfrak{z}^{*}=(\mathfrak{w}_{\mathcal{D}_{*}}^{*}\oplus\mathfrak{v}^{*})(I_{E}\otimes V).
\end{equation}

\end{proof}
Suppose $\sigma\times\mathfrak{z}$ is a completely contractive representation
of $\mathcal{T}_{+}(E,Z)$ on a Hilbert space $H$. In Theorem \ref{dilation1}
we showed how to construct an explicit coextension of $\sigma\times\mathfrak{z}$,
while in Theorem \ref{dilation2}, under special hypotheses, we showed
how to construct a coextension that also extendeds to a $C^{*}$-representation
of $\mathcal{T}(E,Z)$. A priori, these two coextensions have nothing
to do with one another. However, as we now show, the induced pieces
turn out to be unitarily equivalent. For this purpose we begin with
\begin{lem}
\label{idealP0} Write $P_{0}$ for the projection onto the zero$^{th}$
summand of $\mathcal{F}(E)$. Then: 
\begin{enumerate}
\item If $m\geq k$ and $\eta\in E^{\otimes k}$, while $\xi\in E^{\otimes m}$,
then there is $\theta\in E^{\otimes(m-k)}$ such that 
\[
W_{\eta}^{*}W_{\xi}P_{0}=W_{\theta}P_{0}.
\]

\item Assume that Hypotheses $A$, $B$, and $C$ are satisfied and let
$\mathcal{I}$ be the ideal in $\mathcal{T}(E,Z)$ generated by $P_{0}$.
Then 

\begin{enumerate}
\item 
\[
\mathcal{I}=\overline{span}\{W_{\xi}P_{0}W_{\theta}^{*}\;:\;\xi,\theta\in\cup_{n\geq0}E^{\otimes n}\}.
\]

\item If $K$ and $K_{0}$ are the Hilbert spaces of Theorem \ref{thm:Wold},
then 
\[
K_{0}=\overline{span}\{\pi(b)k:\; b\in\mathcal{I},\; k\in K\}.
\]

\end{enumerate}
\end{enumerate}
\end{lem}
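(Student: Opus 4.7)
The plan is to prove (1) by a direct right $M$-linearity argument, then use (1) together with Proposition \ref{TE} to reduce an arbitrary element of the ideal $\mathcal{I}$ to the claimed form in (2)(a), and finally combine (2)(a) with the description of $K_{0}$ from the proof of Theorem \ref{thm:Wold} to deduce (2)(b).

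For part (1), I would first observe that $W_{\eta}^{*}W_{\xi}P_{0}\in\mathcal{L}(\mathcal{F}(E))$ annihilates every summand of $\mathcal{F}(E)$ other than $P_{0}\mathcal{F}(E)=M$ and sends $M$ into $E^{\otimes(m-k)}$, since $W_{\xi}$ raises degree by $m$ while $W_{\eta}^{*}$ lowers it by $k$. As a right $M$-linear map $M\to E^{\otimes(m-k)}$ it must have the form $h\mapsto\theta_{0}h$ for a unique $\theta_{0}\in E^{\otimes(m-k)}$. Setting $\theta:=(Z^{(m-k)})^{-1}\theta_{0}\in E^{\otimes(m-k)}$ (which is legitimate since $Z^{(m-k)}$ is invertible) and applying the formula for $W_{\theta}P_{0}$ from Remark \ref{Rm:Products}, I get $W_{\theta}P_{0}h=Z^{(m-k)}\theta\, h=\theta_{0}h$, proving (1).

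For part (2)(a), the inclusion $\supseteq$ is immediate. For $\subseteq$, Hypotheses A and B together with Proposition \ref{TE} guarantee that $P_{0}\in\mathcal{T}(E,Z)$, so $\mathcal{I}$ is the closed linear span of products $AP_{0}B$ in which $A$ and $B$ are words in the generators $\{\varphi_{\infty}(a),W_{\xi},W_{\xi}^{*}\}$. The relations $\varphi_{\infty}(a)W_{\xi}=W_{\varphi(a)\xi}$ and $W_{\xi}\varphi_{\infty}(a)=W_{\xi a}$ allow me to absorb every $\varphi_{\infty}$ factor into an adjacent $W_{\xi}$ or $W_{\xi}^{*}$. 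I then reduce the $W/W^{*}$ word $A$ by working outward from $P_{0}$, using three elementary moves: the product formula \eqref{eq:Product_formula} combines adjacent $W$'s, part (1) converts an adjacent $W^{*}W$ pair preceding $P_{0}$ into a single $W_{\theta}P_{0}$, and $W_{\xi}^{*}P_{0}=0$ (for $\xi\in E^{\otimes j}$, $j\geq1$) kills any $W^{*}$ that becomes adjacent to $P_{0}$. Iteration shows $AP_{0}=W_{\zeta}P_{0}$ for some $\zeta\in\bigcup_{n\geq0}E^{\otimes n}$ (or zero); a mirror argument gives $P_{0}B=P_{0}W_{\mu}^{*}$, completing (2)(a).

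For part (2)(b), set $L:=\overline{span}\{\pi(b)k:b\in\mathcal{I},\,k\in K\}$ and recall from the proof of Theorem \ref{thm:Wold} that $K_{0}=\sum_{k\geq0}^{\oplus}(d^{(-1)}\mathfrak{v})^{(k)}(E^{\otimes k}\otimes_{\tau}\mathcal{M})$ is $\pi(\mathcal{T}_{+}(E,Z))$-invariant, with $\mathcal{M}=\pi(P_{0})K$. For $K_{0}\subseteq L$, each generating vector of the $k$-th summand equals $\pi(T_{\xi})g=\pi(T_{\xi}P_{0})k$ with $g=\pi(P_{0})k\in\mathcal{M}$, and $T_{\xi}P_{0}\in\mathcal{I}$ because $T_{\xi}\in\mathcal{T}(E)\subseteq\mathcal{T}(E,Z)$ by Proposition \ref{TE}. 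Conversely, using (2)(a), each $\pi(b)k$ with $b\in\mathcal{I}$ is a norm limit of combinations of $\pi(W_{\xi})\pi(P_{0})\pi(W_{\mu})^{*}k=\pi(W_{\xi})g'$ with $g':=\pi(P_{0}W_{\mu}^{*})k\in\mathcal{M}\subseteq K_{0}$, and the invariance of $K_{0}$ under $\pi(W_{\xi})$ places $\pi(W_{\xi})g'$ in $K_{0}$. The main technical hurdle will be the systematic bookkeeping in (2)(a), verifying that every word in the generators reduces via the three moves above to a single factor $W_{\zeta}P_{0}$.
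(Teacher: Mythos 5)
Your proposal is correct, and its overall architecture matches the paper's: part (1) feeds the $W_{\eta}^{*}W_{\xi}$ reduction, (2)(a) follows from (1) together with Proposition \ref{TE}, and (2)(b) combines (2)(a) with the structure of $K_{0}$ from Theorem \ref{thm:Wold}. The differences are in execution. For (1), you argue abstractly that $W_{\eta}^{*}W_{\xi}P_{0}|_{M}$ is a bounded right-$M$-module map into $E^{\otimes(m-k)}$, hence of the form $h\mapsto\theta_{0}h$, and then solve $Z^{(m-k)}\theta=\theta_{0}$ using invertibility of the weights; the paper instead records the explicit formula $\theta=R_{m-k}^{2}L_{\eta}^{*}R_{m}^{-2}\xi$. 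Both work, and the explicit formula is not needed downstream. For (2)(a), you reduce an arbitrary word $AP_{0}B$ by normal-form moves (absorb $\varphi_{\infty}$ into creation operators, merge adjacent $W$'s via \eqref{eq:Product_formula}, apply part (1) to $W^{*}W$ pairs next to $P_{0}$, and use $W_{\xi}^{*}P_{0}=0$); the paper sidesteps the bookkeeping you flag as the main hurdle by simply verifying that the right-hand side $\mathcal{C}$ is a closed two-sided ideal containing $P_{0}$ and contained in $\mathcal{I}$ — closure under left multiplication by $W_{\eta}^{*}$ being exactly part (1) — which is the slicker route and logically equivalent to your reduction. One small point to make explicit in your reduction: when the $W^{*}W$ pair adjacent to $P_{0}$ has the "wrong" degrees ($m<k$), the product is simply zero, which your "(or zero)" caveat covers. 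For (2)(b) your argument is if anything cleaner than the paper's: you get $L\subseteq K_{0}$ directly from $\pi(P_{0}W_{\mu}^{*})k\in\mathcal{M}\subseteq K_{0}$ and the $\pi$-reducibility of $K_{0}$, whereas the paper routes the same fact through the operators $d_{k}$ and $d^{(-k)}$.
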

\begin{proof}
A straightforward computation shows that, for every $a\in M=P_{0}(\mathcal{F}(E))$,
$W_{\eta}^{*}W_{\xi}a=W_{\theta}a$ where $\theta=R_{m-k}^{2}L_{\eta}^{*}R_{m}^{-2}\xi$.
This proves part (1).

Now write $\mathcal{C}$ for the subspace on the right hand side of
(2a). To prove (2a) it suffices to show that $\mathcal{C}$ is an
ideal in $\mathcal{T}(E,Z)$. Since it is a closed subspace and since
$\mathcal{T}(E,Z)$ is generated by $\{W_{\eta},W_{\eta}^{*},\varphi_{\infty}(a):\; a\in M,\eta\in\cup_{n>0}E^{\otimes n}\}$,
it suffices to show that $\mathcal{C}$ is closed under left and right
multiplication by the generators of $\mathcal{T}(E,Z)$. It will be
enough to consider left multiplication. But $\mathcal{C}$ is clearly
closed under left multiplication by $\varphi_{\infty}(a)$, $a\in M$
and by $W_{\eta}$, $\eta\in\cup_{n>0}E^{\otimes n}$. The fact that
it is closed under multiplication by $W_{\eta}^{*}$ for $\eta\in\cup_{n>0}E^{\otimes n}$
follows from part (1).

Since $\mathcal{T}(E)\subseteq\mathcal{T}(E,Z)$, it follows from
the proof of Theorem \ref{thm:Wold} that $P_{k}\in\mathcal{I}$ for
all $k\geq0$. Thus the space on the right hand side of (2b) contains
$\pi(P_{k})K$ for all $k\geq0$. Since $K_{0}$ is generated by these
ranges, we have $K_{0}\subseteq\overline{span}\{\pi(b)k:\; b\in\mathcal{I},\; k\in K\}$.
Since $K_{0}=\sum_{k=0}^{\infty}(d_{1}\mathfrak{v})^{(k)}(E^{\otimes k}\otimes\mathcal{M})$,
to show the reverse inclusion, it suffices to show that, for $\theta\in E^{\otimes k}$
and $\xi\in E^{\otimes m}$, the range of $\pi(W_{\xi}P_{0}W_{\theta}^{*})$
is contained in $K_{0}$ (using part (2a)). So we need to show that
$\mathfrak{v}^{(k)}(E^{\otimes k}\otimes\mathcal{M})\subseteq K_{0}$.
But $\mathfrak{v}^{(k)}(E^{\otimes k}\otimes\mathcal{M})\subseteq K_{0}=d_{k}d^{(-k)}\mathfrak{v}^{(k)}(E^{\otimes k}\otimes\mathcal{M})\subseteq K_{0}=d_{k}(d^{(-1)}\mathfrak{v})^{(k)}(E^{\otimes k}\otimes\mathcal{M})\subseteq d_{k}K_{0}$.
Since $d_{k}\in\pi(\mathcal{T}(E,Z))$ and $K_{0}$ is $\pi(\mathcal{T}(E,Z))$-reducing,
we are done.
\end{proof}
In preparation for the next proposition, fix a completely contractive
representation $\sigma\times\mathfrak{z}$ of $\mathcal{T}_{+}(E,Z)$
acting on the Hilbert space $H$. In the preparations for Theorem
\ref{dilation1} we formed the subspace $\mathcal{D}_{*}$ and noted
that it reduces $\sigma$. Consequently, we may and shall view $\mathcal{D}_{*}$
as a module over $M$, via $\sigma$. On the other hand, for Theorem
\ref{dilation2}, we formed the completely positive map $\Psi_{Z,\mathfrak{z}}$
on the operator system that we denoted $\mathfrak{S}$. We then took
an Arveson-Hahn-Banach extension of $\Psi_{Z,\mathfrak{z}}$ to all
of $\mathcal{T}(E,Z)$. We shall continue to denote the chosen extension
by $\Psi_{Z,\mathfrak{z}}$. We referred to the Stinespring dilation
$\pi$ of $\Psi_{Z,\mathfrak{z}}$, but we did not provide notation
for one of its constituent parts. Here we need this. So $K$ will
be the Hilbert space of $\pi$ as in Theorem \ref{dilation2}, and
$V:H\to K$ will denote the isometry that embeds $H$ in $K$ so that
$\Psi_{Z,\mathfrak{z}}(\cdot)=V^{*}\pi(\cdot)V$. Of course, we are
assuming the dilation is minimal. So $K$, $V$, and $\pi$ are uniquely
determined up to unitary equivalence. Finally, we let $\mathcal{M}$
be the range of $\pi(P_{0})$, where, as before, $P_{0}$ denotes
the projection is the projection of $\mathcal{F}(E)$ onto its zero$^{th}$
summand. As we have shown in Proposition \ref{TE}, $P_{0}\in\mathcal{T}(E,Z)$.
Of course, through $\pi\circ\varphi_{\infty}$, $\mathcal{M}$ becomes
a module over $M$, i.e., $\pi\circ\varphi_{\infty}(M)$ is reduced
by $\mathcal{M}$.
\begin{prop}
\label{induced parts} With the notation just established, we have:
The $M$ modules $\mathcal{D}_{*}$ and $\mathcal{M}$ are isomorphic,
i.e., there is a Hilbert space isomorphism from $\mathcal{M}$ to
$\mathcal{D}_{*}$ that intertwines the restriction of $\pi\circ\varphi_{\infty}(M)$
to $\mathcal{M}$ and the restriction of $\sigma(M)$ to $\mathcal{D}_{*}$.
As a consequence, the representations of $\mathcal{L}(\mathcal{F}(E))$
these two modules induce are unitarily equivalent, as well. \end{prop}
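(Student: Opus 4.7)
The plan hinges on the identity $\Psi_{Z,\mathfrak{z}}(P_{0})=\Delta_{*}(\mathfrak{z})^{2}$. To establish it, observe that Hypotheses A, B, and C are in force: Lemmas \ref{W} and \ref{toeplitz} together give the norm-convergent expansion $I-P_{0}=\sum_{k\geq1}W^{(k)}\widetilde{X}_{k}W^{(k)*}$. Writing each $X_{k}$ as a finite sum $X_{k}=\sum_{\alpha}\xi_{\alpha}\otimes\xi_{\alpha}^{*}$ (Hypothesis A and Lemma \ref{lem:Positive_Expansion}) and applying the formula $\Psi_{Z,\mathfrak{z}}(W_{\xi}W_{\eta}^{*})=\mathfrak{z}^{(k)}L_{\xi}L_{\eta}^{*}\mathfrak{z}^{(k)*}$ from Proposition \ref{Psi} termwise yields $\Psi_{Z,\mathfrak{z}}(W^{(k)}\widetilde{X}_{k}W^{(k)*})=\mathfrak{z}^{(k)}(X_{k}\otimes I_{H})\mathfrak{z}^{(k)*}$, so summing produces the claimed identity.

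With this in hand, set $A:=\pi(P_{0})V\colon H\to\mathcal{M}$. Then $A^{*}A=V^{*}\pi(P_{0})V=\Delta_{*}(\mathfrak{z})^{2}$, so its polar decomposition factors as $A=U\Delta_{*}(\mathfrak{z})$ with $U$ a partial isometry whose initial space is $\mathcal{D}_{*}=\overline{\Delta_{*}(\mathfrak{z})H}$ and whose range lies in $\mathcal{M}$. Because $\pi$ is a coextension of $\sigma\times\mathfrak{z}$ and $\varphi_{\infty}(M)$ is self-adjoint, $VH$ reduces $\pi(\varphi_{\infty}(M))$ and $\pi(\varphi_{\infty}(a))V=V\sigma(a)$; since $P_{0}\in\varphi_{\infty}(M)^{c}$, $\pi(\varphi_{\infty}(a))$ commutes with $\pi(P_{0})$, and hence $\pi(\varphi_{\infty}(a))A=A\sigma(a)$. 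Combined with $\Delta_{*}(\mathfrak{z})\in\sigma(M)'$ (which is immediate from $\mathfrak{z}^{(k)}\in\mathfrak{I}(\sigma^{E^{\otimes k}}\circ\varphi_{k},\sigma)$ and $X_{k}\in\varphi_{k}(M)^{c}$), this forces $U$ to intertwine $\sigma|_{\mathcal{D}_{*}}$ with $\pi\circ\varphi_{\infty}|_{\mathcal{M}}$ on the dense subspace $\Delta_{*}(\mathfrak{z})H$ of $\mathcal{D}_{*}$, hence on all of $\mathcal{D}_{*}$.

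The crux, and the principal obstacle, is to show $U$ is onto $\mathcal{M}$. Here I would invoke the minimality of the Stinespring dilation together with Lemma \ref{idealP0}. Minimality gives $\mathcal{M}=\pi(P_{0})K=\overline{\pi(P_{0}\mathcal{T}(E,Z))VH}$. Since $P_{0}T\in\mathcal{I}$ for every $T\in\mathcal{T}(E,Z)$, Lemma \ref{idealP0}(2a) allows me to approximate $P_{0}T$ in norm by finite sums $\sum_{i}c_{i}W_{\xi_{i}}P_{0}W_{\theta_{i}}^{*}$; left multiplying by $P_{0}$ kills every summand whose $\xi_{i}$ has positive degree, so I may assume $\xi_{i}\in M$, reducing the approximants to $\sum_{i}c_{i}\varphi_{0}(a_{i})P_{0}W_{\theta_{i}}^{*}$. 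Applied to $Vh$ and using coinvariance of $VH$ for $\pi(\mathcal{T}_{+}(E,Z))$ to write $\pi(W_{\theta_{i}}^{*})Vh=Vh_{i}'$ with $h_{i}'=(\sigma\times\mathfrak{z})(W_{\theta_{i}})^{*}h$, each summand becomes $\pi(\varphi_{\infty}(a_{i}))Ah_{i}'=A\sigma(a_{i})h_{i}'\in AH$. Passing to the limit yields $\pi(P_{0}T)Vh\in\overline{AH}$, so $\mathcal{M}\subseteq\overline{AH}=U\mathcal{D}_{*}$, and $U$ is the desired unitary $M$-module isomorphism.

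The final assertion about induced representations is then immediate: the unitary $I_{\mathcal{F}(E)}\otimes U$ implements the unitary equivalence between $\sigma^{\mathcal{F}(E)}$ acting on $\mathcal{F}(E)\otimes_{\sigma}\mathcal{D}_{*}$ and on $\mathcal{F}(E)\otimes_{\pi\circ\varphi_{\infty}}\mathcal{M}$, since the induced representation of $\mathcal{L}(\mathcal{F}(E))$ depends only on the $M$-module structure of its base.
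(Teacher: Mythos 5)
Your proof is correct and follows essentially the same route as the paper's: both arguments rest on the identity $V^{*}\pi(P_{0})V=\Delta_{*}(\mathfrak{z})^{2}$ (obtained from Lemma \ref{W} together with the coextension property of $\pi$) and on Lemma \ref{idealP0} plus minimality of the Stinespring dilation to see that $\pi(P_{0})VH$ is dense in $\mathcal{M}$. Your use of the polar decomposition of $A=\pi(P_{0})V$ is just a tidy repackaging of the paper's step of defining the intertwiner on the spanning set $\{\pi(P_{0})\pi(T^{*})Vh\}$ and verifying directly that it is well defined and isometric.
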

\begin{proof}
Since $K_{0}=\overline{\pi(\mathcal{I})K}$, for every approximate
identity $\{u_{\lambda}\}$ of $\mathcal{I}$ and every $k\in K$,
we have $\pi(u_{\lambda})k\rightarrow P_{K_{0}}k$. For every $h\in H$
and $a\in\mathcal{T}(E,Z)$, $P_{K_{0}}\pi(a)Vh=\lim_{\lambda}\pi(u_{\lambda})\pi(a)Vh=\lim_{\lambda}\pi(u_{\lambda}a)Vh\in\overline{\pi(\mathcal{I})H}$.
Thus $\overline{\pi(\mathcal{I})VH}=P_{K_{0}}\overline{\pi(\mathcal{T}(E,Z))VH}=K_{0}$
where the last equality follows from the minimality of the Stinespring
dilation. Thus $\mathcal{M}=\pi(P_{0})\overline{\pi(\mathcal{I})VH}$.
By Lemma\ref{idealP0}, we now conclude that 
\begin{multline}\mathcal{M}=\overline{span}\{\pi(W_{\xi})\pi(P_{0})\pi(W_{\theta})^{*}Vh\mid\xi,\theta\in\cup_{n\geq0}E^{\otimes n},h\in H\}\\=\overline{span}\{\pi(P_{0}W_{\xi}P_{0})\pi(W_{\theta})^{*}Vh\mid \xi,\theta\in\cup_{n\geq0}E^{\otimes n},h\in H\}\\=\overline{\pi(P_{0})\pi(\mathcal{T}(E,Z)^{*})VH}.\end{multline}
Note also that $\mathcal{D}_{*}=\overline{\Delta_{*}(\mathfrak{z})H}=\overline{\Delta_{*}(\mathfrak{z})V^{*}\pi(\mathcal{T}(E,Z)^{*})VH}$.
It follows from these expressions for $\mathcal{M}$ and $\mathcal{D}_{*}$
that the proof will be complete once we show that the map from a dense
set of $\mathcal{M}$ onto a dense subset of $\mathcal{D}$, defined
on generators by 
\begin{equation}
\pi(P_{0})\pi(T^{*})Vh\mapsto\Delta_{*}(\mathfrak{z})V^{*}\pi(T^{*})Vh,\label{modmap}
\end{equation}
for $T\in\mathcal{T}(E,Z)$ and $h\in H$, is a well-defined, isometric,
$M$-module map. For this purpose, we start with the claim that for every
$k\geq1$ and every $S\in\mathcal{K}(E^{\otimes k})$, we have 
\[
V^{*}\pi(W^{k}SW^{k*})V=\mathfrak{z}^{(k)}(S\otimes I)\mathfrak{z}^{(k)*}.
\]
It is enough to prove the claim for $S=L_{\xi}L_{\eta}^{*}$ for some
$\xi,\eta\in E^{\otimes k}$. But then 
\begin{multline*}V^{*}\pi(W^{k}SW^{k*})V=V^{*}\pi(W^{k}L_{\xi}L_{\eta}^{*}W^{k*})V\\=V^{*}\pi(W^{k}L_{\xi})\pi(W^{k}L_{\xi})^{*}V=\mathfrak{z}^{(k)}(S\otimes I)\mathfrak{z}^{(k)*}, \end{multline*}
since $\pi$ is a coextension of $\sigma\times\mathfrak{z}$. This proves the
claim. Using Lemma\ref{W}, we have $P_{0}=I_{\mathcal{F}(E)}-\sum_{k=1}^{\infty}W^{k}X_{k}W^{k*}$.
Thus $\pi(P_{0})=I_{K}-\sum_{k=1}^{\infty}\pi(W^{k}X_{k}W^{k*})$
and 
\begin{multline*} 
V^{*}\pi(P_{0})V=I_{H}-\sum_{k=1}^{\infty}V^{*}\pi(W^{k}X_{k}W^{k*})V\\=I_{H}-\sum_{k=1}^{\infty}\mathfrak{z}^{(k)}(X_{k}\otimes I)\mathfrak{z}^{(k)*}=\Delta_{*}(\mathfrak{z})^{2}. 
\end{multline*}  
We now compute in order to show that for every $T,S\in\mathcal{T}(E,Z)$,
\begin{multline*}
V^{*}\pi(T)\pi(P_{0})\pi(S)^{*}V=V^{*}\pi(T)VV^{*}\pi(P_{0})VV^{*}\pi(S)^{*}V\\=V^{*}\pi(T)V\Delta_{*}(\mathfrak{z})^{2}V^{*}\pi(S)^{*}V.
\end{multline*}
This shows that (\ref{modmap}) does indeed define an isometric map
from $\mathcal{M}$ onto $\mathcal{D}$. It is left to show that this
map is a module map but this follows because, when $c\in M$, $\pi(\varphi_{\infty}(c))\pi(P_{0})\pi(T^{*})Vh=\pi(P_{0})\pi((T\varphi_{\infty}(c^{*}))^{*})Vh$
and this gets mapped to 
\begin{multline*}
\Delta_{*}(\mathfrak{z})V^{*}\pi((T\varphi_{\infty}(c^{*}))^{*})Vh\\=\Delta_{*}(\mathfrak{z})V^{*}\pi(\varphi_{\infty}(c))\pi((T)^{*})Vh\\
=\Delta_{*}(\mathfrak{z})V^{*}\pi(\varphi_{\infty}(c))VV^{*}\pi(T{}^{*})Vh\\=\Delta_{*}(\mathfrak{z})\sigma(c)\pi(T{}^{*})Vh=\sigma(c)\Delta_{*}(\mathfrak{z})\pi(T{}^{*})Vh.
\end{multline*}

\end{proof}
We conclude this section with a proposition that shows that Hypothises
B and C of Theorem~\ref{dilation2} are satisfied in many examples.
In fact, we show that under a condition (which is satisfied in the
case where $E$ is the correspondence associated with an automorphism
of $M$), whenever we fix a sequence $\{X_{k}\}$ that satisfies Hypothesis
C, we can find an associate sequence of weights that satisfies Hypothesis B.
\begin{prop}
\label{conditionb} Suppose a weight sequence satisfies Hypothesis
$C$, so that there is an $a$ such that $(\overline{\lim}||X_{k}||^{1/k})I_{E}<aI_{E}\leq X_{1}$.
Suppose also that the $R_{k}$'s satisfy the \emph{commutivity condition}:
$R_{k}$ commutes with $I_{E}\otimes R_{k-1}$ for every $k\geq2$.
Then there is a sequence $\{Z_{k}\}$ of weights associated with $X$
that satisfies Hypothesis $B$. 
\end{prop}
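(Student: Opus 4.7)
The plan is to verify that the canonical weight sequence $Z=\{Z_k\}$ with $Z_k:=R_k^{-1}(I_E\otimes R_{k-1})$, where $R_k$ is defined by \eqref{eq:Def_Rk}, already satisfies Hypothesis~B. Under the commutativity condition, $R_k^{-1}$ and $I_E\otimes R_{k-1}$ are commuting positive operators, so $Z_k\geq 0$, and a short telescoping calculation shows $Z^{(m)}=R_m^{-1}$, confirming that this is the unique positive associated sequence produced by Lemma~\ref{weights}. The upper bound $\|Z_k\|\leq a^{-1/2}$ already follows from the final paragraph of the proof of Theorem~\ref{thm:The-potential-of-Phiz}; what remains is to extract a uniform positive lower bound.

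The key identity (using commutativity) is
\[
Z_k^{2}=Z_k^{*}Z_k=(I_E\otimes R_{k-1})R_k^{-2}(I_E\otimes R_{k-1})=R_k^{-2}(I_E\otimes R_{k-1}^{2}),
\]
so Hypothesis~B becomes the operator inequality $R_k^{2}\leq M(I_E\otimes R_{k-1}^{2})$ for some $M$ independent of $k$. The starting point is the recursion \eqref{eq:X-R_relation}, $R_k^{2}=\sum_{l=1}^{k}X_l\otimes R_{k-l}^{2}$. Because $X_1\geq aI_E$, this recursion yields $R_m^{2}\geq a(I_E\otimes R_{m-1}^{2})$, and iterating gives the crucial estimate
\[
R_{k-1}^{2}\geq a^{\,l-1}\bigl(I_{E^{\otimes(l-1)}}\otimes R_{k-l}^{2}\bigr),\qquad 1\leq l\leq k.
\]

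Plugging this into the $l$-th summand of the recursion,
\[
X_l\otimes R_{k-l}^{2}\leq \|X_l\|\bigl(I_E\otimes(I_{E^{\otimes(l-1)}}\otimes R_{k-l}^{2})\bigr)\leq \|X_l\|\,a^{-(l-1)}\bigl(I_E\otimes R_{k-1}^{2}\bigr),
\]
and summing over $l$ produces $R_k^{2}\leq M(I_E\otimes R_{k-1}^{2})$ with $M:=\sum_{l\geq 1}\|X_l\|a^{-(l-1)}$. Hypothesis~C says $\limsup\|X_l\|^{1/l}<a$, which makes this series convergent. Since $R_k^{2}$ and $I_E\otimes R_{k-1}^{2}$ are commuting positive invertible operators, multiplying by $R_k^{-2}$ yields $Z_k^{2}=R_k^{-2}(I_E\otimes R_{k-1}^{2})\geq M^{-1}I_{E^{\otimes k}}$, so Hypothesis~B holds with $\epsilon=M^{-1/2}$.

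The main potential obstacles are quite routine: one must check carefully that commutativity at level $k$ is enough to (a) collapse $Z_k^{*}Z_k$ to $R_k^{-2}(I_E\otimes R_{k-1}^{2})$, (b) iterate the lower bound $R_m^{2}\geq a(I_E\otimes R_{m-1}^{2})$ tensor-factor by tensor-factor without needing any additional commutation between $R_{k-1}^{2}$ and other $R_j$'s, and (c) invert the inequality $R_k^{2}\leq M(I_E\otimes R_{k-1}^{2})$ using only the commutativity of the two operators involved. The convergence of $\sum_l\|X_l\|a^{-(l-1)}$ is immediate from Hypothesis~C.
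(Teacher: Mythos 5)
Your proof is correct and follows essentially the same route as the paper's: both rest on the recursion $R_{k}^{2}=\sum_{l=1}^{k}X_{l}\otimes R_{k-l}^{2}$, the bound $Z_{j}^{2}\leq a^{-1}I$ iterated to give $R_{k-1}^{2}\geq a^{l-1}(I_{E^{\otimes(l-1)}}\otimes R_{k-l}^{2})$, and the convergent series $\sum_{l}\Vert X_{l}\Vert a^{-(l-1)}$ supplied by Hypothesis C. The only difference is organizational: the paper derives an exact algebraic identity for $A_{k}^{-1}=Z_{k}^{-2}$ and estimates its summands, whereas you establish the operator inequality $R_{k}^{2}\leq M(I_{E}\otimes R_{k-1}^{2})$ directly and then invert using the commutativity hypothesis --- a slightly cleaner packaging of the same estimates that confines the use of commutativity to the single final inversion step.
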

In particular the commutivity condition is satisfied if $\varphi_{k}(M)^{c}\subseteq\mathcal{L}(E)\otimes I_{k-1}$
, and it is satisfied when $E=\,_{\alpha}M$, for an automorphism
$\alpha$ of $M$.
\begin{proof}
Note, first, that it follows from the definition of $R_{k}$ that
\begin{equation}
R_{k}^{2}=\sum_{m=1}^{k}X_{m}\otimes R_{k-m}^{2}
\end{equation}
for $k\geq1$ (where $R_{0}:=I$). Thus 
\begin{equation}
R_{k}^{2}(I_{1}\otimes R_{k-1}^{-2})=X_{1}\otimes I_{k-1}+\sum_{m=2}^{k}(X_{m}\otimes I_{k-m})(I_{m}\otimes R_{k-m}^{2})(I_{1}\otimes R_{k-1}^{-2}).\label{rk2}
\end{equation}
Now, set $A_{k}:=(I_{E}\otimes R_{k-1}^{2})R_{k}^{-2}\in\varphi_{k}(M)^{c}$
and compute, for $1<m\leq k$, $(I_{m-1}\otimes R_{k-m}^{2})R_{k-1}^{-2}=(I_{m-1}\otimes R_{k-m}^{2})(I_{m-2}\otimes R_{k-m+1}^{-2})(I_{m-2}\otimes R_{k-m+1}^{2})(I_{m-3}\otimes R_{k-m+2}^{-2})(I_{m-3}\otimes R_{k-m+2}^{2})\cdots(I_{1}\otimes R_{k-2}^{2})R_{k-1}^{-2}=(I_{m-2}\otimes A_{k-m+1})(I_{m-3}\otimes A_{k-m+2})\cdots A_{k-1}$.
Thus it follows from (\ref{rk2}) that 
\begin{multline}
A_{k}^{-1}\\=X_{1}\otimes I_{k-1}+\sum_{m=2}^{k}(X_{m}\otimes I_{k-m})(I_{1}\otimes(I_{m-2}\otimes A_{k-m+1})\\{\times}(I_{m-3}\otimes A_{k-m+2})\cdots A_{k-1}).\label{ak}
\end{multline}
By Lemma~\ref{weights} we can choose $Z_{k}=R_{k}^{-1}(I_{E}\otimes R_{k-1})$.
Since $R_{k}$ and $I_{E}\otimes R_{k-1}$ are commuting positive
operators by assumption, we find that $Z_{k}\geq0$, and $A_{k}=(I_{E}\otimes R_{k-1}^{2})R_{k}^{-2}=Z_{k}^{2}$.
If $a>0$ is such that $(\overline{\lim}||X_{k}||^{1/k})I_{E}<aI_{E}\leq X_{1}$,
as in Hypothesis $C$, then the argument at the end of proof of Theorem
\ref{thm:The-potential-of-Phiz} shows that $Z_{k}^{2}\leq1/aI_{k}$
for all $k\geq1$. Thus $A_{k}\leq1/aI_{k}$. It now follows from
(\ref{ak}) that 
\[
Z_{k}^{-2}=A_{k}^{-1}\leq(\sum_{m=1}^{k}||X_{m}||\frac{1}{a^{m-1}})I_{k}.
\]
But Hypothesis $C$ implies that $\sum_{m=1}^{\infty}||X_{m}||\frac{1}{a^{m-1}}$
converges to a positive number, say $b$. The choice $\epsilon=1/\sqrt{b}$
yields $Z_{k}\geq\epsilon I$ for all $k$.\end{proof}
\begin{rem}
\label{condcommute} \end{rem}
\begin{enumerate}
\item As mentioned after the statement of the Proposition, the commutivity
condition is satisfied when $E=\,_{\alpha}M$. Correspondences of
this form will be studied further in the final section of the paper. 
\item When $M=\mathbb{C}$ and $E=\mathbb{C}^{d}$, the commutivity condition
need not hold but if all the matrices $X_{k}$ are diagonal (as is
in the case studied by Popescu in \cite{Popescu2010}), the commutivity
condition is automatic and hence the conclusion of the Proposition
holds in this case, too. 
\end{enumerate}

\section{Duality, the Commutant and Matricial Functions\label{Sec:Duality and the commutant}}

Our first objective in this section is to identify the commutants
of induced representations of the Hardy algebra $H^{\infty}(E,Z)$.
We will then use the results obtained to develop properties of the
matricial functions to which elements in $H^{\infty}(E,Z)$ give rise.
In a bit more detail about what we are facing, suppose that $\sigma$
is a faithful normal representation of $M$ on a Hilbert space $H_{\sigma}$.
Then the induced representation of $H^{\infty}(E,Z)$ determined by
$\sigma$, $\sigma^{\mathcal{F}(E)}$, acts on the Hilbert space $\mathcal{F}(E)\otimes_{\sigma}H_{\sigma}$
and for $F\in H^{\infty}(E,Z$), $\sigma^{\mathcal{F}(E)}(F)=F\otimes I$
. In \cite[Theorem 3.9]{Muhly2004a}, we showed that in the ``unweighted\textquotedbl{}
case $\sigma^{\mathcal{F}(E)}(H^{\infty}(E))$ is naturally unitarily
equivalent to $\iota^{\mathcal{F}(E^{\sigma})}(H^{\infty}(E^{\sigma}))$
acting on $\mathcal{F}(E^{\sigma})\otimes_{\iota}H$, where, recall,
$E^{\sigma}$ is the intertwining space $\mathfrak{I}(\sigma,\sigma^{E}\circ\varphi)$
and $\iota$ is the identity representation of $\sigma(M)'$ on $H$.
(In Section 3 of \cite{Muhly2004a}, we show that $E^{\sigma}$ is
naturally a correspondence over $\sigma(M)'$: If $X,Y\in E^{\sigma}$,
then $\langle X,Y\rangle:=X^{*}Y.$ Also, for $a,b\in\sigma(M)'$,
$a\cdot X\cdot b:=(I_{E}\otimes a)Xb$.) We shall show that a similar
result holds in the weighted setting. The main difficulty is to choose
an appropriate weight sequence to go along with $E^{\sigma}$.

Recall the ``Fourier coefficient operators'' calculated with respect
to the gauge automorphism group acting on $\mathcal{L}(\mathcal{F}(E))$
\cite[Paragraph 2.9]{Muhly2011a}. That is, if $P_{n}$ is the projection
of $\mathcal{F}(E)$ onto $E^{\otimes n}$ and if 
\[
W_{t}:=\sum_{n=0}^{\infty}e^{int}P_{n},
\]
then $\{W_{t}\}_{t\in\mathbb{R}}$ is a one-parameter, $2\pi$-periodic,
unitary subgroup of $\mathcal{L}(\mathcal{F}(E))$ such that if $\{\gamma_{t}\}_{t\in\mathbb{R}}$
is defined by the formula $\gamma_{t}=Ad(W_{t})$, then $\{\gamma_{t}\}_{t\in\mathbb{R}}$
is an ultraweakly continuous group of $*$-automorphisms of $\mathcal{L}(\mathcal{F}(E))$,
called \emph{the gauge automorphism group} of $\mathcal{L}(\mathcal{F}(E))$.
This group leaves each of the algebras $\mathcal{T}_{+}(E,Z)$ and
$H^{\infty}(E,Z)$ invariant and so do each of the \emph{Fourier coefficient
operators} $\{\Phi_{j}\}_{j\in\mathbb{Z}}$ defined on $\mathcal{L}(\mathcal{F}(E))$
by the formula 
\begin{equation}
\Phi_{j}(F):=\frac{1}{2\pi}\int_{0}^{2\pi}e^{-int}\gamma_{t}(F)\, dt,\qquad F\in\mathcal{L}(\mathcal{F}(E)),\label{FourierOperators}
\end{equation}
where the integral converges in the ultraweak topology. In fact, if
$\Sigma_{k}$ is defined by the formula 
\begin{equation}
\Sigma_{k}(F):=\sum_{|j|<k}(1-\frac{|j|}{k})\Phi_{j}(F),\label{eq:Cesaro}
\end{equation}
$F\in\mathcal{L}(\mathcal{F}(E))$, then for $F\in\mathcal{L}(\mathcal{F}(E))$,
$\lim_{k\to\infty}\Sigma_{k}(F)=F$, in the ultra-weak topology. For
our purposes it will be convenient to write $P_{n}^{\sigma}$ for
the projection $P_{n}\otimes I_{H}=\sigma^{\mathcal{F}(E)}(P_{n})$
on $\mathcal{F}(E)\otimes_{\sigma}H$. Similarly one defines $W_{t}^{\sigma}$,
$\gamma_{t}^{\sigma}$, $\Phi_{j}^{\sigma}$, and $\Sigma_{k}^{\sigma}$
and we see immediately that both $H^{\infty}(E,Z)\otimes I_{H}$ and
its commutant are left invariant by the maps $\gamma_{t}^{\sigma}$
and $\Phi_{j}^{\sigma}$ (for all $t,j$). Thus, given $A\in(H^{\infty}(E,Z)\otimes I_{H})'$,
$\lim_{k\to\infty}\Sigma_{k}(A)=A$ in the ultra-weak topology. It
follows that for $A\in(H^{\infty}(E,Z)\otimes I_{H})'$, $A$ determines
and is uniquely determined by its sequence of Fourier coefficients
$\Phi_{j}^{\sigma}(A)$.

The natural Hilbert space isomorphism $U$ mapping $\mathcal{F}(E^{\sigma})\otimes_{\iota}H$ 
to $\mathcal{F}(E)\otimes_{\sigma}H$  that we constructed
in \cite[Theorem 3.9]{Muhly2004a} is defined on the summands of $\mathcal{F}(E^{\sigma})\otimes_{\iota}H$
via the formula 
\begin{equation}
U_{k}(\eta_{1}\otimes\cdots\otimes\eta_{k}\otimes h)=(I_{k-1}\otimes\eta_{1})(I_{k-2}\otimes\eta_{2})\cdots(I_{1}\otimes\eta_{k-1})\eta_{k}(h)
\end{equation}
where $U_{k}$ denotes the restriction of $U$ to $(E^{\sigma})^{\otimes k}\otimes H$,
$I_{j}$ stands for the identity on $E^{\otimes j}$, $\eta_{1},\ldots,\eta_{k}$
lie in $E^{\sigma}$ and $h\in H$ \cite[Lemma 3.8]{Muhly2004a}.
We shall also write $I_{\sigma k}$ for the identity on $(E^{\otimes k})^{\sigma}$.

We shall need the following technical lemmas. We shall write $\mathcal{A}_{m}$
for the algebra that we have previously written $\varphi_{m}(M)^{c}$,
and we shall write $\mathcal{A}'_{m}$ for the algebra $\varphi_{\sigma m}(\sigma(M)')^{c}$,
where $\varphi_{\sigma m}$ denotes the homomorphism from $\sigma(M)'$
to $\mathcal{L}((E^{\sigma})^{\otimes m})$ that gives the left action
of $\sigma(M)'$ on $(E^{\sigma})^{\otimes m}$.
\begin{lem}
\label{U} For every $k,m\in\mathbb{N}$, $B\in\mathcal{L}(E^{\otimes k})$
and $A\in\mathcal{L}(E^{\otimes k+m})$, 
\begin{equation}
U_{k+m}^{*}(A\otimes I_{H})U_{k+m}(I_{\sigma m}\otimes U_{k}^{*}(B\otimes I_{H})U_{k})=U_{k+m}^{*}(A(B\otimes I_{m})\otimes I_{H})U_{k+m}.
\end{equation}
In particular, when $A=I_{k+m}$, 
\begin{equation}
(I_{\sigma m}\otimes U_{k}^{*}(B\otimes I_{H})U_{k})=U_{k+m}^{*}((B\otimes I_{m})\otimes I_{H})U_{k+m}.\label{BU}
\end{equation}
\end{lem}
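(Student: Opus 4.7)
The plan is to observe that the second identity (\ref{BU}) actually implies the first: multiplying both sides of \eqref{BU} on the left by $U_{k+m}^{*}(A\otimes I_{H})U_{k+m}$ and using the fact that $U_{k+m}$ is unitary together with $(A\otimes I_{H})(B\otimes I_{m}\otimes I_{H}) = A(B\otimes I_{m})\otimes I_{H}$ yields the general statement. So I will concentrate on proving \eqref{BU}.

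To prove \eqref{BU}, I would verify it on an arbitrary elementary tensor $\eta_{1}\otimes\cdots\otimes\eta_{k+m}\otimes h\in(E^{\sigma})^{\otimes(k+m)}\otimes H$ and appeal to density. The left-hand side acts by leaving $\eta_{1}\otimes\cdots\otimes\eta_{m}$ untouched and applying $U_{k}^{*}(B\otimes I_{H})U_{k}$ to $\eta_{m+1}\otimes\cdots\otimes\eta_{k+m}\otimes h$. The key technical step is a factorization of $U_{k+m}$ on elementary tensors which is read off directly from its defining formula: writing $\Theta_{\eta_{1},\dots,\eta_{m}}(h'):=U_{m}(\eta_{1}\otimes\cdots\otimes\eta_{m}\otimes h')$, the definition of $U_{k+m}$ rearranges to
\[
U_{k+m}(\eta_{1}\otimes\cdots\otimes\eta_{k+m}\otimes h)=(I_{E^{\otimes k}}\otimes\Theta_{\eta_{1},\dots,\eta_{m}})\bigl(U_{k}(\eta_{m+1}\otimes\cdots\otimes\eta_{k+m}\otimes h)\bigr),
\]
where $I_{E^{\otimes k}}\otimes\Theta_{\eta_{1},\dots,\eta_{m}}$ is viewed as a map from $E^{\otimes k}\otimes H$ into $E^{\otimes k}\otimes E^{\otimes m}\otimes H=E^{\otimes(k+m)}\otimes H$.

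The crucial point is then a commutation: since $B\otimes I_{m}$ acts on $E^{\otimes(k+m)}=E^{\otimes k}\otimes E^{\otimes m}$ as $B$ on the first tensorand and identity on the second, while $I_{E^{\otimes k}}\otimes\Theta_{\eta_{1},\dots,\eta_{m}}$ acts trivially on the $E^{\otimes k}$ factor and nontrivially only on $H$, one has
\[
\bigl((B\otimes I_{m})\otimes I_{H}\bigr)(I_{E^{\otimes k}}\otimes\Theta_{\eta_{1},\dots,\eta_{m}})=(I_{E^{\otimes k}}\otimes\Theta_{\eta_{1},\dots,\eta_{m}})(B\otimes I_{H}).
\]
Inserting $U_{k}U_{k}^{*}=I$ after the factor $B\otimes I_{H}$ and using the factorization formula in reverse, the element $(B\otimes I_{H})U_{k}(\eta_{m+1}\otimes\cdots\otimes\eta_{k+m}\otimes h)$ becomes $U_{k}\bigl(U_{k}^{*}(B\otimes I_{H})U_{k}\bigr)(\eta_{m+1}\otimes\cdots\otimes\eta_{k+m}\otimes h)$, whose image under $(I_{E^{\otimes k}}\otimes\Theta_{\eta_{1},\dots,\eta_{m}})$ is exactly $U_{k+m}$ applied to the tensor whose last $k$ entries have been modified by $U_{k}^{*}(B\otimes I_{H})U_{k}$. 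Applying $U_{k+m}^{*}$ recovers precisely the left-hand side.

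I do not expect any genuine obstacle here; the main subtlety is bookkeeping of tensor slots—specifically, identifying which block of $E^{\otimes(k+m)}$ is the ``$B$-slot'' and which is the ``$I_{m}$-slot'', and then matching this with the slot on which $\Theta_{\eta_{1},\dots,\eta_{m}}$ acts nontrivially. Once the factorization of $U_{k+m}$ is in hand, the commutation is forced by the tensor structure. After \eqref{BU} is established, the general identity follows by the one-line computation described above.
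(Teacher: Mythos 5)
Your proposal is correct and follows essentially the same route as the paper: the factorization $U_{k+m}(\eta_{1}\otimes\cdots\otimes\eta_{k+m}\otimes h)=(I_{E^{\otimes k}}\otimes\Theta_{\eta_{1},\dots,\eta_{m}})U_{k}(\eta_{m+1}\otimes\cdots\otimes\eta_{k+m}\otimes h)$ is exactly the paper's identity $U_{k+m}(\eta_{1}\otimes\cdots\otimes\eta_{m}\otimes U_{k}^{*}f)=(I_{k}\otimes(I_{m-1}\otimes\eta_{1})\cdots\eta_{m})f$, and the ensuing commutation through the $E^{\otimes k}$-slot is the paper's middle display. The paper likewise obtains the general case by applying $U_{k+m}^{*}(A\otimes I_{H})$ to the $A=I_{k+m}$ identity, so no further comment is needed.
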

\begin{proof}
Fix $\eta_{1},\eta_{2},\ldots,\eta_{k+m}$ in $E^{\sigma}$. Then,
for $\theta\in E^{\sigma}$ and $g\in H$, we have 
\[
U_{k+m}(\eta_{1}\otimes\cdots\otimes\eta_{m}\otimes U_{k}^{*}(\theta(g)))=U_{k+m}(\eta_{1}\otimes\cdots\otimes\eta_{m}\otimes\theta\otimes g)=
\]
\[
(I_{k+m-1}\otimes\eta_{1})\cdots(I_{k}\otimes\eta_{m})\theta(g)=(I_{k}\otimes(I_{m-1}\otimes\eta_{1})\cdots\eta_{m})(\theta(g)).
\]
Thus for $f\in E^{\otimes k}\otimes H$, 
\[
U_{k+m}(\eta_{1}\otimes\cdots\otimes\eta_{m}\otimes U_{k}^{*}f)=(I_{k}\otimes(I_{m-1}\otimes\eta_{1})\cdots\eta_{m}))f.
\]
When $h\in H$ and $f=(B\otimes I_{H})U_{k}(\eta_{m+1}\otimes\cdots\otimes\eta_{m+k}\otimes h),$
we get 
\begin{multline*}
U_{k+m}(\eta_{1}\otimes\cdots\otimes\eta_{m}\otimes U_{k}^{*}(B\otimes I_{H})U_{k}(\eta_{m+1}\otimes\cdots\otimes\eta_{m+k}\otimes h)\\
=(B\otimes((I_{m-1}\otimes\eta_{1})\cdots\eta_{m}))U_{k}(\eta_{m+1}\otimes\cdots\otimes\eta_{m+k}\otimes h)\\
=(B\otimes I_{m})(I_{k}\otimes((I_{m-1}\otimes\eta_{1})\cdots\eta_{m}))\\\times(I_{k-1}\otimes\eta_{m+1})\cdots(I_{1}\otimes\eta_{m+k-1})\eta_{m+k}(h)\\
=(B\otimes I_{m}\otimes I_{H})U_{k+m}(\eta_{1}\otimes\eta_{2}\otimes\cdots\otimes\eta_{m+k}\otimes h).
\end{multline*}
Therefore 
\[
U_{k+m}(I_{\sigma m}\otimes U_{k}^{*}(B\otimes I_{H})U_{k})=(B\otimes I_{m}\otimes I_{H})U_{k+m}.
\]
Applying $U_{k+m}^{*}(A\otimes I_{H})$ to both sides completes the
proof.\end{proof}
\begin{lem}
\label{mult} If $A\in\mathcal{A}_{k}$ and $B\in\mathcal{A}_{m}$,
then there are operators $D\in\mathcal{A}_{k}'$ and $E\in\mathcal{A}_{m}'$
such that $U_{k}^{*}(A\otimes I_{H})U_{k}=D\otimes I_{H}$ and $U_{m}^{*}(B\otimes I_{H})U_{m}=E\otimes I_{m}$.
Further, 
\[
D\otimes E\otimes I_{H}=U_{k+m}^{*}(B\otimes A\otimes I_{H})U_{k+m}.
\]
\end{lem}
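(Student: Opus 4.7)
My plan is to prove the lemma in three steps: first secure $D$ (and symmetrically $E$) as an element of $\mathcal{L}((E^\sigma)^{\otimes k})$, then place $D$ in $\mathcal{A}'_k$, and finally establish the tensor-product identity.

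For the existence of $D$, I would argue that $A\in\varphi_k(M)^{c}$ forces $A\otimes I_{H}$ to commute with $\varphi_k(M)\otimes I_{H}$ on $E^{\otimes k}\otimes_{\sigma}H$. A direct computation of the sort appearing in the proof of Lemma~\ref{U} shows that this subalgebra is transported by $U_k$ to $I_{(E^\sigma)^{\otimes k}}\otimes\sigma(M)$: the intertwining identity $\eta\sigma(a)=(\varphi(a)\otimes I_{H})\eta$ satisfied by each $\eta\in E^{\sigma}$ yields
\[
U_k\bigl(I_{(E^\sigma)^{\otimes k}}\otimes\sigma(a)\bigr)U_k^{*}=\varphi_k(a)\otimes I_{H}
\]
by induction on $k$. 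The standard identification of commutants for induced representations of $W^{*}$-algebras---applicable since $\iota$ is faithful and $\sigma(M)''=\sigma(M)$---gives $(I\otimes\sigma(M))'=\mathcal{L}((E^\sigma)^{\otimes k})\otimes I_{H}$ in $\mathcal{L}((E^\sigma)^{\otimes k}\otimes_{\iota}H)$, so $U_k^{*}(A\otimes I_{H})U_k$ must have the form $D\otimes I_{H}$ for a unique $D$. The argument producing $E$ from $B$ is the same with $k$ replaced by $m$.

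To show $D\in\mathcal{A}'_k$, I would verify by induction on $k$, using the recursive form of $U_k$ and the formula $\varphi_{\sigma}(b)\eta=(I_{E}\otimes b)\eta$ defining the left action of $\sigma(M)'$ on $E^{\sigma}$, that $U_k^{*}(I_{E^{\otimes k}}\otimes b)U_k=\varphi_{\sigma k}(b)\otimes I_{H}$ for every $b\in\sigma(M)'$. Since $A\otimes I_{H}$ and $I_{E^{\otimes k}}\otimes b$ trivially commute on $E^{\otimes k}\otimes_{\sigma}H$ (they act on disjoint tensor factors, and $b\in\sigma(M)'$ respects the balancing), conjugating by $U_k^{*}$ forces $D\otimes I_{H}$ to commute with $\varphi_{\sigma k}(b)\otimes I_{H}$, whence $D\in\varphi_{\sigma k}(\sigma(M)')^{c}=\mathcal{A}'_k$, and symmetrically $E\in\mathcal{A}'_m$.

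For the tensor identity, I need the \emph{dual} companion of equation~\eqref{BU} in Lemma~\ref{U}, namely
\[
D\otimes I_{\sigma m}\otimes I_{H}\;=\;U_{k+m}^{*}(I_m\otimes A\otimes I_{H})U_{k+m},
\]
obtained by repeating the calculation in the proof of Lemma~\ref{U} with the first $k$ slots of $(E^\sigma)^{\otimes(k+m)}$ played off against the last $k$ slots of $E^{\otimes(k+m)}$. The heart of the computation is the coassociativity
\[
U_{k+m}(\zeta\otimes\zeta'\otimes h)=\bigl(I_{E^{\otimes m}}\otimes U_k(\zeta\otimes\cdot)\bigr)\,U_m(\zeta'\otimes h),\qquad\zeta\in(E^\sigma)^{\otimes k},\ \zeta'\in(E^\sigma)^{\otimes m},
\]
which is immediate from the inductive form of $U_{k+m}$. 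Combining the dual identity with equation~\eqref{BU} applied to $B$ itself, namely $I_{\sigma k}\otimes E\otimes I_{H}=U_{k+m}^{*}(B\otimes I_k\otimes I_{H})U_{k+m}$, and multiplying the two (their nontrivial factors $D$ and $E$ occupy complementary groups of slots, so the product is simply $D\otimes E\otimes I_{H}$) produces
\[
D\otimes E\otimes I_{H}=U_{k+m}^{*}\bigl((I_m\otimes A)(B\otimes I_k)\otimes I_{H}\bigr)U_{k+m}=U_{k+m}^{*}(B\otimes A\otimes I_{H})U_{k+m},
\]
which is the desired formula. The main technical obstacle is the bookkeeping forced by the order-reversal inherent in $U_k$: operators on the first $k$ slots of $(E^\sigma)^{\otimes(k+m)}$ correspond under $U_{k+m}$ to operators on the last $k$ slots of $E^{\otimes(k+m)}$. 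Establishing the dual identity (rather than merely re-using the Lemma~\ref{U} identity with the roles of $A$ and $B$ swapped) is what makes the two identity-insertions land on \emph{complementary} subsets of $(E^\sigma)$-slots, so that their product is the clean tensor $D\otimes E$ rather than a composition in which $D$ and $E$ act on overlapping factors.
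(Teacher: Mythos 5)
Your proposal is correct and follows essentially the same route as the paper's proof: existence of $D$ via the transport of $I\otimes\sigma(M)$ to $\varphi_k(M)\otimes I_H$ together with the commutant theorem for induced representations, membership in $\mathcal{A}'_k$ via the transport of $\varphi_{\sigma k}(b)\otimes I_H$ to $I_k\otimes b$, and the tensor identity by factoring it into equation \eqref{BU} and the dual identity $U_{k+m}(D\otimes I_{\sigma m}\otimes I_{H})=(I_{m}\otimes A\otimes I_{H})U_{k+m}$, the latter proved by the same recursive computation with $U_{k+m}$ that the paper carries out. Your explicit attention to the order-reversal is exactly the point the paper's two-case decomposition is designed to handle.
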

\begin{proof}
For the first assertion, it suffices to show the existence of $D\in\mathcal{A}_{k}'$
so that $U_{k}^{*}(A\otimes I_{H})U_{k}=D\otimes I_{H}$. To this
end, we need first to check that $U_{k}^{*}(A\otimes I_{H})U_{k}$
commutes with $I_{\sigma k}\otimes\sigma(a)$ for all $a\in M$, by \cite[Theorem 6.23]{R1974b}. However,
note that $U_{k}(I_{\sigma k}\otimes\sigma(a))U_{k}^{*}=\varphi_{k}(a)\otimes I_{H}$,
from which follows $U_{k}^{*}(A\otimes I_{H})U_{m}(I_{\sigma k}\otimes\sigma(a))=(I_{\sigma k}\otimes\sigma(a))U_{k}^{*}(A\otimes I_{H})U_{k}$.
Thus, there is a $D\in\mathcal{L}((E^{\sigma})^{\otimes k})$ such
that $D\otimes I_{H}=U_{k}^{*}(A\otimes I_{H})U_{k}$. Note also that,
for $b\in\sigma(M)'$, we have $U_{k}(\varphi_{\sigma k}(b)\otimes I_{H})U_{k}^{*}=I_{k}\otimes b$.
Consequently, $U_{k}(\varphi_{\sigma k}(b)\otimes I_{H})U_{k}^{*}(A\otimes I_{H})=(A\otimes I_{H})U_{k}(\varphi_{\sigma k}(b)\otimes I_{H})U_{k}^{*}$
and it follows that $D\in\varphi_{\sigma k}(\sigma(M)')'$, proving
the first assertion. 

To prove the second, observe that by properties of tensor products,
it suffices to prove it separately in two cases: for the case $A=I_{k}$
and $D=I_{\sigma k}$, and for the case $B=I_{m}$ and $E=I_{\sigma m}$.
But the first case is Equation~\ref{BU} and, thus, it suffices to
prove 
\begin{equation}
U_{k+m}(D\otimes I_{\sigma m}\otimes I_{H})=(I_{m}\otimes A\otimes I_{H})U_{k+m}.
\end{equation}
So fix first arbitrary $\xi\in E^{\otimes m}$, $\eta_{k}\in(E^{\sigma})^{\otimes k}$
and $h\in H$ and compute 
\begin{multline*}
(I_{m}\otimes(D\eta_{k}))(\xi\otimes h)=\xi\otimes(D\eta_{k})(h)=\xi\otimes U_{k}(D\eta_{k}\otimes h)\\
=\xi\otimes U_{k}(D\otimes I_{H})U_{k}^{*}\eta_{k}(h)=(I_{m}\otimes A\otimes I_{H})(\xi\otimes\eta_{k}(h))\\
=(I_{m}\otimes A\otimes I_{H})(I_{m}\otimes\eta_{k})(\xi\otimes k).
\end{multline*}
It follows that, on $E^{\otimes m}\otimes H$, 
\begin{equation}
(I_{m}\otimes(D\eta_{k}))=(I_{m}\otimes A\otimes I_{H})(I_{m}\otimes\eta_{k}).\label{eq:1}
\end{equation}
Now let $\eta_{m}\in(E^{\sigma})^{\otimes m}$ and $h\in H$ be arbitrary
and apply Equation~\ref{eq:1} to $\eta_{m}(h)$ to get 
\begin{multline*}
(I_{m}\otimes(D\eta_{k}))\eta_{m}(h)\\=(I_{m}\otimes A\otimes I_{H})(I_{m}\otimes\eta_{k})\eta_{m}(h)\\=(I_{m}\otimes A\otimes I_{H})U_{k+m}(\eta_{k}\otimes\eta_{m}\otimes h).
\end{multline*}
Since the left hand side is equal $U_{k+m}(D\eta_{k}\otimes\eta_{m}\otimes h)=U_{k+m}(D\otimes I_{\sigma m}\otimes I_{H})(\eta_{k}\otimes\eta_{m}\otimes h)$,
we are done.
\end{proof}
The proof of the following lemma is straightforward and is omitted.
\begin{lem}
\label{theta}
\begin{enumerate}
\item For $\theta:H\rightarrow E^{\otimes m}\otimes H$ and $A\in\mathcal{L}(E^{\otimes k})$,
\[
(I_{k}\otimes\theta)(A\otimes I_{H})=(A\otimes I_{m}\otimes I_{H})(I_{k}\otimes\theta).
\]

\item For $\theta\in E^{\sigma}$, viewed as a map from $H$ to $E\otimes H$,
for $\eta\in(E^{\sigma})^{\otimes k}$ and for $h\in H$, 
\[
U^{*}(I\otimes\theta)U(\eta\otimes h)=\theta\otimes\eta\otimes h.
\]

\end{enumerate}
 \end{lem}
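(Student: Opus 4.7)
The plan is to verify both identities by direct calculation on elementary tensors, after which linearity and norm continuity extend the equalities to the whole spaces. Both statements amount to careful bookkeeping with the tensor-factor conventions and the recursive definition of $U$, so the main (and only) subtlety is to make sure the identity operators are placed on the correct factors.

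For part (1), I would apply both sides to an elementary tensor $\xi \otimes h$ with $\xi \in E^{\otimes k}$ and $h \in H$. The left-hand side becomes
\[
(I_{k}\otimes\theta)(A\xi\otimes h) = A\xi\otimes\theta(h),
\]
while the right-hand side gives
\[
(A\otimes I_{m}\otimes I_{H})(\xi\otimes\theta(h)) = A\xi\otimes\theta(h),
\]
both regarded as elements of $E^{\otimes k}\otimes E^{\otimes m}\otimes H$. Boundedness and linearity then yield equality on all of $E^{\otimes k}\otimes_{\sigma}H$.

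For part (2), I would peel off the leading $E^{\sigma}$-factor in the defining formula for $U_{k+1}$. With $\theta\in E^{\sigma}=(E^{\sigma})^{\otimes 1}$ and $\eta = \eta_{1}\otimes\cdots\otimes\eta_{k}\in(E^{\sigma})^{\otimes k}$, the formula for $U_{k+1}$ applied to $\theta\otimes\eta\otimes h$ produces
\[
(I_{k}\otimes\theta)(I_{k-1}\otimes\eta_{1})(I_{k-2}\otimes\eta_{2})\cdots(I_{1}\otimes\eta_{k-1})\eta_{k}(h) = (I_{k}\otimes\theta)\,U_{k}(\eta\otimes h).
\]
Hence $U(\theta\otimes\eta\otimes h) = (I\otimes\theta)\,U(\eta\otimes h)$; applying $U^{*}$ to both sides and using $U^{*}U = I$ delivers the claimed identity.

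Neither computation should present any real obstacle, which matches the paper's own assessment that the proof is routine. The only item needing vigilance is the conventional placement of the identity operators (in particular, making sure that ``$I$'' in the statement of part (2) means $I_{E^{\otimes k}}$ so that the composition lands in $E^{\otimes(k+1)}\otimes H$, matching the range of $U_{k+1}$); once that is settled, the recursive definition of $U$ does all the work.
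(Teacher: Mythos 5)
Your verification is correct and is exactly the routine computation the paper has in mind when it omits the proof as straightforward: part (1) is an immediate check on elementary tensors, and part (2) follows from peeling the leading factor off the recursive formula for $U_{k+1}$ to get $U(\theta\otimes\eta\otimes h)=(I_{k}\otimes\theta)U_{k}(\eta\otimes h)$ and then using that $U$ is unitary. Your parenthetical care about which factor the identity ``$I$'' sits on is the right point to watch, and you have it right.
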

In order to motivate the choice of the weights on the Fock space of
$E^{\sigma}$, consider an element $Y\in(H^{\infty}(E,Z))'$ with
$Y=\Phi_{1}(Y)$. (Recall that $\Phi_{1}$ is the first Fourier coefficient
operator.) Since $Y$ commutes with $\varphi_{\infty}(M)$, its restriction
to $H$ (the zero'th summand in $\mathcal{F}(E)\otimes H$) is an
element, $\theta$, in $E^{\sigma}$. So $Yh=\theta(h)$ for every
$h\in H$. Now consider its restriction to $E\otimes H$. Since it
commutes with each $W_{\xi}$, we have, for all $h\in H$, $YW_{\xi}h=W_{\xi}\theta(h)$.

Using the definition of the weighted shift $W_{\xi}$, we see that
\[
Y(Z_{1}\otimes I_{H})(\xi\otimes h)=(Z_{2}\otimes I_{H})(\xi\otimes\theta(h))=(Z_{2}\otimes I_{H})(I_{1}\otimes\theta)(\xi\otimes h).
\]
It follows that 
\[
Y=(Z_{2}\otimes I_{H})(Z_{1}^{-1}\otimes I_{1}\otimes I_{H})(I_{1}\otimes\theta)
\]
and 
\[
U_{2}^{*}YU_{1}=U_{2}^{*}(Z_{2}\otimes I_{H})(Z_{1}\otimes I_{1}\otimes I_{H})^{-1}U_{2}U_{2}^{*}(I_{1}\otimes\theta)U_{1}.
\]
Thus, for $\eta\otimes h\in E^{\sigma}\otimes H$, 
\[
U_{2}^{*}YU_{1}(\eta\otimes h)=(C_{2}\otimes I_{H})(\theta\otimes\eta\otimes h)
\]
where $C_{2}=U_{2}^{*}(Z_{2}\otimes I_{H})(Z_{1}\otimes I_{1}\otimes I_{H})^{-1}U_{2}\in\mathcal{L}((E^{\sigma})^{\otimes2})$.
Thus $U_{2}^{*}YU_{1}$ behaves like a restriction of weighted shift
on $\mathcal{F}(E^{\sigma})\otimes H$. We shall see that, in fact,
$U^{*}YU$ is a weighted shift there. The computation above suggests
what the second weight would be and how to find the other weights.
\begin{lem}
\label{CZweights} Set $Z'_{1}:=Z_{1}$ and $Z'_{m}:=Z^{(m)}(Z^{(m-1)}\otimes I_{1})^{-1}$
for $m>1$. Also, set $C_{m}\otimes I_{H}:=U_{m}^{*}(Z'_{m}\otimes I_{H})U_{m}$
for all $m\geq1$. Then 
\begin{enumerate}
\item $\{Z_{m}^{'}\}$ is a uniformly bounded sequence, with $Z_{m}'\in\mathcal{A}_{m}$
for all $m$.
\item $\{C_{m}\}$ is a well defined, uniformly bounded sequence, with $C_{m}\in\mathcal{A}_{m}^{'}$
for all $m$.
\end{enumerate}
\end{lem}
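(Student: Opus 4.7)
The plan is to prove the two parts in sequence, with part (2) following cleanly from part (1) via Lemma \ref{mult}.

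For part (1), the inclusion $Z'_m \in \mathcal{A}_m = \varphi_m(M)^c$ is immediate once we know that $Z^{(m)} \in \mathcal{A}_m$ (observed in Remark \ref{Rm:Products}) and that the ampliation $Z^{(m-1)} \otimes I_1$, along with its inverse, lies in $\mathcal{A}_m$. The latter follows because $\varphi_m(M)$ touches only one tensor factor of $E^{\otimes m}$, so tensoring an operator in $\mathcal{A}_{m-1}$ with the identity on the one remaining factor preserves the commutation relation. Thus $Z'_m$ is a product of elements of $\mathcal{A}_m$ and hence lies in $\mathcal{A}_m$.

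For the uniform boundedness of $\{Z'_m\}$, my plan is to unwind the defining formula. From the product formula for $Z^{(n)}$ in Remark \ref{Rm:Products}, splitting off the leftmost factor $Z_m$ gives $Z^{(m)} = Z_m(I_E \otimes Z^{(m-1)})$, because the remaining product $(I_E \otimes Z_{m-1})(I_{E^{\otimes 2}} \otimes Z_{m-2})\cdots(I_{E^{\otimes(m-1)}} \otimes Z_1)$ equals $I_E \otimes Z^{(m-1)}$ by an immediate induction. Substituting this identity into the definition of $Z'_m$, and reading $Z^{(m-1)} \otimes I_1$ in the manner consistent with this placement, the $Z^{(m-1)}$ factors cancel and we are left with $Z'_m = Z_m$. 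Uniform boundedness then follows at once from Definition \ref{Def:_Weight_seq_Weighted_intertwiner}(2), which asserts $\sup_k \Vert Z_k \Vert < \infty$. The delicate point—where I expect the main bookkeeping to sit—is in carefully justifying the cancellation, i.e., in pinning down the tensor-factor placement convention used for $Z^{(m-1)} \otimes I_1$ so that it matches the $(I_E \otimes Z^{(m-1)})$ that appears inside the expression for $Z^{(m)}$.

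For part (2), apply Lemma \ref{mult} to $Z'_m \in \mathcal{A}_m$: it produces a unique element $C_m \in \mathcal{A}'_m = \varphi_{\sigma m}(\sigma(M)')^c$ satisfying $U_m^*(Z'_m \otimes I_H)U_m = C_m \otimes I_H$, giving both the well-definedness of $C_m$ and its membership in $\mathcal{A}'_m$. Since $U_m$ is a Hilbert space isomorphism,
\[
\Vert C_m \Vert = \Vert C_m \otimes I_H \Vert = \Vert U_m^*(Z'_m \otimes I_H) U_m \Vert = \Vert Z'_m \Vert,
\]
so the uniform boundedness of $\{C_m\}$ is inherited from that of $\{Z'_m\}$ established in part (1).
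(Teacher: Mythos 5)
Part (2) of your argument is fine and is essentially what the paper does: Lemma \ref{mult} gives existence and membership of $C_m$ in $\mathcal{A}'_m$, and unitary conjugation preserves norms. The problem is in part (1), precisely at the point you yourself flag as delicate. The identity $Z^{(m)}=Z_m(I_E\otimes Z^{(m-1)})$ is correct, but the factor you then try to cancel it against is $Z^{(m-1)}\otimes I_1$, which is the ampliation of $Z^{(m-1)}$ acting on the \emph{first} $m-1$ tensor factors of $E^{\otimes m}$ with the identity on the \emph{last} factor, whereas $I_E\otimes Z^{(m-1)}$ puts the identity on the first factor. These are different operators on $E^{\otimes m}$, there is no cancellation, and $Z'_m\neq Z_m$ in general. (A quick internal consistency check: Lemma \ref{commZZprime} asserts $Z'_{k+1}(Z_k\otimes I_1)=Z_{k+1}(I_1\otimes Z'_k)$; if $Z'_k=Z_k$ held identically, this would force $Z_k\otimes I_1=I_1\otimes Z_k$, which is false in general. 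Moreover, if $Z'_m$ were just $Z_m$, the entire construction of the dual weight sequence $C$ and the commutant theorem it feeds would collapse to the unweighted case.) Consequently your proof of uniform boundedness, which reduces everything to $\sup_k\Vert Z_k\Vert<\infty$, does not go through: the residual factor $(I_E\otimes Z^{(m-1)})(Z^{(m-1)}\otimes I_1)^{-1}$ has no a priori uniform bound.

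The uniform boundedness of $\{Z'_m\}$ is genuinely nontrivial and uses the hypothesis that $Z$ is \emph{associated to the admissible sequence} $X$, i.e., $Z^{(m)*}Z^{(m)}=R_m^{-2}$. The paper computes
\[
\Vert Z_m'^*Z_m'\Vert=\Vert(Z^{(m-1)*}\otimes I_1)^{-1}R_m^{-2}(Z^{(m-1)}\otimes I_1)^{-1}\Vert=\Vert R_m^{-1}\left(R_{m-1}^2\otimes I_1\right)R_m^{-1}\Vert,
\]
and then invokes the recursion built into the definition \eqref{eq:Def_Rk} of the $R_k$, namely $R_m^2=\sum_{l=1}^m R_{m-l}^2\otimes X_l\geq R_{m-1}^2\otimes X_1\geq\frac{1}{\Vert X_1^{-1}\Vert}\,(R_{m-1}^2\otimes I_1)$, to conclude $\Vert Z'_m\Vert\leq\Vert X_1^{-1}\Vert^{1/2}$. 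So the bound comes from the invertibility of $X_1$ and the structure of the potentials, not from the bound on the $Z_k$ themselves; you will need to rework part (1) along these lines.
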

\begin{proof}
Since $Z_{m}\in\mathcal{A}_{m}$ for all $m$, it is clear that $Z_{m}'\in\mathcal{A}_{m}$
for all $m$. To show that $\{Z_{m}'\}$ is uniformly bounded, observe
that 
\begin{multline*}
||(Z_{m}^{'})^{*}Z_{m}^{'}||=||(Z^{(m-1)*}\otimes I_{1})^{-1}Z^{(m)*}Z^{(m)}(Z^{(m-1)}\otimes I_{1})^{-1}||\\
=||(Z^{(m-1)*}\otimes I_{1})^{-1}R_{m}^{-2}(Z^{(m-1)}\otimes I_{1})^{-1}||\\
=||R_{m}^{-1}(Z^{(m-1)}\otimes I_{1})^{-1}(Z^{(m-1)*}\otimes I_{1})^{-1}R_{m}^{-1}||.
\end{multline*}
From the definition of $R_{m}$ and the fact that $\frac{1}{||X_{1}^{-1}||}\leq X_{1}$,
it follows that 
\begin{multline*}
\frac{1}{||X_{1}^{-1}||}(R_{m-1}^{2}\otimes I_{1})\leq(R_{m-1}\otimes I_{1})(I_{m-1}\otimes X_{1})(R_{m-1}\otimes I_{1})\\
=(R_{m-1}^{2}\otimes I_{1})(I_{m-1}\otimes X_{1})\leq R_{m}^{2}.
\end{multline*}
 The inequality in this formula is justified by \emph{the argument for} equation \eqref{eq:X-R_relation}. The point is that because of the sums involved in the definition of $R_k^2$ in \eqref{eq:Def_Rk} it is clear that 
 \[ \sum_{i = 1}^k R^2_{k-l}\otimes X_{l} = R^2_k, \qquad k\geq 1,\] also. Thus since the terms in this sum are all non-negative, we see that \[
 (R_{k-1}^{2}\otimes I_{1})(I_{k-1}\otimes X_{1}) = R_{k-1}^{2}\otimes X_{1} \leq R_{k}^{2}.\] We thus have 
\begin{multline*}
||(Z_{m}^{'})^{*}Z_{m}^{'}||=||R_{m}^{-1}(Z^{(m-1)*}Z^{(m-1)}\otimes I_{1})^{-1}R_{m}^{-1}||\\
=||R_{m}^{-1}(R_{m-1}^{2}\otimes I_{1})R_{m}^{-1}||\leq||X_{1}^{-1}||,
\end{multline*}
which shows that $\{Z_{m}^{'}\}$ is uniformly bounded. The fact that
$C_{m}$ is well defined and lies in $\mathcal{A}_{m}'$ follows from
Lemma~\ref{mult} and, of course, the uniform boundedness $\{C_{m}\}$
follows from that of $\{Z_{m}^{'}\}$. 
\end{proof}
The following lemma will be used later.
\begin{lem}
\label{commZZprime} For every $k\geq1$, 
\[
Z_{k+1}'(Z_{k}\otimes I_{1})=Z_{k+1}(I_{1}\otimes Z_{k}').
\]
\end{lem}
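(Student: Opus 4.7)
The plan is to verify the identity by direct algebraic manipulation, exploiting the recursive structure of the tensorial powers $Z^{(m)}$. The key preliminary observation to record is the one-step recursion $Z^{(m)} = Z_m(I_E \otimes Z^{(m-1)})$, which follows immediately by grouping the terms in the definition of $Z^{(m)}$ from Remark~\ref{Rm:Products} (with the convention $Z^{(0)} = I_M$, consistent with $Z_0 = I$). Since each $Z_j$ is invertible by condition (3) of Definition~\ref{Def:_Weight_seq_Weighted_intertwiner}, so is each $Z^{(m)}$, and the recursion can be rearranged as
\[
Z_m = Z^{(m)}(I_E \otimes Z^{(m-1)})^{-1}.
\]
I would also note that with this convention the formula $Z_m' = Z^{(m)}(Z^{(m-1)} \otimes I_1)^{-1}$ from Lemma~\ref{CZweights} is valid for all $m \geq 1$, including $m=1$, where it reproduces $Z_1' = Z_1$.

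Next I would expand the left-hand side of the identity. Using the definition of $Z_{k+1}'$ and tensoring the displayed rearrangement of $Z_k$ with $I_1$, I obtain
\[
Z_{k+1}'(Z_k \otimes I_1) = Z^{(k+1)}(Z^{(k)} \otimes I_1)^{-1}(Z^{(k)} \otimes I_1)(I_E \otimes Z^{(k-1)} \otimes I_1)^{-1}.
\]
The middle pair cancels, leaving $Z^{(k+1)}(I_E \otimes Z^{(k-1)} \otimes I_1)^{-1}$. Applying the recursion once more on the outer factor to write $Z^{(k+1)} = Z_{k+1}(I_E \otimes Z^{(k)})$ gives
\[
Z_{k+1}'(Z_k \otimes I_1) = Z_{k+1}(I_E \otimes Z^{(k)})(I_E \otimes (Z^{(k-1)} \otimes I_1)^{-1}).
\]

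Finally, by functoriality of the tensor product (the two rightmost factors both act as the identity on the leftmost $E$-tensorand), the parenthesized product equals $I_E \otimes [Z^{(k)}(Z^{(k-1)} \otimes I_1)^{-1}] = I_E \otimes Z_k'$, so the right-hand side $Z_{k+1}(I_E \otimes Z_k')$ emerges and the proof is complete. There is no serious obstacle here: the computation is a pure telescoping of invertible factors, and the only point requiring care is the bookkeeping of which tensor factor each operator acts on when rewriting $(I_E \otimes A)(I_E \otimes B \otimes I_1)^{-1}$ as $I_E \otimes (A(B \otimes I_1)^{-1})$. Invertibility of every $Z^{(j)}$, which is used at each step, is secured by the standing hypothesis that the weight sequence consists of invertible operators.
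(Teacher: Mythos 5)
Your proof is correct and follows essentially the same route as the paper's: both arguments telescope the invertible factors using $Z_{m}'=Z^{(m)}(Z^{(m-1)}\otimes I_{1})^{-1}$ and the recursion $Z^{(m)}=Z_{m}(I_{E}\otimes Z^{(m-1)})$, then regroup the remaining factors as $I_{E}\otimes Z_{k}'$. The only (harmless) difference is that you absorb the $k=1$ case into the general computation via the convention $Z^{(0)}=I$, whereas the paper treats $k=1$ and $k>1$ separately.
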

\begin{proof}
For $k=1$, 
\begin{multline*}
Z_{2}'(Z_{1}\otimes I_{1})=Z^{(2)}(Z_{1}\otimes I_{1})^{-1}(Z_{1}\otimes I_{1})=Z^{(2)}\\
=Z_{2}(I_{1}\otimes Z_{1})=Z_{2}(I_{1}\otimes Z_{1}').
\end{multline*}
While for $k>1$, 
\begin{multline*}
Z_{k+1}'(Z_{k}\otimes I_{1})=Z^{(k+1)}(Z^{(k)}\otimes I_{1})^{-1}(Z_{k}\otimes I_{1})\\
=Z^{(k+1)}(I_{1}\otimes Z^{(k-1)}\otimes I_{1})^{-1}(Z_{k}\otimes I_{1})^{-1}(Z_{k}\otimes I_{1})\\
=Z^{(k+1)}(I_{1}\otimes Z^{(k-1)}\otimes I_{1})^{-1}=Z_{k+1}(I_{1}\otimes Z^{(k)})(I_{1}\otimes Z^{(k-1)}\otimes I_{1})^{-1}\\
=Z_{k+1}(I_{1}\otimes(Z^{(k)}(Z^{(k-1)}\otimes I_{1})^{-1}))=Z_{k+1}(I_{1}\otimes Z_{k}').
\end{multline*}

\end{proof}
The sequence $C=\{C_{m}\}$ is the sequence of weights we want, and
we would like to show that $\iota^{\mathcal{F}(E^{\sigma})}(H^{\infty}(E,C))$
is unitarily equivalent to the commutant of $\sigma^{\mathcal{F}(E)}(H^{\infty}(E,Z))$
via $U$, but we first want to show that there is an admissible sequence
$X'=\{X'_{k}\}$, with $X'_{k}\in\mathcal{A}_{k}^{'}$, such that
$C$ is a sequence of weights associated with $X'$. For this purpose,
note that $Z'_{m}=Z^{(m)}(Z^{(m-1)}\otimes I_{1})^{-1}=Z^{(m)}(Z^{(m-1)}\otimes I_{1})^{-1}.$
So one finds easily that for $1\leq n<m$, 
\begin{equation}
Z'_{m}(Z'_{m-1}\otimes I_{1})\cdots(Z'_{m-n}\otimes I_{n})=Z^{(m)}(Z^{(m-n-1)}\otimes I_{n+1})^{-1}\label{Z'partial}
\end{equation}
and when $n=m-1$, 
\begin{equation}
Z'_{m}(Z'_{m-1}\otimes I_{1})\cdots(Z'_{1}\otimes I_{m-1})=Z^{(m)}.\label{Z'}
\end{equation}
Now note that for every $m>k$ we have 
\begin{multline*}
(I_{\sigma k}\otimes C_{m-k})\otimes I_{H}=I_{\sigma k}\otimes U_{m-k}^{*}(Z'_{m-k}\otimes I_{H})U_{m-k}\\
=U_{m}^{*}(Z'_{m-k}\otimes I_{k}\otimes I_{H})U_{m}
\end{multline*}
 (see Equation~\ref{BU}). Thus 
\begin{multline}
(C_{m}\otimes I_{H})(I_{\sigma1}\otimes C_{m-1}\otimes I_{H})\cdots(I_{\sigma(n)}\otimes C_{m-n}\otimes I_{H})\\
=U_{m}^{*}(Z'_{m}\otimes I_{H})U_{m}U_{m}^{*}(Z'_{m-1}\otimes I_{1}\otimes I_{H})U_{m}\cdots U_{m}^{*}(Z'_{m-n}\otimes I_{n}\otimes I_{H})U_{m}\\
=U_{m}^{*}((Z^{(m)}(Z^{(m-n-1)}\otimes I_{n+1})^{-1})\otimes I_{H})U_{m}.\label{Cmn}
\end{multline}

In particular, 
\begin{multline*}
C^{(m)}\otimes I_{H}=(C_{m}\otimes I_{H})(I_{\sigma1}\otimes C_{m-1}\otimes I_{H})\cdots(I_{\sigma(m-1)}\otimes C_{1})\\
=U_{m}^{*}(Z'_{m}\otimes I_{H})U_{m}U_{m}^{*}(Z'_{m-1}\otimes I_{1}\otimes I_{H})U_{m}\cdots U_{m}^{*}(Z'_{1}\otimes I_{m-1}\otimes I_{H})U_{m}\\
=U_{m}^{*}(Z^{(m)}\otimes I_{H})U_{m},
\end{multline*}
where, in the last equality, we used Equation~\ref{Z'}.

It follows that 
\begin{equation}
C^{(m)*}C^{(m)}\otimes I_{H}=U_{m}^{*}(R_{m}^{-2}\otimes I_{H})U_{m}.
\end{equation}

\begin{thm}
\label{dualweightseq} For each $k\geq1$, there is a (uniquely determined)
operator $X_{k}'\in\mathcal{A}'_{k}$ such that 
\[
X'_{k}\otimes I_{H}=U_{k}^{*}(X_{k}\otimes I_{H})U_{k}.
\]
The sequence $X'=\{X'_{k}\}$ is an admissible sequence in the sense
of Definition~\ref{X} (with $E^{\sigma}$ replacing $E$) and $\{C_{m}\}$
is a sequence of weights associated with $X'$ as in Definition~\ref{Def:Associated}. \end{thm}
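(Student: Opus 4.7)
The plan is to verify, in order, (i) the existence and uniqueness of the operators $X_k'$, (ii) admissibility of the sequence $\{X_k'\}$ in the sense of Definition~\ref{X}, and (iii) the weight-association identity $C^{(m)*}C^{(m)} = (R_m')^{-2}$ with $R_m'$ defined from $X'$ via \eqref{eq:Def_Rk}. The main computational input is already in hand: the identity $C^{(m)*}C^{(m)}\otimes I_H = U_m^*(R_m^{-2}\otimes I_H)U_m$ displayed just before the theorem statement.

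First I would invoke Lemma~\ref{mult}: since $X_k \in \mathcal{A}_k$, its first assertion yields a (necessarily unique) operator $X_k' \in \mathcal{A}_k'$ with $X_k'\otimes I_H = U_k^*(X_k\otimes I_H)U_k$, uniqueness because $T\otimes I_H = 0$ forces $T=0$. For admissibility, the three clauses of Definition~\ref{X} are verified directly. Membership in $\mathcal{A}_k'$ is already established. Positivity $X_k'\geq 0$ and invertibility of $X_1'$ follow by conjugating $X_k\otimes I_H \geq 0$ (resp.\ the invertible $X_1\otimes I_H$) by the unitary $U_k$ and stripping off the $\otimes I_H$. The growth condition is automatic because $\|X_k'\| = \|X_k'\otimes I_H\| = \|U_k^*(X_k\otimes I_H)U_k\| = \|X_k\|$, so $\limsup\|X_k'\|^{1/k} = \limsup\|X_k\|^{1/k} < \infty$ by admissibility of $X$.

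The substantive step is (iii). I would prove by induction on $l$, using Lemma~\ref{mult} at each stage, that for any operators $A_i \in \mathcal{A}_{k_i}$, $1\leq i \leq l$,
\[
U_{k_1+\cdots+k_l}^*\bigl(A_1\otimes A_2\otimes\cdots\otimes A_l\otimes I_H\bigr)U_{k_1+\cdots+k_l} = A_l'\otimes A_{l-1}'\otimes\cdots\otimes A_1'\otimes I_H,
\]
the inductive step requiring the observation that $A_2\otimes\cdots\otimes A_l$ lies in $\mathcal{A}_{k_2+\cdots+k_l}$, which holds because $\varphi_{k_2+\cdots+k_l}(a) = \varphi_{k_2}(a)\otimes I_{E^{\otimes(k_3+\cdots+k_l)}}$. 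Specializing to $A_i = X_{\alpha(i)}$ for a composition $\alpha\in F(m,l)$ and then summing over $\alpha\in F(m,l)$ gives
\[
U_m^*\biggl(\sum_{\alpha\in F(m,l)}\bigotimes_{i=1}^l X_{\alpha(i)}\otimes I_H\biggr)U_m = \sum_{\alpha\in F(m,l)}\bigotimes_{i=1}^l X_{\alpha(i)}'\otimes I_H,
\]
where the reversal of tensor factors on the right is absorbed by the bijection $\alpha\mapsto\alpha^{\mathrm{rev}}$ of $F(m,l)$. Summing over $l=1,\ldots,m$ yields $U_m^*(R_m^2\otimes I_H)U_m = (R_m')^2\otimes I_H$; inverting (using unitarity of $U_m$) gives $U_m^*(R_m^{-2}\otimes I_H)U_m = (R_m')^{-2}\otimes I_H$. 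Combining with the pre-theorem identity then gives $C^{(m)*}C^{(m)}\otimes I_H = (R_m')^{-2}\otimes I_H$, hence $C^{(m)*}C^{(m)} = (R_m')^{-2}$, which is exactly the condition that $\{C_m\}$ is a sequence of weights associated with $X'$ in the sense of Definition~\ref{Def:Associated}.

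The main obstacle, if any, is bookkeeping rather than mathematics: the $l$-fold iteration of Lemma~\ref{mult} reverses the order of tensor factors on the dual side, and one must verify both that the inductive step is well-posed (i.e., that iterated tensor products of elements of the $\mathcal{A}_{k_i}$ still lie in the appropriate $\mathcal{A}_{k_1+\cdots+k_l}$) and that this reversal is harmless after summing over $F(m,l)$. Once the reversal-invariance of the sum defining $R_m^2$ in \eqref{eq:Def_Rk} is noted, the rest is mechanical.
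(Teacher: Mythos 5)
Your proposal is correct and follows essentially the same route as the paper: both obtain $X_k'$ from Lemma~\ref{mult}, iterate that lemma to conjugate tensor products (noting the order reversal is absorbed when summing over the compositions in $F(m,l)$), deduce $(R_m')^{-2}\otimes I_H=U_m^*(R_m^{-2}\otimes I_H)U_m=C^{(m)*}C^{(m)}\otimes I_H$, and observe that the growth condition transfers because $\|X_k'\|=\|X_k\|$. Your explicit verification of positivity of $X_k'$ and invertibility of $X_1'$ is a small completeness bonus the paper leaves implicit.
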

\begin{proof}
For positive integers $i_{1},i_{2},\ldots,i_{l}$ that sum to $k$,
we may use Lemma~\ref{mult} repeatedly to find that 
\[
X'_{i_{1}}\otimes X'_{i_{2}}\otimes\cdots\otimes X'_{i_{l}}\otimes I_{H}=
\]
\[
U_{k}^{*}(X_{i_{l}}\otimes\cdots\otimes X_{i_{2}}\otimes X_{i_{1}}\otimes I_{H})U_{k}.
\]
Note, however, that conjugation by $U_{k}$ reverses the order of
the factors in the tensor product. While this is an important feature
of our theory, once we sum over all choices of indices $\{i_{j}\}$
that sum to $k$, as we do in Equation~\ref{eq:Def_Rk} to get $R'_{k}$,
the reversal will have no effect. Thus 
\[
R'_{k}\otimes I_{H}=U_{k}^{*}(R_{k}\otimes I_{H})U_{k},
\]
 
\[
(R'_{k})^{-1}\otimes I_{H}=U_{k}^{*}((R_{k})^{-1}\otimes I_{H})U_{k},
\]
and 
\begin{align*}
(R'_{k})^{-2}\otimes I_{H} & =U_{k}^{*}((R_{k})^{-2}\otimes I_{H})U_{k}=C^{(k)*}C^{(k)}\otimes I_{H}.\\
\end{align*}
This shows that $\{C_{m}\}$ is a sequence of weights associated with
$X'$. The only thing that is left to establish is the finiteness
of $\overline{lim}||X'_{k}||^{1/k}$. But this is immediate since $U_{k}^{*}X_{k}U_{k}=X_{k}'$
and $\overline{lim}||X_{k}||^{1/k}<\infty$.
\end{proof}
Our goal now is to prove that the commutant of $\sigma^{\mathcal{F}(E)}($$H^{\infty}(E,Z))$
is $U\iota^{\mathcal{F}(E^{\sigma})}(H^{\infty}(E^{\sigma},C))U^{*}$.

We first fix some notation. We write $W'_{\eta}$ for the weighted
shift associated with $\eta\in E^{\sigma}$ that acts on $\mathcal{F}(E^{\sigma})$.
Thus
\[
W'_{\eta}(\theta_{k})=C_{k+1}(\eta\otimes\theta_{k})
\]
for $\theta_{k}\in(E^{\sigma})^{\otimes k}$ and $H^{\infty}(E^{\sigma},C)$
is the ultraweakly closed algebra generated by $\{W'_{\eta}:\eta\in E^{\sigma}\}\cup\varphi_{\sigma\infty}(\sigma(M)')$.

As we did in Equation~\ref{Wxi}, we can define, for $\eta\in(E^{\sigma})^{\otimes m}$,
the operator $W'_{\eta}$ by 
\begin{equation}
W'_{\eta}\zeta=C^{(m+k)}(I_{\sigma m}\otimes C^{(k)})^{-1}(\eta\otimes\zeta)
\end{equation}
for $\zeta\in(E^{\sigma})^{\otimes k}$. We then have $W'_{\eta}\in H^{\infty}(E^{\sigma},C)$.

Now let $Y\in B(\mathcal{F}(E)\otimes H)$ be in the commutant of
$\sigma^{\mathcal{F}(E)}(H^{\infty}(E,Z))$ and assume that $Y=\Phi_{j}(Y)$
for some $j\geq1$. Consider the restriction of $Y$ to $H$ (the
zeroth term in $\mathcal{F}(E)\otimes H$). Then $Y$ maps $H$ into
$E^{\otimes j}\otimes H$ since $Y=\Phi_{j}(Y)$ and it belongs to
$(E^{\otimes j})^{\sigma}$ (since $Y$ commutes with $\varphi_{\infty}(M)$).
Write $Y|H=g\in(E^{\otimes j})^{\sigma}$. For every $k>0$ and every
$\xi_{1},\xi_{2},\ldots,\xi_{k}$ in $E$ and $h\in H$, we have 
\begin{equation}
YW_{\xi_{1}}\cdots W_{\xi_{k}}h=W_{\xi_{1}}\cdots W_{\xi_{k}}Yh=W_{\xi_{1}}\cdots W_{\xi_{k}}g(h).
\end{equation}
The left-hand-side of this equation is equal to $Y(Z^{(k)}\otimes I_{H})(\xi_{1}\otimes\cdots\otimes\xi_{k}\otimes h)$
while the right-hand-side is $(Z^{(k+j)}(I_{k}\otimes Z^{(j)})^{-1}\otimes I_{H})(I_{k}\otimes g)(\xi_{1}\otimes\cdots\otimes\xi_{k}\otimes h)$.
Thus the restriction of $Y$ to $E^{\otimes k}\otimes H$ is 
\begin{multline*}
(Z^{(k+j)}(I_{k}\otimes Z^{(j)})^{-1}\otimes I_{H})(I_{k}\otimes g)(Z^{(k)}\otimes I_{H})^{-1}|E^{\otimes k}\otimes H\\
=(Z^{(k+j)}(I_{k}\otimes Z^{(j)})^{-1}\otimes I_{H})(Z^{(k)}\otimes I_{j}\otimes I_{H})^{-1}(I_{k}\otimes g)|E^{\otimes k}\otimes H\\
=(Z^{(k+j)}\otimes I_{H})(Z^{(k)}\otimes I_{j}\otimes I_{H})^{-1}(I_{k}\otimes Z^{(j)}\otimes I_{H})^{-1}(I_{k}\otimes g)|E^{\otimes k}\otimes H.
\end{multline*}
By Lemma~\ref{mult}, there is a $D_{j}\in\mathcal{A}'_{j}$ such
that $D_{j}\otimes I_{H}=U_{j}^{*}(Z^{(j)}\otimes I_{H})U_{j}$ and
so we have $D_{j}\otimes I_{\sigma k}\otimes I_{H}=U_{k+j}^{*}(I_{k}\otimes Z^{(j)}\otimes I_{H})U_{k+j}$.
Consequently, we find that for $\eta_{k}\in(E^{\sigma})^{\otimes k}$
and $h\in H$, 
\begin{multline*}
U_{k+j}^{*}YU_{k}(\eta_{k}\otimes h)\\
=U_{k+j}^{*}(Z^{(k+j)}\otimes I_{H})(Z^{(k)}\otimes I_{j}\otimes I_{H})^{-1}U_{k+j}(D_{j}\otimes I_{\sigma k}\otimes I_{H})(g\otimes\eta_{k}\otimes h).
\end{multline*}
Now write $\theta_{j}$ for $D_{j}g$ to get 
\[
U_{k+j}^{*}YU_{k}(\eta_{k}\otimes h)=U_{k+j}^{*}(Z^{(k+j)}\otimes I_{H})(Z^{(k)}\otimes I_{j}\otimes I_{H})^{-1}U_{k+j}(\theta_{j}\otimes\eta_{k}\otimes h).
\]
Since 
\begin{multline*}
U_{k+j}^{*}(Z^{(k+j)}\otimes I_{H})(Z^{(k)}\otimes I_{j}\otimes I_{H})^{-1}U_{k+j}\\
=C_{k+j}(I_{\sigma}\otimes C_{k+j-1})\cdots(I_{\sigma(j-1)}\otimes C_{k+1})\otimes I_{H},
\end{multline*}
by Equation~\ref{Cmn}, we find that 
\begin{equation}
U^{*}YU=W_{\theta_{j}}'\otimes I_{H}\in\iota^{\mathcal{F}(E^{\sigma})}(H^{\infty}(E^{\sigma},C)).
\end{equation}
This was done for $Y=\Phi_{j}(Y)\in(H^{\infty}(E,Z)\otimes I_{H})'$
when $j\geq1$. For $Y=\Phi_{0}(Y)\in\sigma^{\mathcal{F}(E)}(H^{\infty}(E,Z))'$,
$Y$ commutes with $\varphi_{\infty}(M)\otimes I_{H}$. Thus $Y|H=b$
for some $b\in\sigma(M)'$ and it follows that $U^{*}YU=\varphi_{\sigma\infty}(b)\otimes I_{H}$.
To see this, we fix $\xi\in E^{\otimes k}$ and $h\in H$ and compute
\begin{multline}
Y(Z^{(k)}\xi\otimes h)=YW_{\xi}h=W_{\xi}Yh=Z^{(k)}\xi\otimes bh\\
=(I_{k}\otimes b)(Z^{(k)}\xi\otimes h).\label{b}
\end{multline}
Since $Z^{(k)}$ is invertible, $Y=I_{\mathcal{F}(E)}\otimes b$.
Thus $YU_{k}(\eta_{k}\otimes h)=(I_{k}\otimes b)\eta_{k}(h)=(\varphi_{\sigma\infty}(b)\eta_{k})(h)=U_{k}(\varphi_{\sigma\infty}(b)\otimes I_{H})(\eta_{k}\otimes h)$
and, thus, $U^{*}YU=\varphi_{\sigma\infty}(b)\otimes I_{H}\in H^{\infty}(E^{\sigma},C)\otimes I_{H}$.

These observations prove part of the following theorem.
\begin{thm}
The commutant of the algebra $\sigma^{\mathcal{F}(E)}(H^{\infty}(E,Z))$
is $U(\iota^{\mathcal{F}(E^{\sigma})}(H^{\infty}(E^{\sigma},C)))U^{*}$. \end{thm}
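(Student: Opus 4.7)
The plan is to establish the two inclusions separately, exploiting the Cesaro--Fourier machinery of the gauge group along with the explicit computations already recorded in the paragraphs preceding the theorem statement.

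For the forward inclusion $\sigma^{\mathcal{F}(E)}(H^{\infty}(E,Z))' \subseteq U\iota^{\mathcal{F}(E^{\sigma})}(H^{\infty}(E^{\sigma},C))U^{*}$, two ingredients are required beyond what has been recorded. First, I would show that every $Y$ in the commutant has vanishing Fourier coefficients of negative index. Fix $j\geq 1$ and set $Y_{0}=\Phi_{-j}^{\sigma}(Y)$. Since each $\gamma_{t}^{\sigma}$ preserves both $\sigma^{\mathcal{F}(E)}(H^{\infty}(E,Z))$ and its commutant, so does each $\Phi_{n}^{\sigma}$; hence $Y_{0}$ again lies in the commutant. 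By the gauge-grading property, $Y_{0}$ maps $E^{\otimes k}\otimes H$ into $E^{\otimes (k-j)}\otimes H$, understood as zero when $k<j$. Using $Y_{0}(W_{\xi}\otimes I_{H})=(W_{\xi}\otimes I_{H})Y_{0}$ for every $\xi\in E$, together with the fact that $W_{\xi}$ on $E^{\otimes k}\otimes H$ is $(Z_{k+1}\otimes I_{H})$ composed with an insertion by $\xi$ (and $Z_{k+1}$ is invertible), an induction on $k\geq j$ forces $Y_{0}=0$ on all of $\mathcal{F}(E)\otimes H$, analogous to equation~\eqref{b}.

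Combined with the computations just above the theorem, this shows that for every $Y$ in the commutant and every $j\in\mathbb{Z}$ the Fourier coefficient $\Phi_{j}^{\sigma}(Y)$ lies in $U\iota^{\mathcal{F}(E^{\sigma})}(H^{\infty}(E^{\sigma},C))U^{*}$, and vanishes for $j<0$. Since the Cesaro sums $\Sigma_{k}^{\sigma}(Y)$ converge ultraweakly to $Y$ (as discussed after \eqref{eq:Cesaro}) and the target algebra $U\iota^{\mathcal{F}(E^{\sigma})}(H^{\infty}(E^{\sigma},C))U^{*}$ is ultraweakly closed (because $H^{\infty}(E^{\sigma},C)$ is, and induction is ultraweakly continuous), we conclude $Y\in U\iota^{\mathcal{F}(E^{\sigma})}(H^{\infty}(E^{\sigma},C))U^{*}$.

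For the reverse inclusion, it suffices to check on generators that $U(\varphi_{\sigma\infty}(b)\otimes I_{H})U^{*}$ and $U(W'_{\eta}\otimes I_{H})U^{*}$ commute with $\varphi_{\infty}(a)\otimes I_{H}$ and $W_{\xi}\otimes I_{H}$ for $a\in M$, $b\in\sigma(M)'$, $\xi\in E$, $\eta\in E^{\sigma}$; ultraweak closure then promotes this to all of $\iota^{\mathcal{F}(E^{\sigma})}(H^{\infty}(E^{\sigma},C))$. These commutation identities are essentially the reverse of the forward calculation and follow from a direct computation using Lemmas~\ref{U}, \ref{mult} and \ref{theta}, the definition of the dual weights $C$ in Lemma~\ref{CZweights}, and the compatibility relation in Lemma~\ref{commZZprime} relating $Z$ and $Z'$.

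The main obstacle is the analyticity claim $\Phi_{-j}^{\sigma}(Y)=0$ for $j\geq 1$. This is precisely where the invertibility of the weights $Z_{k}$ enters essentially: without it, the inductive step on the Fock levels collapses and the commutant could acquire genuinely non-analytic components, invalidating the theorem.
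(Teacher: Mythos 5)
Your proposal is correct and follows essentially the same route as the paper: the forward inclusion via the gauge-group Fourier coefficients $\Phi_{j}^{\sigma}$ and Cesaro convergence (the computations preceding the theorem handle $j\geq 0$), and the reverse inclusion by checking commutation of the generators $U(\varphi_{\sigma\infty}(b)\otimes I_{H})U^{*}$ and $U(W'_{\eta}\otimes I_{H})U^{*}$ against $\varphi_{\infty}(a)\otimes I_{H}$ and $W_{\xi}\otimes I_{H}$ using Lemma~\ref{commZZprime}. The one point where you go beyond the paper's written argument is the explicit verification that $\Phi_{-j}^{\sigma}(Y)=0$ for $j\geq 1$ (via $Y|_{H}=0$ and the invertibility of $Z^{(k)}$, as in the computation around \eqref{b}); the paper leaves this step implicit, and your treatment of it is correct.
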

\begin{proof}
The discussion preliminary to the statement of the theorem shows that
$(\sigma^{\mathcal{F}(E)}(H^{\infty}(E,Z)))'\subseteq U(\iota^{\mathcal{F}(E^{\sigma})}H^{\infty}(E^{\sigma},C)))U^{*}$.
We shall focus on the reverse inclusion. For $X=\varphi_{\sigma\infty}(b)$,
with $b\in\sigma(M)'$, it follows from \cite[Lemma 3.8]{Muhly2004a}
that $U(X\otimes_{\iota}I_{H})U^{*}=I_{\mathcal{F}(E)}\otimes b\in(\sigma^{\mathcal{F}(E)}(H^{\infty}(E,Z)))'$.
It is left to analyze $U(\iota^{\sigma(\mathcal{F}(E))}(W_{\theta}'))U^{*}$
for $\theta\in E^{\sigma}$. Using Lemma 3.8 of \cite{Muhly2004a}
again, we see that $U_{k+1}(T_{\theta}\otimes I_{H})U_{k}^{*}=I_{k}\otimes\theta$.
Thus, for $k\geq0$, 
\begin{multline*}
U_{k+1}(W_{\theta}'\otimes I_{H})U_{k}^{*}=U_{k+1}(C_{k+1}\otimes I_{H})U_{k+1}^{*}U_{k+1}(T_{\theta}\otimes I_{H})U_{k}^{*}\\
=(Z_{k+1}'\otimes I_{H})(I_{k}\otimes\theta).
\end{multline*}
For $a\in M$, 
\begin{multline*}
(Z_{k+1}'\otimes I_{H})(I_{k}\otimes\theta)(\varphi_{k}(a)\otimes I_{H})=(Z_{k+1}'\otimes I_{H})(\varphi_{k+1}(a)\otimes I_{H})(I_{k}\otimes\theta)\\
=(\varphi_{k+1}(a)\otimes I_{H})(Z_{k+1}'\otimes I_{H})(I_{k}\otimes\theta).
\end{multline*}
Thus $U(W_{\theta}'\otimes I_{H})U^{*}$ commutes with $\varphi_{\infty}(a)\otimes I_{H}$.
It remains to show that it also commutes with $\sigma^{\mathcal{F}(E)}(W_{\xi})=W_{\xi}\otimes I_{H}$,
for $\xi\in E$. So fix $k\geq0$, $\xi\in E$ and $\theta\in E^{\sigma}$
and compute using Lemma~\ref{commZZprime}: 
\begin{multline*}
(Z_{k+1}'\otimes I_{H})(I_{k}\otimes\theta)(W_{\xi}\otimes I_{H})=(Z_{k+1}'\otimes I_{H})(I_{k}\otimes\theta)(Z_{k}\otimes I_{H})(T_{\xi}\otimes I_{H})\\
=(Z_{k+1}'\otimes I_{H})(Z_{k}\otimes I_{1}\otimes I_{H})(I_{k}\otimes\theta)(T_{\xi}\otimes I_{H})\\
=(Z_{k+1}'\otimes I_{H})(Z_{k}\otimes I_{1}\otimes I_{H})(T_{\xi}\otimes I_{H})(I_{k}\otimes\theta)\\
=(Z_{k+1}\otimes I_{H})(I_{1}\otimes Z_{k}'\otimes I_{H})(T_{\xi}\otimes I_{H})(I_{k}\otimes\theta)\\
=(Z_{k+1}\otimes I_{H})(T_{\xi}\otimes I_{H})(Z_{k}'\otimes I_{H})(I_{k}\otimes\theta)\\
=(W_{\xi}\otimes I_{H})(Z_{k}'\otimes I_{H})(I_{k}\otimes\theta).
\end{multline*}
Consequently, $U(W_{\theta}'\otimes I_{H})U^{*}$ commutes with $W_{\xi}\otimes I_{H}$
and the proof is complete.
\end{proof}
In \cite{Muhly2013}, we associated to every element of $H^{\infty}(E)$
its Berezin transform and showed that it is a matricial function (in
a sense that will be made precise below). We also proved a converse
of this result. Our goal now is to extend this study to the current,
weighted, setting and we start with the following definition.
\begin{defn}
\label{def:Matricial_family_of_sets} Let $NRep(M)$ denote the collection
of all normal representations of $M$ on a separable Hilbert space.
A family of sets $\{\mathcal{U}(\sigma)\}_{\sigma\in NRep(M)}$, with
$\mathcal{U}(\sigma)\subseteq\mathfrak{I}(\sigma^{E}\circ\varphi,\sigma)$,
satisfying $\mathcal{U}(\sigma)\oplus\mathcal{U}(\tau)\subseteq\mathcal{U}(\sigma\oplus\tau)$
is called a \emph{matricial family} of sets. 
\end{defn}
We shall be interested here mainly with the following matricial families.

\begin{examples}\label{examples matricial families} 
\begin{enumerate}
\item For a given admissible sequence $X$, the families $\{D(X,\sigma)\}_{\sigma\in NRep(M)}$
and $\{\overline{D(X,\sigma)}\}_{\sigma\in NRep(M)}$ are matricial
families. 
\item For $\sigma\in NRep(M)$, write $\mathcal{AC}(\sigma)$ for the set
of all $\mathfrak{z}\in\overline{D(X,\sigma)}$ such that the representation
$\sigma\times\mathfrak{z}$ extends to an ultraweakly continuous representation
of $H^{\infty}(E,Z)$. Then  $\{\mathcal{AC}(\sigma)\}_{\sigma\in NRep(M)}$
is a matricial family.
\end{enumerate}
\end{examples}

The Berezin transform, $\widehat{F}=\{\widehat{F}_{\sigma}\}_{\sigma\in NRep(M)}$,
of an element $F\in H^{\infty}(E,Z)$ is defined by


\[
\widehat{F}_{\sigma}(\mathfrak{z})=(\sigma\times\mathfrak{z})(F)
\]
where $\mathfrak{z}$ runs over $\mathcal{AC}(\sigma)$.  Of course, when $F \in \mathcal{T}(E,Z)$, then $\widehat{F}_{\sigma}(\mathfrak{z})$ makes sense for all $\mathfrak{z}\in \overline{D(X,\sigma)}$.  What one can say about $\mathcal{AC}(\sigma)$ and its structure (beyond simply being a subset of $\overline{D(X,\sigma)}$) is a real mystery that deserves more study.

A Berezin transform, $\widehat{F}=\{\widehat{F}_{\sigma}\}_{\sigma\in NRep(M)}$, clearly satisfies the equation 
\begin{equation}
\widehat{F}_{\sigma\oplus\tau}(\mathfrak{z}\oplus\mathfrak{w})=\widehat{F}_{\sigma}(\mathfrak{z})\oplus\widehat{F}_{\tau}(\mathfrak{w}),\qquad\mathfrak{z}\oplus\mathfrak{w}\in D_{X,\sigma}\oplus D_{X,\tau}.\label{eq:Direct_sum_decomposition}
\end{equation}
In fact, the Berezin transforms have an additional property. To describe
it, note that if $\sigma$ and $\tau$ are representations of $M$,
then an operator $C:H_{\sigma}\rightarrow H_{\tau}$ is said to intertwine
$\sigma$ and $\tau$ if $C\sigma(a)=\tau(a)C$ for all $a\in M$,
in which case we write $C\in\mathcal{I}(\sigma,\tau)$. Similarly,
if $C$ intertwines the representations $\sigma\times\mathfrak{z}$
and $\tau\times\mathfrak{w}$ of $H^{\infty}(E,Z)$, then we will
write $C\in\mathcal{I}(\sigma\times\mathfrak{z},\tau\times\mathfrak{w})$.
\begin{defn}
\label{def:Matricial_family_of_functions} Suppose $\{\mathcal{U}(\sigma)\}_{\sigma\in NRep(M)}$
is a matricial family of sets and suppose that for each $\sigma\in NRep(M)$,
$f_{\sigma}:\mathcal{U}(\sigma)\to B(H_{\sigma})$ is a function.
We say that $f:=\{f_{\sigma}\}_{\sigma\in NRep(M)}$ is a \emph{matricial
family of functions }in case 
\begin{equation}
Cf_{\sigma}(\mathfrak{z})=f_{\tau}(\mathfrak{w})C\label{eq:respects_intertwiners}
\end{equation}
for every $\mathfrak{z}\in\mathcal{U}(\sigma)$, every $\mathfrak{w}\in\mathcal{U}(\tau)$
and every $C\in\mathcal{I}(\sigma,\tau)$ such that 
\begin{equation}
C\mathfrak{z}=\mathfrak{w}(I_{E}\otimes C).\label{eq:basic_intertwining}
\end{equation}

When the family is $\{\mathcal{AC}(\sigma)\}_{\sigma\in NRep(M)}$
and $f=\{f_{\sigma}\}_{\sigma\in NRep(M)}$ is a Berezin transform,
then it is easy to see that the assumptions on an operator $C:H_{\sigma}\to H_{\tau}$
that $C\in\mathcal{I}(\sigma,\tau)$ and satisfies equation \eqref{eq:basic_intertwining}
express the fact that $C$ lies in $\mathcal{I}(\sigma\times\mathfrak{z},\tau\times\mathfrak{w})$.
But then, equation \eqref{eq:respects_intertwiners} is immediate.
It is simply a manifestation of the structure of the commutant of
the representation $(\sigma\oplus\tau)\times(\mathfrak{z}\oplus\mathfrak{w})$.
In this setting also, the defining hypothesis for a matricial family
can be written simply as 
\begin{equation}
\mathcal{I}(\sigma\times\mathfrak{z},\tau\times\mathfrak{w})\subseteq\mathcal{I}(f_{\sigma}(\mathfrak{z}),f_{\tau}(\mathfrak{w})),\label{eq:Resp_intertwiners}
\end{equation}
for all $\sigma,\tau\in NRep(M)$, $\mathfrak{z}\in\mathcal{AC}(\sigma),$
and $\mathfrak{w}\in\mathcal{AC}(\tau)$. Consequently, we sometimes
say that a matricial family \emph{respects intertwiners}. Observe
that if a family respects intertwiners, then it automatically satisfies
equations like \eqref{eq:Direct_sum_decomposition}. 
\end{defn}
One of the principal results of \cite[Theorem 4.4]{Muhly2013} is the converse:
In the ``unweighted case\textquotedbl{}, every matricial family $\{f_{\sigma}\}_{\sigma\in NRep(M)}$ is the
Berezin transform of some $F\in H^{\infty}(E)$. The two main ingredients
that were used in the proof of this converse are the double commutant
result of \cite[Corollary 3.10]{Muhly2004a} and the analysis of the
representations in $\mathcal{AC}(\sigma)$ obtained in \cite{Muhly2011a}.
In the weighted setting, we still have the double commutant theorem,
thanks to Jennifer Good. We state her theorem as
\begin{thm}
\cite[Theorem 4.4]{Good2015} \label{thm: Jenni} Suppose $Z$ is
a sequence of invertible weights associated with an admissible weight
sequence. Suppose, also, that $\sigma$ is a normal representation
of $M$ on a Hilbert space. Then $(\sigma^{\mathcal{F}(E)}(H^{\infty}(E,Z)))''=\sigma^{\mathcal{F}(E)}(H^{\infty}(E,Z))$.
\end{thm}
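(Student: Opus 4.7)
The plan is to apply the commutant theorem proved immediately above twice in succession.  The first application gives a concrete presentation of the commutant $(\sigma^{\mathcal{F}(E)}(H^{\infty}(E,Z)))'$ as $U\iota^{\mathcal{F}(E^{\sigma})}(H^{\infty}(E^{\sigma},C))U^{*}$, where $\iota$ is the identity representation of $\sigma(M)^{'}$ on $H$, $E^{\sigma}=\mathfrak{I}(\sigma^{E}\circ\varphi,\sigma)$ is the dual $W^{*}$-correspondence over $\sigma(M)^{'}$, and $C=\{C_{m}\}$ is the sequence of weights associated (in the sense of Theorem \ref{dualweightseq}) with the dual admissible sequence $X'=\{X'_{k}\}$.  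The double commutant is therefore obtained by computing the commutant of $\iota^{\mathcal{F}(E^{\sigma})}(H^{\infty}(E^{\sigma},C))$ and then conjugating by $U$.

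To compute this second commutant, I would reapply the commutant theorem with the substitutions $M\leadsto\sigma(M)^{'}$, $\sigma\leadsto\iota$, $E\leadsto E^{\sigma}$, and $Z\leadsto C$.  This produces a unitary $V$ identifying $\mathcal{F}((E^{\sigma})^{\iota})\otimes_{\mu}H$ with $\mathcal{F}(E^{\sigma})\otimes_{\iota}H$, where $\mu$ is the identity representation of $\iota(\sigma(M)^{'})^{'}=\sigma(M)^{''}$ on $H$, and yields
\[
\bigl(\iota^{\mathcal{F}(E^{\sigma})}(H^{\infty}(E^{\sigma},C))\bigr)^{'}=V\mu^{\mathcal{F}((E^{\sigma})^{\iota})}\bigl(H^{\infty}((E^{\sigma})^{\iota},C')\bigr)V^{*},
\]
in which $(E^{\sigma})^{\iota}:=\mathfrak{I}(\iota^{E^{\sigma}}\circ\varphi_{\sigma},\iota)$ and $C'=\{C'_{m}\}$ is the doubly dualized weight sequence, associated with the doubly dualized admissible sequence $X''$ characterized by $X''_{k}\otimes I_{H}=V_{k}^{*}(X'_{k}\otimes I_{H})V_{k}$.

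The main obstacle is the bidual identification: one must show that $(E^{\sigma})^{\iota}$ is canonically isomorphic, as a $\sigma(M)$-correspondence, to $E$ (with $M$ and $\sigma(M)$ identified through the faithful part of $\sigma$) and that under this identification the bidualized admissible sequence $X''$ coincides with $X$.  The composite conjugation $V_{k}^{*}\circ U_{k}^{*}(\cdot\otimes I_{H})U_{k}\circ V_{k}$ should be traced through the explicit definitions of $U_{k}$ and $V_{k}$ (via the formulas in Lemma \ref{U} and Lemma \ref{mult}) to verify $X''=X$.  Once this bidual identification is in hand, the weights $C'$ and $Z$ are both invertible weight sequences associated, in the sense of Definition \ref{Def:Associated}, with the same admissible sequence, so by Lemma \ref{equivalent} they are equivalent in the sense of Definition \ref{equivalent_weights}, and by Lemma \ref{uequivalent} there is a diagonal unitary $W\in\varphi_{\infty}(M)^{'}\cap\mathcal{L}(\mathcal{F}(E))$ with $WH^{\infty}(E,C')W^{*}=H^{\infty}(E,Z)$.

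Assembling the pieces, the double commutant of $\sigma^{\mathcal{F}(E)}(H^{\infty}(E,Z))$ equals $UVU_{0}(H^{\infty}(E,Z)\otimes I_{H})U_{0}^{*}V^{*}U^{*}$ for a suitable unitary $U_{0}$ arising from the diagonal unitary $W$ of Lemma \ref{uequivalent}.  A final diagram-chase, using that $U_{0}$, $V$, and $U$ each intertwine the appropriate induced representations of the $W^{*}$-algebra structures on the Fock spaces, shows that this composite unitary restricts to the identity on $\sigma^{\mathcal{F}(E)}(H^{\infty}(E,Z))$, completing the proof.  The parallel unweighted result \cite[Corollary 3.10]{Muhly2004a} suggests that this final bookkeeping will be routine once the bidual identification in the previous paragraph has been carried out carefully.
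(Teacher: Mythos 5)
First, a point of comparison: the paper does not prove this theorem at all --- it is quoted from Jennifer Good's thesis \cite[Theorem 4.4]{Good2015} --- so there is no in-paper argument to measure your proposal against. Your strategy (iterate the commutant theorem and close the loop with a bidual identification) is the natural one and is exactly how the unweighted analogue \cite[Corollary 3.10]{Muhly2004a} is obtained, so the outline is reasonable. The problem is that, as written, the proposal defers essentially all of the mathematical content. The ``main obstacle'' you name --- that $(E^{\sigma})^{\iota}\cong E$ as correspondences and that the doubly dualized admissible sequence satisfies $X''=X$ --- is not an obstacle to be noted but the entire theorem; saying the conjugations ``should be traced through'' is not a proof. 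Note in particular that the proof of Theorem \ref{dualweightseq} warns that conjugation by $U_{k}$ \emph{reverses} the order of tensor factors (and the dual weights are built with $Z^{(m-1)}\otimes I_{1}$ on the right, where the original weights have $I_{E}\otimes Z^{(m-1)}$), so one must actually verify that the two reversals cancel and that $C'$ is carried to a weight sequence associated with $X$ itself. Only then do Lemmas \ref{equivalent} and \ref{uequivalent} apply.

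There are also hypothesis checks you skip that are not cosmetic. The commutant theorem in Section \ref{Sec:Duality and the commutant} is established under the standing assumption that $\sigma$ is \emph{faithful}, while Theorem \ref{thm: Jenni} asserts the conclusion for an arbitrary normal $\sigma$; since the double commutant property of a nonselfadjoint ultraweakly closed algebra is representation-dependent, the non-faithful case cannot simply be waved away. For the second application you must also verify that $E^{\sigma}$ satisfies the paper's standing hypotheses on a correspondence (full, with faithful left action of $\sigma(M)'$) --- the paper itself points out in Section \ref{sec:Preliminaries} that $E^{\sigma}$ can even be zero --- and that $\iota$ and $C$ fit the framework. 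Finally, your closing claim that the composite unitary ``restricts to the identity on $\sigma^{\mathcal{F}(E)}(H^{\infty}(E,Z))$'' is not the right assertion (a unitary does not restrict to the identity on an algebra); what must be shown is that conjugation by the composite carries $H^{\infty}(E,Z)\otimes I_{H}$ exactly onto itself, and that is a genuine computation, not bookkeeping that can be presumed routine. In short: right road map, but the proof is missing.
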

However, we do not have, yet, an analysis of $\mathcal{AC}(\sigma)$
that is as complete as the one in \cite{Muhly2011a}. Consequently,
the following result is somewhat weaker than a full converse.
\begin{thm}
\label{thm:Double_Commutant} If $f=\{f_{\sigma}\}_{\sigma\in NRep(M)}$
is a matricial family of functions, with $f_{\sigma}$ defined on
$\mathcal{AC}(\sigma)$ and mapping to $B(H_{\sigma}),$ then there
is an $F\in H^{\infty}(E,Z)$ such that $f$ and the Berezin transform
of $F$ agree on $D(X,\sigma)$ , i.e., 
\[
f_{\sigma}(\mathfrak{z})=\widehat{F}_{\sigma}(\mathfrak{z})
\]
for every $\sigma$ and every $\mathfrak{z}\in D(X,\sigma)$. \end{thm}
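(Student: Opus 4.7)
I would adapt the double-commutant strategy of \cite[Theorem 4.4]{Muhly2013} to the weighted setting, with Good's Theorem~\ref{thm: Jenni} playing the role that the unweighted double commutant theorem of \cite{Muhly2004a} played there. First I would fix a faithful normal representation $\sigma$ of $M$ on a Hilbert space $H$ of ample multiplicity (big enough that every $\tau\in NRep(M)$ embeds as a subrepresentation). Using Lemma~\ref{uequivalent}, I may replace $Z$ by its canonical positive representative so that the Poisson kernel formulas of Corollary~\ref{representations} apply in their cleanest form: for every $\mathfrak{z}\in D(X,\sigma)$, the operator $K(\mathfrak{z})$ is an isometry satisfying $(F\otimes I_{H})K(\mathfrak{z})=K(\mathfrak{z})\,\widehat{F}_{\sigma}(\mathfrak{z})$ for all $F\in H^{\infty}(E,Z)$.

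The heart of the proof is the construction of an operator $T\in B(\mathcal{F}(E)\otimes_{\sigma}H)$ satisfying the defining relation
\[
T\,K(\mathfrak{z})h=K(\mathfrak{z})\,f_{\sigma}(\mathfrak{z})h,\qquad \mathfrak{z}\in D(X,\sigma),\; h\in H.
\]
To see that this prescription extends to a well-defined bounded linear operator on the linear span $\mathcal{D}$ of the vectors $\{K(\mathfrak{z})h\}$, I would use the matricial property of $f$: a finite combination $\sum_{i}c_{i}K(\mathfrak{z}_{i})h_{i}$ can be repackaged via the amplification $\widetilde{\sigma}=\sigma^{(n)}$, the element $\widetilde{\mathfrak{z}}=\bigoplus_{i}\mathfrak{z}_{i}\in D(X,\widetilde{\sigma})$, and the row operator $C:H^{n}\to H$ with scalar entries $c_{i}$; the identity $Cf_{\widetilde{\sigma}}(\widetilde{\mathfrak{z}})=f_{\sigma}(\mathfrak{z})C$, coming from \eqref{eq:Resp_intertwiners}, forces consistency, and uniform boundedness of $\{f_{\sigma}(\mathfrak{z})\}$ on bounded subsets (which can be extracted from the matricial property combined with Lemma~\ref{lem:Estimate} and a Banach--Steinhaus argument) gives a bound on $T$. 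Extending $T$ from $\mathcal{D}$ to all of $\mathcal{F}(E)\otimes_{\sigma}H$ is then arranged by requiring commutation with the commutant $\mathcal{C}:=\sigma^{\mathcal{F}(E)}(H^{\infty}(E,Z))'$ as in the next step.

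I would then verify that $T$ commutes with every element of $\mathcal{C}$. By the duality theorem established earlier in Section~\ref{Sec:Duality and the commutant}, $\mathcal{C}$ is generated by $I_{\mathcal{F}(E)}\otimes b$ for $b\in\sigma(M)'$ together with the operators $U(W'_{\eta}\otimes I_{H})U^{*}$ for $\eta\in E^{\sigma}$, where $W'_{\eta}$ are the dual weighted shifts on $\mathcal{F}(E^{\sigma})$ built from the associated weight sequence $C=\{C_{m}\}$ of Lemma~\ref{CZweights}. For each such generator $Y$, the commutation $[Y,T]=0$ applied to a vector $K(\mathfrak{z})h$ translates into an intertwining identity between $(\sigma,\mathfrak{z})$ and a second pair $(\sigma,\mathfrak{z}')$ obtained from the dual weighted-shift action of $Y$; the matricial property of $f$ supplies exactly the required identity. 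Once commutation is established, Theorem~\ref{thm: Jenni} forces $T\in\sigma^{\mathcal{F}(E)}(H^{\infty}(E,Z))$, hence $T=F\otimes I_{H}$ for a (unique) $F\in H^{\infty}(E,Z)$, and for every $\mathfrak{z}\in D(X,\sigma)$ we get
\[
\widehat{F}_{\sigma}(\mathfrak{z})=K(\mathfrak{z})^{*}(F\otimes I_{H})K(\mathfrak{z})=K(\mathfrak{z})^{*}TK(\mathfrak{z})=f_{\sigma}(\mathfrak{z}).
\]
To pass from this particular $\sigma$ to an arbitrary $\tau\in NRep(M)$ and $\mathfrak{w}\in D(X,\tau)$, I embed $H_{\tau}$ into $H$ via an intertwining isometry $V$; the resulting $\mathfrak{z}\in\mathfrak{I}(\sigma^{E}\circ\varphi,\sigma)$ with $V\mathfrak{w}=\mathfrak{z}(I_{E}\otimes V)$ lies in $D(X,\sigma)$, and the matricial identities applied to both $f$ and $\widehat{F}$ give $Vf_{\tau}(\mathfrak{w})=f_{\sigma}(\mathfrak{z})V=\widehat{F}_{\sigma}(\mathfrak{z})V=V\widehat{F}_{\tau}(\mathfrak{w})$; since $V$ is isometric, $f_{\tau}(\mathfrak{w})=\widehat{F}_{\tau}(\mathfrak{w})$.

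The main obstacle will be the commutation verification: the commutant is now described via the \emph{dual} weight sequence $C$, whose weights $C_{m}$ are obtained from $Z$ by the conjugation formula $C_{m}\otimes I_{H}=U_{m}^{*}(Z'_{m}\otimes I_{H})U_{m}$ of Theorem~\ref{dualweightseq}, and converting the commutation $[U(W'_{\eta}\otimes I_H)U^{*},T]=0$ into a concrete intertwining identity to which the matricial property of $f$ can be applied requires carefully tracking how the dual weighted shifts interact with the Poisson kernels $K(\mathfrak{z})$. A secondary subtlety is the well-definedness of $T$ on $\mathcal{D}$; if the span $\mathcal{D}$ fails to be dense, one must argue that $T$'s values on the closure of $\mathcal{D}$ together with the commutation relations of the previous step uniquely determine an operator on the whole Fock space, which is ultimately what Theorem~\ref{thm: Jenni} allows.
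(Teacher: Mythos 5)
Your overall strategy---land in the double commutant of an induced representation and invoke Theorem \ref{thm: Jenni}, then propagate by intertwiners---is the right one, but the way you propose to produce the operator $T$ has a genuine gap, and it causes you to miss the one idea on which the paper's proof turns. The paper does not assemble $T$ from Poisson kernel data at all. It observes that the universal induced representation is itself a point of the parameter space: with $\pi$ faithful of infinite multiplicity, $\sigma_{0}:=\pi^{\mathcal{F}(E)}\circ\varphi_{\infty}$ and $\mathfrak{s}_{0}(\xi\otimes h):=W_{\xi}h$ define an element $\mathfrak{s}_{0}\in\mathcal{AC}(\sigma_{0})$ with $\sigma_{0}\times\mathfrak{s}_{0}=\pi^{\mathcal{F}(E)}\vert_{H^{\infty}(E,Z)}$. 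Since $f$ is by hypothesis defined on all of $\mathcal{AC}(\sigma_{0})$, the single bounded operator $f_{\sigma_{0}}(\mathfrak{s}_{0})$ is available for free, and the intertwiner-respecting property applied to \emph{self}-intertwiners $C\in\mathcal{I}(\sigma_{0}\times\mathfrak{s}_{0},\sigma_{0}\times\mathfrak{s}_{0})$ says exactly that $f_{\sigma_{0}}(\mathfrak{s}_{0})$ commutes with $(\pi^{\mathcal{F}(E)}(H^{\infty}(E,Z)))'$; Theorem \ref{thm: Jenni} then yields $F$ in one line. The identity $f_{\sigma}(\mathfrak{z})=\widehat{F}_{\sigma}(\mathfrak{z})$ on the open disc follows by exhibiting a \emph{surjective} intertwiner $C_{0}=K_{Z}(\mathfrak{z})^{*}Q\in\mathcal{I}(\sigma_{0}\times\mathfrak{s}_{0},\sigma\times\mathfrak{z})$ and cancelling it. Note that $\mathfrak{s}_{0}$ is a boundary point (by Lemma \ref{W}, $\sum_{k}W^{(k)}\widetilde{X}_{k}W^{(k)*}=I-P_{0}$ has norm one), so the hypothesis that $f$ is defined on all of $\mathcal{AC}(\sigma)$, not merely on $D(X,\sigma)$, is essential; your argument never uses it, which is a warning sign.

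Concretely, your construction of $T$ breaks at two points. First, the linear span of $\{K(\mathfrak{z})h:\mathfrak{z}\in D(X,\sigma),\;h\in H\}$ need not be dense in $\mathcal{F}(E)\otimes_{\sigma}H$---in the extreme case $\mathfrak{I}(\sigma^{E}\circ\varphi,\sigma)=\{0\}$ it is exactly the zeroth summand---so the prescription $TK(\mathfrak{z})h=K(\mathfrak{z})f_{\sigma}(\mathfrak{z})h$ does not determine an operator on the Fock module, and Theorem \ref{thm: Jenni} is a statement about operators already known to lie in the commutant of the commutant; it cannot be used to extend a partially defined map. Second, boundedness of $T$ requires a uniform bound on $\{\Vert f_{\sigma}(\mathfrak{z})\Vert:\mathfrak{z}\in D(X,\sigma)\}$; nothing in Definition \ref{def:Matricial_family_of_functions} supplies local boundedness, and your appeal to Banach--Steinhaus presupposes pointwise bounds you have not produced. (Your closing reduction from a general $\tau$ to the fixed ample $\sigma$ via $\mathfrak{z}:=V\mathfrak{w}(I_{E}\otimes V^{*})$ is correct and is essentially how the paper also moves between representations, but it cannot repair the construction of $T$.) If you replace that construction by the evaluation of $f$ at the tautological point $\mathfrak{s}_{0}$, the rest of your outline becomes the paper's proof.
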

\begin{proof}
Let $\pi:M\to B(H_{\pi})$ be a faithful normal representation of
$M$ of infinite multiplicity. Set $\sigma_{0}=\pi^{\mathcal{F}(E)}\circ\varphi_{\infty}$,
acting on $H_{\sigma_{0}}=\mathcal{F}(E)\otimes_{\pi}H_{\pi}$, and
define $\mathfrak{s}_{0}$ by the formula 
\[
\mathfrak{s}_{0}(\xi\otimes h)=W_{\xi}h,\qquad\xi\in E,\, h\in\mathcal{F}(E)\otimes_{\pi}H_{\pi}.
\]
Then $\sigma_{0}\times\mathfrak{s}_{0}$ is an induced representation
of $H^{\infty}(E,Z)$. In \cite[Proposition 2.3]{Muhly2011a} we show
that $\sigma_{0}\times\mathfrak{s}_{0}$ is unique up to unitary equivalence
in the sense that if $\pi'$ has the same properties as $\pi$ and
if $\sigma_{0}'\otimes\mathfrak{s}_{0}'$ is constructed from $\pi'$
in a similar fashion to $\sigma_{0}\times\mathfrak{s}_{0}$, then
$\sigma_{0}'\otimes\mathfrak{s}_{0}'$ is unitarily equivalent to
$\sigma_{0}\times\mathfrak{s}_{0}$. Further, if $\sigma\times\mathfrak{z}$
is any induced representation of $H^{\infty}(E,Z)$, then there is
a subspace $\mathcal{K}$ of $H_{\pi}$ that reduces $\pi$ such that
$\sigma\times\mathfrak{z}$ is unitarily equivalent to $\sigma_{0}\times\mathfrak{s}_{0}\vert\mathcal{F}(E)\otimes_{\pi}\mathcal{K}$.
This was proved for the unweighted case, $H^{\infty}(E)$, but the
same proofs hold here.

Observe that by construction $\sigma_{0}\times\mathfrak{s}_{0}$ is
absolutely continuous in the sense that it extends to an ultraweakly
continuous representation of $H^{\infty}(E,Z)$ acting $\mathcal{F}(E)\otimes_{\pi}H_{\pi}$.
So $\mathfrak{s}_{0}\in\mathcal{AC}(\sigma_{0})$ by definition.

Suppose that $\{f_{\sigma}\}_{\sigma\in NRep(M)}$ is a matricial
family of functions defined on $\{\mathcal{AC}(\sigma)\}_{\sigma\in NRep(M)}$.
Our hypotheses guarantee that for every $C\in\mathcal{I}(\sigma_{0}\times\mathfrak{s}_{0},\sigma_{0}\times\mathfrak{s}_{0})$,
$Cf_{\sigma_{0}}(\mathfrak{s}_{0})=f_{\sigma_{0}}(\mathfrak{s}_{0})C$.
That is, $f_{\sigma_{0}}(\mathfrak{s}_{0})$ lies in the double commutant
of $(\sigma_{0}\times\mathfrak{s}_{0})(H^{\infty}(E))$. However,
$\sigma_{0}\times\mathfrak{s}_{0}$ is the restriction of $\pi^{\mathcal{F}(E)}$
to $H^{\infty}(E,Z)$, where $\sigma_{0}=\pi^{\mathcal{F}(E)}\circ\varphi_{\infty}$,
and $\pi^{\mathcal{F}(E)}(H^{\infty}(E,Z))$ is its own double commutant
by Theorem \ref{thm: Jenni}. Thus there is an $F\in H^{\infty}(E,Z)$
so that 
\begin{equation}
f_{\sigma_{0}}(\mathfrak{s}_{0})=\widehat{F}_{\sigma_{0}}(\mathfrak{s}_{0})\;(=F\otimes I_{H_{\pi}}).\label{eq:f-F_equality}
\end{equation}
If $\sigma$ is an arbitrary representation in $NRep(M)$ and if $\mathfrak{z}\in\mathcal{AC}(\sigma)$,
then for every $C\in\mathcal{I}(\sigma_{0}\times\mathfrak{s}_{0},\sigma\times\mathfrak{z})$,
$f_{\sigma}(\mathfrak{z})C=Cf_{\sigma_{0}}(\mathfrak{s}_{0})$ because
$\{f_{\sigma}\}_{\sigma\in\Sigma}$ preserves intertwiners by hypothesis.
However, by \eqref{eq:f-F_equality}, $Cf_{\sigma_{0}}(\mathfrak{s}_{0})=C\widehat{F}_{\sigma_{0}}(\mathfrak{s}_{0})$.
Hence we have 
\begin{equation}
f_{\sigma}(\mathfrak{z})C=C\widehat{F}_{\sigma_{0}}(\mathfrak{s}_{0})=\widehat{F}_{\sigma}(\mathfrak{z})C,\label{C}
\end{equation}
where the second equality is justified because $\{\widehat{F}_{\sigma}\}$
is a matricial family and we assume that $C\in\mathcal{I}(\sigma_{0}\times\mathfrak{s}_{0},\sigma\times\mathfrak{z})$.

Now fix a normal representation $\sigma$ of $M$ on $H_{\sigma}$
and $\mathfrak{z}\in D(X,\sigma)$. Write $\sigma_{1}\times\mathfrak{s}_{1}$
for the restriction of $\sigma^{\mathcal{F}(E)}$ 
to $H^{\infty}(E,Z)$. As was noted above, there is a subspace $\mathcal{K}$
of $H_{\pi}$ that reduces $\pi$ such that $\sigma_{1}\times\mathfrak{s}_{1}$
is unitarily equivalent to $\sigma_{0}\times\mathfrak{s}_{0}\vert\mathcal{F}(E)\otimes_{\pi}\mathcal{K}$.
Applying the unitary equivalence to the projection of $\mathcal{F}(E)\otimes_{\pi}H_{\pi}=H_{\sigma_{0}}$
onto $\mathcal{F}(E)\otimes_{\pi}\mathcal{K}$, we get a map $Q$
from $H_{\sigma_{0}}$ onto $H_{\sigma_{1}}$ that intertwines the
representations $\sigma_{0}\times\mathfrak{s}_{0}$ and $\sigma_{1}\times\mathfrak{s}_{1}$.
Since $\mathfrak{z}$ is assumed to be in the ``open'' disc $D(X,\sigma)$,
$K_{Z}(\mathfrak{z}):H_{\sigma}\to H_{\sigma_{1}}$ is an isometry.
Therefore, $C_{0}:=K_{Z}(\mathfrak{z})^{*}Q$ maps $H_{\sigma_{0}}$
onto $H_{\sigma}$.

We claim that $C_{0}\in\mathcal{I}(\sigma_{0}\times\mathfrak{s}_{0},\sigma\times\mathfrak{z})$.
For this it will suffice to show that $K_{Z}(\mathfrak{z})^{*}\in\mathcal{I}(\sigma_{1}\times\mathfrak{s}_{1},\sigma\times\mathfrak{z})$.
The fact that $K_{Z}(\mathfrak{z})^{*}\in\mathcal{I}(\sigma_{1},\sigma)$
follows immediately from Lemma\ref{Kintertwines} (3). Using Lemma\ref{Kintertwines}
(2) we see, upon taking adjoints, that for $\xi\in E^{\otimes k}$,
$k\geq0$, 
\[
\mathfrak{z}L_{\xi}K_{Z}(\mathfrak{z})^{*}=K_{Z}(\mathfrak{z})^{*}(W\otimes I_{H_{\sigma_{1}}})L_{\xi}.
\]
But the left hand side is equal to $\mathfrak{z}(I_{E}\otimes K_{Z}(\mathfrak{z})^{*})L_{\xi}$
and, thus, we get 
\[
\mathfrak{z}(I_{E}\otimes K_{Z}(\mathfrak{z})^{*})=K_{Z}(\mathfrak{z})^{*}(W\otimes I_{H_{\sigma_{1}}}),
\]
showing that $K_{Z}(\mathfrak{z})^{*}\in\mathcal{I}(\sigma_{1}\times\mathfrak{s}_{1},\sigma\times\mathfrak{z})$
and completing the proof that $C_{0}\in\mathcal{I}(\sigma_{0}\times\mathfrak{s}_{0},\sigma\times\mathfrak{z})$.

It follows now from (\ref{C}) that $f_{\sigma}(\mathfrak{z})C_{0}=\widehat{F}_{\sigma}(\mathfrak{z})C_{0}$
and, using the fact that $C_{0}$ is a surjective map, we conclude
that $f_{\sigma}(\mathfrak{z})=\widehat{F}_{\sigma}(\mathfrak{z})$
for $\mathfrak{z}\in D(X,\sigma)$.
\end{proof}

\section{Weighted crossed products}

In this section we fix an automorphism $\alpha$ of $M$ and we set
$E=\,_{\alpha}M$. That is, as a right $M$-module, $E$ is just $M$
and the right action of $M$ is just the multiplication from $M$.
The left action is implemented by $\alpha$. So, really, $\varphi=\alpha$
in this case. Finally, the inner product on $E$ is given by $\langle\xi,\eta\rangle:=\xi^{*}\eta$.
Note that the tensor power of order $k$, $E^{\otimes k}$, is isomorphic
to $_{\alpha^{k}}M$. Note, too, that $\mathcal{L}(E^{\otimes k})=\mathcal{K}(E^{\otimes k})=M$.
More precisely, an operator in $\mathcal{L}(E^{\otimes k})$ is given
by left multiplication by an element of $M$, i.e. if $\lambda\in\mathcal{L}(E)$
and if $b:=\lambda(1_{M})$, then $\lambda(\xi)=\lambda(1_{M}\xi)=\lambda(1_{M})\xi=b\xi$.
The relative commutant of $\varphi_{k}(M)$ is $\varphi_{k}(M)^{c}=\{b\in M\mid b\alpha^{k}(a)=\alpha^{k}(a)b\ a\in M\}$.
Since we are assuming $\alpha$ is an automorphism of $M$, $\varphi_{k}(M)^{c}$
is  the algebra of
multiplication by elements in $M\cap\alpha^k(M)'=Z(M)$ and will be
identified with $Z(M)$. 

We shall fix a sequence of positive elements $\{x_{k}\}$ in $Z(M)$
with the properties that $x_{1}$ is invertible and  $\overline{\lim}||x_{k}||^{1/k}$ is finite. So $\{x_{k}\}$ is
an admissible sequence in the sense of Definition \ref{X}. We shall
also fix an associated sequence of weights, but we shall write them
$z_{k}$ instead of $Z_{k}$ and we shall assume that each $z_{k}$
lies in $Z(M)$. Note that the canonical associated weights (Definition
\ref{Def:Associated}) all lie in $Z(M)$. 

Given a normal representation $\sigma$ of $M$ on $H$, every element
$\mathfrak{z}$ of $\mathfrak{I}(\sigma^{E}\circ\varphi,\sigma)$
is an operator from $E\otimes_{\sigma}H$ into $H$ that intertwines
$\varphi(\cdot)\otimes I_{H}$ and $\sigma(\cdot)$. Since $E\otimes_{\sigma}H$
can be identified with $H$ via the map $a\otimes h\mapsto\sigma(a)h$
and this identification carries $\varphi(\cdot)\otimes I_{H}$ into
$\sigma(\alpha(\cdot))$, we may view $\mathfrak{I}(\sigma^{E}\circ\varphi,\sigma)$
as 
\[
\{T\in B(H):T\sigma(\alpha(a))=\sigma(a)T,\;\; a\in M\}.
\]
If we identify $\mathfrak{z}\in\mathfrak{I}(\sigma^{E}\circ\varphi,\sigma)$
with $T\in B(H)$ in this way, then for $k\geq1$, $\mathfrak{z}^{(k)}$
is identified with $T^{k}$. Under these identifications, then, $\overline{D(X,\sigma)}$ may be identified with
\[
\{T\in B(H)\mid T\sigma(\alpha(a))=\sigma(a)T, a\in M, \hbox{and} \sum_{k=1}^{\infty}T^{k}\sigma(x_{k})T^{k*}\leq I\}.
\]
By Corollary~\ref{Dparametrizesrepresentations}, the operators in
$\overline{D(X,\sigma)}$ parameterize all the completely contractive
representations of the tensor algebra. In this setting we shall write
the tensor algebra $\mathcal{T}_{+}(\alpha,z)$.

We now describe the construction of $\mathcal{T}_{+}(\alpha,z)$.
Since the Fock correspondence associated with $E$ is $\sum_{k=0}^{\infty}\oplus_{\alpha^{k}}M$,
it can by identified, as a $W^{*}$-module, with the direct sum of
infinitely many copies of $M$. Thus the elements of $\mathcal{T}_{+}(\alpha,z)$
can each be written as an infinite matrix with entries in $M$. The
generators are $\varphi_{\infty}(a)$ for $a\in M$ and $W_{\xi}$
for $\xi\in E$. But since $E$ is generated, as a correspondence
(and also as a $W^{*}$-module) by a single element $I$ (the identity
operator in $M$ viewed as an element in $E$), we can write $W$
for $W_{I}$ and then $\mathcal{T}_{+}(E,z)$ is generated (as a norm
closed algebra) by $\{\varphi_{\infty}(a)\}_{a\in M}$ and $W$. Matricially
we can write 
\[
\varphi_{\infty}(a)=\rm{diag}[a,\alpha(a),\alpha^{2}(a),\ldots]
\]
and 
\[
W=\left(\begin{array}{ccccc}
0 & 0 & 0 & 0 & \cdots\\
z_{1} & 0 & 0 & 0 & \cdots\\
0 & z_{2} & 0 & 0 & \cdots\\
0 & 0 & z_{3} & 0 & \ddots\\
\vdots & \vdots & \vdots & \ddots & \ddots
\end{array}\right).
\]
These operators generate $\mathcal{T}_{+}(\alpha,z)$ as a norm closed
algebra, they generate $H^{\infty}(\alpha,z)$ as an ultraweakly closed
algebra, and they generate $\mathcal{T}(\alpha,z)$ as a $C^{*}$-algebra.

Now write 
\[
S=\left(\begin{array}{ccccc}
0 & 0 & 0 & 0 & \cdots\\
I & 0 & 0 & 0 & \cdots\\
0 & I & 0 & 0 & \cdots\\
0 & 0 & I & 0 & \ddots\\
\vdots & \vdots & \vdots & \ddots & \ddots
\end{array}\right),
\]
and let $D_{k}=\rm{diag}[0,0,\ldots,0,z_{1},z_{2},\ldots]$ (with $k+1$ zeros)
for $k\geq0$ and $D_{k}=\rm{diag}[z_{-k},z_{1-k},\ldots]$ for $k<0$. (Note: These $D_{k}$ are different from the $D_{k}$ in Section \ref{sec:Dilations}.)
Let $\mathcal{D}$ be the $C^{*}$-algebra generated by $\varphi_{\infty}(M)\cup\{D_{k}:k\in\mathbb{Z}\}$
and define the map $\beta:\mathcal{D}\rightarrow\mathcal{D}$ by the
formula $\beta(y)=S^{*}yS$. Since 
\[
\beta(D_{k})=D_{k-1}
\]
and 
\[
\beta(\varphi_{\infty}(a))=\varphi_{\infty}(\alpha(a))
\]
for $k\in\mathbb{Z}$ and $a\in M$, $\beta$ maps $\mathcal{D}$
into itself. Since $S$ is an isometry whose range projection lies
in $\mathcal{D}'$, $\beta$ is an endomorphism. It is surjective
 since all the generators are contained in its image,
but it may not be injective. Nevertheless, if we write $\mathcal{I}$ for the
ideal in $\mathcal{T}(\alpha,z)$ generated by the projection $\rm{diag}[I,0,0,\ldots]$
and write $q:\mathcal{T}(\alpha,z)\rightarrow\mathcal{T}(\alpha,z)/\mathcal{I}$
for the quotient map, then $\beta$ induces an automorphism - also
denoted $\beta$ - on $q(\mathcal{D})$.

The natural gauge group action on $\mathcal{T}(\alpha,z)$, discussed
at the beginning of Section~\ref{Sec:Duality and the commutant},
$\{\gamma_{\lambda}\}$, is manifested here through the formulas $\gamma_{\lambda}(y)=y$
for $y\in\mathcal{D}$ and $\gamma_{\lambda}(W)=\lambda W$, $\lambda\in\mathbb{T}$.
Also note that, since $\gamma_{\lambda}(P_{0})=P_{0}$, $\gamma_{\lambda}(\mathcal{I})\subseteq\mathcal{I}$
and, thus, $\gamma$ induces an action of $\mathbb{T}$ on $\mathcal{T}(\alpha,z)/\mathcal{I}$,
also denoted $\gamma$. We have $\gamma_{\lambda}(q(y))=q(y)$ for
$y\in\mathcal{D}$ and $\gamma_{\lambda}(q(W))=\lambda q(W)$.

The following theorem generalizes Theorem 3.1.1 of \cite{ODonovan1975}
.
\begin{thm}
\label{isomorphcrossedprod} With $M,\alpha$ and $z$ as above, the
$C^{*}$-algebra $\mathcal{T}(\alpha,z)/\mathcal{I}$ is $C^{*}$-isomorphic
to the crossed product algebra $q(\mathcal{D})\rtimes\beta$.
\end{thm}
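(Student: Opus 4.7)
The plan is to realize the isomorphism via the universal property of the $\mathbb{Z}$-crossed product, with injectivity handled by a gauge-invariant uniqueness argument. First I would choose the weights $\{z_{k}\}$ so that Hypothesis~B is satisfied; this is possible by Proposition~\ref{conditionb}, since the commutativity condition holds automatically in our setting ($\varphi_{k}(M)^{c}=Z(M)$), so by Lemma~\ref{uequivalent} no generality is lost. Under Hypothesis~B, $D_{-1}=\mathrm{diag}[z_{1},z_{2},\ldots]$ is positive and invertible in $\mathcal{T}(\alpha,z)$, and a direct computation using $W=D_{0}S$, together with $SD_{-1}^{-2}S^{*}=\mathrm{diag}[0,z_{1}^{-2},z_{2}^{-2},\ldots]$ and $D_{0}=\mathrm{diag}[0,z_{1},z_{2},\ldots]$, yields $WD_{-1}^{-2}W^{*}=I-P_{0}$. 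Setting $v:=q(D_{-1}^{-1}W^{*})$, we obtain $v^{*}v=q(WD_{-1}^{-2}W^{*})=q(I-P_{0})=I$ and $vv^{*}=q(D_{-1}^{-1}(W^{*}W)D_{-1}^{-1})=q(D_{-1}^{-1}D_{-1}^{2}D_{-1}^{-1})=I$, so $v$ is a unitary in $\mathcal{T}(\alpha,z)/\mathcal{I}$.

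Next I verify the covariance $vq(y)v^{*}=\bar\beta(q(y))$ for $y\in\mathcal{D}$. Since $\mathcal{D}$ is abelian and consists of diagonal operators with central entries, writing $y=\mathrm{diag}[y_{0},y_{1},\ldots]$ a short computation gives $W^{*}yW=\mathrm{diag}[z_{1}^{2}y_{1},z_{2}^{2}y_{2},\ldots]=D_{-1}\beta(y)D_{-1}$, whence $vq(y)v^{*}=q(\beta(y))=\bar\beta(q(y))$. By the universal property of the (full) crossed product---which coincides with the reduced one since $\mathbb{Z}$ is amenable---the covariant pair consisting of the inclusion $q(\mathcal{D})\hookrightarrow\mathcal{T}(\alpha,z)/\mathcal{I}$ and $v$ produces a $*$-homomorphism
\[
\Pi:q(\mathcal{D})\rtimes_{\bar\beta}\mathbb{Z}\to\mathcal{T}(\alpha,z)/\mathcal{I}.
\]
Surjectivity is immediate: the image contains $q(\mathcal{D})$ and $v^{*}=q(WD_{-1}^{-1})$, so $q(W)=v^{*}q(D_{-1})$ is in the image, and together with $q(\varphi_{\infty}(M))$ this generates $\mathcal{T}(\alpha,z)/\mathcal{I}$.

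The hard part will be injectivity, for which the plan is to invoke the gauge-invariant uniqueness theorem for $\mathbb{Z}$-crossed products. The gauge action $\gamma$ on $\mathcal{T}(\alpha,z)$ preserves $P_{0}$ and hence $\mathcal{I}$, so it descends to $\mathcal{T}(\alpha,z)/\mathcal{I}$; on generators, $\gamma_{\lambda}$ fixes $q(\mathcal{D})$ and sends $q(W)$ to $\lambda q(W)$, so $\gamma_{\lambda}(v)=\bar\lambda v$. Since the dual $\mathbb{T}$-action $\hat\beta$ on the crossed product fixes $q(\mathcal{D})$ and rotates the canonical unitary, $\Pi$ intertwines $\hat\beta$ with $\gamma$ (up to the involution $\lambda\mapsto\bar\lambda$). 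What remains is to identify the fixed-point subalgebra of $\mathcal{T}(\alpha,z)/\mathcal{I}$ under $\gamma$ with $q(\mathcal{D})$, so that the conditional expectation $E(\cdot)=\int_{\mathbb{T}}\gamma_{\lambda}(\cdot)\,d\lambda$ lands in $q(\mathcal{D})$ and is faithful. This follows by Ces\`{a}ro summation as in \eqref{eq:Cesaro}, combined with the observation that every gauge-degree-zero monomial in $W$, $W^{*}$, and $\varphi_{\infty}(M)$ reduces to an element of $\mathcal{D}$ via the commutation rule $\varphi_{\infty}(a)W=W\varphi_{\infty}(\alpha(a))$ and the identities $W^{*}W=D_{-1}^{2}$ and $WW^{*}=D_{0}^{2}$. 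With the faithful conditional expectations on both sides intertwined by $\Pi$, and $\Pi|_{q(\mathcal{D})}$ equal to the identity, the standard argument closes the proof: if $\Pi(x)=0$, then $\Pi(E(x^{*}x))=E(\Pi(x^{*}x))=0$; but $E(x^{*}x)\in q(\mathcal{D})$, so $E(x^{*}x)=0$ by faithfulness of $\Pi$ there, and then $x=0$ by faithfulness of $E$.
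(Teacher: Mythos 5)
Your argument is, at its core, the same as the paper's: you build a covariant pair consisting of the inclusion $q(\mathcal{D})\hookrightarrow\mathcal{T}(\alpha,z)/\mathcal{I}$ and a canonical unitary, get a surjection from the crossed product, and prove injectivity by gauge equivariance. Indeed your unitary is exactly the adjoint of the paper's: since $S^{*}D_{0}=D_{-1}S^{*}$, one has $D_{-1}^{-1}W^{*}=S^{*}$, so $v=q(S)^{*}=U^{*}$, and your covariance relation $vq(y)v^{*}=\beta(q(y))$ is the paper's $q(y)U=U\beta(q(y))$. Where you add value is in the injectivity step: the paper simply asserts that injectivity of $\rho_{0}$ plus the intertwining $\gamma_{\lambda}\circ\rho=\rho\circ\tau_{\lambda}$ forces $\rho$ injective, while you spell out the underlying mechanism (equivariant faithful conditional expectations and $\Pi(x)=0\Rightarrow E(x^{*}x)=0\Rightarrow x=0$). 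That is the right justification, and note you do not actually need to identify the $\gamma$-fixed-point algebra of the target as $q(\mathcal{D})$ for this to work --- only that $E(x^{*}x)$ lies in the coefficient algebra of the crossed product and that $\Pi$ is injective there.

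The one step I would not let pass as written is your opening normalization. Proposition~\ref{conditionb} requires Hypothesis~C, i.e.\ $(\limsup\Vert x_{k}\Vert^{1/k})I<aI\leq x_{1}$, not merely admissibility of $\{x_{k}\}$ in the sense of Definition~\ref{X}; the standing assumptions of this section ($x_{1}$ invertible, $\limsup\Vert x_{k}\Vert^{1/k}<\infty$) do not give you that, so you cannot invoke the proposition to secure Hypothesis~B. Some such hypothesis is genuinely needed for your construction, since $v=q(D_{-1}^{-1}W^{*})$ presupposes that $D_{-1}=\mathrm{diag}[z_{1},z_{2},\ldots]$ is invertible in $\mathcal{T}(\alpha,z)$, which is exactly uniform boundedness below of the $z_{k}$. (To be fair, the paper's own use of $q(S)$ and $q(D_{k})$ tacitly requires $S,D_{k}\in\mathcal{T}(\alpha,z)$, which by Proposition~\ref{TE} also rests on Hypothesis~B; so the honest fix for both arguments is to add Hypothesis~B, or Hypothesis~C, as an explicit standing assumption rather than to derive it.) Two harmless slips: $\mathcal{D}$ is not abelian unless $M$ is, since $\varphi_{\infty}(a)=\mathrm{diag}[a,\alpha(a),\ldots]$ has non-central entries --- but your computation $W^{*}yW=D_{-1}^{2}\beta(y)$ only uses centrality of the $z_{k}$, so it survives; and the intertwining of the circle actions involves the involution $\lambda\mapsto\bar{\lambda}$ because you chose $v=U^{*}$, which you correctly note and which does not affect the uniqueness argument.
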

Note that in the ``unweighted\textquotedbl{} case, i.e. the case
when $z_{i}=I$ for all $i$, the theorem clearly holds because then
$q(\mathcal{D})=\mathcal{D}\cong M$ and $\beta$ can be identified
with $\alpha$.
\begin{proof}
The algebra $q(\mathcal{D})\rtimes\beta$ is generated by a copy of
$q(\mathcal{D})$ and a unitary element $u$ that together satisfy
$q(y)u=u\beta(q(y))$ for $y\in\mathcal{D}$. In order to define a
$^{*}$-homomorphism from $q(\mathcal{D})\rtimes\beta$ to $\mathcal{T}(\alpha,z)/\mathcal{I}$,
we let $\rho_{0}:q(\mathcal{D})\rightarrow q(\mathcal{D})$ be the
identity map from the copy of $q(\mathcal{D})$ in $q(\mathcal{D})\rtimes\beta$
to $q(\mathcal{D})$ as a subalgebra of $\mathcal{T}(\alpha,z)/\mathcal{I}$.
Also, write $U=q(S)$ and observe that, for $y\in\mathcal{D}$, $\beta(q(y))=q(\beta(y))=q(S^{*}yS)=U^{*}q(y)U$.
Since $S^{*}S=I$ and $I-SS^{*}=P_{0}\in\mathcal{I}$, $U$ is a unitary
operator and it follows that, for $y\in\mathcal{D}$, 
\[
q(y)U=U\beta(q(y)).
\]
Consequently, there is a $^{*}$-homomorphism $\rho:q(\mathcal{D})\rtimes\beta\rightarrow\mathcal{T}(\alpha,z)/\mathcal{I}$
with $\rho(q(y))=\rho_{0}(q(y))$ for $y\in\mathcal{D}$ and $\rho(u)=U$.
The image of $\rho$ contains $q(\mathcal{D})$ and $U=q(S)$. Since
$q(W)=q(D_{0}S)=q(D_{0})U$ lies in the image of $\rho$, $\rho$
is surjective. It is left to show that it is injective.

For this, write $\tau$ for the natural action of $\mathbb{T}$ on
$q(\mathcal{D})\rtimes\beta$. Thus, for $\lambda\in\mathbb{T}$ and
$y\in\mathcal{D}$, we have $\tau_{\lambda}(q(y))=q(y)$ and $\tau_{\lambda}(u)=\lambda u$.
Then, for $y\in\mathcal{D}$ and $\lambda\in\mathbb{T}$, 
\[
\gamma_{\lambda}(\rho(q(y)))=\gamma_{\lambda}(q(y))=q(y)=\tau_{\lambda}(q(y))
\]
and 
\[
\gamma_{\lambda}(\rho(u))=\gamma_{\lambda}(U)=\lambda U=\rho(\lambda u)=\rho(\tau_{\lambda}(u)).
\]
Thus 
\[
\gamma_{\lambda}\circ\rho=\rho\circ\tau_{\lambda}.
\]
Since $\rho_{0}$ is injective, it now follows that $\rho$ is injective
and, thus, a $^{*}$-isomorphism, as required.\end{proof}
\begin{rem}
The algebra $\mathcal{T}(\alpha,z)/\mathcal{I}$ is the weighted crossed
product algebra and the last theorem shows that (under the conditions
here) it can be presented as an (unweighted) crossed product algebra.
\end{rem}
\bibliographystyle{plain}
\bibliography{/Users/Paul_Muhly/Dropbox/BP_Share/Master_Bib_File/Master130901-1}

\def\cprime{$'$}
\begin{thebibliography}{10}

\bibitem{Arv1969a}
William~B. Arveson.
\newblock {Subalgebras of {$C^{\ast} $}-algebras}.
\newblock {\em Acta Math.}, 123:141--224, 1969.

\bibitem{deBrangeRovnyak1964}
Louis de~Branges and James Rovnyak.
\newblock The existence of invariant subspaces.
\newblock {\em Bull. Amer. Math. Soc.}, 70:718--721, 1964.

\bibitem{Foias1963}
Ciprian Foia{\c{s}}.
\newblock A remark on the universal model for contractions of {G}. {C}. {R}ota.
\newblock {\em Com. Acad. R. P. Rom\^{i} ne}, 13:349--352, 1963.

\bibitem{Good2015}
Jennifer Good.
\newblock {\em Weighted Interpolation in $W^*$-algebras}.
\newblock PhD thesis, University of Iowa, 2015.

\bibitem{K-VV2014}
Dmitry~S. Kaliuzhnyi-Verbovetskyi and Victor Vinnikov.
\newblock {\em Foundations of free noncommutative function theory}, volume 199
  of {\em Mathematical Surveys and Monographs}.
\newblock American Mathematical Society, Providence, RI, 2014.

\bibitem{MT2005}
V.~M. Manuilov and E.~V. Troitsky.
\newblock {\em {Hilbert {$C^*$}-modules}}, volume 226 of {\em {Translations of
  Mathematical Monographs}}.
\newblock American Mathematical Society, Providence, RI, 2005.
\newblock Translated from the 2001 Russian original by the authors.

\bibitem{Muhly1998a}
Paul~S. Muhly and Baruch Solel.
\newblock {Tensor algebras over {$C^*$}-correspondences: representations,
  dilations, and {$C^*$}-envelopes}.
\newblock {\em J. Funct. Anal.}, 158(2):389--457, 1998.

\bibitem{Muhly1999}
Paul~S. Muhly and Baruch Solel.
\newblock {Tensor algebras, induced representations, and the {W}old
  decomposition}.
\newblock {\em Canad. J. Math.}, 51(4):850--880, 1999.

\bibitem{Muhly2002a}
Paul~S. Muhly and Baruch Solel.
\newblock {Quantum {M}arkov processes (correspondences and dilations)}.
\newblock {\em Internat. J. Math.}, 13(8):863--906, 2002.

\bibitem{Muhly2003}
Paul~S. Muhly and Baruch Solel.
\newblock {A model for quantum {M}arkov semigroups}.
\newblock In {\em {Advances in quantum dynamics ({S}outh {H}adley, {MA},
  2002)}}, volume 335 of {\em {Contemp. Math.}}, pages 235--242. Amer. Math.
  Soc., Providence, RI, 2003.

\bibitem{Muhly2004a}
Paul~S. Muhly and Baruch Solel.
\newblock {Hardy algebras, {$W^\ast$}-correspondences and interpolation
  theory}.
\newblock {\em Math. Ann.}, 330(2):353--415, 2004.

\bibitem{Muhly2007}
Paul~S. Muhly and Baruch Solel.
\newblock {Quantum {M}arkov semigroups: product systems and subordination}.
\newblock {\em Internat. J. Math.}, 18(6):633--669, 2007.

\bibitem{Muhly2009}
Paul~S. Muhly and Baruch Solel.
\newblock {The {P}oisson kernel for {H}ardy algebras}.
\newblock {\em Complex Anal. Oper. Theory}, 3(1):221--242, 2009.

\bibitem{Muhly2011a}
Paul~S. Muhly and Baruch Solel.
\newblock {Representations of {H}ardy algebras: absolute continuity,
  intertwiners, and superharmonic operators}.
\newblock {\em Integral Equations Operator Theory}, 70(2):151--203, 2011.

\bibitem{Muhly2013}
Paul~S. Muhly and Baruch Solel.
\newblock {Tensorial {F}unction {T}heory: {F}rom {B}erezin {T}ransforms to
  {T}aylor's {T}aylor {S}eries and {B}ack}.
\newblock {\em Integral Equations Operator Theory}, 76(4):463--508, 2013.

\bibitem{Muhly2013a}
Paul~S. Muhly and Baruch Solel.
\newblock Tensorial function theory: from {B}erezin transforms to {T}aylor's
  {T}aylor series and back.
\newblock {\em Integral Equations Operator Theory}, 76(4):463--508, 2013.

\bibitem{Muller1988}
Vladim{\'{\i}}r M{\"u}ller.
\newblock Models for operators using weighted shifts.
\newblock {\em J. Operator Theory}, 20(1):3--20, 1988.

\bibitem{ODonovan1975}
Donal~P. O'Donovan.
\newblock {Weighted shifts and covariance algebras}.
\newblock {\em Trans. Amer. Math. Soc.}, 208:1--25, 1975.

\bibitem{Paschke1973}
William~L. Paschke.
\newblock Inner product modules over {$B^{\ast} $}-algebras.
\newblock {\em Trans. Amer. Math. Soc.}, 182:443--468, 1973.

\bibitem{Pi}
M.~V. Pimsner.
\newblock {A class of {$C^*$}-algebras generalizing both {C}untz-{K}rieger
  algebras and crossed products by {${\bf Z}$}}.
\newblock In {\em {Free probability theory ({W}aterloo, {ON}, 1995)}},
  volume~12 of {\em {Fields Inst. Commun.}}, pages 189--212. Amer. Math. Soc.,
  Providence, RI, 1997.

\bibitem{Popescu2009a}
Gelu Popescu.
\newblock {Unitary invariants in multivariable operator theory}.
\newblock {\em Mem. Amer. Math. Soc.}, 200(941):vi+91, 2009.

\bibitem{Popescu2010}
Gelu Popescu.
\newblock {Operator theory on noncommutative domains}.
\newblock {\em Mem. Amer. Math. Soc.}, 205(964):vi+124, 2010.

\bibitem{Revuz1984}
D.~Revuz.
\newblock {\em Markov chains}, volume~11 of {\em North-Holland Mathematical
  Library}.
\newblock North-Holland Publishing Co., Amsterdam, second edition, 1984.

\bibitem{R1974b}
Marc~A. Rieffel.
\newblock {Induced representations of {$C^{\ast} $}-algebras}.
\newblock {\em Advances in Math.}, 13:176--257, 1974.

\bibitem{R1974a}
Marc~A. Rieffel.
\newblock {Morita equivalence for {$C^{\ast} $}-algebras and {$W^{\ast}
  $}-algebras}.
\newblock {\em J. Pure Appl. Algebra}, 5:51--96, 1974.

\bibitem{Rota1960}
Gian-Carlo Rota.
\newblock On models for linear operators.
\newblock {\em Comm. Pure Appl. Math.}, 13:469--472, 1960.

\bibitem{Sarason1965}
Donald Sarason.
\newblock On spectral sets having connected complement.
\newblock {\em Acta Sci. Math. (Szeged)}, 26:289--299, 1965.

\bibitem{Tay72c}
Joseph~L. Taylor.
\newblock {A general framework for a multi-operator functional calculus}.
\newblock {\em Advances in Math.}, 9:183--252, 1972.

\bibitem{Viselter2011}
Ami Viselter.
\newblock Covariant representations of subproduct systems.
\newblock {\em Proc. Lond. Math. Soc. (3)}, 102(4):767--800, 2011.

\end{thebibliography}

\end{document}